\newtheorem{thm}{Theorem}
\newtheorem{prop}[thm]{Proposition} 
\newtheorem{lem}[thm]{Lemma}
\newtheorem{cor}[thm]{Corollary}
\newtheorem{clm}{Claim}
\newtheorem{fact}[thm]{Fact}
\theoremstyle{remark}
\newtheorem*{rem}{Remark}
\newtheorem*{attribution}{Attributions}
\newtheorem*{Ack}{Acknowledgments}
\newtheorem*{subclaimn}{Subclaim}
\theoremstyle{definition}
\newtheorem{defn}[thm]{Definition}
\newtheorem*{notation}{Notation}
\newtheorem{notationn}{Notation}
\newtheorem{example}[thm]{Example} 
\newtheorem{examples}[thm]{Examples} 
\newtheorem{question}[thm]{Question}
\newtheorem*{question1}{Question 1}
\newtheorem*{question2}{Question 2}
\newtheorem*{question3}{Question 3}
\theoremstyle{remark}
\newcommand{\fin}{\mathrm{fin}}
\newcommand{\lra}{\leftrightarrow}
\newcommand{\al}{\alpha}
\newcommand{\om}{\omega}
\newcommand{\vp}{\varphi}
\newcommand{\sse}{\subseteq}
\newcommand{\contains}{\supseteq}
\DeclareMathOperator{\Iso}{Iso}
\newcommand{\re}{\restriction}
\newcommand{\bA}{\mathbb{A}}
\newcommand{\bN}{\mathbb{N}}
\newcommand{\bsA}{\boldsymbol{A}}
\newcommand{\bsB}{\boldsymbol{B}}
\newcommand{\bsC}{\boldsymbol{C}}
\newcommand{\bsD}{\boldsymbol{D}}
\newcommand{\bsE}{\boldsymbol{E}}
\newcommand{\bsF}{\boldsymbol{F}}
\newcommand{\bsS}{\boldsymbol{S}}
\newcommand{\bsT}{\boldsymbol{T}}
\newcommand{\bsU}{\boldsymbol{U}}
\newcommand{\bsV}{\boldsymbol{V}}
\newcommand{\bsW}{\boldsymbol{W}}
\newcommand{\bsX}{\boldsymbol{X}}
\newcommand{\bsY}{\boldsymbol{Y}}
\newcommand{\bsZ}{\boldsymbol{Z}}
\DeclareMathOperator{\depth}{depth}
\DeclareMathOperator{\FIN}{FIN}
\newcommand{\ra}{\rightarrow}
\newcommand{\Llra}{\Longleftrightarrow}
\newcommand{\lgl}{\langle}
\newcommand{\rgl}{\rangle}
\newcommand{\Pudlak}{Pudl{\'{a}}k}
\newcommand{\Rodl}{R{\"{o}}dl}
\newcommand{\Erdos}{Erd{\H{o}}s}
\newcommand{\Fraisse}{Fra{\"{i}}ss{\'{e}}}
\newcommand{\Sokic}{Soki{\'{c}}}
\newcommand{\Nesetril}{Ne{\v{s}}et{\v{r}}il}
\definecolor{darkgreen}{rgb}{0,0.6,0}
\newcommand{\noprint}[1]{\relax}
\title{Topological Ramsey spaces from Fra\"{i}ss\'{e}\ classes, Ramsey-classification theorems, and initial structures in the Tukey types of p-points}
\author{Natasha
  Dobrinen}
\address{Department of Mathematics\\
  University of Denver \\
   2280 S Vine St\\ Denver, CO \ 80208 U.S.A.}  
\email{natasha.dobrinen@du.edu}
\urladdr{\url{http://web.cs.du.edu/~ndobrine}} 
\thanks{Dobrinen  was partially supported by  National Science Foundation Grant DMS-1301665  and  Simons Foundation Collaboration Grant 245286}
\author{Jos\'e G. Mijares}
\address{Department of Mathematics\\
  University of Denver \\
   2280 S Vine St\\ Denver, CO \ 80208 U.S.A.}
\email{Jose.MijaresPalacios@du.edu}
\thanks{Mijares  was partially supported Dobrinen's  Simons Foundation Collaboration Grant 245286}
\author{Timothy Trujillo}
\address{Applied Mathematics and Statistics\\
 Colorado School of Mines \\
 Golden,  CO \ 80401 U.S.A.}
\email{trujillo@mines.edu}
\subjclass[2010]{03E02, 03E05, 03E40,05C55,  05D10, 54H05}
\dedicatory{Dedicated to James Baumgartner, whose depth and insight  continue to inspire}
\begin{document}

\maketitle

\begin{abstract}
A general method for constructing a new class of topological Ramsey spaces is presented.
Members of such spaces are infinite sequences of 
 products of Fra\"{i}ss\'{e} classes of finite relational structures satisfying the Ramsey property.
The Product Ramsey Theorem of Soki\v{c} is extended to equivalence relations for  
finite products of structures from  \Fraisse\ classes of finite relational structures satisfying the Ramsey property and the Order-Prescribed Free Amalgamation Property.
This is essential to
proving Ramsey-classification theorems for equivalence relations on fronts, generalizing the \Pudlak-\Rodl\ Theorem to this class of topological Ramsey spaces.

To each topological Ramsey space in this framework corresponds an associated ultrafilter  satisfying some weak partition property.
By using the correct \Fraisse\ classes,  we construct topological Ramsey spaces which are dense in the partial orders  of Baumgartner and Taylor in \cite{Baumgartner/Taylor78}  
   generating  p-points  which are $k$-arrow but not $k+1$-arrow, and in a
partial order of Blass in \cite{Blass73} producing a diamond shape in the Rudin-Keisler structure of p-points.
Any space in our framework in which blocks are products of $n$ many structures
produces ultrafilters with initial Tukey structure exactly the  Boolean algebra $\mathcal{P}(n)$.
If the number of \Fraisse\ classes on each block grows without bound, then 
 the Tukey types of the p-points below the space's associated ultrafilter have the structure exactly $[\om]^{<\om}$.
In contrast,  
the set of isomorphism types
of any
  product of finitely many \Fraisse\ classes 
of finite relational structures satisfying the Ramsey property and the OPFAP,
  partially ordered by embedding,
 is realized as the initial Rudin-Keisler structure of some p-point generated by a space  constructed from our template.

\end{abstract}

\section{Introduction}\label{sec.intro}

The Tukey theory of ultrafilters has 
recently seen much progress, developing into a full-fledged area of research drawing on set theory, topology, and Ramsey theory.
Interest in Tukey reducibility on ultrafilters stems both from the fact that it is a weakening of the well-known Rudin-Keisler reducibility as well as the fact that it is a useful tool for classifying partial orderings.

Given ultrafilters $\mathcal{U},\mathcal{V}$, we say that $\mathcal{U}$ is {\em Tukey reducible to} $\mathcal{V}$ (written $\mathcal{U}\le_T\mathcal{V}$)
if there is a function $f:\mathcal{V}\ra\mathcal{U}$ which sends filter bases of $\mathcal{V}$ to filter bases of $\mathcal{U}$.
We say that $\mathcal{U}$ and $\mathcal{V}$ are {\em Tukey equivalent} if both $\mathcal{U}\le_T\mathcal{V}$ and $\mathcal{V}\le_T\mathcal{U}$.
The collection of all ultrafilters Tukey equivalent to $\mathcal{U}$ is called the {\em Tukey type} of $\mathcal{U}$.

The question of which structures embed into the Tukey types of ultrafilters on the natural numbers was addressed to some extent in \cite{Dobrinen/Todorcevic10}.
In that paper, the following were shown to be consistent with ZFC: chains of length $\mathfrak{c}$ embed into the Tukey types of p-points; diamond configurations embed into the Tukey types of p-points;
and there are $2^{\mathfrak{c}}$ many Tukey-incomparable selective ultrafilters.
However, \cite{Dobrinen/Todorcevic10} left open the question of which structures appear as {\em initial Tukey structures} in the Tukey types of ultrafilters, where 
by  an initial Tukey structure we mean a collection of  Tukey types of nonprincipal ultrafilters which is closed under Tukey reducibility.

The first progress in this direction was made in
\cite{Raghavan/Todorcevic12},
where applying a canonical Ramsey theorem of \Pudlak\ and \Rodl\ (see Theorem \ref{thm.PR}),
Todorcevic showed that every nonprincipal ultrafilter Tukey reducible to a Ramsey ultrafilter is in fact Tukey equivalent to that Ramsey ultrafilter.
Thus, 
the initial Tukey structure below a Ramsey ultrafilter is simply a singleton.

Further progress on initial Tukey structures was made by Dobrinen and Todorcevic  in
 \cite{Dobrinen/Todorcevic11} and \cite{Dobrinen/Todorcevic12}.
To each topological Ramsey space, there is a naturally associated ultrafilter obtained by forcing with the topological Ramsey space partially ordered modulo finite initial segments.  The  properties of the associated ultrafilters are inherited from the properties of the topological Ramsey space (see Section \ref{sec.uf}).
In \cite{Dobrinen/Todorcevic11}, a dense subset of a partial ordering of Laflamme  from \cite{Laflamme89} which forces a weakly Ramsey ultrafilter was pared down to reveal the inner structure responsible for the desired properties to be  that of a topological Ramsey space, $\mathcal{R}_1$.
In fact,  Laflamme's partial ordering is exactly that of an earlier example of Baumgartner and Taylor in \cite{Baumgartner/Taylor78} (see Example \ref{ex.BT4.84.9}).
By proving and applying  a new Ramsey classification theorem, generalizing the \Pudlak-\Rodl\ Theorem for canonical equivalence relations on barriers,
it was shown in  \cite{Dobrinen/Todorcevic11} that the ultrafilter associated with $\mathcal{R}_1$ has exactly one Tukey type of nonprincipal ultrafilters strictly below it, namely that of the projected Ramsey ultrafilter, and similarly for Rudin-Keisler reduction.
Thus, the initial   Tukey and Rudin-Keisler structures of nonprincipal ultrafilters reducible to the ultrafilter associated with $\mathcal{R}_1$ are both exactly a chain of length $2$.

In \cite{Dobrinen/Todorcevic12},
this work was extended to a new class of topological Ramsey spaces $\mathcal{R}_{\al}$, which are obtained as particular dense sets of forcings of Laflamme in \cite{Laflamme89}.
In \cite{Dobrinen/Todorcevic12}, it was proved that the structure of the Tukey types of ultrafilters Tukey reducible to the ultrafilter associated with $\mathcal{R}_{\al}$ is exactly a decreasing chain of order-type $\al+1$.
Likewise for the initial Rudin-Keisler structure.
As before, this result was obtained by proving new Ramsey-classification theorems for canonical equivalence relations on barriers and applying them to deduce the Tukey and Rudin-Keisler structures below the ultrafilter associated with $\mathcal{R}_{\al}$.

All of the results in \cite{Raghavan/Todorcevic12}, \cite{Dobrinen/Todorcevic11} and \cite{Dobrinen/Todorcevic12}   produced initial Tukey and Rudin-Keisler structures which are linear orders, precisely, decreasing chains of some countable successor ordinal length. 
This led to the following questions, which motivated the present and forthcoming work.

\begin{question}\label{q.1}
What are the possible initial Tukey structures for ultrafilters on a countable base set?
\end{question}

\begin{question}\label{q.2}
What are the possible initial Rudin-Keisler structures for ultrafilters on a countable base set?
\end{question}

\begin{question}\label{q.3}
 For a given ultrafilter $\mathcal{U}$, what is the structure of the Rudin-Keisler ordering of the isomorphism classes of ultrafilters Tukey reducible to $\mathcal{U}$?
\end{question}

Question \ref{q.3} was answered in  \cite{Raghavan/Todorcevic12}, \cite{Dobrinen/Todorcevic11} and \cite{Dobrinen/Todorcevic12}  by showing that each Tukey type below the associated ultrafilter consists of iterated Fubini products of p-points obtained from projections of the ultrafilter forced by the space.

Related to these questions are the following two motivating questions.
Before \cite{Dobrinen/Todorcevic12}, there were relatively few examples in the literature of topological Ramsey spaces.  The constructions in that paper led to considering what other new topological Ramsey spaces can be formed. 
Our general construction method presented in Section \ref{sec.genseq} is a step toward answering the following larger question.

\begin{question}\label{q.constructionscheme}
What general construction schemes are there for constructing new topological Ramsey spaces?
\end{question}

We point out some recent work in this vein constructing new types of topological Ramsey spaces. 
In \cite{Mijares/Padilla13}, Mijares and Padilla construct new spaces of infinite polyhedra, and in \cite{Mijares/Torrealba13},
Mijares and Torrealba construct spaces whose members are
 countable metric spaces with rational valued metrics.
These spaces answer  questions in Ramsey theory regarding homogeneous structures and random objects.
One of aims  of the present work is to find a general framework for ultrafilters satisfying partition properties in terms of topological Ramsey spaces.
See also \cite{Dobrinen14} for a new construction scheme.

\begin{question}\label{q.frameworkuf}
Is each ultrafilter on some countable base satisfying some partition relations actually an ultrafilter associated with some topological Ramsey space (or something close to a topological Ramsey space)?
Is there some general framework of topological Ramsey spaces into which many or all examples of ultrafilters with partition properties fit?
\end{question}

Some recent work of Dobrinen in \cite{Dobrinen14} constructs high-dimensional extensions of the Ellentuck  space.
These topological Ramsey spaces  generate  ultrafilters which are not p-points but which have strong partition properties;
precisely these spaces yield the ultrafilters generic for the forcings $\mathcal{P}(\om^n)/$Fin$^{\otimes n}$, $2\le n<\om$.
The structure of the spaces aids in 
finding their initial Tukey structures via new extensions of the \Pudlak-\Rodl\ Theorem for these spaces.

It turns out that whenever an ultrafilter is associated with some topological Ramsey space, the ultrafilter has complete combinatorics, meaning that in the presence of a supercompact cardinal, the ultrafilter is generic over $L(\mathbb{R})$.  
This was proved by Di Prisco, Mijares, and Nieto in \cite{DiPrisco/Mijares/Nieto15}, building on work of Todorcevic in 
 \cite{Farah98} for the Ellentuck space.
Thus, finding a general framework for ultrafilters with partition properties in terms of ultrafilters associated with  topological Ramsey spaces has the benefit of providing a large class of forcings with complete combinatorics.

In this
 paper we provide a general scheme for 
constructing new topological Ramsey spaces.
This construction scheme uses products of finite ordered relational structures from  \Fraisse\ classes with the Ramsey property. 
The details are set out in Section \ref{sec.genseq}.
The goal of this construction scheme is 
several-fold.
We aim to construct topological Ramsey spaces with associated ultrafilters which have initial Tukey structures which are not simply linear orders.
This is achieved by allowing ``blocks" of the members of the Ramsey space to consist of products of structures, rather than trees as was the case in \cite{Dobrinen/Todorcevic12}.
In particular, for each $n<\om$, we construct a hypercube space $\mathcal{H}^n$  which produces an ultrafilter with initial Tukey and Rudin-Keisler structures exactly that of the Boolean algebra $\mathcal{P}(n)$.
See  Example \ref{ex.H} and 
Theorems \ref{Tim-InitialStructures} and  \ref{TukeyReducibleTheorem2}.

We also  seek to use topological Ramsey spaces to
provide a unifying framework for p-points satisfying weak partition properties.
This is the focus  in Section \ref{sec.uf}.
All of the p-points of Baumgartner and Taylor in \cite{Baumgartner/Taylor78}  fit into our scheme,
in particular, the $k$-arrow, not $(k+1)$-arrow p-points which they construct.
In the other direction,  for many collections of weak partition properties,
we show 
there is a topological Ramsey space with associated ultrafilter simultaneously satisfying those properties.

The general Ramsey-classification Theorem \ref{canonical R} in Section \ref{sec.canon}  hinges on 
Theorem \ref{FiniteERProducts} in Section \ref{sec.ProductER}, which 
 generalizes the \Erdos-Rado Theorem (see Theorem \ref{thm.finiteER})
in two ways: by extending it from finite linear orders to 
 \Fraisse\ classes of finite ordered relational structures with the Ramsey property and the Order-Prescribed  Free Amalgamation Property (see Definition \ref{def.opfap}),
and by extending it to 
finite products of members of such classes.
Theorem \ref{FiniteERProducts} also extends the Product Ramsey Theorem of Soki\v{c} (see Theorem \ref{thm.Sokic}) from finite colorings  to equivalence relations, but at the expense of restricting to a certain subclass of those \Fraisse\ classes for which his theorem holds.
Theorem \ref{FiniteERProducts} is applied in 
Section \ref{sec.canon} to prove
 Theorem \ref{canonical R},
which generalizes the Ramsey-classification theorems in \cite{Dobrinen/Todorcevic11} for equivalence relations on fronts
to the setting of the topological Ramsey spaces in this paper.
Furthermore, we show that the Abstract Nash-Williams Theorem (as opposed to the Abstract Ellentuck Theorem) suffices for the proof.

  Section \ref{sec.basic} contains theorems general to all topological Ramsey spaces $(\mathcal{R},\le,r)$, not just those constructed from a generating sequence.
In this section, general notions of a filter being selective or  Ramsey for the space $\mathcal{R}$ are put forth.
The main result of this section,
Theorem \ref{p-point Tukey theorem},
shows that
 Tukey reductions for ultrafilters Ramsey for a topological Ramsey space
can be assumed to be continuous with respect to the metric topology on the Ramsey space.
In particular, it is shown that any cofinal map from an ultrafilter Ramsey for $\mathcal{R}$ is 
continuous on some base for that ultrafilter, and even better, is {\em basic} (see Definition \ref{defn.basic}).
This section also contains a general method for analyzing 
ultrafilters Tukey reducible to some ultrafilter Ramsey for $\mathcal{R}$ in terms of fronts and canonical functions.
(See  Proposition \ref{Proposition 5.5 Analogue}  and neighboring text.)

Theorems \ref{canonical R} and \ref{p-point Tukey theorem} are applied 
in Section \ref{sec.initstruc}
to answer Questions \ref{q.1} - \ref{q.3}.
All initial Tukey and Rudin-Keisler structures associated with the ultrafilters generated by the class of topological Ramsey spaces constructed in this paper are found.
 Theorem \ref{Tim-InitialStructures},
shows that whenever $n$  \Fraisse\ classes 
are used to generate a topological Ramsey space, then the initial Tukey structure below the associated ultrafilter is exactly the Boolean algebra $\mathcal{P}(n)$.
When infinitely many \Fraisse\ classes are used, then the initial Tukey structure of the p-points below the associated forced filter is exactly $([\om]^{<\om},\sse)$.
In Theorem
\ref{TukeyReducibleTheorem},
we find the exact structure of the Rudin-Keisler types inside the Tukey types of ultrafilters reducible to the associated filter.
Theorem \ref{TukeyReducibleTheorem2} 
shows that if $\mathcal{R}$ is a topological Ramsey space constructed from some \Fraisse\ classes $\mathcal{K}_j$, $j\in J$, and $\mathcal{C}$ is a Ramsey filter on $(\mathcal{R}, \le )$,
then 
 the Rudin-Keisler ordering of the p-points Tukey reducible to $\mathcal{C}$ is isomorphic to the 
collection of all (equivalence classes of) finite products of members of the classes $\mathcal{K}_j$, partially ordered under embeddability.

\begin{attribution}
The work in Sections 3 - 5 is due to Dobrinen.
Section \ref{sec.canon} comprises joint work of Dobrinen and Mijares.
Sections  \ref{sec.basic}
and \ref{sec.initstruc} are joint work of Dobrinen and Trujillo,
building on some of the work in Trujillo's thesis.
The main results in this paper for the special case of the  space $\mathcal{H}^2$   constitute work of Trujillo  in  his PhD thesis \cite{TrujilloThesis}.
\end{attribution}

\begin{Ack}
The authors gratefully acknowledge input from the first anonymous  referee pointing out an oversight in the first draft which led us to formulate the OPFAP.
We also thank the second anonymous referee for pointing out some typos. 
Many thanks go to   Miodrag Socki\v{c} for his thorough reading of previous drafts, catching typos and some errors which have been fixed.
\end{Ack}


\section{Background on topological Ramsey spaces, notation,\\ and classical canonization theorems}\label{sec.2}

In 
\cite{TodorcevicBK10}, Todorcevic distills the key properties of the Ellentuck space into  four axioms which guarantee that a space is a topological Ramsey space.
For the sake of clarity, we reproduce his definitions here.
The following can all be found at the beginning of Chapter 5 in
\cite{TodorcevicBK10}.

The  axioms \bf A.1 \rm - \bf A.4 \rm
are defined for triples
$(\mathcal{R},\le,r)$
of objects with the following properties.
$\mathcal{R}$ is a nonempty set,
$\le$ is a quasi-ordering on $\mathcal{R}$,
 and $r:\mathcal{R}\times\om\ra\mathcal{AR}$ is a mapping giving us the sequence $(r_n(\cdot)=r(\cdot,n))$ of approximation mappings, where
$\mathcal{AR}$ is  the collection of all finite approximations to members of $\mathcal{R}$.
For $a\in\mathcal{AR}$ and $A,B\in\mathcal{R}$,
\begin{equation}
[a,B]=\{A\in\mathcal{R}:A\le B\mathrm{\ and\ }(\exists n)\ r_n(A)=a\}.
\end{equation}

For $a\in\mathcal{AR}$, let $|a|$ denote the length of the sequence $a$.  Thus, $|a|$ equals the integer $k$ for which $a=r_k(a)$.
For $a,b\in\mathcal{AR}$, $a\sqsubseteq b$ if and only if $a=r_m(b)$ for some $m\le |b|$.
$a\sqsubset b$ if and only if $a=r_m(b)$ for some $m<|b|$.
For each $n<\om$, $\mathcal{AR}_n=\{r_n(A):A\in\mathcal{R}\}$.
\vskip.1in

\begin{enumerate}
\item[\bf A.1]\rm
\begin{enumerate}
\item
$r_0(A)=\emptyset$ for all $A\in\mathcal{R}$.\vskip.05in
\item
$A\ne B$ implies $r_n(A)\ne r_n(B)$ for some $n$.\vskip.05in
\item
$r_n(A)=r_m(B)$ implies $n=m$ and $r_k(A)=r_k(B)$ for all $k<n$.\vskip.1in
\end{enumerate}
\item[\bf A.2]\rm
There is a quasi-ordering $\le_{\mathrm{fin}}$ on $\mathcal{AR}$ such that\vskip.05in
\begin{enumerate}
\item
$\{a\in\mathcal{AR}:a\le_{\mathrm{fin}} b\}$ is finite for all $b\in\mathcal{AR}$,\vskip.05in
\item
$A\le B$ iff $(\forall n)(\exists m)\ r_n(A)\le_{\mathrm{fin}} r_m(B)$,\vskip.05in
\item
$\forall a,b,c\in\mathcal{AR}[a\sqsubset b\wedge b\le_{\mathrm{fin}} c\ra\exists d\sqsubset c\ a\le_{\mathrm{fin}} d]$.\vskip.1in
\end{enumerate}
\end{enumerate}

We abuse notation and 
for $a\in\mathcal{AR}$ and $A\in\mathcal{R}$, we  write 
$a\le_{\mathrm{fin}}A$ to denote that  there is some $n<\om$ such that $a\le_{\mathrm{fin}} r_n(A)$.
$\depth_B(a)$ denotes the least $n$, if it exists, such that $a\le_{\mathrm{fin}}r_n(B)$.
If such an $n$ does not exist, then we write $\depth_B(a)=\infty$.
If $\depth_B(a)=n<\infty$, then $[\depth_B(a),B]$ denotes $[r_n(B),B]$.

\begin{enumerate}
\item[\bf A.3] \rm
\begin{enumerate}
\item
If $\depth_B(a)<\infty$ then $[a,A]\ne\emptyset$ for all $A\in[\depth_B(a),B]$.\vskip.05in
\item
$A\le B$ and $[a,A]\ne\emptyset$ imply that there is $A'\in[\depth_B(a),B]$ such that $\emptyset\ne[a,A']\sse[a,A]$.\vskip.1in
\end{enumerate}
\end{enumerate}

If $n>|a|$, then  $r_n[a,A]$ denotes the collection of all $b\in\mathcal{AR}_n$ such that $a\sqsubset b$ and $b\le_{\mathrm{fin}} A$.

\begin{enumerate}
\item[\bf A.4]\rm
If $\depth_B(a)<\infty$ and if $\mathcal{O}\sse\mathcal{AR}_{|a|+1}$,
then there is $A\in[\depth_B(a),B]$ such that
$r_{|a|+1}[a,A]\sse\mathcal{O}$ or $r_{|a|+1}[a,A]\sse\mathcal{O}^c$.\vskip.1in
\end{enumerate}

The topology on $\mathcal{R}$ is given by the basic open sets
$[a,B]$.
This topology is called the  {\em Ellentuck} topology on $\mathcal{R}$;
it extends the usual metrizable topology on $\mathcal{R}$ when we consider $\mathcal{R}$ as a subspace of the Tychonoff cube $\mathcal{AR}^{\bN}$.
Given the Ellentuck topology on $\mathcal{R}$,
the notions of nowhere dense, and hence of meager are defined in the natural way.
Thus, we may say that a subset $\mathcal{X}$ of $\mathcal{R}$ has the {\em property of Baire} iff $\mathcal{X}=\mathcal{O}\cap\mathcal{M}$ for some Ellentuck open set $\mathcal{O}\sse\mathcal{R}$ and Ellentuck meager set $\mathcal{M}\sse\mathcal{R}$.

\begin{defn}[\cite{TodorcevicBK10}]\label{defn.5.2}
A subset $\mathcal{X}$ of $\mathcal{R}$ is {\em Ramsey} if for every $\emptyset\ne[a,A]$,
there is a $B\in[a,A]$ such that $[a,B]\sse\mathcal{X}$ or $[a,B]\cap\mathcal{X}=\emptyset$.
$\mathcal{X}\sse\mathcal{R}$ is {\em Ramsey null} if for every $\emptyset\ne [a,A]$, there is a $B\in[a,A]$ such that $[a,B]\cap\mathcal{X}=\emptyset$.

A triple $(\mathcal{R},\le,r)$ is a {\em topological Ramsey space} if every property of Baire subset of $\mathcal{R}$ is Ramsey and if every meager subset of $\mathcal{R}$ is Ramsey null.
\end{defn}

The following result can be found as Theorem
5.4 in \cite{TodorcevicBK10}.

\begin{thm}[Abstract Ellentuck Theorem]\label{thm.AET}\rm \it
If $(\mathcal{R},\le,r)$ is closed (as a subspace of $\mathcal{AR}^{\bN}$) and satisfies axioms {\bf A.1}, {\bf A.2}, {\bf A.3}, and {\bf A.4},
then every property of Baire subset of $\mathcal{R}$ is Ramsey,
and every meager subset is Ramsey null;
in other words,
the triple $(\mathcal{R},\le,r)$ forms a topological Ramsey space.
\end{thm}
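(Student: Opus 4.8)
The plan is to reproduce, in the abstract setting of axioms \textbf{A.1}--\textbf{A.4}, the method of combinatorial forcing that Ellentuck used for the space $[\om]^\om$. I would organize the argument in three stages: first establish the conclusion for metrically open sets (an abstract Galvin--Prikry / Nash-Williams statement), then bootstrap from open sets to arbitrary sets with the property of Baire by a fusion argument, and finally read off that meager sets are Ramsey null. Throughout, the closedness of $\mathcal{R}$ as a subspace of $\mathcal{AR}^{\bN}$ is what guarantees that the limits of the fusion sequences I build again lie in $\mathcal{R}$.

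The technical heart is the combinatorial-forcing machinery. Fix a set $\mathcal{X}\sse\mathcal{R}$. For $a\in\mathcal{AR}$ and $B\in\mathcal{R}$ with $\depth_B(a)<\infty$, I would say that $B$ \emph{accepts} $a$ if $[a,B]\sse\mathcal{X}$, that $B$ \emph{rejects} $a$ if no $C\in[\depth_B(a),B]$ accepts $a$, and that $B$ \emph{decides} $a$ if it does one or the other. The basic lemmas to verify are: (i) acceptance and rejection are preserved when passing from $B$ to any $C\le B$ with $\depth_C(a)<\infty$, which follows from \textbf{A.3}; (ii) every $a$ is decided by some $C\in[\depth_B(a),B]$, again from \textbf{A.3(b)}; and (iii) the propagation lemma: if $B$ rejects $a$, then there is $C\in[\depth_B(a),B]$ rejecting \emph{every} one-step extension $b\in r_{|a|+1}[a,C]$. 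Step (iii) is exactly where Axiom \textbf{A.4} enters: one applies it to the set $\mathcal{O}\sse\mathcal{AR}_{|a|+1}$ of extensions accepted by some witness, and the homogeneity it delivers, combined with (i) and (ii), rules out the ``accept'' alternative.

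With (i)--(iii) in hand I would prove that every metrically open $\mathcal{X}$ is Ramsey. Given $\emptyset\ne[a,A]$, first thin $A$ so that it decides $a$. If the surviving condition accepts $a$ we are done with $[a,B]\sse\mathcal{X}$; if it rejects $a$, I iterate (iii) along a fusion sequence, at each level rejecting all one-step extensions simultaneously and diagonalizing to a single $B\in[a,A]$ that rejects $a$ together with all of its finite extensions inside $[a,B]$. Here closedness produces the diagonal limit $B\in\mathcal{R}$. Since $\mathcal{X}$ is open, any point of $[a,B]\cap\mathcal{X}$ would carry a basic Ellentuck neighborhood $[c,x]\sse\mathcal{X}$ with $a\sqsubseteq c$, so $x$ would accept $c$; but $x\le B$ and $B$ rejects $c$, contradicting (i). Hence $[a,B]\cap\mathcal{X}=\emptyset$. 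This establishes the Ramsey property for open sets and, simultaneously, that nowhere dense sets are Ramsey null.

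Finally I would pass from open sets to all sets with the property of Baire. Writing $\mathcal{X}$ in its property-of-Baire form $\mathcal{O}\triangle\mathcal{M}$ with $\mathcal{O}$ Ellentuck-open and $\mathcal{M}=\bigcup_n\mathcal{N}_n$ a countable union of nowhere dense sets, I run a second fusion: using that each $\mathcal{N}_n$ is Ramsey null (from the previous stage), I build $B\in[a,A]$ inside which every $\mathcal{N}_n$ is avoided on the relevant neighborhoods, so that on $[a,B]$ the set $\mathcal{X}$ agrees with the open set $\mathcal{O}$. Applying the open case to $\mathcal{O}$ inside $[a,B]$ then decides $\mathcal{X}$, proving $\mathcal{X}$ Ramsey; the same construction with $\mathcal{O}=\emptyset$ shows a meager $\mathcal{X}$ is Ramsey null. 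I expect the main obstacle to be the propagation lemma (iii) together with making the two fusion arguments rigorous: one must track $\le_{\fin}$ and $\depth$ carefully through \textbf{A.2} and \textbf{A.3} so that the diagonal objects remain legitimate members of $[a,A]$, and it is precisely the interplay of \textbf{A.4} with closedness that makes the limits both exist and be homogeneous.
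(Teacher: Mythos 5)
The paper does not prove this statement: it is quoted verbatim as Theorem 5.4 of Todorcevic's \emph{Introduction to Ramsey Spaces} and used as a black box, so there is no in-paper proof to compare against. Your outline is essentially the standard combinatorial-forcing proof from that reference --- accept/reject/decide, heredity and deciding via \textbf{A.3}, the propagation lemma via \textbf{A.4} applied to the set of accepted one-step extensions, fusion with closedness supplying the diagonal limit, then open sets, nowhere dense and meager sets, and finally property-of-Baire sets --- and it is correct in structure. The one point to tighten is that the first stage must yield the Ramsey property for \emph{Ellentuck}-open sets (as your neighborhood argument $[c,x]\sse\mathcal{X}$ in fact does), not merely metrically open ones, since the property-of-Baire decomposition in the final stage is taken with respect to the Ellentuck topology.
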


For a topological Ramsey space,
certain types of subsets of the collection of approximations $\mathcal{AR}$  have the Ramsey property.

\begin{defn}[\cite{TodorcevicBK10}]\label{defn.5.16}
A family $\mathcal{F}\sse\mathcal{AR}$ of finite approximations is
\begin{enumerate}
\item
{\em Nash-Williams} if $a\not\sqsubseteq b$ for all $a\ne b\in\mathcal{F}$;
\item
{\em Sperner} if $a\not\le_{\mathrm{fin}} b$ for all $a\ne b\in\mathcal{F}$;
\item
{\em Ramsey} if for every partition $\mathcal{F}=\mathcal{F}_0\cup\mathcal{F}_1$ and every $X\in\mathcal{R}$,
there are $Y\le X$ and $i\in\{0,1\}$ such that $\mathcal{F}_i|Y=\emptyset$.
\end{enumerate}
\end{defn}

The Abstract Nash-Williams Theorem (Theorem 5.17  in \cite{TodorcevicBK10}), which follows from the Abstract Ellentuck Theorem,
will suffice for the arguments in this paper.

\begin{thm}[Abstract Nash-Williams Theorem]\label{thm.ANW}
Suppose $(\mathcal{R},\le,r)$ is a closed triple that satisfies {\bf A.1} - {\bf A.4}. Then
every Nash-Williams family of finite approximations is Ramsey.
\end{thm}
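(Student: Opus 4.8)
The plan is to derive this from the Abstract Ellentuck Theorem (Theorem~\ref{thm.AET}) by translating the combinatorial assertion about families in $\mathcal{AR}$ into a topological assertion about subsets of $\mathcal{R}$. To each family $\mathcal{G}\sse\mathcal{AR}$ I would associate the set
\[
\mathcal{X}_{\mathcal{G}}=\{A\in\mathcal{R}:\exists n\ r_n(A)\in\mathcal{G}\},
\]
consisting of all members of $\mathcal{R}$ having some initial approximation lying in $\mathcal{G}$. The first observation is that $\mathcal{X}_{\mathcal{G}}$ is open in the Ellentuck topology: if $r_n(A)=a\in\mathcal{G}$, then the basic neighborhood $[a,A]$ of $A$ is contained in $\mathcal{X}_{\mathcal{G}}$, since every member of $[a,A]$ also has $a$ among its approximations. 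Being open, $\mathcal{X}_{\mathcal{G}}$ has the property of Baire, so by Theorem~\ref{thm.AET} it is Ramsey.

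Now suppose $\mathcal{F}$ is Nash-Williams, fix a (disjoint) partition $\mathcal{F}=\mathcal{F}_0\cup\mathcal{F}_1$ and $X\in\mathcal{R}$; note $\mathcal{F}_0$ and $\mathcal{F}_1$ are themselves Nash-Williams. First I would apply the Ramsey property of $\mathcal{X}_{\mathcal{F}_0}$ to $[\emptyset,X]$ to obtain $Y_0\le X$ with $[\emptyset,Y_0]\sse\mathcal{X}_{\mathcal{F}_0}$ or $[\emptyset,Y_0]\cap\mathcal{X}_{\mathcal{F}_0}=\emptyset$. In the second case I claim $\mathcal{F}_0|Y_0=\emptyset$: if some $a\in\mathcal{F}_0$ satisfied $a\le_{\fin}Y_0$, then $\depth_{Y_0}(a)<\infty$ and Axiom~{\bf A.3} would yield a member $A$ of $[a,Y_0]$, which would lie in $[\emptyset,Y_0]\cap\mathcal{X}_{\mathcal{F}_0}$, a contradiction; so we finish with $i=0$. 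In the first case I would apply the Ramsey property of $\mathcal{X}_{\mathcal{F}_1}$ to $[\emptyset,Y_0]$, obtaining $Y_1\le Y_0$ with $[\emptyset,Y_1]\sse\mathcal{X}_{\mathcal{F}_1}$ or $[\emptyset,Y_1]\cap\mathcal{X}_{\mathcal{F}_1}=\emptyset$; in the latter subcase the same {\bf A.3} argument gives $\mathcal{F}_1|Y_1=\emptyset$, and we finish with $i=1$.

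The one remaining subcase, namely $[\emptyset,Y_1]\sse\mathcal{X}_{\mathcal{F}_0}\cap\mathcal{X}_{\mathcal{F}_1}$, is where the Nash-Williams hypothesis does the real work, and I expect this to be the crux. Here $Y_1$ itself has an approximation $r_m(Y_1)\in\mathcal{F}_0$ and an approximation $r_n(Y_1)\in\mathcal{F}_1$. Since $\mathcal{F}_0$ and $\mathcal{F}_1$ are disjoint these are distinct, so $m\ne n$, and by Axiom~{\bf A.1} the approximations of $Y_1$ are linearly ordered by $\sqsubset$, whence one of $r_m(Y_1),r_n(Y_1)$ is an initial segment of the other. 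This exhibits two distinct members of $\mathcal{F}$, one $\sqsubseteq$ the other, contradicting the Nash-Williams property; hence this subcase cannot occur and one of the three successful cases always applies, completing the proof. The only delicate points are the verification that ``empty intersection with $\mathcal{X}_{\mathcal{G}}$'' genuinely forces $\mathcal{G}|Y=\emptyset$ (which is exactly where {\bf A.3} is needed) and the bookkeeping ensuring the restrictions $\mathcal{F}_i|Y$ are read as $\{a\in\mathcal{F}_i:a\le_{\fin}Y\}$.
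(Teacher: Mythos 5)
Your proof is correct and is essentially the standard derivation of the Abstract Nash-Williams Theorem from the Abstract Ellentuck Theorem (the paper gives no proof of its own, citing Theorem 5.17 of Todorcevic's book, where the argument runs through exactly this kind of metrically open set $\mathcal{X}_{\mathcal{G}}$ together with the {\bf A.3} argument you isolate). The only cosmetic difference is that your second application of the Ellentuck theorem, to $\mathcal{X}_{\mathcal{F}_1}$, is not actually needed: in the case $[\emptyset,Y_0]\sse\mathcal{X}_{\mathcal{F}_0}$, any $a\in\mathcal{F}_1$ with $a\le_{\fin}Y_0$ already yields via {\bf A.3} some $A\in[a,Y_0]\sse\mathcal{X}_{\mathcal{F}_0}$ carrying approximations in both pieces, which contradicts the Nash-Williams property directly and finishes with $i=1$.
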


\begin{defn}\label{def.frontR1}
Suppose $(\mathcal{R},\le,r)$ is a closed triple that satisfies {\bf A.1} - {\bf A.4}.
Let $X\in\mathcal{R}$.
A family $\mathcal{F}\sse\mathcal{AR}$ is a {\em front} on $[0,X]$ if
\begin{enumerate}
\item
For each $Y\in[0,X]$, there is an $a\in \mathcal{F}$ such that $a\sqsubset Y$; and
\item
$\mathcal{F}$ is Nash-Williams.
\end{enumerate}
$\mathcal{F}$ is a {\em barrier}  if (1) and ($2'$) hold,
where
\begin{enumerate}
\item[(2$'$)]
$\mathcal{F}$ is Sperner.
\end{enumerate}
\end{defn}

The quintessential example of a topological Ramsey space is the {\em Ellentuck space}, which  is the triple $([\om]^{\om},\sse,r)$.
Members $X\in[\om]^{\om}$ are considered as infinite increasing sequences of natural numbers, $X=\{x_0,x_1,x_2,\dots\}$.
For each $n<\om$, the $n$-th approximation to $X$ is $r_n(X)=\{x_i:i<n\}$; in particular, $r_0(X)=\emptyset$.
The basic open sets of the Ellentuck topology are sets of the form 
$[a,X]=\{Y\in [\om]^{\om}: a\sqsubset Y$ and $Y\sse X\}$.
Notice that the Ellentuck topology is finer than the metric topology on $[\om]^{\om}$.

In the case of the Ellentuck space,
the Abstract Ellentuck Theorem says the following:
Whenever a subset $\mathcal{X}\sse[\om]^{\om}$
has the property of Baire in the Ellentuck topology, 
then that set is {\em Ramsey},
meaning that every open set contains a basic open set either contained in $\mathcal{X}$ or else disjoint from $\mathcal{X}$.
This was proved  by Ellentuck in \cite{Ellentuck74}.

The first theorem to extend Ramsey's Theorem from finite-valued functions to countably infinite-valued functions was a theorem of \Erdos\ and Rado.  
They found that in fact, given any equivalence relation on $[\om]^n$,
there is an infinite subset on which the equivalence relation is canonical - one of exactly $2^n$ many equivalence relations.
We shall state the finite version of their theorem, as it is all that is used in this paper (see Section \ref{sec.ProductER}).

Let $n\le l$.
For each $I\sse n$,
the equivalence relation $E_I$ on $[l]^n$ is defined as follows:
For $b,c\in [l]^n$,  
$$b\ E_I\ c\Llra \forall i\in I (b_i=c_i),$$
where $\{b_0,\dots,b_{n-1}\}$ and $\{c_0,\dots,c_{n-1}\}$ are the strictly increasing enumerations of $b$ and $c$, respectively.
An equivalence relation $E$ on $[l]^n$ is {\em canonical} if and only if there is some $I\sse n$ for which $E= E_I$.

\begin{thm}[Finite \Erdos-Rado Theorem, \cite{Erdos/Rado50}]\label{thm.finiteER}
Given $n\le l$, there is an $m>l$ such that for each equivalence relation $E$ on $[m]^n$,
there is a subset $s\sse m$ of cardinality $l$ such that $E\re [s]^n$ is canonical;
that is, there is a set $I\sse n$ such that $E\re [s]^n=E_I\re [s]^n$.
\end{thm}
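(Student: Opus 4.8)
The plan is to deduce the statement from the finite Ramsey theorem for colourings of $[m]^{2n}$, followed by a purely combinatorial analysis of ``pattern-invariant'' equivalence relations. First I would set up the reduction. Given an equivalence relation $E$ on $[m]^n$, encode its behaviour by a colouring $\chi$ of $[m]^{2n}$ as follows: to each $w\in [m]^{2n}$ with increasing enumeration $w_0<\cdots<w_{2n-1}$ assign the colour recording, for every ordered pair $(P,Q)$ of $n$-element subsets of the index set $\{0,\dots,2n-1\}$, whether $\{w_i:i\in P\}\,E\,\{w_j:j\in Q\}$. Since there are only $\binom{2n}{n}^2$ such pairs, $\chi$ takes at most $2^{\binom{2n}{n}^2}$ values. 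Applying the finite Ramsey theorem with this many colours, arity $2n$, and a target size $l'$ to be specified later, I obtain an $m$ so large that every such $\chi$ admits a homogeneous set $s'\sse m$ of size $l'$, on which all $2n$-subsets receive the same colour.

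Second, I would read off the structural consequence of homogeneity: for $b,c\in[s']^n$, whether $b\,E\,c$ holds depends only on the order type of the pair $(b,c)$ inside $s'$, that is, on the pattern of coincidences and of relative order among the coordinates of $b$ and $c$. This follows by embedding $b\cup c$ into a $2n$-element subset $w\sse s'$ and reading off the common colour of $w$; padding $b\cup c$ up to size $2n$ is possible because $l'\ge 2n$ with room to spare, and homogeneity guarantees the answer is independent of how the padding is chosen.

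Third---the heart of the argument---I would show that this pattern-invariant relation is exactly $E_I$ for a canonical $I$. Define $I\sse\{0,\dots,n-1\}$ by declaring $i\in I$ iff some (equivalently, by pattern-invariance, every) pair $b,c\in[s']^n$ agreeing off position $i$ and differing at $i$ satisfies $b\ \slashed{E}\ c$; since the order type of such a pair is independent of the actual values (the union $b\cup c$ has $n+1$ elements, with $b$ and $c$ omitting the two candidate values at slot $i$ in a fixed way), $I$ is well defined. It then remains to prove $E\re[s']^n=E_I\re[s']^n$. For ``$E_I$ implies $E$'', given $b,c$ agreeing on every coordinate in $I$, I would connect $b$ to $c$ by a chain of moves each altering a single non-essential coordinate while staying inside $[s']^n$; each move preserves the $E$-class by definition of $I$, and transitivity yields $b\,E\,c$. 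For the converse I would show that disagreement at an essential coordinate forces $b\ \slashed{E}\ c$, reducing again to a single-coordinate move by interpolation and invoking essentiality together with transitivity. Since the identity $E\re[s']^n=E_I\re[s']^n$ holds on the large homogeneous set, it holds on $[s]^n$ for every $s\sse s'$ with $|s|=l$; taking any such $s$ finishes the proof. The role of $l'$ is precisely to supply enough spare elements for the interpolation, so one chooses $l'$ large relative to $n$ (and at least $l$) and sets $m$ to be the corresponding Ramsey number.

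The main obstacle will be this third step: certifying that a pattern-invariant equivalence relation is genuinely of the form $E_I$. The two delicate points are, first, checking that the chain of single-coordinate moves connecting tuples that agree on $I$ can be routed entirely within $[s']^n$---this is where the surplus elements of $s'$ are needed, to slide one coordinate past another without disturbing the rest---and second, the careful use of transitivity of $E$ to promote the single-coordinate facts packaged in the definition of $I$ to statements about arbitrary pairs. The first two steps are essentially bookkeeping around the finite Ramsey theorem; the real content lies in this final combinatorial verification.
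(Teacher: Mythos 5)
The paper offers no proof of this statement: it is the classical finite canonization theorem of Erd\H{o}s and Rado, quoted with a citation to their 1950 paper, so there is nothing internal to the paper to compare your argument against. On its own terms your proposal is sound and is, in essence, the standard proof: encode the equivalence relation as a coloring of $[m]^{2n}$ with finitely many colors, extract a homogeneous set by the finite Ramsey theorem, observe that on it the relation depends only on the order type of a pair, and identify the resulting invariant relation with some $E_I$ via single-coordinate moves and transitivity. The one step I would ask you to make explicit is the passage from homogeneity to pattern-invariance. Homogeneity immediately gives that, for a \emph{fixed} pair $(b,c)$, every realization of it inside a $2n$-subset of $s'$ reports the same verdict; but to conclude that two \emph{different} pairs with the same order type receive the same verdict you must exhibit a common index-pattern $(P,Q)$ realizing both --- for instance by always padding $b\cup c$ with elements of $s'$ lying above its maximum (which is why it is cleanest to reserve $2n$ top elements of $s'$ and take $s$ inside the remaining initial segment), or by routing both pairs through a single ``spread-out'' representative of their order type. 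With that supplied, the rest of your outline is correct: the chain of single non-essential-coordinate moves can indeed be routed inside $[s']^n$ (the needed intermediate values can be taken from the gap elements already present in $b\cup c$), and the interpolation-plus-transitivity argument for the converse direction goes through. You have correctly located the real content in the final combinatorial verification.
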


\Pudlak\ and \Rodl\ later extended this theorem to equivalence relations on general barriers on the Ellentuck space.
To state their theorem, we need the following definition.

\begin{defn}\label{def.frontEllentuck}
A map $\vp$ from a front $\mathcal{F}$ on the Ellentuck space into $\om$ is called {\em irreducible} if 
\begin{enumerate}
\item $\vp$ is {\em inner}, meaning that $\vp(a)\sse a$ for all $a\in\mathcal{F}$; and
\item$\vp$ is {\em Nash-Williams}, meaning that $\vp(a)\not\sqsubset\vp(b)$ for all $a,b\in\mathcal{F}$ such that $\vp(a)\ne\vp(b)$.
\end{enumerate}
\end{defn}

Given a front $\mathcal{F}$ and an $X\in[\om]^{\om}$,
we let $\mathcal{F}\re X$ denote $\{a\in \mathcal{F}:a\sse X\}$.
Given an equivalence relation $E$ on a barrier $\mathcal{F}$,
we say that an irreducible map $\vp$ {\em represents} $E$ on $\mathcal{F}\re X$ if for all $a,b\in\mathcal{F}\re X$, we have $a\ E\ b\lra \vp(a)=\vp(b)$.

The following theorem of \Pudlak\ and \Rodl\ is the basis for all subsequent canonization theorems for fronts on the general topological Ramsey spaces considered in the papers
\cite{Dobrinen/Todorcevic11} and \cite{Dobrinen/Todorcevic12}.

\begin{thm}[\Pudlak/\Rodl, \cite{Pudlak/Rodl82}]\label{thm.PR}
For any barrier $\mathcal{F}$ on the Ellentuck space and any equivalence relation on $\mathcal{F}$,
there is an $X\in[\om]^{\om}$ and an irreducible map $\vp$ such that
the equivalence relation restricted to $\mathcal{F}\re X$ is represented by $\vp$.
\end{thm}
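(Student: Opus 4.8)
The plan is to argue by induction on the rank of the barrier $\mathcal{F}$ (the ordinal rank of the well-founded tree of proper initial segments of its members), using the Abstract Nash-Williams Theorem (Theorem~\ref{thm.ANW}) applied to the Ellentuck space as the main combinatorial engine, and the \Erdos-Rado canonization (Theorem~\ref{thm.finiteER}) at the base case. Morally, $\vp(a)$ should be the smallest subset of $a$ whose value already determines the $E$-class of $a$ among members of $\mathcal{F}$; the content of the theorem is that, after thinning to a suitable $X$, this \emph{essential part} can be chosen coherently so that the resulting map is inner, Nash-Williams, and faithfully codes $E$ in the sense of Definition~\ref{def.frontEllentuck}.

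For the base case I would treat barriers of finite rank, where one may pass to an infinite $X_0$ on which $\mathcal{F}\re X_0 = [X_0]^n$ for some $n<\om$. Then $E$ is an equivalence relation on $[X_0]^n$, and the infinite form of Theorem~\ref{thm.finiteER} (obtained from the finite version by a routine compactness/diagonalization argument) yields an infinite $X\sse X_0$ and a set $I\sse n$ with $E\re [X]^n = E_I\re [X]^n$. Setting $\vp(\{a_0 < \dots < a_{n-1}\}) = \{a_i : i \in I\}$ then gives an inner, Nash-Williams map representing $E$ on $\mathcal{F}\re X$.

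For the inductive step, assume the result for all barriers of rank strictly below that of $\mathcal{F}$. For each $m<\om$ I would set $\mathcal{F}_{(m)} = \{a : \min(a) > m \text{ and } \{m\}\cup a \in \mathcal{F}\}$; each $\mathcal{F}_{(m)}$ is a barrier on $(m,\om)$ of strictly smaller rank, and via the bijection $a \mapsto \{m\}\cup a$ the relation $E$ induces an equivalence relation $E_m$ on $\mathcal{F}_{(m)}$. The inductive hypothesis supplies, for each $m$, an infinite $X_m$ and an irreducible $\vp_m$ representing $E_m$ on $\mathcal{F}_{(m)}\re X_m$. A fusion (diagonal) argument then produces a single infinite $X$ such that, for every $m\in X$, the relation $E$ among members of $\mathcal{F}\re X$ with minimum $m$ is represented by the inner map $b \mapsto \{m\}\cup \vp_m(b \setminus \{m\})$ or its variant omitting $m$. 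It remains to reconcile members with different minima: here I would invoke Theorem~\ref{thm.ANW} on the Ellentuck space to decide homogeneously, for the relevant finite configurations, both whether two members of $\mathcal{F}\re X$ with distinct minima can be $E$-related and whether the minimum of a member is essential (belongs to $\vp$). Patching the local maps accordingly yields a single $\vp$ on $\mathcal{F}\re X$; a final shrinking of $X$ via Theorem~\ref{thm.ANW} guarantees the Nash-Williams condition $\vp(a) \not\sqsubset \vp(b)$ whenever $\vp(a) \neq \vp(b)$, so that $\vp$ is irreducible and $a\, E\, b \Llra \vp(a) = \vp(b)$ on $\mathcal{F}\re X$.

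The hard part will be exactly this reconciliation and fusion. The inductive hypothesis only controls $E$ separately within each fixed-minimum slice $\mathcal{F}_{(m)}$, whereas $E$ may relate members across different minima, and the locally chosen inner maps $\vp_m$ must be simultaneously realized on one infinite $X$ and then welded into a globally inner map that is also Nash-Williams. Managing these infinitely many constraints at once—keeping the map inner, ruling out proper $\sqsubset$-relations among distinct $\vp$-values, and verifying faithfulness across minima—is where the carefully diagonalized applications of Theorem~\ref{thm.ANW} do the real work. As with the generalizations pursued later in this paper, it is worth noting that only the Abstract Nash-Williams Theorem, rather than the full Abstract Ellentuck Theorem, is needed throughout.
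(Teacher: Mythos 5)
First, a point of orientation: the paper does not prove Theorem~\ref{thm.PR} at all; it is quoted from \cite{Pudlak/Rodl82} as classical background. So there is no in-paper proof to match your attempt against. The closest in-paper analogue is the proof of the generalization, Theorem~\ref{canonical R}, which is organized quite differently from your sketch: rather than inducting on the rank of the barrier, it uses the \Proml--Voigt machinery of \emph{mixing} and \emph{separating} finite approximations (transitivity of mixing, the hereditary-property diagonalization of Lemma~\ref{diagonalization argument}, and a claim that mixing of one-block extensions is governed by a canonical equivalence relation on blocks), with the Abstract Nash-Williams Theorem as the only combinatorial input. Your outline instead follows the shape of the original \Pudlak--\Rodl\ argument: \Erdos--Rado at the base, decomposition into the slices $\mathcal{F}_{(m)}$, and fusion.

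That said, as it stands your proposal has a genuine gap, and it sits exactly where you admit the difficulty lies. The inductive hypothesis applied to $\mathcal{F}_{(m)}$ only controls $E$ restricted to pairs with the \emph{same} minimum $m$; it says nothing about when $a\mathrel{E}b$ for $\min(a)\neq\min(b)$, and it provides no coherence whatsoever between $\vp_m$ and $\vp_{m'}$ for $m\neq m'$ (the maps are produced independently, on different sets $X_m$, and may pick out structurally different ``essential parts''). Everything that makes the theorem true is concentrated in: (i) uniformizing the $\vp_m$ across $m\in X$ so that a single inner map results; (ii) deciding whether $m$ itself is essential, which is not a finite local decision but interacts with cross-minimum $E$-relations (if two sets with different minima are $E$-related, their minima cannot both be essential unless the map fails to represent $E$); and (iii) the Nash-Williams condition $\vp(a)\not\sqsubset\vp(b)$, which is a global constraint over all of $\mathcal{F}\re X$, not a property of finitely many configurations that one homogenizes away in ``a final shrinking.'' Saying that ``carefully diagonalized applications of Theorem~\ref{thm.ANW} do the real work'' names the tool but not the argument: you would need, at minimum, an analogue of the mixing/separating dichotomy and its transitivity (or the original \Pudlak--\Rodl\ combinatorial analysis of ``essential subsets'') to carry out steps (i)--(iii). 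The base case is fine (the infinite \Erdos--Rado canonization does follow from Theorem~\ref{thm.finiteER} plus Ramsey's theorem, and a barrier of finite rank uniformizes to some $[X]^n$ on a suitable $X$), but the inductive step is an outline of where a proof would go rather than a proof.
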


Theorem \ref{thm.PR} was generalized to a class of topological Ramsey spaces whose members are trees
in  \cite{Dobrinen/Todorcevic11} and  \cite{Dobrinen/Todorcevic12}.
In Section \ref{sec.canon},
we shall generalize this theorem to the broad class of topological Ramsey  spaces  defined in the next section.


\section{A general method for constructing  topological Ramsey spaces\\ using \Fraisse\ theory}\label{sec.genseq}

We review only the facts of \Fraisse\ theory for ordered  relational structures which are necessary to this article.
More general background  on \Fraisse\ theory can be found in \cite{Kechris/Pestov/Todorcevic05}.
We shall call $L=\{<\}\cup\{R_i\}_{i\in I}$  an {\em ordered relational signature} if it consists of the order relation symbol $<$ and  a (countable) collection of  {\em relation symbols} $R_i$, where for each $i\in I$, we let $n(i)$ denote the {\em arity} of $R_i$.
A {\em structure} for $L$ is of the form $\bsA=\lgl |\bsA|,<^{\bsA}, \{R_i^{\bsA}\}_{i\in I}\rgl$,
where $|\bsA|\ne\emptyset$ is the {\em universe} of $\bsA$, $<^{\bsA}$ is a linear ordering of $|\bsA|$, and for each $i\in I$, $R_i^{\bsA}\sse A^{n(i)}$.
An {\em embedding} between structures $\bsA,\bsB$ for $L$ is an injection $\iota:|\bsA|\ra|\bsB|$ such that
for any two $a,a'\in|\bsA|$,
$a <^{\bsA} a'\leftrightarrow \iota(a) <^{\bsB}\iota(a')$,
 and for all $i\in I$, $R_i^{|\bsA|}(a_1,\dots,a_{n(i)})\leftrightarrow R_i^{|\bsB|}(\iota(a_1),\dots,\iota(a_{n(i)}))$.
If $\iota$ is the identity map, then we say that $\bsA$ is a {\em substructure} of $\bsB$.
We say that $\iota$ is an {\em isomorphism} if $\iota$ is an onto embedding.
We write  $\bsA\le\bsB$ to denote that $\bsA$ can be embedded into $\bsB$; and we  write $\bsA\cong\bsB$ to denote that $\bsA$ and $\bsB$ are isomorphic.

A class $\mathcal{K}$ of finite structures for an ordered relational signature $L$ is called {\em hereditary} if whenever $\bsB\in\mathcal{K}$ and  $\bsA\le\bsB$, then also $\bsA\in\mathcal{K}$.
$\mathcal{K}$ satisfies the {\em joint embedding property} if for any $\bsA,\bsB\in\mathcal{K}$,
there is a $\bsC\in\mathcal{K}$ such that $\bsA\le\bsC$ and $\bsB\le\bsC$.
We say that $\mathcal{K}$ satisfies the {\em amalgamation property} if for any embeddings 
$f:\bsA\ra\bsB$ and $g:\bsA\ra\bsC$, with $\bsA,\bsB,\bsC\in\mathcal{K}$,
there is a $\bsD\in\mathcal{K}$ and  there are embeddings $r:\bsB\ra\bsD$ and $s:\bsC\ra\bsD$ such that
$r\circ f = s\circ g$.
$\mathcal{K}$ satisfies the {\em strong amalgamation property}   there are embeddings
 $r:\bsB\ra\bsD$ and $s:\bsC\ra\bsD$ such that
$r\circ f = s\circ g$ and additionally,
$r(\bsB)\cap s(\bsC)=r\circ f(\bsA)= s\circ g(\bsA)$.
A class of finite structures $\mathcal{K}$ is called a {\em \Fraisse\ class of ordered relational structures} for an ordered  relational signature $L$ if it is hereditary, satisfies the joint embedding and amalgamation properties, contains (up to isomorphism) only countably many structures, and contains structures of arbitrarily large finite cardinality.

Let $\mathcal{K}$ be a hereditary class of finite structures for an ordered  relational signature $L$.
For $\bsA,\bsB\in\mathcal{K}$ with $\bsA\le\bsB$, we use ${\bsB\choose\bsA}$ to denote the set of all substructures of $\bsB$ which are isomorphic to $\bsA$.
Given structures $\bsA\le\bsB\le\bsC$ in $\mathcal{K}$, we write
$$
\bsC\ra(\bsB)_k^{\bsA}
$$
to denote that for each coloring of ${\bsC\choose \bsA}$ into $k$ colors, there is a $\bsB' \in {\bsC\choose\bsB}$ such that
${\bsB'\choose\bsA}$ is homogeneous, i.e.\ monochromatic, meaning that
 every member of ${\bsB'\choose\bsA}$ has the same color.
We say that $\mathcal{K}$ has the {\em Ramsey property} if and only if for any two structures $\bsA\le\bsB$ in $\mathcal{K}$ and any natural number $k\ge 2$,
there is a $\bsC\in\mathcal{K}$ with $\bsB\le\bsC$ such that 
$\bsC\ra (\bsB)^{\bsA}_k$.

For finitely many \Fraisse\ classes $\mathcal{K}_j$, $j\in J$ for some $J<\om$, 
we write ${(\bsB_j)_{j\in J} \choose (\bsA_j)_{j\in J}}$ to denote the set of all sequences $(\bsD_j)_{j\in J}$ such that for each $j\in J$, $\bsD_j\in{\bsB_j\choose\bsA_j}$.
For structures $\bsA_j\le\bsB_j\le\bsC_j\in\mathcal{K}_j$, $j\in J$, we write 
$$
(\bsC_j)_{j\in J}\ra ((\bsB_j)_{j\in J})_k^{(\bsA_j)_{j\in J}}
$$
to denote that for each coloring of the members of   ${(\bsC_j)_{j\in J}\choose (\bsA_j)_{j\in J}}$ into $k$ colors, there is $(\bsB'_j)_{j\in J}\in{(\bsC_j)_{j\in J}\choose (\bsB_j)_{j\in J}}$ such that 
all members of ${(\bsB'_j)_{j\in J}\choose(\bsA_j)_{j\in J}}$ have the same color; that is, the set 
 ${(\bsB'_j)_{j\in J}\choose(\bsA_j)_{j\in J}}$ is homogeneous.
We subscribe to the  usual convention that when no $k$ appears in the expression,  it is assumed that $k=2$.

We point out that by Theorem A of \Nesetril\ and \Rodl\ in \cite{Nesetril/Rodl83}, there is a large class of \Fraisse\ classes of finite ordered relational structures with the Ramsey property.
In particular, the collection of all finite linear orderings,
the collection of all finite ordered $n$-clique free graphs, 
and the collection of all finite ordered complete graphs are examples of \Fraisse\ classes fulfilling our requirements.
Moreover, finite products of members of such classes preserve the Ramsey property, as we now see.
The following  theorem for products of Ramsey classes of finite objects is due to \Sokic\ and can be found in his PhD thesis. 

\begin{thm}[Product Ramsey Theorem, \Sokic\ \cite{SokicPhDThesis}]\label{thm.Sokic}
Let $s$ and $k$ be fixed natural numbers and let $\mathcal{K}_j$, $j\in s$, be a sequence of Ramsey classes of finite objects.
Fix two sequences $(\bsB_j)_{j\in s}$ and $(\bsA_j)_{j\in s}$ such that for each $j\in s$, we have $\bsA_j,\bsB_j\in\mathcal{K}_j$ and $\bsA_j\le\bsB_j$.
Then there is a sequence $(\bsC_j)_{j\in s}$ such that $\bsC_j\in\mathcal{K}_j$ for each $j\in s$, 
and
$$
(\bsC_j)_{j\in s}\ra ((\bsB)_{j\in s})_k^{(\bsA_j)_{j\in s}}.
$$
\end{thm}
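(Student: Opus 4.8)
The plan is to prove this by induction on the number $s$ of \Fraisse\ classes in the product, using at each step only the single-class Ramsey property together with the classical \emph{color-inflation} trick (the same device by which the ordinary product Ramsey theorem is derived from Ramsey's theorem). The base case $s=1$ is simply the hypothesis that $\mathcal{K}_0$ has the Ramsey property: given $\bsA_0\le\bsB_0$ and $k$ colors, the definition yields $\bsC_0\in\mathcal{K}_0$ with $\bsC_0\ra(\bsB_0)_k^{\bsA_0}$. The key point I would exploit throughout is that the Ramsey property is assumed for \emph{every} finite number of colors, not merely for $k=2$, so I am free to apply it to colorings with as many colors as I like.

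\textbf{Inductive step.} Assume the statement for products of $s$ classes, and consider $s+1$ classes $\mathcal{K}_0,\dots,\mathcal{K}_s$ with targets $\bsA_j\le\bsB_j$ and $k$ colors. First I would apply the induction hypothesis to the first $s$ classes $\mathcal{K}_0,\dots,\mathcal{K}_{s-1}$ (with $k$ colors) to obtain structures $\bsC_0,\dots,\bsC_{s-1}$ with
$$(\bsC_j)_{j<s}\ra((\bsB_j)_{j<s})_k^{(\bsA_j)_{j<s}}.$$
This fixes the finite index set ${(\bsC_j)_{j<s}\choose(\bsA_j)_{j<s}}$, whose cardinality I call $N$. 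Next I would apply the single-class Ramsey property of $\mathcal{K}_s$ to the \emph{inflated} color count $k^N$, producing $\bsC_s\in\mathcal{K}_s$ with $\bsC_s\ra(\bsB_s)_{k^N}^{\bsA_s}$. The claim is then that $(\bsC_j)_{j\le s}$ works for $k$ colors, which completes the induction since $N$ is finite once the first $s$ structures are chosen.

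\textbf{Verification and main obstacle.} Given a coloring $c$ of ${(\bsC_j)_{j\le s}\choose(\bsA_j)_{j\le s}}$ into $k$ colors, I would first homogenize the last coordinate. To do this, regard each sub-copy $\bsD_s\in{\bsC_s\choose\bsA_s}$ as carrying the ``super-color'' recording the entire restriction $(\bsD_j)_{j<s}\mapsto c((\bsD_j)_{j<s},\bsD_s)$; this is a coloring of ${\bsC_s\choose\bsA_s}$ into $k^N$ colors, so the choice of $\bsC_s$ gives $\bsB'_s\in{\bsC_s\choose\bsB_s}$ on which it is constant. Constancy means there is a single function $g:{(\bsC_j)_{j<s}\choose(\bsA_j)_{j<s}}\ra k$ with
$$c((\bsD_j)_{j<s},\bsD_s)=g((\bsD_j)_{j<s})$$
for all $\bsD_s\in{\bsB'_s\choose\bsA_s}$ and all $(\bsD_j)_{j<s}$; the last coordinate has been rendered irrelevant. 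Applying the homogeneity property of $(\bsC_j)_{j<s}$ to the $k$-coloring $g$ then yields $\bsB'_j\in{\bsC_j\choose\bsB_j}$ for $j<s$ making ${(\bsB'_j)_{j<s}\choose(\bsA_j)_{j<s}}$ $g$-monochromatic with some value $i$, and combining this with the displayed identity shows ${(\bsB'_j)_{j\le s}\choose(\bsA_j)_{j\le s}}$ is $c$-monochromatic with value $i$. I expect the only genuinely delicate point to be the \emph{uniformity} forced by the color inflation: the homogenization in coordinate $s$ must be simultaneous over all of the (finitely many) sub-copy tuples in the remaining coordinates, and it is precisely the passage to $k^N$ colors that secures this while keeping each appeal to the single-class Ramsey property legitimate.
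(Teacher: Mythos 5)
Your proof is correct. Note that the paper does not actually prove this statement --- it is quoted as Theorem \ref{thm.Sokic} with attribution to \Sokic's thesis \cite{SokicPhDThesis} and used as a black box --- so there is no in-paper argument to compare against. Your induction on the number of factors with color inflation is the standard derivation: the passage to $k^N$ colors in the last coordinate is legitimate because the paper's definition of the Ramsey property quantifies over all $k\ge 2$, and the verification step (homogenize the last coordinate to collapse $c$ to a coloring $g$ of the first $s$ coordinates, then apply the inductive homogeneity to $g$) is carried out correctly. The argument uses nothing beyond the single-class Ramsey property, so it is valid at the stated level of generality of ``Ramsey classes of finite objects.''
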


We now present our notion of a {\em generating sequence}.
Such sequences will be used to generate new topological Ramsey spaces.

\begin{defn}[Generating Sequence]\label{defn.A_k}
Let $1\le J\le\om$ and
$\mathcal{K}_j$, $j\in J$, be a collection
of  \Fraisse\ classes  of  finite ordered relational structures with   the Ramsey property.
For each $k\in\om$,
if $J<\om$ then let $J_k=J$, and if $J=\om$ then let $J_k=k+1$.

For each $k<\om$ and $j\in J_k$,
 suppose $\bsA_{k,j}$ is some fixed member of $\mathcal{K}_j$,
and
let $\bsA_k$ denote the sequence $(\bsA_{k,j})_{j\in J_k}$.
We say that 
$\lgl \bsA_k:k<\om\rgl$ is a {\em generating sequence}  
if and only if
\begin{enumerate}
\item
For each $j\in J_0$, $|\bsA_{0,j}|=1$.
\item
For each $k<\om$ and all $j\in J_k$,
$\bsA_{k,j}$ is a substructure of  $\bsA_{k+1,j}$.
\item
For each $j\in J$ and each structure $\bsB\in\mathcal{K}_j$,
there is a $k$ such that 
$\bsB\le \bsA_{k,j}$.
\item (Pigeonhole)
For each pair $k<m<\om$,  there is an $n>m$ such that 
$$
(\bsA_{n,j})_{j\in J_k}\ra {(\bsA_{m,j})_{j\in J_k}}^{(\bsA_{k,j})_{j\in J_k}}.
$$
\end{enumerate}
\end{defn}

\begin{rem}
Note that (3) implies that for each $j\in J$ and each $\bsB\in\mathcal{K}_j$, $\bsB\le \bsA_{k,j}$ for all but finitely many $k$.
\end{rem}

We now define the new class of topological Ramsey spaces which are the focus of this article.

\begin{defn}[The spaces $\mathcal{R}(\lgl\bsA_k:k<\om\rgl)$]\label{def.XinR}
Let $1\le J\le\om$ and
$\mathcal{K}_j$, $j\in J$, be a collection
of  \Fraisse\ classes  of  finite ordered relational structures with  the Ramsey property.
Let $\lgl \bsA_k:k<\om\rgl$ be any generating sequence.
Let $\bA=\lgl \lgl k,\bsA_k\rgl:k<\om\rgl$.
$\bA$ is the maximal member of $\mathcal{R}(\lgl\bsA_k:k<\om\rgl)$.

We define
$B$ to be a member of $\mathcal{R}(\lgl\bsA_k:k<\om\rgl)$ if and only if
$B=\lgl \lgl n_k, \bsB_k\rgl:k<\om\rgl$,
where 
\begin{enumerate}
\item
$(n_k)_{k<\om}$ is some strictly increasing sequence of natural numbers;   and 
\item
For each $k<\om$, $\bsB_k$ is some sequence $(\bsB_{k,j})_{j\in J_k}$, where 
for each $j\in J_k$,
 $\bsB_{k,j}\in{\bsA_{n_k,j}\choose \bsA_{k,j}}$.
\end{enumerate}
We use $B(k)$ to denote  $\lgl n_k,\bsB_k\rgl$, the {\em $k$-th block} of $B$.
Let $\mathcal{R}(k)$ denote $\{B(k):B\in\mathcal{R}(\lgl\bsA_k:k<\om\rgl)\}$,
the collection of all  $k$-th blocks of members of $\mathcal{R}(\lgl\bsA_k:k<\om\rgl)$.
The $n$-th approximation of $B$ is
$r_n(B):=\lgl B(0),\dots,B(n-1)\rgl$.
In particular, $r_0(B)=\emptyset$.
Let 
$\mathcal{AR}_n=\{r_n(B):B\in\mathcal{R}(\lgl\bsA_k:k<\om\rgl)\}$, 
the collection of all $n$-th approximations to members of $\mathcal{R}(\lgl\bsA_k:k<\om\rgl)$.
Let  $\mathcal{AR}=\bigcup_{n<\om}\mathcal{AR}_n$,
the collection of all finite approximations to members of $\mathcal{R}(\lgl\bsA_k:k<\om\rgl)$.

Define the partial order  $\le$ on $\mathcal{R}(\lgl\bsA_k:k<\om\rgl)$ as follows.
For $B=\lgl \lgl m_k,\bsB_k\rgl:k<\om\rgl$ and $C=\lgl\lgl n_k,\bsC_k\rgl:k<\om\rgl$,
 define
$C\le B$ if and only if 
for each $k$ there is an $l_k$ such that 
$n_k=m_{l_k}$
and
 for all $j\in J_k$,  $\bsC_{k,j}\in{\bsB_{l_k,j} \choose \bsA_{k,j}}$.

Define the partial order $\le_{\mathrm{fin}}$ on $\mathcal{AR}$ as follows:
For $b=\lgl \lgl m_k,\bsB_k\rgl:k<p\rgl$ and $c=\lgl\lgl n_k,\bsC_k\rgl:k<q\rgl$,
define $c\le_{\mathrm{fin}} b$ if and only if there are $C\le B$ and  $k\le l$ such that $c=r_q(C)$,
$b=r_p(B)$, and for each $k<q$, $n_k= m_{l_k}$ for some $l_k<p$.
\end{defn}

For $c\in\mathcal{AR}$ and $B\in \mathcal{R}$,
$\depth_B(c)$ denotes the minimal $d$ such that $c\le_{\mathrm{fin}} r_d(B)$, if such a $d$ exists; otherwise $\depth_B(c)=\infty$.
Note that  for 
$c=\lgl\lgl n_k,\bsC_k\rgl:k<q\rgl$,
$\depth_{\mathbb A}(c)$ is equal to  $n_{q-1}+1$.
The {\em length} of $c$, denoted by $|c|$, is the minimal $q$ such that $c= r_q(c)$.
For $b,c\in\mathcal{AR}$, we write $b\sqsubseteq c$ if and only if there is a $p\le |c|$ such that $b=r_p(c)$.
In this case, we say that $b$ is an {\em initial segment} of $c$.
We use $b\sqsubset c$ to denote that $b$ is a proper initial segment of $c$; that is $b\sqsubseteq c$ and $b\ne c$.

\begin{rem}
The members of $\mathcal{R}(\lgl \bsA_k:k<\om\rgl)$ are infinite squences $B$ which are isomorphic to the maximal member $\bA$, in the sense that for each $k$-th block $B(k)=\lgl n_k, \bsB_k\rgl$, each of the structures $\bsB_{k,j}$ is isomorphic to $\bsA_{k,j}$.
This idea, of forming  a topological Ramsey space by taking the collection of all infinite sequences coming from within some fixed sequence and preserving the same form as this fixed sequence,
is extracted from the Ellentuck space itself, and was
first extended to more generality in \cite{Dobrinen/Todorcevic11}.
\end{rem}

The above method of construction yields a new class of topological Ramsey spaces. The proof below is jointly written with Trujillo.

\begin{thm}\label{thm.tRs}
Let $1\le J\le\om$ and
$\mathcal{K}_j$, $j\in J$, be a collection
of  \Fraisse\ classes  of  finite ordered relational structures with  the Ramsey property.
For each generating sequence $\lgl \bsA_k:k<\om\rgl$,
the space $(\mathcal{R}(\lgl \bsA_k:k<\om\rgl),\le,r)$ 
satisfies axioms \bf A.1 \rm - \bf A.4 \rm and is closed in $\mathcal{AR}^{\om}$,
and hence,
is a
 topological Ramsey space.
\end{thm}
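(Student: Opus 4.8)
The plan is to verify axioms \textbf{A.1}--\textbf{A.4} and closedness for $\mathcal{R}=\mathcal{R}(\lgl\bsA_k:k<\om\rgl)$, and then invoke the Abstract Ellentuck Theorem \ref{thm.AET}. Axiom \textbf{A.1} is immediate from Definition \ref{def.XinR}: each $r_n(B)=\lgl B(0),\dots,B(n-1)\rgl$ records the first $n$ blocks, so distinct members differ in some block and hence in some approximation (part (b)), while the integer $n$ and all shorter initial segments are recoverable from $r_n(B)$ (parts (a), (c)). For closedness, a sequence $\lgl a_n:n<\om\rgl\in\mathcal{AR}^\om$ lies in the image of $\mathcal{R}$ exactly when $a_n\sqsubset a_{n+1}$ for all $n$ and the blocks meet the local requirements of Definition \ref{def.XinR} (strictly increasing indices, and at each position $k$ substructures in ${\bsA_{n_k,j}\choose\bsA_{k,j}}$); each such requirement involves only finitely many coordinates, so its failure is open and $\mathcal{R}$ is closed.

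For \textbf{A.2} I use the quasi-ordering $\le_{\mathrm{fin}}$ of Definition \ref{def.XinR}. Finiteness (a) holds because $c\le_{\mathrm{fin}} b$ forces every block of $c$ to be a substructure-copy drawn from one of the finitely many blocks of $b$, and each finite structure has only finitely many substructures; this bounds both the length and the content of $c$. Parts (b) and (c) are routine unfoldings of the definitions of $\le$ and $\le_{\mathrm{fin}}$, with (c) obtained by restricting the witnessing refinement $C\le B$ to the initial segment determined by $a$. For \textbf{A.3}, both clauses rest on the nesting property (2) and coverage property (3) of the generating sequence. For (a), if $A\in[\depth_B(a),B]$ then $a\le_{\mathrm{fin}} r_d(A)$ for $d=\depth_B(a)$, and $a$ extends to a full $A'\le A$ with $r_{|a|}(A')=a$ by choosing, at each position $k\ge|a|$, a copy of $(\bsA_{k,j})_{j\in J_k}$ inside a sufficiently late block of $A$ (possible since $\bsA_{k,j}\le\bsA_{l,j}$ whenever $k\le l$); thus $[a,A]\ne\emptyset$. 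For (b), given $A\le B$ with $[a,A]\ne\emptyset$, keep the first $d$ blocks of $B$ and append the blocks of $A$ beyond depth $d$ to form $A'\in[\depth_B(a),B]$; every extension of $a$ inside $A'$ already sits inside $A$, so $\emptyset\ne[a,A']\sse[a,A]$.

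The genuinely nontrivial axiom is the pigeonhole \textbf{A.4}, where the Ramsey property (4) of the generating sequence (equivalently Theorem \ref{thm.Sokic}) is essential. Fix $a$ with $|a|=p$ and $d=\depth_B(a)<\infty$, and $\mathcal{O}\sse\mathcal{AR}_{p+1}$; this induces a $2$-coloring of the copies of $(\bsA_{p,j})_{j\in J_p}$ eligible to serve as the $(p+1)$st block of an extension of $a$, namely those appearing in coordinates $j\in J_p$ of blocks of $B$ at positions $\ge d$. I proceed in two stages. \emph{Within blocks:} for each target position $t\ge d$, apply property (4) with $k=p$, $m=t$ to select a block $B(l)$ with $l$ so large that $(\bsA_{l,j})_{j\in J_p}\ra(\bsA_{t,j})_{j\in J_p}^{(\bsA_{p,j})_{j\in J_p}}$, and extract a monochromatic copy of $(\bsA_{t,j})_{j\in J_p}$, recording its color $\chi(t)\in 2$. \emph{Across blocks:} by the infinite pigeonhole principle there is a single color $i^*$ and an infinite set $T=\{t_0<t_1<\cdots\}$ on which $\chi\equiv i^*$. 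Then build $A\in[\depth_B(a),B]$ by keeping the first $d$ blocks of $B$ and, for each $i<\om$, placing at position $d+i$ a block whose coordinates $j\in J_p$ come from the monochromatic copy associated with $t_i$. Since $d+i\le t_i$, the structure $(\bsA_{d+i,j})_{j\in J_p}$ embeds into that copy, and any sub-copy of a monochromatic copy is again monochromatic; hence every eligible $(\bsA_{p,j})_{j\in J_p}$-copy inside any block of $A$ has color $i^*$, yielding $r_{p+1}[a,A]\sse\mathcal{O}$ or $r_{p+1}[a,A]\sse\mathcal{O}^c$.

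The hard part is this within-block step: one must invoke property (4) for the product over $j\in J_p$ with the correct parameters, and reconcile the growing number of coordinates when $J=\om$. The eligible next-block copies occupy only the first $J_p$ coordinates, whereas the block of $A$ at position $d+i$ must carry $J_{d+i}\ge J_p$ coordinates; the reconciliation is that the coordinates $j$ with $J_p\le j<J_{d+i}$ can be filled by arbitrary copies of $\bsA_{d+i,j}$ inside the corresponding coordinate of the chosen $B(l)$ (which exist since $d+i\le l$), and these do not affect the color of any next-block copy. Once \textbf{A.4} is in hand, the hypotheses of the Abstract Ellentuck Theorem \ref{thm.AET} are met, and therefore $(\mathcal{R}(\lgl\bsA_k:k<\om\rgl),\le,r)$ is a topological Ramsey space.
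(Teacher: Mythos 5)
Your proposal is correct and follows essentially the same route as the paper: closedness and \textbf{A.1}--\textbf{A.3} are checked directly from the definitions and the nesting/coverage properties of the generating sequence, and \textbf{A.4} is obtained by first applying the pigeonhole property (4) within a single sufficiently deep block to extract a monochromatic copy, then applying the infinite pigeonhole principle across blocks, exactly as in the paper's proof. Your explicit remark about filling the coordinates $J_p\le j<J_{d+i}$ with arbitrary copies when $J=\om$ is a point the paper handles implicitly by choosing the $(\bsC_{i,j})_{j\in J_i}$ over the full index set $J_i$, so nothing is genuinely different.
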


\begin{proof}
Let $\mathcal{R}$ denote $\mathcal{R}(\lgl \mathbf{A}_{k}:k<\omega\rgl)$. 
$\mathcal{R}$ is identified with the subspace of the Tychonov power $\mathcal{AR}^{\omega}$ consisting of all sequences $\left<a_{n}, n<\omega\right>$ for which there is a $B\in \mathcal{R}$ such that for each $n<\omega$, $a_{n} = r_{n}(B)$. 
$\mathcal{R}$ forms a closed subspace of $\mathcal{AR}^{\omega}$, since for each sequence $\left<a_{n}, n<\omega\right>$ with the properties that  each  $a_{n} \in \mathcal{AR}_{n}$
and $a_n\sqsubset a_{n+1}$,
 then $\left< \lgl\depth_{A} (a_{n+1}), a_{n+1}(n)\rgl : n<\omega \right>$ is a member of $\mathcal{R}$.
It is routine to check that axioms \bf A.1 \rm  and  \bf A.2 \rm hold.

${\bf A.3}$
(1) If $\depth_{B}(a) = n <\omega$, then $a \le_{\fin} r_{n}(B)$. 
If $C\in [\depth_{B}(a), B]$, then $r_{n}(B)=r_{n}(C)$ and for each $k>n$, there is an $m_{k}$ such that
$( \mathbf{C}_{m_{k},j})_{j\in J_{k}}\le
 (\mathbf{B}_{k,j})_{j\in J_{k}}$. 
For each $i\ge|a|$, let $D(i)$ be an element of $\mathcal{R}(i)$ such that  $(\mathbf{D}_{i,j})_{j\in J_{i}}$ is a substructure of $(\mathbf{C}_{i,j})_{j\in J_{i}}$ isomorphic to $(\mathbf{A}_{i, j})_{j\in J_{i}}$.
 Let $D = a ^{\frown}  \left< D(i) :  |a|\le i<\om \right> \in \mathcal{R}$. 
Then $ D \in [a, B]$, so $[a,B] \not = \emptyset$.

(2) Suppose that $B\le C$ and $[a,B]\not= \emptyset$. Let $n=\depth_{C}(a)$. Then $n <\infty$ since $B\le C$. Let $D= r_{n}(C) ^{\frown} \left< B(n+i): i<\omega\right>$. Then $D \in [ \depth_{C}(a), C]$ and $\emptyset \not=[a,D]\subseteq[a,B]$.

${\bf A.4}$
Suppose that 
$B = \left< (n_k, \mathbf{B}_k) : k<\omega\right>$,
$\depth_{B}(a)<\infty$, and $\mathcal{O}\subseteq \mathcal{AR}_{|a|+1}$.
Let $n=|a|$.
By (4) in the definition of a generating sequence, there is  a strictly increasing sequence $(k_i)_{i\ge n}$ such  that 
$(\bsA_{k_i,j})_{j\in J_n}\ra {(\bsA_{i,j})_{j\in J_n}\choose
(\bsA_{n,j})_{j\in J_n}}$, for each $i\ge n$.
For each $i\ge n$, choose some $(\bsC_{i,j})_{j\in J_i}$ in ${(\bsB_{k_i,j})_{j\in J_i}\choose (\bsA_{i,j})_{j\in J_i}}$
such that the collection
$$
\{\lgl  n_{k_i}, (\bsX_{i,j})_{j\in J_n}\rgl:
 (\bsX_{i,j})_{j\in J_n}\in {(\bsC_{i,j})_{j\in J_n}\choose (\bsA_{n,j})_{j\in J_n}}\}$$
 is homogeneous for $\mathcal{O}$.
Infinitely many of these $(\bsC_{i,j})_{j\in J_i}$ will agree about being in or out of $\mathcal{O}$.
Thus, for some subsequence
$(k_{i_l})_{l\ge n}$,
there are 
 $(\bsD_{l,j})_{j\in J_l}\in {(\bsC_{i_l,j})_{j\in J_l}\choose (\bsA_{l,j})_{j\in J_l}}$
such that
letting $D=a^{\frown}\lgl \lgl
n_{k_{i_l}}
 ,(\bsD_{l,j})_{j\in J_l}\rgl:l\ge n\rgl$,
we have that
 $r_{n+1}[a,D]$ is either contained in or disjoint from $\mathcal{O}$.
\end{proof}

We fix the following notation, which is used throughout this paper.

\begin{notationn}\label{notn.lots}
For $a\in\mathcal{AR}$ and $B\in\mathcal{R}$,
we write $a\le_{\fin} B$ to mean that there is some $A\in\mathcal{R}$ such that $A\le B$ and $a=r_n(A)$ for some $n$.
For $\mathcal{H}\sse\mathcal{AR}$ and $B\in\mathcal{R}$,
let $\mathcal{H}|B$ denote the collection of all $a\in\mathcal{H}$ such that $a\le_{\fin} B$.

For $n<\om$,
$\mathcal{R}(n)=\{C(n):C\in\mathcal{R}\}$, and 
$\mathcal{R}(n)|B=\{C(n):C\le B\}$.
$B/a$ denotes the tail of $B$ which is above every block in $a$.
$\mathcal{R}(n)|B/a$ denotes the members of $\mathcal{R}(n)|B$ which are above $a$.
\end{notationn}


\section{Ultrafilters associated with topological Ramsey spaces
constructed from \\ generating sequences
and their partition properties}\label{sec.uf}

In this section, we show that many examples of ultrafilters satisfying partition properties can be 
seen to arise as ultrafilters associated with some
topological Ramsey spaces constructed from  a generating sequence.
In particular, the ultrafilters of Baumgartner and Taylor  in Section 4 of \cite{Baumgartner/Taylor78} arising from norms fit into this framework.
We begin by reviewing some 
 important types of ultrafilters.
All of the following definitions  can found in \cite{Bartoszynski/JudahBK}.
Recall the standard notation $\sse^*$,
where for $X,Y\sse\om$, we write
$X\sse^* Y$ to denote that $|X\setminus Y|<\om$.

\begin{defn}\label{defn.uftypes}
Let $\mathcal{U}$ be a nonprincipal ultrafilter.
\begin{enumerate}
\item
$\mathcal{U}$ is {\em selective} if 
for every function $f:\om\ra\om$,
there is an $X\in\mathcal{U}$ such that either $f\re X$ is constant or $f\re X$ is one-to-one.
\item
$\mathcal{U}$ is {\em Ramsey} if for each $2$-coloring $f:[\om]^2\ra 2$, there is  an $X\in\mathcal{U}$ such that $f\re [X]^2$ takes on exactly one color.
This is denoted by $\om\ra(\mathcal{U})^2$.
\item
$\mathcal{U}$ is a {\em p-point} if for every family $\{X_n:n<\om\}\sse\mathcal{U}$
there is an $X\in\mathcal{U}$ such that $X\sse^* X_n$ for each $n<\om$.
\item
$\mathcal{U}$ is a {\em q-point} if for each partition of $\om$ into finite pieces $\{I_n:n<\om\}$,
there is an $X\in\mathcal{U}$ such that $|X\cap I_n|\le 1$ for each $n<\om$.
\item
$\mathcal{U}$ is {\em rapid} if 
for each function $f:\om\ra\om$,
there exists an $X\in\mathcal{U}$ such that $|X\cap f(n)|\le n$ for each $n<\om$.
\end{enumerate}
\end{defn}

It is well-known that for ultrafilters on $\om$, being Ramsey is equivalent to being selective, and that an ultrafilter is Ramsey if and only if it is both a p-point and a q-point.
Every q-point is rapid.

Let $(\mathcal{R},\le, r)$ be any topological Ramsey space.
Recall that a subset $\mathcal{C}\sse\mathcal{R}$ is a
{\em filter} on $(\mathcal{R},\le)$ 
if $\mathcal{C}$ is {\em closed upwards}, meaning that whenever $X\in\mathcal{C}$ and $X\le Y$, then also $Y\in\mathcal{C}$;
and for every pair $X,Y\in\mathcal{C}$, there is a $Z\in\mathcal{C}$ such that $Z\le X,Y$.

\begin{defn}\label{def.Ramseymaxfilter}
A filter $\mathcal{C}$ on a topological Ramsey space $\mathcal{R}$ is called {\em Ramsey for $\mathcal{R}$}
if $\mathcal{C}$ is a maximal filter and
for each $n<\om$ and each $\mathcal{H}\sse\mathcal{AR}_n$,
there is a member $C\in\mathcal{C}$ such that  either 
$\mathcal{AR}_n|C\sse\mathcal{H}$ or else
$\mathcal{AR}_n|C\cap\mathcal{H}=\emptyset$.
\end{defn}

Note that a filter which is Ramsey for $\mathcal{R}$ is a  maximal filter on $(\mathcal{R},\le)$, meaning that  for each $X\in\mathcal{R}\setminus \mathcal{C}$,
the filter generated by $\mathcal{C}\cup\{X\}$ is all of $\mathcal{R}$.

\begin{fact}\label{fact.Ramseyuf}
Let $\lgl\bsA_n:n<\om\rgl$ be any generating sequence with $1\le J<\om$.
Each  filter $\mathcal{C}$ which is Ramsey for $\mathcal{R}(\lgl\bsA_n:n<\om\rgl)$ generates an ultrafilter on the base set $\mathcal{AR}_1$,
namely the ultrafilter, denoted $\mathcal{U}_{\mathcal{R}}$, generated by the collection $\{\mathcal{AR}_1|C:C\in\mathcal{C}\}$.
\end{fact}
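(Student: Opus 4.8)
The plan is to verify directly the two defining properties of an ultrafilter for the family $\mathcal{B}=\{\mathcal{AR}_1|C:C\in\mathcal{C}\}$: that it is a filter base of nonempty subsets of $\mathcal{AR}_1$, and that the filter $\mathcal{U}_{\mathcal{R}}$ it generates decides every subset of $\mathcal{AR}_1$. Since $J<\om$ and each \Fraisse\ class contributes only countably many finite structures, $\mathcal{AR}_1$ is countable, so this will exhibit $\mathcal{U}_{\mathcal{R}}$ as an ultrafilter on a countable base set.

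First I would check that $\mathcal{B}$ is a filter base. Each $\mathcal{AR}_1|C$ is nonempty, since $r_1(C)\le_{\fin}C$ gives $r_1(C)\in\mathcal{AR}_1|C$. The key monotonicity fact is that $D\le C$ implies $\mathcal{AR}_1|D\sse\mathcal{AR}_1|C$: if $a\le_{\fin}D$, say $a\le_{\fin}r_n(D)$, then by axiom A.2(b) there is $m$ with $r_n(D)\le_{\fin}r_m(C)$, and transitivity of the quasi-order $\le_{\fin}$ yields $a\le_{\fin}r_m(C)$, hence $a\le_{\fin}C$. Because $\mathcal{C}$ is a filter on $(\mathcal{R},\le)$ it is downward directed, so given $C,C'\in\mathcal{C}$ I would pick $D\in\mathcal{C}$ with $D\le C,C'$ and conclude $\mathcal{AR}_1|D\sse(\mathcal{AR}_1|C)\cap(\mathcal{AR}_1|C')$, which is the refinement property required of a filter base. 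Thus $\mathcal{U}_{\mathcal{R}}$, the collection of all subsets of $\mathcal{AR}_1$ containing some member of $\mathcal{B}$, is a proper filter, as $\emptyset\notin\mathcal{U}_{\mathcal{R}}$ because every member contains a nonempty base set.

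The heart of the argument is the maximality (ultrafilter) property, and it is precisely where the hypothesis that $\mathcal{C}$ is Ramsey for $\mathcal{R}$ is used. Given any $\mathcal{H}\sse\mathcal{AR}_1$, I would apply Definition \ref{def.Ramseymaxfilter} at $n=1$ to obtain $C\in\mathcal{C}$ with either $\mathcal{AR}_1|C\sse\mathcal{H}$ or $\mathcal{AR}_1|C\cap\mathcal{H}=\emptyset$. In the first case $\mathcal{H}\contains\mathcal{AR}_1|C$, so $\mathcal{H}\in\mathcal{U}_{\mathcal{R}}$; in the second case $\mathcal{AR}_1\setminus\mathcal{H}\contains\mathcal{AR}_1|C$, so $\mathcal{AR}_1\setminus\mathcal{H}\in\mathcal{U}_{\mathcal{R}}$. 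Hence $\mathcal{U}_{\mathcal{R}}$ decides every subset of $\mathcal{AR}_1$ and is an ultrafilter.

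I do not anticipate a genuine obstacle: the content is a direct translation of the $n=1$ instance of ``Ramsey for $\mathcal{R}$'' into the language of ultrafilters. The only points that require care are bookkeeping, namely the compatibility of $\le_{\fin}$ with $\le$ used for monotonicity (immediate from A.2) and the correct reading of Definition \ref{def.Ramseymaxfilter}. If one additionally wants $\mathcal{U}_{\mathcal{R}}$ nonprincipal, I would observe that each $\mathcal{AR}_1|C$ is infinite, since any $C\in\mathcal{R}$ has blocks at infinitely many indices and each such block can appear as a first block of some $D\le C$; therefore no singleton lies in $\mathcal{U}_{\mathcal{R}}$.
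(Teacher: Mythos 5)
Your proposal is correct and follows essentially the same route as the paper: the filter property comes from $\mathcal{C}$ being a filter on $(\mathcal{R},\le)$ (with the monotonicity $D\le C\Rightarrow\mathcal{AR}_1|D\sse\mathcal{AR}_1|C$ that the paper leaves implicit), and the ultrafilter property is exactly the $n=1$ instance of Definition \ref{def.Ramseymaxfilter} applied to an arbitrary $\mathcal{H}\sse\mathcal{AR}_1$. The added observations on countability of the base and nonprincipality are harmless extras not present in the paper's proof.
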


\begin{proof}
Let $\mathcal{U}$ denote the collection of $\mathcal{G}\sse\mathcal{AR}_1$ such that $\mathcal{G}\contains \mathcal{AR}_1|C$ for some $C\in\mathcal{C}$.
Certainly $\mathcal{U}$ is a filter on $\mathcal{AR}_1$, since $\mathcal{C}$ is a filter on $\mathcal{R}(\lgl\bsA_n:n<\om\rgl)$.
To see that $\mathcal{U}$ is an ultrafilter,
let $\mathcal{H}\sse\mathcal{AR}_1$ be given.
Since $\mathcal{C}$ is Ramsey for $\mathcal{R}(\lgl\bsA_n:n<\om\rgl)$, 
there is a $C\in\mathcal{C}$ such that either $\mathcal{AR}_1|C\sse\mathcal{H}$ or else $\mathcal{AR}_1|C\cap\mathcal{H}=\emptyset$.
In the first case, $\mathcal{H}\in\mathcal{U}$; in the second case, $\mathcal{AR}_1\setminus\mathcal{H}\in\mathcal{U}$.
\end{proof}

One of the motivations for generating sequences was to provide a construction scheme for ultrafilters which are p-points satisfying some partition relations.
At this point, we show how some historic examples of such ultrafilters can be seen to arise as ultrafilters associated with some topological Ramsey space constructed from a generating sequence, thus providing a general framework for such ultrafilters.

\begin{example}[A weakly Ramsey, non-Ramsey ultrafilter, \cite{Baumgartner/Taylor78}, \cite{Laflamme89}]\label{ex.BT4.84.9}
In \cite{Dobrinen/Todorcevic11} a topological Ramsey space called $\mathcal{R}_1$ was extracted from a forcing of Laflamme which forces a weakly Ramsey ultrafilter which is not Ramsey.
That forcing of Laflamme is the same as the example of 
Baumgartner and Taylor in Theorems 4.8 and 4.9 in \cite{Baumgartner/Taylor78}.
$\mathcal{R}_1$ is exactly $\mathcal{R}(\lgl\bsA_n:n<\om\rgl)$, where 
 each  $\bsA_n=\lgl n, <\rgl$, the linear order of cardinality $n$.
$\mathcal{R}_1$ is dense in the forcing given by Baumgartner and Taylor.
Thus, their ultrafilter can be seen to be generated by the topological Ramsey space $\mathcal{R}_1$.
\end{example}

The next set of examples of ultrafilters which are generated by our topological Ramsey spaces are the $n$-arrow, not $(n+1)$-arrow ultrafilters of Baumgartner and Taylor.

\begin{defn}[\cite{Baumgartner/Taylor78}]\label{defn.arrow}
An ultrafilter $\mathcal{U}$ is {\em $n$-arrow} if $3\le n<\om$ and for every function $f:[\om]^2\ra 2$,
either there exists a set $X\in\mathcal{U}$ such that $f([X]^2)=\{0\}$,
or else there is a set $Y\in [\om]^n$ such that $f([Y]^2)=\{1\}$.
$\mathcal{U}$ is an {\em arrow} ultrafilter if $\mathcal{U}$ is $n$-arrow for each $n\le 3<\om$.
\end{defn}

Theorem 4.11 in \cite{Baumgartner/Taylor78} of Baumgartner and Taylor
shows that for each $2\le n<\om$, there are p-points which are $n$-arrow but not $(n+1)$-arrow.
(By default, every ultrafilter is $2$-arrow.)
As the ultrafilters of Laflamme in \cite{Laflamme89} with partition relations had led to the formation of new topological Ramsey spaces and their analogues of the \Pudlak-\Rodl\ Theorem
in \cite{Dobrinen/Todorcevic11} and \cite{Dobrinen/Todorcevic12}, Todorcevic suggested  that these arrow  ultrafilters  with asymmetric partition relations might lead to interesting new Ramsey-classification theorems.
It turns out that the constructions of Baumgartner and Taylor can be thinned to see that there is a generating sequence with associated topological Ramsey space producing their ultrafilters.
In fact, our idea of using \Fraisse\ classes of relational structures to construct topological Ramsey spaces was gleaned from their theorem.

\begin{example}[Spaces $\mathcal{A}_n$, generating $n$-arrow, not $(n+1)$-arrow p-points]\label{ex.A}
For a fixed $n\ge 2$, let $J=1$ and $\mathcal{K}=\mathcal{K}_0$ denote the \Fraisse\ class of all finite $(n+1)$-clique-free ordered graphs.
By Theorem A  of \Nesetril\ and \Rodl\ in \cite{Nesetril/Rodl83}, $\mathcal{K}$ has the Ramsey property.
Choose any generating sequence $\lgl \bsA_k :k<\om\rgl$.
One can check, by a proof similar to that given in Theorem 4.11 of \cite{Baumgartner/Taylor78}, that any ultrafilter on $\mathcal{AR}_1$ which is Ramsey for $\mathcal{R}(\lgl \bsA_k:k<\om\rgl)$ is an $n$-arrow p-point which is not $(n+1)$-arrow.

Let $\mathcal{U}_{\mathcal{A}_n}$ denote any ultrafilter on $\mathcal{AR}_1$ which is Ramsey for $\mathcal{A}_n$.
It will follow from 
Theorem \ref{TukeyReducibleTheorem2} 
that the initial Rudin-Keisler structure of the p-points Tukey reducible to $\mathcal{U}_{\mathcal{A}_n}$
is exactly that of the collection of isomorphism classes of members of $\mathcal{K}_0$, partially ordered by embedability.
Further,
Theorem \ref{Tim-InitialStructures}
  will show 
that the initial Tukey structure below $\mathcal{U}_{\mathcal{A}_n}$ is exactly a chain of length 2.
\end{example}

\begin{rem}
In fact,  Theorem A in \cite{Nesetril/Rodl83}  of \Nesetril\ and \Rodl\  provides a large collection of  \Fraisse\ classes
of finite ordered relational structures which omit subobjects which are irreducible.
Generating sequences can be taken from any of these, 
resulting in  new
 topological Ramsey spaces and associated ultrafilters.
(See \cite{Nesetril/Rodl83} for the relevant definitions.)
\end{rem}

The next  collection of topological Ramsey spaces we will call {\em hypercube spaces}, $\mathcal{H}^n$, $1\le n<\om$.
The idea for the space $\mathcal{H}^2$ was gleaned from Theorem 9 of Blass in \cite{Blass73}, where he shows that, assuming Martin's Axiom, there is a p-point with two Rudin-Keisler incomparable p-points Rudin-Keisler reducible to it.
The partial ordering he uses has members which are  infinite unions  of  $n$-squares.
That example was enhanced in \cite{Dobrinen/Todorcevic10} to show that, assuming CH, there is a p-point with two Tukey-incomparable p-points Tukey reducible to it.
A closer look at the partial ordering of Blass  reveals inside  essentially a product of two copies of the topological Ramsey space $\mathcal{R}_1$ from \cite{Dobrinen/Todorcevic11}.
Our space $\mathcal{H}^2$ was constructed in order to construct or force a p-point which has initial Tukey structure  exactly the Boolean algebra $\mathcal{P}(2)$.
The spaces $\mathcal{H}^n$ were then the logical next step in constructing p-points with initial Tukey structure exactly $\mathcal{P}(n)$.

We point out that the space $\mathcal{H}^1$ is exactly the space $\mathcal{R}_1$ in \cite{Dobrinen/Todorcevic11}.

\begin{rem}
The space $\mathcal{H}^2$ was investigated  in \cite{TrujilloThesis}.
All the  results in this paper pertaining to the space $\mathcal{H}^2$ are due to Trujillo.
\end{rem}

\begin{example}[Hypercube Spaces $\mathcal{H}^n$, $1\le n<\om$]\label{ex.H}
Fix $1\le n<\om$, and let $J=n$.
For each $k<\om$ and $j\in n$, let $\bsA_{k,j}$ be any linearly ordered set of size $k+1$.
Letting $\bsA_k$ denote the sequence $(\bsA_{k,j})_{j\in n}$,
we see that $\lgl \bsA_k:k<\om\rgl$ is a generating sequence, where   each
 $\mathcal{K}_j$ is  the class of finite linearly ordered sets.
Let
$\mathcal{H}^n$ denote $\mathcal{R}(\lgl \bsA_k:k<\om\rgl)$.
It will follow from Theorem \ref{Tim-InitialStructures}
that the initial Tukey structure below $\mathcal{U}_{\mathcal{H}^n}$ is exactly that of the Boolean algebra $\mathcal{P}(n)$.
\end{example}

Many other examples of topological Ramsey spaces
are obtained in this manner, simply letting $\mathcal{K}_n$ be a \Fraisse\ class of finite ordered relational structures with the Ramsey property.

We now look at the most basic example of a topological Ramsey space generated by infinitely many \Fraisse\ classes.
When $J=\om$, $\mathcal{AR}_1$ no longer suffices as a base for an ultrafilter.
In fact, any filter which is Ramsey for this kind of space codes a Fubini product of the ultrafilters associated with $\mathcal{K}_j$ for each index $j\in \om$.
However,  the notion of a  filter Ramsey for such a space is still well-defined.

\begin{example}[The infinite Hypercube Space $\mathcal{H}^{\om}$]\label{ex.Homega}
Let $J=\om$.
For each $k<\om$ and $j\in k$, let $\bsA_{k,j}$ be any linearly ordered set of size $k+1$.
Letting $\bsA_k$ denote the sequence $(\bsA_{k,j})_{j\in k+1}$,
we see that $\lgl \bsA_k:k<\om\rgl$ is a generating sequence for the \Fraisse\ classes $\mathcal{K}_j$ being the class of finite linearly ordered sets.
Let
$\mathcal{H}^{\om}$ denote $\mathcal{R}(\lgl \bsA_k:k<\om\rgl)$.
It will be shown in Theorem \ref{Tim-InitialStructures} that
the  structure of the Tukey types of p-points Tukey reducible to  any  filter $\mathcal{C}_{\mathcal{H}^{\om}}$ which is Ramsey for $\mathcal{H}^{\om}$ is exactly $[\om]^{<\om}$.
The space $\mathcal{H}^{\om}$ is the first example of a topological Ramsey space which has associated filter $\mathcal{C}_{\mathcal{H}^{\om}}$
with 
 infinitely many Tukey-incomparable Ramsey ultrafilters Tukey reducible to it. 
\end{example}

We point out that, taking $J=\om$ and each $\mathcal{K}_j$, $j\in\om$, to be the \Fraisse\ class of finite ordered $(j+3)$-clique-free graphs,
the resulting topological Ramsey space codes the Fubini product seen in Theorem 3.12 in \cite{Baumgartner/Taylor78} of Baumgartner and Taylor  which produces an ultrafilter which is $n$-arrow for all $n$.

We conclude this section by showing how the partition properties of ultrafilters Ramsey for some space constructed from a generating sequence can be read off from  the \Fraisse\ classes.
Recall the following notation for partition relations.
For $k>l$, any $m\ge 2$, and an ultrafilter $\mathcal{U}$,
\begin{equation}
\mathcal{U}\ra(\mathcal{U})^m_{k,l}
\end{equation}
denotes that for any  $U\in\mathcal{U}$ and any partition of $[U]^m$ into $k$ pieces,
there is a subset $V\sse U$ in $\mathcal{U}$ such that $[V]^m$ is contained in at most $l$ pieces of the partition.
We shall say that the {\em Ramsey degree for $m$-tuples} for $\mathcal{U}$ is $l$, denoted $R(\mathcal{U},m)=l$,
if $\mathcal{U}\ra(\mathcal{U})^m_{k,l}$ for each $k\ge l$,
but $\mathcal{U}\not\ra(\mathcal{U})^m_{k,l-1}$.

It is straightforward to calculate the Ramsey degrees of ultrafilters Ramsey for topological Ramsey spaces constructed from a generating sequence, given knowledge of the \Fraisse\ classes used in the construction.
For a given \Fraisse\ class $\mathcal{K}$, for each $s\ge 1$,
let $\Iso(\mathcal{K},s)$ denote the number of isomorphism classes in $\mathcal{K}$ of structures with universe of size $s$.
Let $S(m)$ denote the collection of all finite sequences $\vec{s}=\lgl s_0,\dots,s_{l-1}\rgl\in (\om\setminus\{0\})^{<\om}$ such that $s_0+\dots +s_{l-1}=m$.

\begin{fact}\label{fact.partitionrels}
Let $J=1$, $\mathcal{K}$ be a \Fraisse\ class of finite ordered relational structures with the Ramsey property,
and $\mathcal{U}_{\mathcal{K}}$ be an ultrafilter Ramsey for $\mathcal{R}(\lgl \bsA_k:k<\om\rgl)$ for some generating sequence for $\mathcal{K}$.
Then for each $m\ge 2$,
\begin{equation}
R(\mathcal{U}_{\mathcal{K}},m)=
 \Sigma_{s\in S(m)}\Pi_{i<|s|}\Iso(\mathcal{K},s_i).
\end{equation}
\end{fact}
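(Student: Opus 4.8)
The plan is to identify both sides of the identity with the number of \emph{canonical types} of an $m$-element subset of the base set, and then to show that this number is simultaneously an upper and a lower bound for the Ramsey degree.

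First I would unpack the right-hand side. Fix a member $C$ of the filter $\mathcal{C}$ generating $\mathcal{U}_{\mathcal{K}}$ (Fact \ref{fact.Ramseyuf}). Since $|\bsA_{0,0}|=1$, the base set $\mathcal{AR}_1|C$ consists of single points lying in the blocks $\bsC_{k,0}$, $k<\om$, with block $k$ contributing $|\bsA_{k,0}|$ of them, and the blocks are linearly ordered by their level. Any $a=\{a_0,\dots,a_{m-1}\}\in[\mathcal{AR}_1|C]^m$ therefore determines: the way its points are distributed among the blocks it meets, read in increasing order of level, which is a composition $\vec s=\lgl s_0,\dots,s_{r-1}\rgl\in S(m)$; and, for each block met, the isomorphism type in $\mathcal{K}$ of the ordered substructure cut out by the points of $a$ in that block, of which there are $\Iso(\mathcal{K},s_i)$ of size $s_i$. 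Because distinct blocks carry no relations to one another in $\mathcal{R}(\lgl\bsA_k:k<\om\rgl)$, these data are independent across blocks, so the number of such \emph{types} $(\vec s,\lgl\tau_0,\dots,\tau_{r-1}\rgl)$ is exactly $N:=\Sigma_{s\in S(m)}\Pi_{i<|s|}\Iso(\mathcal{K},s_i)$, the right-hand side. It remains to prove $R(\mathcal{U}_{\mathcal{K}},m)=N$.

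For the upper bound $\mathcal{U}_{\mathcal{K}}\ra(\mathcal{U}_{\mathcal{K}})^m_{k,N}$, given a coloring $d:[\mathcal{AR}_1]^m\ra k$ I would view $d$ as a finitely-valued equivalence relation and apply the canonization machinery of Sections \ref{sec.ProductER} and \ref{sec.canon}: using that $\mathcal{C}$ is Ramsey for $\mathcal{R}$ together with the Ramsey property of $\mathcal{K}$ and its finite products (Theorems \ref{thm.Sokic} and \ref{FiniteERProducts}, culminating in Theorem \ref{canonical R}), one thins to a member $D\le C$ on which $d$ is represented by a canonical, i.e.\ inner and Nash--Williams, map. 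The point is then purely combinatorial: a canonical map that remembers the identity of any block, or any information about a point beyond the isomorphism type of its within-block substructure, already induces infinitely many classes, since $D$ has infinitely many blocks. Hence a \emph{finitely}-valued canonical map can record only the composition and the within-block isomorphism types, so on $[\mathcal{AR}_1|D]^m$ the coloring $d$ is constant on each of the $N$ types and thus assumes at most $N$ values. As $\mathcal{AR}_1|D\in\mathcal{U}_{\mathcal{K}}$, this yields $\mathcal{U}_{\mathcal{K}}\ra(\mathcal{U}_{\mathcal{K}})^m_{k,N}$ for every $k\ge N$. For the matching lower bound $\mathcal{U}_{\mathcal{K}}\not\ra(\mathcal{U}_{\mathcal{K}})^m_{N,N-1}$, I would use the type-coloring itself: let $d^*:[\mathcal{AR}_1]^m\ra N$ send each $m$-tuple to its type $(\vec s,\lgl\tau_i\rgl)$. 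Every $V\in\mathcal{U}_{\mathcal{K}}$ realizes all $N$ colors, so $d^*$ cannot be compressed to $N-1$ values on any member: indeed $V\contains\mathcal{AR}_1|D$ for some $D\in\mathcal{C}$, and since $D$ has infinitely many blocks every composition $\vec s\in S(m)$ is realized by choosing points from $|\vec s|$ distinct blocks, while by clauses (2) and (3) of Definition \ref{defn.A_k} the blocks $\bsD_{k,0}$ eventually embed every member of $\mathcal{K}$, so within each chosen block every isomorphism type of the required size occurs. Thus every color of $d^*$ is attained inside $[\mathcal{AR}_1|D]^m\sse[V]^m$. Combining the two bounds gives $R(\mathcal{U}_{\mathcal{K}},m)=N$, as required.

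The main obstacle is the upper bound, specifically the claim that the finitely-valued canonical equivalence relations on $m$-tuples are exactly the coarsenings of the ``composition-plus-within-block-isomorphism-type'' relation. Making this rigorous requires setting up the correct front of approximations capturing $m$ base points and checking that the canonical representatives produced by Theorem \ref{canonical R} interact with the product structure of the blocks as claimed; the finite-products case of the canonization (Theorem \ref{FiniteERProducts}) is what guarantees that no spurious cross-block invariant survives. The lower bound, by contrast, is a direct realizability argument using only the defining clauses of a generating sequence.
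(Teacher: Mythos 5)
The paper offers no proof of this Fact (it is asserted as a ``straightforward'' calculation), so there is nothing to compare against line by line; judged on its own terms, your identification of the right-hand side with the number of types $(\vec s,\lgl\tau_0,\dots,\tau_{r-1}\rgl)$ of an $m$-subset, and your lower-bound argument via the type-coloring $d^*$ (every type is realized below every $D\in\mathcal{C}$ by clauses (2) and (3) of Definition \ref{defn.A_k}), are correct and are surely what the authors intend.

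The upper bound, however, has a genuine gap, and it is exactly where you locate ``the main obstacle.'' First, Theorems \ref{FiniteERProducts} and \ref{canonical R} require the OPFAP, which is not a hypothesis of Fact \ref{fact.partitionrels} (the Fact sits in Section \ref{sec.uf}, before the OPFAP is even defined), so they are not available. Second, and more fundamentally, $[\mathcal{AR}_1]^m$ is not a front on $\mathcal{R}$: its elements are $m$ scattered points of the base, not approximations $r_n(X)$, so Theorem \ref{canonical R} does not apply to equivalence relations on it, and there is no ``correct front capturing $m$ base points'' to set up. Worse, the canonical maps of Theorem \ref{canonical R} are \emph{inner}: they record actual substructures (hence, implicitly, the depths of the blocks they live in), so on an infinite set the only inner Nash--Williams map with finite range is the empty one. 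If your canonization step went through, it would therefore force the finitely-valued coloring to be \emph{constant} on $[\mathcal{AR}_1|D]^m$, contradicting your own lower bound; the invariant that actually survives for $m$-subsets is the \emph{isomorphism type} of the within-block parts, which is not an inner map, and no theorem in the paper canonizes equivalence relations on $[\mathcal{AR}_1]^m$ in that form. The repair is to bypass canonization entirely: for each of the $N$ types $t=(\vec s,\lgl\tau_i\rgl_{i<r})$ separately, stabilize the color of the $m$-subsets of type $t$ by a fusion argument --- at each stage use the Ramsey property of $\mathcal{K}$, the Product Ramsey Theorem \ref{thm.Sokic}, and clause (4) of Definition \ref{defn.A_k} to shrink finitely many blocks so that all products $\prod_{i<r}{D(k_i)\choose \tau_i}$ over the chosen blocks are monochromatic, then pigeonhole over blocks (inducting on $r$, with $r=1$ handled by $\bsA_{n,0}\ra(\bsA_{k,0})^{\tau}_c$). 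This uses only the stated hypotheses and yields at most $N$ colors below some $D\in\mathcal{C}$, completing the computation of $R(\mathcal{U}_{\mathcal{K}},m)$.
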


\begin{examples}\label{ex.J=1Ramseydegrees}
For an ultrafilter $\mathcal{U}_{\mathcal{H}^1}$ Ramsey for the space $\mathcal{H}^1$, 
we have $R(\mathcal{U}_{\mathcal{H}^1},2)=2$,
$R(\mathcal{U}_{\mathcal{H}^1},3)=4$,
$R(\mathcal{U}_{\mathcal{H}^1},4)=8$, and in general, 
$R(\mathcal{U}_{\mathcal{H}^1},m)=2^{m-1}$.

For an ultrafilter $\mathcal{U}_{\mathcal{A}_2}$ Ramsey for the space $\mathcal{A}_2$, we have 
$R(\mathcal{U}_{\mathcal{A}_2},2)=3$,
$R(\mathcal{U}_{\mathcal{A}_2},3)=12$, and
$R(\mathcal{U}_{\mathcal{A}_2},4)=35$.
In fact, for each $n\ge 3$, 
$R(\mathcal{U}_{\mathcal{A}_n},2)=3$, since the only relation is the edge relation.
The numbers $R(\mathcal{U}_{\mathcal{A}_n},m)$  can be calculated from  the recursive formula in Fact \ref{fact.partitionrels},
but as they grow quickly, we leave this to the interested reader.
\end{examples}

When $J=2$,
the Ramsey degrees are again calculated from knowledge of the \Fraisse\ classes $\mathcal{K}_0$ and $\mathcal{K}_1$.

\begin{fact}\label{fact.RamseydegJ=2}
For $\mathcal{R}$ a topological Ramsey space constructed from a generating sequence for \Fraisse\ classes $\mathcal{K}_j$, $j\in 2$,
letting $\mathcal{U}_{\mathcal{R}}$ be an ultrafilter Ramsey for $\mathcal{R}$, we have
\begin{equation}
R(\mathcal{U}_{\mathcal{R}},2)=1+
\Iso(\mathcal{K}_0,2)+\Iso(\mathcal{K}_1,2) + 2 \Iso(\mathcal{K}_0,2)\cdot \Iso(\mathcal{K}_1,2).
\end{equation}
\end{fact}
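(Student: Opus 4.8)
The plan is to compute the Ramsey degree $R(\mathcal{U}_{\mathcal{R}},2)$ by classifying the canonical equivalence relations on pairs from $\mathcal{AR}_1$, exactly as in the general philosophy behind Theorem~\ref{canonical R}, and then counting the ``stable'' colors that survive. First I would set up the combinatorial picture: a basic element of $\mathcal{AR}_1$ corresponds to a single block, which (since $J=2$) carries a piece of structure from $\mathcal{K}_0$ and a piece from $\mathcal{K}_1$; a pair from $[\mathcal{AR}_1]^2$ therefore involves two blocks $B(i)$, $B(i')$ with $i<i'$. The key point is that, after passing to a member $C\in\mathcal{C}$ on which every coloring is reduced to a canonical one (using that $\mathcal{C}$ is Ramsey for $\mathcal{R}$ together with Theorem~\ref{FiniteERProducts}, which canonizes equivalence relations on finite products of \Fraisse-class structures), the color of a pair depends only on a fixed ``pattern'' of isomorphism types of the substructures induced on the two blocks in each of the two coordinates $j\in 2$.

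The heart of the count is enumerating these patterns. I would organize them by how the pair ``splits'' across the two blocks. The leading $1$ counts the single canonical relation that ignores the finer structure entirely (the relation recording only which block each point lies in, i.e.\ the coarsest nontrivial pattern — the analogue of the $E_\emptyset$/identity-of-position color coming from the underlying Ellentuck/$\mathcal{H}^1$ skeleton). The terms $\Iso(\mathcal{K}_0,2)$ and $\Iso(\mathcal{K}_1,2)$ count patterns in which the relevant $2$-element substructure lies entirely within a single coordinate: for coordinate $j$, a two-element substructure of the blocks gives $\Iso(\mathcal{K}_j,2)$ many isomorphism types, corresponding to the two points sitting in one block in that coordinate while being trivial in the other. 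Finally the cross term $2\,\Iso(\mathcal{K}_0,2)\cdot\Iso(\mathcal{K}_1,2)$ arises when the pair genuinely engages both coordinates simultaneously: the factor $\Iso(\mathcal{K}_0,2)\cdot\Iso(\mathcal{K}_1,2)$ counts the joint isomorphism type of the induced two-element product structure, and the factor $2$ records the two inequivalent ways the two blocks can be interleaved relative to the product order (an asymmetry that is invisible in the single-coordinate $J=1$ case of Fact~\ref{fact.partitionrels} but becomes a genuine doubling once two independent \Fraisse\ coordinates are present).

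Having listed the patterns, I would verify two things. For the lower bound $\mathcal{U}_{\mathcal{R}}\not\ra(\mathcal{U}_{\mathcal{R}})^2_{k,l-1}$ with $l$ equal to the claimed value, I would exhibit the ``tautological'' coloring that assigns to each pair precisely its canonical pattern; since these patterns are realized cofinally on every $C\in\mathcal{C}$ (each isomorphism type occurs inside blocks of every sufficiently large member, by clause~(3) of Definition~\ref{defn.A_k}), no member of $\mathcal{C}$ can reduce the number of colors below $l$, so this coloring witnesses that the degree is at least $l$. For the upper bound $\mathcal{U}_{\mathcal{R}}\ra(\mathcal{U}_{\mathcal{R}})^2_{k,l}$, I would take an arbitrary $k$-coloring of $[\mathcal{AR}_1]^2$, restrict it to a single $C$ on which it is canonical (the existence of such $C$ is where Theorem~\ref{FiniteERProducts} and the Ramsey-for-$\mathcal{R}$ property of $\mathcal{C}$ do all the work), and observe that the number of distinct colors is then bounded by the number of patterns, namely $l$.

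The main obstacle I anticipate is the cross term and in particular justifying the factor of $2$: I must argue carefully that when the two points of the pair carry nontrivial structure in both coordinates, the relative interleaving of the two blocks is a genuine canonical invariant rather than an artifact, and that exactly two interleavings are distinguishable. This requires unpacking how the product order on a two-block configuration interacts with the orderings on the $\mathcal{K}_0$- and $\mathcal{K}_1$-structures, and checking via Theorem~\ref{FiniteERProducts} that no further refinement or collapse occurs. Everything else is bookkeeping once the canonical patterns are correctly enumerated and matched against the displayed formula.
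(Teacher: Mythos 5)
Your final count and your organization of pairs into four classes agree with the paper, whose entire justification of this Fact is the enumeration in the paragraph following its statement; your upper/lower-bound scaffolding (a tautological coloring by pattern, plus stabilization of an arbitrary coloring on each pattern class) is the standard way to make that enumeration rigorous. However, two points need repair. First, your setup misidentifies the combinatorics of the base set: an element of $\mathcal{AR}_1$ is a single block $\lgl n,(x_0,x_1)\rgl$ whose two coordinates are one-point substructures, and the decisive dichotomy for a pair is whether the two elements have the \emph{same} depth $n$ (i.e., lie in the same block of $C$) or different depths. Your opening claim that a pair ``involves two blocks $B(i)$, $B(i')$ with $i<i'$'' is exactly backwards: the cross-block pairs contribute only the leading $1$ (after thinning they all receive one color), and every other term comes from same-block pairs. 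Correspondingly, the factor of $2$ in the cross term is not an ``interleaving of two blocks'': for a same-block pair with $x_0<x_0'$ in the $\mathcal{K}_0$-coordinate, it records whether $x_1<x_1'$ or $x_1>x_1'$ in the $\mathcal{K}_1$-coordinate, i.e., the two diagonal orientations inside a single product block. Your later case analysis does implicitly treat same-block pairs, which is why the total comes out right, but the stated framework contradicts it and, taken literally, would yield only the single cross-block color.

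Second, Theorem \ref{FiniteERProducts} is the wrong tool for the upper bound, for two reasons: it requires the OPFAP, which Fact \ref{fact.RamseydegJ=2} does not assume (a generating sequence only requires the Ramsey property of the classes), and it canonizes equivalence relations on single copies of $(\bsA_{n,j})_{j}$ inside a product, not colorings of pairs of points of $\mathcal{AR}_1$; such a pair is not an element of $\mathcal{AR}_2$, so Corollary \ref{canonical on finite fronts} does not apply directly either. What is actually needed is only the pigeonhole: a same-block pair of a fixed pattern amalgamates into a structure embeddable in $(\bsA_{m,j})_{j\in 2}$ for large $m$, so each pattern class can be coded into a subset of $\mathcal{AR}_{m+1}$ (as in Lemma \ref{lem.1}) and the coloring made constant on it by finitely many applications of the Ramsey property of $\mathcal{C}$ (equivalently {\bf A.4}, or Theorem \ref{thm.Sokic}). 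Your lower-bound argument --- that every pattern is realized inside every block of every $C$, by clause (3) of Definition \ref{defn.A_k} --- is fine as stated and needs no canonization.
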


The $1$ comes from the fact that a pair can come from different blocks; for a pair coming from the same block,
 $\Iso(\mathcal{K}_0,2)$  takes care of the case when the pair has the same second dimensional coordinate,  $\Iso(\mathcal{K}_1,2)$ takes care of the case  when the pair has the same first dimensional coordinate, and  $ 2 \Iso(\mathcal{K}_0,2)\cdot \Iso(\mathcal{K}_1,2) $ is the number of possible different colors for pairs which are diagonal to each other.

For larger $J$ and $m$, the Ramsey degrees can be obtained in a similar manner as above.
For example,
$R(\mathcal{U}_{\mathcal{H}^2},3)=24$.
We leave the reader with the following:
$R(\mathcal{U}_{\mathcal{H}^2},2)=5$,
$R(\mathcal{U}_{\mathcal{H}^3},2)=14$,
and we conjecture that in general,
$R(\mathcal{U}_{\mathcal{H}^n},2)=\frac{3^n-1}{2}+1$.


\section{Canonical equivalence relations for products of  structures \\ from \Fraisse\ classes
of finite ordered relational structures}\label{sec.ProductER}

In the main theorem of this section,  Theorem \ref{FiniteERProducts}, we extend  the finite  \Erdos-Rado Theorem \ref{thm.finiteER} to finite products of sets as well as finite products of 
 members of  \Fraisse\ classes of finite ordered relational structures with the Ramsey property and an additional property which we shall call the {\em Order-prescribed Free Amalgamation Property}, defined below.
In particular, this extends the 
Product Ramsey Theorem \ref{thm.Sokic} from finite colorings to equivalence relations for  \Fraisse\ classes with the aforementioned properties.
Theorem \ref{FiniteERProducts} will follow from Theorem \ref{thm.ERProducts}, 
which gives canonical equivalence relations for blocks from topological Ramsey spaces constructed from generating sequences for these special types of \Fraisse\ classes.
We proceed in this manner for two reasons.
First, the 
strength of topological Ramsey space theory, and in particular the
availability  of the Abstract Nash-Williams Theorem, greatly streamlines the proof.
Second, our desired application of 
Theorem \ref{FiniteERProducts} is in the proof of Theorem \ref{canonical R} in Section \ref{sec.canon}
to  find the canonical equivalence relations on fronts for
 topological Ramsey spaces constructed from a generating sequence.

Recall that  $|\bsB|$ denotes the universe of the structure $\bsB$, and $\|\bsB\|$ denotes the cardinality of the universe of $\bsB$.
For a structure $\bsX_{j}\in\mathcal{K}_j$, we shall let $\{x_{j}^p:p<\|\bsX_{j}\|\}$ denote the members of the universe $|\bsX_{j}|$ of $\bsX_{j}$, enumerated in $<$-increasing order.

\begin{defn}[Order-Prescribed Free Amalgamation Property  (OPFAP)]\label{def.opfap}
An ordered relational \Fraisse\ class $\mathcal{K}$ has the 
{\em Order-Prescribed   Free Amalgamation Property} if 
the following holds.
Suppose $\bsX,\bsY,\bsZ$ are structures in $\mathcal{K}$ 
with embeddings 
 $e:\bsZ\ra\bsX$ and $f:\bsZ\ra\bsY$.
Let $K=\|\bsX\|$, $L=\|\bsY\|$, and $M=\|\bsZ\|$.
Let $\{x^k:k\in K\}$ denote $|\bsX|$  and $\{y^l:l\in L\}$ denote  $|\bsY|$, the universes of $\bsX$ and $\bsY$, respectively.
Let $K'=\{k'_m:m\in M\}\sse K$ and $L'=\{l'_m:m\in M\}\sse L$ be the subsets such that 
$\bsX\re K'=e(\bsZ)$ and 
$\bsY\re L'= f(\bsZ)$.

Let $\vec{\rho}:K\times L\ra\{<,=,>\}$ be any function such that 
\begin{enumerate}
\item[(a)]
For each $m\in M$,
$\vec{\rho}(k'_m,l'_m)=\, =$;
\item[(b)]
For each $m\in M$, $k<k'_m$ and $l>l'_m$ implies $\vec{\rho}(k,l)=\, <$;
and $k>k'_m$ and $l<l'_m$ implies $\vec{\rho}(k,l)=\, >$;
\item[(c)]
For all $(k,l)\in (K\setminus K')\times(L\setminus L')$,
$\vec{\rho}(k,l)\ne\,  =$;
\item[(d)]
 $\vec{\rho}(k,l)=\, <$ implies  for all $l'>l$ and $k'<k$, 
$\vec{\rho}(k',l')=\, <$;\\
and  $\vec{\rho}(k,l)=\, >$ 
implies  for all $l'<l$ and $k'>k$, 
$\vec{\rho}(k',l')=\, >$.
\end{enumerate}

Then there is a  free amalgamation  $(g,h,\bsW)$
of $(\bsZ,e,\bsX,f,\bsY)$ 
and there is a function $\sigma: K+L\ra \|\bsW\|$ such that the following hold:
\begin{enumerate}
\item
$\sigma\re K$ and $\sigma\re[K,K+L)$ are strictly increasing;
\item
$\bsW\re \sigma'' K =g(\bsX)$ and
$\bsW\re \sigma'' [K,K+L)=h(\bsY)$;
\item
For all $m\in M$, $\sigma(k'_m)=\sigma(K+l'_m)$,
and 
$\bsW\re \sigma''\{k'_m :m\in M\}=
\bsW\re \sigma''\{K+l'_m:m\in M\}=g\circ e(\bsZ)=h\circ f(\bsZ)\cong\bsZ$;
\item
For all $(k,l)\in K\times L$,
$w^{\sigma(k)}\,\vec{\rho}\, w^{\sigma(K+l)}$.
\end{enumerate}
Hence, $\bsW$ contains copies of $\bsX$ and $\bsY$ 
which appear as substructures of $\bsW$ in the order prescribed by $\vec{\rho}$.
\end{defn}

In words, the Order-Prescribed  Free Amalgamation Property says that given any structure $\bsZ$ appearing as a substructure of both $\bsX$ and $\bsY$,
one can find a strong amalgamation $\bsW$ of $\bsX$ and $\bsY$  so that the members of the universes of the copies of $\bsX$ and $\bsY$ in $\bsW$ lying between the members of the universe  of the copy of $\bsZ$  can lie in any  order which we prescribed ahead of time, 
and  the only relations between members of the copies of $\bsX$ and $\bsY$  in $\bsW$ are those in $\bsZ$, that is, the amalgamation is free.

\begin{rem}
Note that (3) in Definition \ref{def.opfap} implies that there are no transitive relations on $\mathcal{K}$.
Thus, any \Fraisse\ class which has a transitive relation does not satisfy the OPFAP.
We point out that the classes of ordered finite graphs, ordered finite $K_n$-free graphs, 
and  more generally, the classes of ordered set-systems omitting some collection of  irreducible structures (see \cite{Nesetril/Rodl83})
all satisfy the OPFAP.
\end{rem}

\begin{defn}\label{defn.finite.prod.canonical}
Let  $\mathcal{K}_j$, $j\in J<\om$ be
 \Fraisse\ classes of finite ordered relational structures with the Ramsey property and the OPFAP.
For each $j\in J$, let $\bsA_j,\bsB_j\in\mathcal{K}_j$ such that $\bsA_j\le\bsB_j$.
Given a subset $I_j\sse\|\bsA_j\|$ and $\bsX_j,\bsY_j\in{\bsB_j \choose\bsA_j}$,
we write $|\bsX_j|\, E_{I_j}\, |\bsY_j|$ if and only if for all $i\in I_j$, $x^i_j=y^i_j$.

An equivalence relation $E$ on ${(\bsB_j)_{j\in J}\choose (\bsA_j)_{j\in J}}$ is {\em canonical} if and only if
for each $j\in J$, there is a set $I_j\sse \|\bsA_j\|$ such that
for all $(\bsX_j)_{j\in J},(\bsY_j)_{j\in J}\in {(\bsB_j)_{j\in J}\choose (\bsA_j)_{j\in J}}$,
\begin{equation}
(\bsX_j)_{j\in J}\, E\, (\bsY_j)_{j\in J}\longleftrightarrow
\forall j\in J,\ |\bsX_j|\, E_{I_j}\, |\bsY_j|.
\end{equation}
When $E$ is canonical, given by $E_{I_j}$, $j\in J$, then we shall write $E=E_{(I_j)_{j\in J}}$.
\end{defn}

\begin{thm}\label{FiniteERProducts}
Let  $\mathcal{K}_j$, $j\in J<\om$, be
 \Fraisse\ classes of ordered relational structures with the Ramsey property  and the OPFAP.
For each $j\in J$, let $\bsA_j,\bsB_j\in\mathcal{K}_j$ be such that $\bsA_j\le\bsB_j$.
Then for each $j\in J$, there is a $\bsC_j\in\mathcal{K}_j$ such that for each equivalence relation $E$ on ${(\bsC_j)_{j\in J}\choose(\bsA_j)_{j\in J}}$,
there is a sequence $(\bsB'_j)_{j\in J}\in{(\bsC_j)_{j\in J}\choose(\bsB_j)_{j\in J}}$ such that 
$E$ restricted to ${(\bsB'_j)_{j\in J}\choose(\bsA_j)_{j\in J}}$ is canonical. 
\end{thm}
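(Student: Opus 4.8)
The plan is to lift Rado's proof of the canonical Ramsey theorem to this setting, using the Product Ramsey Theorem of \Sokic\ (Theorem~\ref{thm.Sokic}) in place of finite Ramsey's theorem and using the OPFAP in place of the trivial realizability of interleaving patterns that is available for linear orders in the classical \Erdos--Rado argument (Theorem~\ref{thm.finiteER}). Write $n_j=\|\bsA_j\|$ for each $j\in J$. The target is to produce $(\bsB'_j)_{j\in J}$ on which $E$-relatedness of two copies of $(\bsA_j)_{j\in J}$ depends only on which coordinates the copies share, coordinatewise in each factor.

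First I would encode $E$ as finitely many $2$-colorings. A pair $(\bsX_j)_{j\in J},(\bsY_j)_{j\in J}\in{(\bsC_j)_{j\in J}\choose(\bsA_j)_{j\in J}}$ determines, in each factor $j$, a finite \emph{configuration type}: the substructure $\bsX_j\cup\bsY_j$ of $\bsC_j$ together with the markings of which coordinates of $\bsX_j$ and $\bsY_j$ coincide and the relative $<$-order of the remaining coordinates. Since each $\bsA_j$ has fixed finite size and we only ever see configurations embedding into $\bsB_j$, there are finitely many types. Attaching to each type the bit ``the two embedded copies are $E$-related'' turns $E$ into finitely many colorings, one coloring of copies of each (marked) configuration structure. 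I would then absorb all of these simultaneously: choosing each $\bsC_j$ large enough -- possible because $\mathcal{K}_j$ contains arbitrarily large structures and has the Ramsey property -- and iterating Theorem~\ref{thm.Sokic} over the finitely many configuration structures, I obtain $(\bsB'_j)_{j\in J}\in{(\bsC_j)_{j\in J}\choose(\bsB_j)_{j\in J}}$ homogeneous for every configuration coloring at once. On $(\bsB'_j)_{j\in J}$, whether two copies are $E$-related then depends only on their configuration type.

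The heart of the argument is to collapse this type-dependence to coordinate-agreement. A priori the $E$-bit could also depend on the relative \emph{order} of the non-shared coordinates, or on interactions between distinct factors $j$. To rule both out I would use the OPFAP exactly as it is designed: given a prescribed common substructure $\bsZ$ (the intended agreement set) of two copies of $\bsA_j$ and a prescribed interleaving order $\vec{\rho}$ meeting conditions (a)--(d) of Definition~\ref{def.opfap}, the OPFAP realizes an actual such pair, freely amalgamated, inside a structure of $\mathcal{K}_j$ and hence (for $\bsC_j$ large) inside $\bsC_j$. Freeness guarantees that no relations beyond those of $\bsZ$ are introduced, so I can transpose the order of two adjacent non-shared coordinates, or alter a single factor at a time, by passing through intermediate pairs. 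For each $j$ I would define $I_j\subseteq n_j$ to be the set of coordinates whose alteration alone can change the $E$-class, and then verify $E\re{(\bsB'_j)_{j\in J}\choose(\bsA_j)_{j\in J}}=E_{(I_j)_{j\in J}}$: homogeneity keeps the $E$-bit constant along each elementary move, transitivity of $E$ propagates equivalence along chains of such moves, so equivalence is seen to be insensitive to order and to coordinates outside $\bigcup_j I_j$, while sensitivity to $I_j$-coordinates follows from their very definition.

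I expect the main obstacle to be precisely this final coherence step: organizing the transposition and single-factor chains so that order-dependence and cross-factor dependence are genuinely eliminated, while certifying at each link that the intermediate configuration is legitimately realizable -- that is, that the order prescription $\vec{\rho}$ one needs satisfies the compatibility conditions (a)--(d) required by the OPFAP, and that the resulting structure embeds into the ambient $\bsC_j$. Isolating the correct definition of each $I_j$ and checking both inclusions of $E=E_{(I_j)_{j\in J}}$ is the crux; by contrast, the reduction to finitely many colorings and the homogenization are routine once Theorem~\ref{thm.Sokic} is in hand. It is this need for abundant realizable configurations that explains why the paper's route runs through the topological Ramsey space version, where condition~(3) of a generating sequence supplies arbitrarily large ambient structures and makes realizability automatic.
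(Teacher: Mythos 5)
Your plan is sound and, at its combinatorial core, coincides with the paper's: the paper likewise homogenizes the $E$-bit over all marked amalgamation (``configuration'') types, defines $I_j$ as the set of coordinates on which every $E$-related pair must agree, and then kills dependence on the interleaving order and on cross-factor interaction via OPFAP-realized elementary moves chained together by transitivity of $E$ (Claims \ref{claim.A}--\ref{claim.D}, the ``maximal switching pair'' argument of Claims \ref{claim.NNL} and \ref{claim.C}, and the induction on the number of factors in the proof of Theorem \ref{thm.ERProducts}). Where you genuinely diverge is the homogenization engine: you iterate the finite Product Ramsey Theorem \ref{thm.Sokic} once per configuration type, staying in the finite category throughout, whereas the paper first passes to the infinite topological Ramsey space $\mathcal{R}(\lgl\bsA_k:k<\om\rgl)$ built from a generating sequence and applies the Abstract Nash-Williams Theorem (Lemma \ref{lem.1}), deriving the finite statement as a corollary. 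Your route is more elementary and self-contained for Theorem \ref{FiniteERProducts} itself; the paper's route buys the stronger infinitary Theorem \ref{thm.ERProducts}, which is what is actually consumed later in the proof of Theorem \ref{canonical R}, and it makes ``enough room'' automatic since every finite configuration recurs in infinitely many blocks. If you carry out your version, note two things. First, the witnessing configurations in the coherence step --- the auxiliary third copy $\bsZ$ connecting two copies that agree on $I_j$, and the intermediate pairs in your transposition chains --- need not embed into $\bsB_j$; in the paper they live in the larger ambient $B$ while canonicity is asserted only on $C\le B$. So you must homogenize a structure $\bsD_j$ large enough to contain all free amalgamations of $\bsB_j$-configurations over their common parts and then locate $\bsB'_j$ inside it; homogenizing a bare copy of $\bsB_j$ does not suffice. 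Second, the step you rightly flag as the crux is where essentially all the work of the paper's proof lies: one cannot transpose arbitrary non-shared coordinates in a single move, and the moves have to be organized around maximal switching pairs within the intervals cut out by $I_j$ before the single-factor and product reductions go through; your outline names the correct moves but a complete write-up would have to reproduce essentially that argument.
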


Theorem \ref{FiniteERProducts} will follow immediately from the next theorem.

\begin{thm}\label{thm.ERProducts}
Let
$\lgl \bsA_k:k<\om\rgl$ be a generating sequence associated to some \Fraisse\ classes of finite ordered relational structures $\mathcal{K}_j$, $j\in J$, each satisfying the Ramsey property  and the OPFAP.
Let $n<\om$ and $L\sse J_n$ be given,
 and let $E$ be 
 an equivalence relation on  
$\bigcup_{k\ge n}{ (\bsA_{k,j})_{j\in L}\choose (\bsA_{n,j})_{j\in L}}$
such that
$E\, \sse \bigcup_{k\ge n}{ (\bsA_{k,j})_{j\in L}\choose (\bsA_{n,j})_{j\in L}}\times { (\bsA_{k,j})_{j\in L}\choose (\bsA_{n,j})_{j\in L}}$.
Then
there is a  $C\in \mathcal{R}(\lgl \bsA_k:k<\om\rgl)$ and there are index sets
$I_{j}\sse \|\bsA_{n,j}\|$ such that 
for all  
$k\ge n$,
$E=E_{(I_j)_{j\in L}}$ when restricted to 
${(\bsC_{k,j})_{j\in L}\choose (\bsA_{n,j})_{j\in L}}$.
That is, 
for each $k\ge n$,
 and each pair $(\bsX_{n,j})_{j\in L},(\bsY_{n,j})_{j\in L}\in {(\bsC_{k,j})_{j\in L}\choose (\bsA_{n,j})_{j\in L}}$,
$$
\left((\bsX_{n,j})_{j\in L}\, E\, (\bsY_{n,j})_{j\in L} \longleftrightarrow
\forall j\in L,\ 
|\bsX_{n,j}|\, E_{I_{j}}\, |\bsY_{n,j}|\right).
$$
\end{thm}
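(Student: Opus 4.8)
The plan is to isolate a finite, single-block canonization statement --- of the same shape as Theorem \ref{FiniteERProducts}, which I would prove directly --- and then to globalize it to one member $C\in\mathcal{R}(\lgl\bsA_k:k<\om\rgl)$ by pigeonhole. The finite statement is: given $\bsA_{n,j}\le\bsE_j$ in $\mathcal{K}_j$ for $j\in L$, there are $\bsD_j\in\mathcal{K}_j$ with $\bsE_j\le\bsD_j$ such that every equivalence relation on ${(\bsD_j)_{j\in L}\choose(\bsA_{n,j})_{j\in L}}$ is canonical when restricted to ${(\bsE'_j)_{j\in L}\choose(\bsA_{n,j})_{j\in L}}$ for some $(\bsE'_j)_{j\in L}\in{(\bsD_j)_{j\in L}\choose(\bsE_j)_{j\in L}}$. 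Granting this, I would build an auxiliary $B\in\mathcal{R}$ block by block: for its $l$-th block with $l\ge n$, use clause (3) of Definition \ref{defn.A_k} to go deep enough in $\bA$ and apply the finite statement with $\bsE_j=\bsA_{l,j}$, obtaining a block on which $E$ is canonical with some index tuple $(I_j^l)_{j\in L}$.

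To prove the finite statement I would lift the \Erdos-Rado argument to the relational setting, using the OPFAP precisely where linear orders used to supply witnesses for free. Fix a coordinate $j$ and a position $i<\|\bsA_{n,j}\|$, and let $\bsZ$ be $\bsA_{n,j}$ with its $i$-th point deleted. The OPFAP produces a free amalgam of two copies of $\bsA_{n,j}$ over $\bsZ$ in which the two distinguished points occupy, in a prescribed relative order, the order slot left by the deleted $i$-th point; freeness makes both $\bsZ\cup\{x_i\}$ and $\bsZ\cup\{y_i\}$ genuine copies of $\bsA_{n,j}$ differing in exactly the $i$-th point, so that the $E$-value of this pair genuinely measures the effect of position $i$. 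By clause (3) of Definition \ref{defn.A_k} these amalgams embed into $\bsA_{m,j}$ for all large $m$. Forming the analogous one-coordinate-at-a-time witnesses inside the product $(\bsA_{m,j})_{j\in L}$ and applying the Product Ramsey Theorem \ref{thm.Sokic} of \Sokic\ once for each witness type, I would pass to a sub-copy on which the $E$-value of a one-position difference depends only on its type. This simultaneously determines, across all coordinates and positions, the set $I_j\sse\|\bsA_{n,j}\|$ of \emph{essential} positions --- those $i$ whose homogenized witness value is ``inequivalent''.

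It then remains to check that $E=E_{(I_j)_{j\in L}}$ on the resulting sub-copy. The forward implication, that two copies agreeing on every $I_j$ are $E$-equivalent, is proved by interpolating from one to the other along a chain of single non-essential moves and invoking transitivity of $E$, each move being realized by an OPFAP amalgam carrying the correct relative order; the backward implication is handled by the same interpolation together with the fact that a single essential move produces inequivalent copies. I expect this interpolation to be the main obstacle: one must arrange every intermediate copy to sit inside one ambient structure as a genuine copy of $\bsA_{n,j}$, which is exactly what forces the restriction to \Fraisse\ classes with free amalgamation and no transitive relations (cf.\ the Remark following Definition \ref{def.opfap}), and one must verify that the prescribed orders $\vec\rho$ demanded at each step can be chosen to meet conditions (a)--(d) of the OPFAP.

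Finally, to globalize, note that each $I_j^l$ ranges over the finitely many subsets of the fixed finite set $\|\bsA_{n,j}\|$, so by pigeonhole a single tuple $(I_j)_{j\in L}$ occurs for an infinite set $S$ of block indices $l\ge n$. Since a canonical relation $E_{(I_j)_{j\in L}}$ restricts to $E_{(I_j)_{j\in L}}$ on every sub-copy, I would define $C\le B$ by taking its $k$-th block, for each $k\ge n$, to be a sub-copy of $(\bsA_{k,j})_{j\in J_k}$ drawn from the $l_k$-th block of $B$, where $(l_k)_{k\ge n}$ is strictly increasing with $l_k\in S$ and $l_k\ge k$; the blocks below $n$ are chosen arbitrarily, since the theorem only concerns $k\ge n$. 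Because the $l_k$-th block of $B$ is canonical with tuple $(I_j)_{j\in L}$, so is the $k$-th block of $C$, and thus $C$ witnesses the theorem.
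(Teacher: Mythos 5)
Your overall architecture is legitimate and genuinely different from the paper's: the paper proves the block version first, homogenizing all amalgamation types at once with the Abstract Nash--Williams Theorem applied to $\mathcal{H}_\iota\sse\mathcal{AR}_{m+1}$ (Lemma \ref{lem.1}), and then reads off Theorem \ref{FiniteERProducts} as a corollary, whereas you propose to prove the finite product statement directly and globalize by pigeonhole over blocks. The pigeonhole globalization is sound, since the conclusion only concerns pairs drawn from a single block $C(k)$. The difficulty is that your sketch leaves the actual core of the proof unresolved, and you say so yourself.

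The gap is the step from ``the homogenized verdict of my one chosen witness configuration at position $i$'' to ``$E$ restricted to the sub-copy equals $E_{(I_j)_{j\in L}}$.'' Two copies of $\bsA_{n,j}$ differing only at position $i$ can amalgamate in many distinct order configurations (the new point can land in any slot of the interval between the neighbouring fixed points, and freeness versus extra relations also varies), and your homogenization only decides one such configuration per position. Your proposed chain of single non-essential moves then passes through consecutive pairs whose amalgamation types are \emph{not} the homogenized witness types, so transitivity of $E$ does not close the argument. This is exactly what the bulk of the paper's proof is for: Claim \ref{claim.A} upgrades a single ``inequivalent-at-$i$'' witness to all free one-point differences at $i$; Claims \ref{claim.NNL} and \ref{claim.C} (the ``maximal switching pair'' machinery) show that the set $\mathcal{I}'$ of equivalent types is closed under reordering the non-essential points within each interval, producing a special type $\eta$ in which one copy's non-essential points all precede the other's; and Claim \ref{claim.D} then handles an arbitrary pair $\bsX,\bsY$ agreeing on $I_j$ not by interpolating between them directly but by producing a third copy $\bsZ$ freely amalgamated \emph{below both} in every interval, so that $(\bsX,\bsZ)$ and $(\bsY,\bsZ)$ both realize the known-equivalent type $\eta$. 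Without some version of this switching-and-third-copy argument, your definition of $I_j$ is not even well posed as ``the'' canonical index set, and the equality $E=E_{(I_j)_{j\in L}}$ does not follow. (A secondary omission of the same kind: for $|L|>1$ you need to rule out non-product equivalence relations, which the paper does by induction on $|L|$ via Claims \ref{clm.3} and \ref{claim.10new}; your one-coordinate-at-a-time witnesses face the same issue of unhomogenized intermediate types there.)
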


\begin{proof}
Let $J\le\om$ and $\mathcal{K}_j$, $j\in J$, be a collection of \Fraisse\ classes of finite ordered relational structures with the Ramsey property
 and the Order-Prescribed  Free Amalgamation Property.
Let 
$\lgl \bsA_k:k<\om\rgl$ be a generating sequence associated with the $\mathcal{K}_j$, $j\in J$,
and let $\mathcal{R}$ denote the topological Ramsey space $\mathcal{R}(\lgl \bsA_k:k<\om\rgl)$.
Recall that $J_n=J$ if $J<\om$, and $J_n=n$ if $J=\om$.

Before beginning the inductive proof, we establish some terminology and  notation, and  Lemma \ref{lem.1} below.
Given  $n<\om$, for each  $j\in J_n$  
let $K_{j}$ denote $\|\bsA_{n,j}\|$, the cardinality of the universe  of the structure $\bsA_{n,j}$.
For a given structure $\bsX\in\mathcal{K}_j$,
  let $\{x^0,\dots,x^{\|\bsX\|-1}\}$ denote  $|\bsX|$, the universe of $\bsX$, enumerated in increasing order.
For $M\sse\|X\|$, let $\bsX\re M$ denote the substructure of $\bsX$ on universe $\{x^k:k\in M\}$.
For each $j\in J_n$, 
let Amalg$(n,j)$ denote the collection of all $\bsX\in\mathcal{K}_j$ such that $\bsX$ is an amalgamation of two copies of $\bsA_{n,j}$.
By this we mean precisely that there are set of indices $M_0,M_1\sse\|\bsX\|$ such that 
$M_0$ and $M_1$ each have cardinality $K_{j}$,
$M_0\cup M_1=\|\bsX\|$,
and 
$\bsX\re M_0\cong\bsX\re M_1\cong\bsA_{n,j}$.

By the definition of a generating sequence, given $n<\om$   there is an $m>n$  such that for each $j\in J_n$, every structure $\bsX\in$ Amalg$(n,j)$ 
embeds into $\bsA_{m,j}$.
Define $\mathcal{I}_{j}$ to be
the collection of functions $\iota_j:2K_{j}\ra \|\bsA_{m,j}\|$ 
such that
$\iota_j\re K_{j}$ and $\iota_j\re[K_{j},2K_{j})$ are strictly increasing, 
 the substructure $\bsA_{m,j}\re \iota'' 2 K_{j}$ is in Amalg$(n,j)$, and moreover,
$\bsA_{m,j}\re\iota'' K_{j}\cong \bsA_{m,j}\re \iota''[K_{j},2K_{j})\cong\bsA_{n,j}$.
For each $\iota_j\in \mathcal{I}_{j}$ and $\bsX\cong\bsA_{m,j}$,
 fix the notation
$$
\iota_j(\bsX):=
(\bsX\re\iota'' K_{j},\bsX \re \iota''[K_{j},2K_{j}))=(\{x^{\iota_j(0)},\dots,x^{\iota_j(K_{j}-1)}\},
\{x^{\iota_j(K_{j})},\dots,x^{\iota_j(2K_{j}-1)}\}),
$$
the pair of structures in ${\bsX \choose \bsA_{n,j}}$ determined by $\iota_j$.

Throughout the poof of this theorem, given any structure $\bsD$ which embeds $\bsA_{n,j}$, for any $\bsX,\bsY\in {\bsD \choose\bsA_{n,j}}$,
the pair $(\bsX,\bsY)$ is considered both as an ordered pair of structures isomorphic to $\bsA_{n,j}$ as well  as the substructure of $\bsD\re(|\bsX|\cup|\bsY|)$ with all inherited relations.

\begin{clm}\label{claim.F'}
Let $j\in J_n$.
There is a structure $\bsB\in\mathcal{K}_j$ with a substructure $\bsC\in{\bsB \choose\bsA_{m,j}}$ such that
for each $\iota\in\mathcal{I}_j$,
for each $\tau\in\mathcal{I}_j$ such that $\tau(\bsA_{m,j})\cong\iota(\bsC)$
there is a $\bsV\in{\bsB\choose\bsA_{m,j}}$ such that 
$\tau(\bsV)=\iota(\bsC)$.
\end{clm}

\begin{proof}
Let $p=|\mathcal{I}_j|-1$ and enumerate $\mathcal{I}_j$ as $\lgl \iota^i:i\le p\rgl$.
The proof proceeds by amalgamation in $p$ stages, each stage $i\le p$ proceeding inductively  by amalgamating   to obtaining a $\bsB^i$ which satisfies the claim for the structure $\iota_i(\bsA_{m,j})$.

Let  $\bsW^0$  denote the substructure $\iota^0(\bsA_{m,j})$.
Let $\mathcal{I}^0$ denote the set of $\tau\in\mathcal{I}_j$ such that $\tau(\bsA_{m,j})\cong\bsW^0$, and enumerate $\mathcal{I}^0$ as $\lgl \tau^{0,k}:k\le q^0\rgl$.
Let $e^{0,0}:\bsW^0\ra\bsA_{m,j}$ be the
identity injection on $\bsW^0$,
so that
 $e^{0,0}(\bsW^0)=\iota^0(\bsA_{m,j})$.
Let $f^{0,0}:\bsW^0\ra\bsV^{0,0}$ be an embedding of $\bsW^0$ into a copy  $\bsV^{0,0}$ of $\bsA_{m,j}$ such that $f^{0,0}(\bsW^0)=\tau^{0,0}(\bsV^{0,0})$.
Let $(g^{0,0},h^{0,0},\bsB^{0,0})$ be a free amalgamation of $(\bsW^0, e^{0,0},\bsA_{m,j},f^{0,0},\bsV^{0,0})$, and
let $e^{0,1}=g^{0,0}\circ e^{0,0}$.
Thus, $e^{0,1}:\bsW^0\ra\bsB^{0,0}$.
Let $f^{0,1}:\bsW^0\ra\bsV^{0,1}$ be an embedding of $\bsW^0$ into a copy  $\bsV^{0,1}$ of $\bsA_{m,j}$ such that $f^{0,1}(\bsW^0)=\tau^{0,1}(\bsV^{0,1})$.
Let $(g^{0,1},h^{0,1},\bsB^{0,1})$ be a free amalgamation of $(\bsW^0, e^{0,1},\bsB^{0,0},f^{0,1},\bsV^{0,1})$, and
let $e^{0,2}=g^{0,1}\circ e^{0,1}$,
so that $e^{0,2}:\bsW^0\ra\bsB^{0,1}$.

Given 
$e^{0,k+1}:\bsW^0\ra\bsB^{0,k}$,
let $f^{0,k+1}:\bsW^0\ra\bsV^{0,k+1}$ be an embedding of 
$\bsW^0$ into a copy  $\bsV^{0,k+1}$ of $\bsA_{m,j}$ such that $f^{0,k+1}(\bsW^0)=\tau^{0,k+1}(\bsV^{0,k+1})$.
Let $(g^{0,k+1},h^{0,k+1},\bsB^{0,k+1})$ be a free amalgamation of $(\bsW^0, e^{0,k+1},\bsB^{0,k},f^{0,k_1},\bsV^{0,k+1})$, and
let $e^{0,k+2}=g^{0,k+1}\circ e^{0,k+1}$,
so that $e^{0,k+2}:\bsW^0\ra\bsB^{0,k+1}$.
At the end of the $q^0$ many stages of the construction, let 
$h^{0}= h^{0,q^0}\circ\dots\circ h^{0,0}$
and let $\bsB^0=\bsB^{0,q^0}$,
so that $h^{0}$ embeds  the original copy of $\bsA_{m,j}$ into $\bsB^{0}$.
This concludes the $0$-th stage of constructing $\bsB$.

For the $i$-th stage,
suppose that $0<i\le p$ and $h^{i-1}:\bsA_{m,j}\ra\bsB^{i-1}$ are given.
Let $\mathcal{I}^{i}$ denote the set of $\tau\in\mathcal{I}_j$ such that $\tau(\bsA_{m,j})\cong\bsW^{i}:=\iota^{i}(\bsA_{m,j})$, and enumerate these as $\lgl \tau^{i,k}:k\le q^{i}\rgl$.
Let $e^{i,0}:\bsW^{i}\ra\bsB^{i-1}$ be the embedding such that $e^{i,0}(\bsW^{i})=\iota^{i}(h^{i-1}(\bsA_{m,j}))$.
Let $f^{i,0}:
\bsW^{i}\ra\bsV^{i,0}$ be an embedding of $\bsW^{i}$ into a copy  $\bsV^{i,0}$ of $\bsA_{m,j}$ such that $f^{i,0}(\bsW^i)=\tau^{i,0}(\bsV^{i,0})$.
Let $(g^{i,0},h^{i,0},\bsB^{i,0})$ be a free amalgamation of $(\bsW^{i}, e^{i,0},\bsB_{i-1},f^{i,0},\bsV^{i,0})$, and
let $e^{i,1}=g^{i,0}\circ e^{i,0}$.

For $0<k \le q^i$ suppose 
$e^{i,k}:\bsW^{i}\ra\bsB^{i,k-1}$ is given.
Let $f^{i,k}:\bsW^{i}\ra\bsV^{i,k}$ be an embedding of 
$\bsW^{i}$ into a copy  $\bsV^{i,k}$ of $\bsA_{m,j}$ such that $f^{i,k}(\bsW^{i})=\tau^{i,k}(\bsV^{i,k})$.
Let $(g^{i,k},h^{i,k},\bsB^{i,k})$ be a free amalgamation of $(\bsW^{i}, e^{i,k},\bsB^{i,k-1},f^{i,k},\bsV^{i,k})$, and
let $e^{i,k+1}=g^{i,k}\circ e^{i,k}$.
At the end of the $q^{i}$ many stages of the construction, let 
$\bsB^i=\bsB^{i,q^i}$ and
$h^{i}= h^{i,q^{i}}\circ\dots\circ h^{i,0}$.
This concludes the $i$-th stage.

Repeating the procedure 
for all $i\le p$, we
 obtain a structure $\bsB:=\bsB^{p}$ satisfying the claim.
\end{proof}

\begin{clm}\label{claim.F}
Let  $B\in\mathcal{R}$, $n<\om$ and $L\sse J_n$ be given.
There is a  $C\le B$ such that for all $k\ge m$,
for
each
 pair $(\bsX_j)_{j\in L},(\bsY_j)_{j\in L}\in {(\bsC_{k,j})_{j\in L}\choose(\bsA_{n,j})_{j\in L}}$ and for  each $(\iota_j:j\in L)\in \prod_{j\in L}\mathcal{I}_j$ such that for each $j\in L$, $\iota_j(\bsA_{m,j})\cong (\bsX_j,\bsY_j)$,
there is a $(\bsZ_j)_{j\in L}\in{(\bsB_{k',j})_{j\in L}\choose (\bsA_{m,j})_{j\in L}}$ such that each  $\iota_j(\bsZ_j)=(\bsX_j,\bsY_j)$,
where $k'$ is such that $\bsC_{k,j}$ is a substructure of $\bsB_{k',j}$.
\end{clm}

\begin{proof}
By  Claim \ref{claim.F'},
for each $p\ge m$ and each $j\in L$,
there is a structure $\bsB_{p,j}^*\in\mathcal{K}_j$ containing  a substructure $\bsC_{p,j}^*\in{\bsB_{p,j}^*\choose\bsA_{p,j}}$  with the following property:
Given $\bsU_j\in{\bsC_{p,j}^*\choose\bsA_{m,j}}$ and
 $\iota_j\in\mathcal{I}_j$,
for each $\tau_j\in\mathcal{I}_j$ such that $\tau_j(\bsA_{m,j})\cong(\bsX_j,\bsY_j)$,
there is a $\bsV_j\in{\bsB^*_{p,j}\choose\bsA_{m,j}}$ such that 
$\tau_j(\bsV_j)= \iota_j(\bsU_j)$.
Since for a generating sequence, each structure in $\mathcal{K}_j$ embeds into all but finitely many $\bsA_{k,j}$,
there is a subsequence $(k_p)_{p\ge m}$ such that  each $\bsB_{p,j}^*$ embeds as a substructure of $\bsA_{k_p,j}$.

Thinning through this subsequence, for each $p\ge m$, take $\bsC_{p,j}$ to be a substructure of $\bsB_{k_p,j}$ isomorphic to $\bsA_{p,j}$ satisfying Claim \ref{claim.F'},
and let $\bsC_{p,j}=\bsB_{p,j}$ for each $p<m$.
Then we obtain a $C\le B$ which satisfies the claim.
\end{proof}

For fixed $n$ and $L\sse J_n$, we shall let
 $\mathcal{I}$ denote the set of all 
sequences $(\iota_j)_{j\in L}\in\prod_{j\in L}\mathcal{I}_{j}$.
Given  a sequence $\iota=(\iota_j:j\in L)\in\mathcal{I}$
 and $X(m)\in\mathcal{R}(m)$,
 fix the notation
$$
\iota(X(m)):=((\bsX_{m,j}\re\iota_j'' K_{n,j})_{j\in L},
(\bsX_{m,j}\re\iota_j'' [K_{n,j},2K_{n,j}))_{j\in L}).
$$
Thus, $\iota(X(m))$ is a pair of sequences of structures, each  sequence of which is isomorphic to $(\bsA_{n,j})_{j\in L}$.
Moreover, for each $j\in L$,
the pair 
$(\bsX_{m,j}\re\iota_j'' K_{n,j},\bsX_{m,j}\re\iota_j'' [K_{n,j},2K_{n,j}))$
also determines  a substructure of $\bsX_{m,j}$ which is an amalgamation of two copies of $\bsA_{n,j}$.

\begin{lem}\label{lem.1}
Let 
 $n<\om$ and $L\sse J_n$  be given. 
Let $E$ be an equivalence relation on  $\bigcup_{k\ge n} {(\bsA_{k,j})_{j\in L} \choose (\bsA_{n,j})_{j\in L}}\times{(\bsA_{k,j})_{j\in L} \choose (\bsA_{n,j})_{j\in L}}$.
Let  $m$  be large enough that for each $j\in L$,  all members of {\rm Amalg}$(n,j)$ embed into $\bsA_{m,j}$.
Then there are $C\le B \in\mathcal{R}$ and a subset $\mathcal{I}'\sse\mathcal{I}$ such that
$C\le B$ satisfy Claim \ref{claim.F} and
for all  $k\ge n$ and  all $X(n),Y(n)\in\mathcal{R}(n)|C(k)$,
\begin{equation}
(\bsX_{n,j})_{j\in L}\, E\, (\bsY_{n,j})_{j\in L}
\longleftrightarrow 
\exists U(m)\in\mathcal{R}(m)|B\ \
\exists \iota\in \mathcal{I}'\ \ 
\iota(U(m))=((\bsX_{n,j})_{j\in L},(\bsY_{n,j})_{j\in L})).
\end{equation}
\end{lem}

\begin{proof}
For each $\iota\in\mathcal{I}$,
define
$$
\mathcal{H}_{\iota}=\{r_{m+1}(X): X\in\mathcal{R}\mathrm{\ and\ } E(\iota(X(m)))\}.
$$
Each $\mathcal{H}_{\iota}$ is 
a subset of the Nash-Williams family $\mathcal{AR}_{m+1}$.
Hence, by the Abstract Nash-Williams Theorem, 
there is a $B\in\mathcal{R}$ which is homogeneous for  $\mathcal{H}_{\iota}$, for all $\iota\in\mathcal{I}$.
That is, for each $\iota\in\mathcal{I}$,
either $\mathcal{AR}_{m+1}|B\sse\mathcal{H}_{\iota}$ or else $\mathcal{AR}_{m+1}|B\cap\mathcal{H}_{\iota}=\emptyset$.
Let $\mathcal{I}'=\{\iota\in\mathcal{I}: E(\iota(B(m)))\}$.
Finally, take  $C\le B$ satisfying the conclusion of Claim \ref{claim.F} for each $j\in L$.
\end{proof}

We will prove the following statement by induction on  $M\ge 1$:
Given any   $n$ such that $J_n\ge M$,   $L\in [J_n]^M$,
 and  an equivalence relation $E$ on
$\bigcup_{k\ge n}{(\bsA_{k,j})_{j\in L} \choose (\bsA_{n,j})_{j\in L}}\times{(\bsA_{k,j})_{j\in L} \choose (\bsA_{n,j})_{j\in L}}$,
 there is a $C\in\mathcal{R}$ 
and there are index sets $I_{j}\sse K_{j}$
such that 
for all $k\ge n$ and all $(\bsX_{n,j})_{j\in L},(\bsY_n)_{j\in L}\in {(\bsC_{k,j})_{j\in L} \choose (\bsA_{n,j})_{j\in L}}$,
$$
(\bsX_{n,j})_{j\in L}\, E\, (\bsY_{n,j})_{j\in L}\mathrm{\ if\ and\ only\ if\ }  \forall j\in L,\ |\bsX_{n,j}|\, E_{I_{j}}\, |\bsY_{n,j}|.
$$
\vskip.1in

\underline{Base Case}.
$M=1$.
Let $n<\om$, $j\in J_n$, and  $L=\{j\}$. 
Let $E$ be an equivalence relation such that  $E\sse \bigcup_{k\ge n} {(\bsA_{k,j}) \choose (\bsA_{n,j})}\times{(\bsA_{k,j}) \choose (\bsA_{n,j})}$.
Let $C\le B\in\mathcal{R}$ and  $\mathcal{I}'_{j}$ satisfy Lemma  \ref{lem.1}.
Define 
$$
I_{j}=\{i\in K_{j}:\forall \iota\in \mathcal{I}',\ 
\iota_j(i)=\iota_j(K_{j}+i)\}.
$$
Since each $\iota\in\mathcal{I}'_{j}$ is a sequence consisting of only a single entry, $(\iota_j)$, we shall  abuse notation for the base case and use $\iota$ in place of $\iota_j$.
We make the convention that for each $k<\om$,  $k'$ denotes the number such that $\bsC_{k,j}$ is a substructure of $\bsB_{k',j}$.

\begin{clm}\label{claim.E}
If $\iota\in\mathcal{I}'_{j}$,
 $\tau\in\mathcal{I}_{j}$,
and $\tau(\bsA_{m,j})\cong\iota(\bsA_{m,j})$,
then $\tau\in\mathcal{I}'_{j}$.
\end{clm}

\begin{proof}
Let $\iota$ and $\tau$ be as in the hypothesis.
Let $(\bsX,\bsY)=\iota(\bsC_{m,j})$.
By Claim \ref{claim.F},
there is an $m'\ge m$ and a $\bsV\in{\bsB_{m',j}\choose \bsA_{m,j}}$ such that $\tau(\bsV)=(\bsX,\bsY)$.
Since $\iota\in\mathcal{I}'_{j}$, Lemma \ref{lem.1} implies that
$\bsX\, E\, \bsY$.
Therefore, by Lemma \ref{lem.1}, $\tau$ is also in $\mathcal{I}'_{j}$.
\end{proof}

\begin{clm}\label{claim.A}
Let  $i\in K_{j}\setminus I_{j}$
For each $l\ge m$ there are   $\bsX,\bsY\in {\bsC_{l,j}\choose \bsA_{n,j}}$ such that for each $k\in K_{j}\setminus \{i\}$,
$x^k=y^k$,
$x^i\ne y^i$,
and $\bsC_{l,j}\re(|\bsX|\cup|\bsY|)$ is a free amalgamation of $\bsX$ and $\bsY$.
Let $\iota$ be any map in $\mathcal{I}_{j}$ such that $\iota(\bsA_{m,j})\cong(\bsX,\bsY)$.
Then $\iota\in\mathcal{I}'_{j}$.
\end{clm}

\begin{proof}
First we prove a  general fact.
Let  $\sigma\in\mathcal{I}_{j}$ and $i\in K_{j}$ be such that $\sigma(i)<\sigma(K_{j}+i)$.
Let $\bsV=(\bsX,\bsY)=\sigma(\bsA_{m,j})$.
Let $k\in \|\bsV\|$ be such that $v^k=x^i$.
Take another copy $\bsW=(\bsZ,\bsY')\cong \sigma(\bsA_{m,j})$.
Let $e:\bsY\ra\bsV$ be the identity embedding
and $f:\bsY\ra\bsW$ be such that $f(\bsY)$ equals $\bsY'$.
By the OPFAP, we may freely amalgamate
$(\bsY,e,\bsV,f,\bsW)$ to some $(g,h,\bsU)$
 so that the following hold:
$\bsV\cong\bsU\re(\|\bsU\|\setminus\{k\})$ and
$\bsW\cong\bsU\re(\|\bsU\|\setminus\{k+1\})$.
In words, $\bsU$ consists exactly of copies of the substructures $(\bsX,\bsY)$ and $(\bsZ,\bsY')$ where 
the copies of $\bsY$ and $\bsY'$ coincide, and 
the copies of  $\bsX$ and $\bsZ$ in $\bsW$ differ only on their $i$-th coordinates.
Thus, by an argument similar to Claim \ref{claim.F},
possibly thinning $C$ again, 
we may assume that for each $\sigma\in\mathcal{I}_{j}$ and $i\in K_{j}$ such that $\sigma(i)<\sigma(K_{j}+i)$
and $(\bsX,\bsY)=\sigma(\bsC_{m,j})$,
there  are substructures $\bsZ$ and $\bsW$ as above coming from $\bsB_{m',j}$.
That is,  there is a $z^k$ in $\bsB_{m',j}$ so that the substructure of $\bsB_{m',j}$ restricted to universe of $\sigma(\bsC_{m,j})\cup\{z^k\}$ is isomorphic to $\bsW$.

To prove the claim,
 first note that
since   $i$ is in $K_{j}\setminus I_{j}$,
 there is a $\sigma\in\mathcal{I}'_{j}$ such that $\sigma(i)\ne \sigma(K_{j}+i)$.
Let $(\bsX,\bsY)=\sigma(\bsC_{m,j})$.
Since $\sigma$ is in $\mathcal{I}'_{j}$, it follows that $\bsX\, E\, \bsY$, by Lemma \ref{lem.1}.
Without loss of generality, assume that $x^i<y^i$.
By the previous paragraph, there are structures  $\bsZ\in{\bsB_{m',j}\choose \bsA_{n,j}}$ and 
$\bsW\in{\bsB_{m',j}\choose \bsA_{m,j}}$   such that
\begin{enumerate}
\item
for each $k\in K_{j}\setminus\{i\}$,
$z^k=x^k$,
\item
$z^i<x^i$,
\item
$\bsB_{m',j}\re(|\bsX|\cup|\bsZ|)$ is the free amalgamation of $\bsX$ and $\bsZ$, and
\item
$\sigma(\bsW)=(\bsZ,\bsY)$.
\end{enumerate}
Since $\sigma$ is in $\mathcal{I}'_{j}$, it follows that $\bsZ\, E\, \bsY$.
Hence, $\bsX\, E\, \bsZ$.

Now let $\iota\in\mathcal{I}_{j}$ be any map such that $\iota(\bsA_{m,j})\cong(\bsX,\bsZ)$; that is, the free amalgamation of two copies of $\bsA_{n,j}$ where only their $i$-th coordinates differ.
Then $\iota$ must be in $\mathcal{I}'_{j}$ by Lemma \ref{lem.1}, since $\bsX\, E\, \bsZ$ and there is a $\bsD\in{\bsB_{m',j}\choose\bsA_{m,j}}$ such that $\iota(\bsD)=(\bsX,\bsZ)$.
\end{proof}

\begin{clm}\label{claim.B}
Let  $\iota,\tau\in\mathcal{I}'_{j}$ and  $k\ge m$ be given.
Suppose there are
$\bsV,\bsW\in {\bsB_{k',j}\choose\bsA_{m,j}}$ such that $\iota(\bsV)=(\bsX,\bsY)$ and $\tau(\bsW)=(\bsX,\bsZ)$,
where 
$\bsX,\bsY,\bsZ\in {\bsC_{k,j}\choose\bsA_{n,j}}$.
Then for each $\sigma\in\mathcal{I}_{j}$ such that there is a $\bsU\in {\bsB_{k',j}\choose\bsA_{m,j}}$ such that 
$\sigma(\bsU)=(\bsY,\bsZ)$,
$\sigma$ is in $\mathcal{I}'_{j}$.

Likewise, if there are 
$\bsV,\bsW\in {\bsB_{k',j}\choose\bsA_{m,j}}$ such that $\iota(\bsV)=(\bsX,\bsY)$ and $\tau(\bsW)=(\bsY,\bsZ)$,
then for each $\sigma\in\mathcal{I}_{j}$ for which there is a $\bsU\in {\bsB_{k',j}\choose\bsA_{m,j}}$ such that 
$\sigma(\bsU)=(\bsX,\bsZ)$,
$\sigma$ is in $\mathcal{I}'_{j}$.
\end{clm}

\begin{proof}
The proof is immediate from Lemma \ref{lem.1}, the definition of $\mathcal{I}'_{j}$ and the fact that $E$ is an equivalence relation.
\end{proof}

Our strategy at this point is to find an $\eta\in\mathcal{I}'_j$ such that for each interval between two points of $I_j$, for $(\bsX,\bsY)=\eta(\bsA_{m,j})$,
all the members  of  $\bsX$ in that interval are less than all the members of $\bsY$ in that interval.
This will be done in Claim \ref{claim.C}.
That claim will set us up to show that every map in $\mathcal{I}_j$ which fixes the members of $I_j$ is actually in $\mathcal{I}'_j$.

We now give a few more definitions which will aid in the remaining proofs.
Let $q=|I_{j}|$ and enumerate $I_{j}$ in increasing order as $\{i_p:p<q\}$.
Fix the following notation for the intervals of $K_{j}$ determined by the members of $I_{j}$:
Let $I^0=[0,i_0)$,
for each $p<q-1$ let $I^{p+1}=(i_p,i_{p+1})$,
and let $I^q=(i_{q-1},K_{j})$.
Thus, $K_{j}$ is the disjoint union of $I_{j}$  and   the intervals $I^p$, $p\le q$.
Given $\iota\in\mathcal{I}'_j$ and $(\bsX,\bsY)=\iota(\bsA_{m,j})$, 
for $p\le q$ and $k,l\in I^p$,
we say that $(k,l)$ is the  {\em maximal switching pair of $\iota$ in $I^p$} if  the following holds: 
\begin{enumerate}
\item[(a)]
$l=\max\{i\in I^p:\exists i'\in I^p\, \iota(i')> \iota(K_j+i)\}$ and
\item[(b)]
$k=\min\{i'\in I^p:\iota(i')> \iota(K_j+l)\}$.
\end{enumerate}
In words,  $x^k> y^l$ in $(\bsX,\bsY)$, there are no other members of $(\bsX,\bsY)$ between them, and for every $t>l$ in $I^p$, $y^t$ is greater than every member of $\bsX$ in the interval $I^p$.
We point out that  $x^{\max(I^p)}<y^{\min(I^p)}$ in the structure $(\bsX,\bsY)=\iota(\bsA_{m,j})$ if and  only if there is no maximal switching pair for $\iota$ in the interval $I^p$.
This is the configuration we are heading for in Claim \ref{claim.C} below.

For $\iota\in\mathcal{I}'_j$, define the order relation induced by $\iota$,  $\rho_{\iota}:K_{j}\times K_{j}\ra \{<,=,>\}$,  as follows:
For $(k,l)\in K_{j}\times K_j$ and $\rho\in\{<,=,>\}$,  define $\vec{\rho}_{\iota}(k,l)=\rho$ if and only if $(\iota(k), \iota(K_j+l))=\rho$.

\begin{clm}[Maximal switching pair can be switched]\label{claim.NNL}
Let $\tau\in\mathcal{I}_j'$ and $p\le q$, and let $(k,l)$ be the maximal switching pair in $I^p$.
Then there is a $\sigma\in\mathcal{I}_j'$ such that 
for all $(s,t)\in K_j\times K_j\setminus \{(k,l)\}$, $\vec{\rho}_{\sigma}(s,t)=\vec{\rho}_{\tau}(s,t)$,
and  $\vec{\rho}_{\sigma}(k,l)=\, <$.
\end{clm}

\begin{proof}
Let $\tau\in\mathcal{I}_j'$, let $p\le q$, and let $(k,l)$ be the maximal switching pair for $\tau$ in $I^p$.
Let $(\bsX,\bsY)=\tau(\bsA_{m,j})$.
Since $l\not\in I_j$,  Claim \ref{claim.A} implies
 there is an $\iota\in\mathcal{I}'_{j}$ such that $\iota(k)=\iota(K_{j}+k)$ for all $k\in K_{j}\setminus\{l\}$,
and $\iota(l)<\iota(K_{j}+l)$.
Let $(\bsY_*,\bsZ)=\iota(\bsA_{m,j})$.
Let $e:\bsY\ra(\bsX,\bsY)$ be the identity map on $\bsY$,
and let $f:\bsY_*\ra(\bsY_*,\bsZ)$ be the identity map on $\bsY_*$.

By the OPFAP, there is a free amalgamation of $(\bsX,\bsY)$ and $(\bsY_*,\bsZ)$ with the order prescribed by the  order relation $\vec{\rho}$, which is now described.
Let $\bsU$ denote $(\bsX,\bsY)$ and $\bsV$ denote $(\bsY_*,\bsZ)$.
In words, since the only difference between $\bsY_*$ and $\bsZ$ in $\bsV$ is at their $l$-th members, we identify $\bsY_*$ with $\bsY$ and define  $\vec{\rho}$ between $\bsU$ and $\bsY_*$ as $\vec{\rho}_{\tau}$, and additionally we order $x^k<z^l$ and $z^l$ less than the least member of $\bsU$ above $x^k$.
Precisely,
for $(s,t)\in |U|\times|V|$ such that $u^s=y^i$ and $v^t=y_*^i$ for the same $i\in K_j$,
define $\vec{\rho}(s,t)=\, =$.
This induces the relation $\vec{\rho}(s',t)=\vec{\rho}_{\tau}(s',t)$ for all $s'\in |U|$ and $t\in K_j+1\setminus\{ l+1\}$. 
Let $s_k$ be the number in $|U|$ such that $u^{s_k}=x^k$.
Note that $|V|=K_j+1$ and  $v^{l+1}=z^l$.
Define $\vec{\rho}(s_k,l+1)=\, <$,
and define $\vec{\rho}(s_k+1,l+1)=\, >$.
The rest of $\vec{\rho}$ is completely determined by the above relations, since we require $\vec{\rho}$ to respect the linear orders on $|U|$ and $K_j+1$.
That is, we require that  if $\vec{\rho}(s,t)=\, <$, then $\vec{\rho}(s',t')=\, <$ for all $s'\le s$ and $t'\ge t$;
and if $\vec{\rho}(s,t)=\, >$, then $\vec{\rho}(s',t')=\, >$ for all $s'\ge s$ and $t'\le t$.




By the OPFAP,
there is a free amalgamation $(g,h,\bsW)$ of $(\bsY, e,\bsU,f,\bsV)$ respecting the order $\vec{\rho}$.
There is  a copy $\bsW'\cong\bsW$ which is  a substructure of $\bsC_{i,j}$ for some $i$,
since  each member of $\mathcal{K}_j$ embeds into all but finitely many $\bsC_{i,j}$.
Slightly abusing notation, we have that  $(g,h,\bsW')$ is a free amalgamation  of $(\bsY, e,\bsU,f,\bsV)$ respecting the order $\vec{\rho}$.

Let $\bsU'=(\bsX',\bsY')$ denote $g(\bsU)=g(\bsX,\bsY)$  and let 
 $\bsV'=(\bsY_*',\bsZ')$ denote $h(\bsV)=h(\bsY_*,\bsZ)$, substructures of $\bsW'$.
By choosing $C$ small enough within $B$, similarly to the proof in Claim \ref{claim.F}, 
 we may assume that there are $\bsD_{m,j},\bsE_{m,j}\in{\bsB_{i',j}\choose \bsA_{m,j}}$ such that $\tau(\bsD_{m,j})=\bsU'$
and $\iota(\bsE_{m,j})=\bsV'$.
Since $\tau\in\mathcal{I}'_j$ and $\bsU'=\tau(\bsD_{m,j})$ it follows that $\bsX'\, E\, \bsY'$.
Likewise, $\iota\in\mathcal{I}'_j$ and $\bsV'=\iota(\bsE_{m,j})$  imply that $\bsY'_*\, E\, \bsZ'$.
Since $\bsY'$ and $\bsY_*'$ are the same substructure of $\bsW'$ and $E$ is an equivalence relation,
we have $\bsX'\, E\, \bsZ'$.

Let $\sigma$ be any member of $\mathcal{I}_j$ such that 
$\sigma(\bsA_{m,j})\cong(\bsX',\bsZ')$.
Again, by choosing $C$ small enough within $B$, similarly to the proof in Claim \ref{claim.F}, 
we may assume that there is an $\bsF_{m,j}\in{\bsB_{i',j}\choose \bsA_{m,j}}$  such that $\sigma(\bsF_{m,j})=(\bsX',\bsZ')$.
Then $\sigma$ is in $\mathcal{I}'_j$, since $\bsX'\, E\, \bsZ'$.
Note that for any such  $\sigma$, $\vec{\rho}_{\sigma}$ is the same as $\vec{\rho}_{\tau}$ except at the pair $(k,l)$, where now $\vec{\rho}_{\sigma}(k,l)=\, <$.
Thus, there is a $\sigma\in\mathcal{I}_j'$ satisfies the claim.
\end{proof}

\begin{clm}\label{claim.C}
There is an $\eta\in \mathcal{I}'_{j}$ such that
for each $p\le q$,
$\max(\eta'' I^p)<\min(\eta''\{K_{j}+i:i\in I^p\})$,
and 
there are no relations between $\bsA_{m,j}\re \eta''( K_j\setminus I_j)$ and $\bsA_{m,j}\re \eta''\{K_j+k:k\in K_j\setminus I_j\}$.
\end{clm}

\begin{proof}
Let $p\le q$ be given and assume that $I^p$ is nonempty.
Let $\tau\in\mathcal{I}'_j$ and $(k,l)$ be the maximal switching pair  for $\tau$ in the interval $I^p$.
By finitely many applications of Claim \ref{claim.NNL},
there is a $\tau_l\in\mathcal{I}'_j$ such that for all $(s,t)\in K_j\times (K_j\setminus\{l\})$,
$\vec{\rho}_{\tau_l}(s,t)=\vec{\rho}_{\tau}(s,t)$;
for all $s\le \max(I^p)$, $\vec{\rho}_{\tau_l}(s,l)=\, <$;
and for all $s>\max(I^p)$, $\vec{\rho}_{\tau_l}(s,l)=\, >$.
If $l>\min(I^p)$, then the applications of Claim \ref{claim.NNL} constructed $\tau_l$ so that $\tau_l(k)>\tau_l(l-1)$.
Thus,  there is some $k_1\le k$ such that $(k_1,l-1)$ is the maximal switching pair for $\tau_l$ in the interval $I^p$.
By finitely many applications of Claim \ref{claim.NNL}, we obtain a $\tau_{l-1}\in\mathcal{I}'_j$ such that 
 for all $(s,t)\in K_j\times (K_j\setminus\{l-1\})$,
$\vec{\rho}_{\tau_{l-1}}(s,t)=\vec{\rho}_{\tau_l}(s,t)$;
for all $s\le \max(I^p)$, $\vec{\rho}_{\tau_{l-1}}(s,l-1)=\, <$;
and for all $s>\max(I^p)$, $\vec{\rho}_{\tau_{l-1}}(s,l-1)=\, >$.
Continuing in this manner, we eventually obtain a $\sigma^p\in\mathcal{I}'_j$ such that for all $(s,t)\in K_j\times  K_j\setminus I^p\times I^p$, 
$\vec{\rho}_{\sigma^p}(s,t)=\vec{\rho}_{\tau}(s,t)$;
and for all $(s,t)\in I^p\times I^p$, $\vec{\rho}_{\sigma^p}(s,t)=\, <$.

By induction on the intervals to build an $\eta$ as in the claim as follows:
Starting with any $\tau\in\mathcal{I}'_j$, by the previous paragraph,
there is a $\sigma^0\in\mathcal{I}'_j$ 
 such that 
for all $(s,t)\in K_j\times  K_j\setminus I^0\times I^0$, 
$\vec{\rho}_{\sigma^0}(s,t)=\vec{\rho}_{\tau}(s,t)$;
and for all $(s,t)\in I^p\times I^p$, $\vec{\rho}_{\sigma^0}(s,t)=\, <$.
Given $p<q$ and $\sigma^p$, by the previous paragraph, 
there is a $\sigma^{p+1}\in\mathcal{I}'_j$ 
 such that 
for all $(s,t)\in K_j\times  K_j\setminus I^{p+1}\times I^{p+1}$, 
$\vec{\rho}_{\sigma^{p+1}}(s,t)=\vec{\rho}_{\sigma^p}(s,t)$;
and for all $(s,t)\in I^{p+1}\times I^{p+1}$, $\vec{\rho}_{\sigma^{p+1}}(s,t)=\, <$.

Let $\sigma$ denote $\sigma^q$.
Then $\sigma\in\mathcal{I}'_j$ and 
for each $p\le q$,
$\max(\sigma'' I^p)<\min(\sigma''\{K_{j}+i:i\in I^p\})$.
However,  $\sigma(\bsA_{m,j})$ might not be the free amalgamation of $\bsA_{m,j}\re \sigma''K_j$ and $\bsA_{m,j}\re \sigma''[K_j,2K_j)$
over the structure $\bsA_{m,j}\re\sigma'' I_j$.
Take structures $(\bsX,\bsY)\cong(\bsX',\bsZ)\cong\sigma(\bsA_{m,j})$.
By the OPFAP,
there is a free  amalgamation  $\bsW$ of  $(\bsX,\bsY)$ and $(\bsX',\bsZ)$ with the following properties:
 $\bsX$ and $\bsX'$ are sent to the same substructure, call it $\bsX_*$ of $\bsW$ (hence $\bsX\re I_j$ is sent to the same substructure as $\bsX'\re I_j$).
Moreover, letting $\bsY_*,\bsZ_*$ denote the copies of $\bsY$ and $\bsZ$ in $\bsW$,
we have that 
 for all $p\le  q$, for all $i,k,l\in I^p$, $x_*^i<y_*^k<z_*^l$.
Thus, $\bsY_*\re K_j\setminus I_j$ and $\bsZ_*\re K_j\setminus I_j$ have no relations between them in $\bsW$.
Thus, the substructure $(\bsY_*,\bsZ_*)$ in $\bsW$, is the free amalgamation of two copies of $\bsA_{n,j}$ over the substructure $\bsA_{n,j}\re I_j$,
and for each $p\le q$, all the members of $\bsY_*$ in the interval $I^p$ are less than all the members of $\bsZ_*$ in $I^p$.
Since the structure $(\bsY_*,\bsZ_*)$ is an amalgamation of two copies of $\bsA_{n,j}$, there is an $\eta\in\mathcal{I}_j$ such that $\eta(\bsA_{m,j})\cong(\bsY_*,\bsZ_*)$.
Since $\sigma\in\mathcal{I}'_j$,
we have that $\bsX_*\, E\, \bsY_*$ and $\bsY_*\, E\, \bsZ_*$.
Thus, $\bsY_*\, E\, \bsZ_*$.
Therefore, $\eta$ is in $\mathcal{I}'_j$, by Lemma \ref{lem.1}.
This $\eta$ satisfies the claim.
\end{proof}

\begin{clm}\label{claim.D}
Let $k\ge m$.
Given any $\bsX,\bsY\in {\bsC_{k,j}\choose\bsA_{n,j}}$ such that for all $i\in I_{n}$, $x^i=y^i$,
there is a $\tau\in\mathcal{I}'_{j}$ such that for some $\bsW\in{\bsB_{k',j}\choose\bsA_{m,j}}$,
$\tau(\bsW)=(\bsX,\bsY)$.
\end{clm}

\begin{proof}
Let  $\bsX,\bsY\in {\bsC_{k,j}\choose\bsA_{n,j}}$ be  such that for all $i\in I_{j}$, $x^i=y^i$.
Take $\bsZ\in{\bsB_{k',j}\choose\bsA_{n,j}}$ such that
for each $i\in I_{j}$, $z^i=x^i$,
and 
 for each $p\le q$,
$\max \bsZ\re I^p<\min(\bsX\re I^p,\bsY\re I^p)$,
and  any relations between $\bsZ$ and $\bsX$ and any relations between $\bsZ$ and $\bsY$ involve only members of their universes with indices in $I_{j}$.
(By the OPFAP and possibly thinning $C$ again below $B$, such a $\bsZ$ exists.)
Let $\bsV$ be the substructure of $\bsB_{k',j}$ determined by the universe $|\bsX|\cup|\bsZ|$;
and let $\bsW$ be the substructure of $\bsB_{k',j}$ determined by the universe $|\bsY|\cup|\bsZ|$.
Let $\eta$ be the member of $\mathcal{I}'_{j}$ from Claim \ref{claim.C}.
Then both $\bsV$ and $\bsW$ are isomorphic to the structure $\eta(\bsA_{m,j})$.
Since $\eta\in\mathcal{I}'_{j}$,
we have $\bsZ\, E\, \bsX$ and $\bsZ\, E\, \bsY$.
Since $E$ is an equivalence relation, it follows that $\bsX\, E\, \bsY$.
It follows that any $\tau\in\mathcal{I}_{j}$ for which there is a $\bsD\in{\bsB_{k',j}\choose\bsA_{m,j}}$ such that $\tau(\bsD)=(\bsX,\bsY)$ is in $\mathcal{I}'_{j}$.
Therefore,
each $\tau\in\mathcal{I}_{j}$ which is fixed on indices in $I_{j}$ is also in $\mathcal{I}'_{j}$.
\end{proof}

By Claim \ref{claim.D},
the following is immediate.

\begin{clm}\label{claim.3}
For each $k\ge n$, for all $X(n),Y(n)\in\mathcal{R}(n)|C(k)$,
we have $\bsX_{n,j}\, E\, \bsY_{n,j}$ if and only
$\bsX_{n,j}\, E_{I_{j}}\, \bsY_{n,j}$.
\end{clm}

\begin{proof}
Let $k\ge n$ and  $X(n),Y(n)\in\mathcal{R}(n)|C(k)$.
If $\bsX_{n,j}\, E\, \bsY_{n,j}$, then 
 there is an $\iota\in \mathcal{I}'_{j}$ 
and a $\bsU\in{\bsB_{k',j}\choose\bsA_{m,j}}$
such that
$(\bsX_{n,j},\bsY_{n,j})=\iota(\bsU)$, by Lemma \ref{lem.1}.
Since $\iota\in\mathcal{I}'_{j}$, we have that   for each $i\in I_{j}$, $\iota(i)=\iota( K_{n,j}+i)$; hence, for each $i\in I_{j}$, $x^i_{n,j}=y^i_{n,j}$.
Therefore, $\bsX_{n,j}\, E_{I_{j}}\, \bsY_{n,j}$.
Conversely, suppose $\bsX_{n,j}\, E_{I_{j}}\, \bsY_{n,j}$.
Let $\iota\in\mathcal{I}_{j}$ 
and  $\bsW_{m,j}\in{\bsB_{k',j}\choose\bsA_{m,j}}$ be such that $\iota(\bsW_{m,j})=(\bsX_{n,j},\bsY_{n,j})$.
By Claim \ref{claim.D}, $\iota\in\mathcal{I}'_{j}$. 
Thus,   $\bsX_{n,j}\, E\, \bsY_{n,j}$.
\end{proof}

It follows from Claim \ref{claim.3} and Lemma \ref{lem.1}  that for each  $\tau\in\mathcal{I}_j$,
$\tau$ is in $\mathcal{I}'_j$ if and only if  for all $i\in I_j$, $\tau(i)=\tau(K_j+1)$.
This completes the Base Case.
\vskip.1in

\underline{Induction Hypothesis}.
Given $n$ such that $J_n\ge M$,  $N\le M$, and $L\in [J_n]^N$,
letting $E$ be an equivalence relation such that  $E\, \sse \bigcup_{k\ge n} {(\bsA_{k,j})_{j\in L} \choose (\bsA_{n,j})_{j\in L}}\times{(\bsA_{k,j})_{j\in L} \choose (\bsA_{n,j})_{j\in L}}$,
the following hold.
Fix  any $m$  large enough that for each $j\in L$, all 
amalgamations 
 of two  copies of $\bsA_{n,j}$ can be embedded into $\bsA_{m,j}$, and let  $C\le B$ and $\mathcal{I}'\sse\mathcal{I}$ be obtained as in Lemma \ref{lem.1}
and similarly as in Claims \ref{claim.F'} and \ref{claim.F}.
Letting, for $j\in L$,
\begin{equation}
I_j=\{i\in K_{j}:\forall \iota_j\in\mathcal{I}'_j, \iota_j(i)=\iota_j(K_{j}+i)\},
\end{equation}
the following hold:
\begin{enumerate}
\item[(a)]
$\mathcal{I}'=\Pi_{j\in L}\mathcal{I}'_j$,
where for each $j\in L$, $\mathcal{I}'_j=\{\iota_j:\iota\in\mathcal{I}'\}$.
\item[(b)]
When restricted below $C$,
$E=E_{(I_j)_{j\in L}}$.
\end{enumerate}

\underline{Induction Step}.
$M+1$.
Let $n<\om$ be such that $M+1\ge J_n$.
 Let $L\in [J_n]^{M+1}$, 
and let $E$ be an equivalence relation such that  $E\sse \bigcup_{k\ge n} {(\bsA_{k,j})_{j\in L} \choose (\bsA_{n,j})_{j\in L}}\times{(\bsA_{k,j})_{j\in L} \choose (\bsA_{n,j})_{j\in L}}$.
Let $m$ be large enough that for each $j\in L$, all  amalgamations  of two  copies of $\bsA_{n,j}$ can be embedded into $\bsA_{m,j}$.
Let $l=\max(L)$, and 
let $L'=L\setminus\{l\}$.
We start by fixing $B,C\in\mathcal{R}$,  with $C\le B$,  and  $\mathcal{I}'$ satisfying Lemma  \ref{lem.1}.
For each $j\in L$, let $\mathcal{I}'_j$ denote the collection of those $\iota_j\in\mathcal{I}_j$ for which there exists a $\tau=(\tau_k:k\in L)\in\mathcal{I}'$ such that $\iota_j=\tau_j$.

For  each $W\in\mathcal{R}$ and each $K_l\sse M_{l}:=\|\bsA_{m,l}\|$ such that $\bsA_{m,l}\re K_l\cong\bsA_{n,l}$,
by the induction hypothesis there is 
a $V\le W$ such that for each $X\le V$, $E$ restricted to the copies of $(\bsA_{n,j})_{j\in L}$ in $(\bsX_{m,j})_{j\in L'}{}^{\frown}(\bsX_{m,l}\re K_l)$ is canonical.
By the OPFAP and the definition of generating sequence, 
for any $K_l,K'_l\sse M_{l}$
satisfying $\bsA_{m,l}\re K_l\cong \bsA_{m,l}\re K'_l\cong\bsA_{n,l}$,
there is a $p$ large enough so that there are structures
 $\bsY_{m,l},\bsZ_{m,l}$ in ${\bsA_{p,l}\choose \bsA_{m,l}}$ with
$\bsY_{m,l}\re K_l=\bsZ_{m,l}\re K'_l$.
Thus, possibly thinning $B$, we have that 
the canonical equivalence relation on 
${(\bsB_{p,j})_{j\in L'}{}^{\frown}(\bsD_{n,l})
\choose(\bsA_{n,j})_{j\in L}}$
is the same for all $p\ge m$ and each fixed $\bsD_{n,l}\in{\bsB_{p,l}\choose\bsA_{n,l}}$.

Let $\mathcal{T}_{L'}$ denote the collection of $\tau=(\tau_j:j\in L')$ (where each $\tau_j\in\mathcal{I}_j$)
which give the canonical equivalence relation on 
${(\bsB_{p,j})_{j\in L'}{}^{\frown}(\bsC_{n,l})
\choose(\bsA_{n,j})_{j\in L}}$, for each $p\ge m$ and 
$\bsC_{n,l}\in{\bsB_{p,l}\choose\bsA_{n,l}}$.
By (a) of the induction hypothesis,
$\mathcal{T}_{L'}=\Pi_{j\in L'}\mathcal{T}_j$,
where for each $j\in L'$, $\mathcal{T}_j=\{\tau_j:\tau\in\mathcal{T}_{L'}\}$.
For each $j\in L'$,
let $H_j=\{i\in K_{j}:\forall \tau\in\mathcal{T}_{L'},\ 
\tau_j(i)=\tau_j(K_{j}+i)\}$.
By (b) of the induction hypothesis, below $B$ the canonical equivalence relation when the $l$-th coordinate is fixed is $E_{(H_j)_{j\in L'}}$.

Likewise, 
for  each $W\in\mathcal{R}$ and each collection $K_j\sse M_{j}:=\|\bsA_{m,j}\|$ ($j\in L'$) such that $\bsA_{m,j}\re K_j\cong\bsA_{n,j}$,
by the induction hypothesis, there is 
a $V\le W$ such that for each $X\le V$, $E$ restricted to the copies of $(\bsA_{n,j})_{j\in L}$ in $(\bsX_{m,j}\re K_j)_{j\in L'}{}^{\frown}(\bsX_{m,l})$ is canonical.
By the OPFAP and the  definition of a generating sequence,
it follows that for each $W\in\mathcal{R}$, there is a $V\le W$ such that for all $j\in L'$, whenever  $K_j,K'_j\sse M_{j}$
satisfy $\bsA_{m,j}\re K_j\cong\bsA_{m,j}\re K'_j\cong\bsA_{n,j}$,
then
there are $\bsY_{m,j},\bsZ_{m,j}\in{\bsW_{p,j}\choose\bsA_{n,j}}$, for some $p\ge m$,
such that $\bsY_{m,j}\re K_j=\bsZ_{m,j}\re K'_j$.
Thus, possibly thinning $B$, we may assume that 
the equivalence relation is the same canonical one on each set
${(\bsD_{n,j})_{j\in L'}{}^{\frown}(\bsB_{p,l})
\choose(\bsA_{n,j})_{j\in L}}$
 for all $p\ge m$ and each fixed $(\bsD_{n,j})_{j\in L'}\in{(\bsB_{p,j})_{j\in L'}\choose(\bsA_{n,j})_{j\in L'}}$.

Let $\mathcal{T}_{l}$ denote the collection of $\tau_{l}\in\mathcal{I}_l$ 
which give the canonical equivalence relation on 
the copies of $(\bsA_{n,j})_{j\in L}$ in 
${(\bsD_{n,j})_{j\in L'}{}^{\frown}(\bsB_{p,l})
\choose(\bsA_{n,j})_{j\in L}}$
 for all $p\ge m$ and each fixed $(\bsD_{n,j})_{j\in L'}\in{(\bsB_{p,j})_{j\in L'}\choose(\bsA_{n,j})_{j\in L'}}$.
Let $H_{l}=\{i\in K_{l}:\forall \tau_{l}\in\mathcal{T}_{l},\ 
\tau_{l}(i)=\tau_l(K_{l}+i)\}$.
Thus, below $B$, the canonical equivalence relation when the $l$-th coordinate is fixed is $E_{H_{l}}$.

By the induction hypothesis, $\mathcal{T}_{L'}=\prod_{j\in L'}\mathcal{T}_j$.
Thus, 
 $\mathcal{T}_{L'}\times\mathcal{T}_l=\prod_{j\in L}\mathcal{T}_j$.
Moreover, each $I_j$ must be contained in $H_j$, for each $j\in L$.

\begin{clm}\label{clm.3}
$\prod_{j\in L}\mathcal{T}_j\sse \mathcal{I}'$.
Hence, below $C$,
$E_{(H_{j})_{j\in L}}\sse E$.
\end{clm}

\begin{proof}
Given any $\tau=(\tau_j:j\in L)\in \prod_{j\in L}\mathcal{T}_j$ and $((\bsX_{n,j})_{j\in L},(\bsY_{n,j})_{j\in L})=\tau((\bsC_{m,j})_{j\in L})$,
we see that $(\bsX_{n,j})_{j\in L}\, E\, 
{(\bsX_{n,j})_{j\in L'}}^{\frown}\bsY_{n,l}\, E\, 
(\bsY_{n,j})_{j\in L}$.
Thus, $\tau\in\mathcal{I}'$.

Suppose that 
$(\bsX_{n,j})_{j\in L},(\bsY_{n,j})_{j\in L}\in {(\bsC_{p,j})_{j\in L}\choose(\bsA_{n,j})_{j\in L}}$
satisfy 
$(\bsX_{n,j})_{j\in L}\, E_{(H_{j})_{j\in L}}\, (\bsY_{n,j})_{j\in L}$, where $p\ge n$.
Let $\bsZ_{n,j}=\bsX_{n,j}$ for each $j\in L'$, and let $\bsZ_{n,l}=\bsY_{n,l}$.
Then $(\bsX_{n,j})_{j\in L}\, E$ $(\bsZ_{n,j})_{j\in L}$, and
$(\bsZ_{n,j})_{j\in L}\, E\, (\bsY_{n,j})_{j\in L}$.
Thus, 
 $(\bsX_{n,j})_{j\in L}\, E\, (\bsY_{n,j})_{j\in L}$, by transitivity of $E$.
\end{proof}

\begin{clm}\label{claim.10new}
Below $C$,
$\mathcal{I}'\sse\prod_{j\in L}\mathcal{T}_j$.
\end{clm}

\begin{proof}
Let $\iota:=(\iota_j:j\in L)\in\mathcal{I}'$ and let $((\bsX_{n,j})_{j\in L},(\bsY_{n,j})_{j\in L})=\iota((\bsC_{m,j})_{j\in L})$.
Then $(\bsX_{n,j})_{j\in L}\, E\, (\bsY_{n,j})_{j\in L}$.
Fixing $((\bsX_{n,j})_{j\in L'},(\bsY_{n,j})_{j\in L'})$ and running the arguments for the Base Case on coordinate $l$,
we obtain Claim \ref{claim.C} on $\bsC_{m,l}$.
Take $\eta_l\in\mathcal{T}_l$ as in Claim \ref{claim.C}.
Take  a $\bsZ_{n,l}\in{\bsB_{m',l}\choose \bsA_{n,l}}$ such that 
both $(\bsX_{n,l},\bsZ_{n,l})=\eta_l(\bsV_{m,l})$
and $(\bsY_{n,l},\bsZ_{n,l})=\eta(\bsW_{m,l})$
for some $\bsV_{m,l},\bsW_{m,l}\in{\bsB_{m',l}\choose\bsA_{n,l}}$.
In particular, 
$\bsX_{n,l}\re H_l=\bsZ_{n,l}\re H_l=\bsY_{n,l}\re H_l$.
It follows  that $(\bsX_{n,j})_{j\in L'}\, E\, {(\bsX_{n,j})_{j\in L'}}^{\frown}\bsZ_{n,l}$,
and 
$(\bsY_{n,j})_{j\in L'}\, E\, {(\bsY_{n,j})_{j\in L'}}^{\frown}\bsZ_{n,l}$.
Therefore, 
${(\bsX_{n,j})_{j\in L'}}^{\frown}\bsZ_{n,l}\, E\, 
{(\bsY_{n,j})_{j\in L'}}^{\frown}\bsZ_{n,l}$,
which implies that $(\iota_j:j\in L')\in\mathcal{T}_{L'}$.
Hence, for each $j\in L'$,
$\iota_j$ is in $\mathcal{T}_j$.

By a similar argument, say fixing
$((\bsX_{n,j})_{j\in L\setminus\{0\}},(\bsY_{n,j})_{j\in L\setminus\{0\}})$, we find that $\iota_l$ is in $\mathcal{T}_l$.
Therefore, $\iota\in\prod_{j\in L}\mathcal{T}_j$.
\end{proof}

Therefore, $\mathcal{I}'=\prod_{j\in L}\mathcal{T}_j$.
Hence, below $C$, $E$ is given by $E_{(H_j)_{j\in L}}$.
We conclude by showing that the induction hypotheses (a) and (b) are satisfied for this stage.

If $\iota_j$ is in $\mathcal{I}'$, then there was some $\tau=(\tau_l:l\in L)\in\mathcal{I}'$  such that $\iota_j=\tau_j$.
Since $\mathcal{I}'=\prod_{j\in L}\mathcal{T}_j$, $\iota_j$ must be in $\mathcal{T}_j$. 
Conversely, if $\tau_j\in\mathcal{T}_j$, then taking any $\tau_l\in \mathcal{T}_l$ for $l\in L\setminus\{j\}$,
we have that $(\tau_l:l\in L)$ is in $\mathcal{I}'$.
Thus, $\tau_j$ is in $\mathcal{I}'_j$.
Therefore, $\mathcal{I}'=\prod_{l\in L} \mathcal{I}'_l$, so (a) holds.

Define $I_j$  to be the set of all $i\in K_j$ such that for all $\iota_j\in \mathcal{I}'_j$, $\iota_j(i)=\iota_j(K_j+i)$. 
It was shown above that each $I_j\sse H_j$.
Suppose there is an $i\in H_j\setminus I_j$.
There is an $(\iota_l:l\in L)\in \mathcal{I}'$ such that $\iota_j(i)\ne K_j+i$.
Every $(\tau_l:l\in L)\in\prod_{l\in L}\mathcal{T}_l$
must have $\tau_j(i)=\tau_j(K_j+i)$.
But $\prod_{l\in L}\mathcal{T}_l$ equals $\mathcal{I}'$, a contradiction.
Therefore, each $I_j=H_j$, hence (b) holds.

This finishes the proof of the theorem.
\end{proof}

\begin{rem}
Soki\v{c} has pointed out that it seems to be sufficient to assume the weaker Order-preserving Strong Amalgamation Property,
the same definition as OPFAP except  only requiring the amalgamations  to be strong, not necessarily free.
\end{rem}


\section{General Ramsey-classification theorem for
topological Ramsey spaces constructed from  generating sequences}\label{sec.canon}

We  prove a general Ramsey-classification theorem, Theorem \ref{canonical R},
 for equivalence relations on fronts
 for the class of the topolgical Ramsey spaces introduced in Section \ref{sec.genseq}, where the \Fraisse\ classes have the OPFAP.
Theorem \ref{canonical R} extends Theorem  4.14
from \cite{Dobrinen/Todorcevic11} for canonical equivalence relations on the space $\mathcal{R}_1$ to the more general class of topological Ramsey spaces constructed from a generating sequence.
As the proof here closely follows that in 
\cite{Dobrinen/Todorcevic11}, we shall omit those  proofs which follow by   straightforward  modifications of arguments in that paper.
The essential new ingredient here is that 
the building blocks for Theorem \ref{canonical R} are the canonical equivalence relations  from Theorem \ref{FiniteERProducts}, and handling  this shall require some care.

Throughout this section, 
let $1\le J\le \om$, and $\mathcal{K}_j$, $j\in J$, be \Fraisse\ classes of finite ordered relational structures with the Ramsey property and the Order-Prescribed Free Amalgamation Property.
Let
$\lgl \bsA_k:k<\om\rgl$ be a fixed generating sequence,
and let $\mathcal{R}$ denote the topological Ramsey space $\mathcal{R}(\lgl \bsA_k:k<\om\rgl)$.
Recall that for $j\in J_k$,  $K_{k,j}$ denotes the cardinality of  the structure $\bsA_{k,j}$, and 
 for any structure $\bsB_{k,j} \cong \bsA_{k,j}$,
we let $\{b^i_{k,j}:i<K_{k,j}\}$ denote the enumeration of the universe of $\bsB_{k,j}$ in increasing order.

\begin{defn}[Canonical projection maps on blocks]\label{projections}
Let $k<\om$ be given.
For $\bsB_{k,j}\in
{\bsA_{n,j} \choose \bsA_{k,j}}$
and $I\sse K_{k,j}$, 
let 
 $\pi_I(\bsB_{k,j})= \bsB_{k,j}\upharpoonright \{b_{k,j}^i : i\in I\}$,   
the substructure of $\bsB_{k,j}$ with universe $\{b_{k,j}^i : i\in I\}$.

For  $B(k) = \lgl n,(\bsB_{k,j})_{j\in J_k}\rgl\in\mathcal{R}(k)$, 
we define the following projection maps.
Given  $I_{k,j}\subseteq K_{k,j}$, $j\in J_k$,  let
 \begin{equation}
\pi_{(I_{k,j})_{j\in J_k}}(B(k)) = \lgl n,  (\pi_{I_{k,j}}(\bsB_{k,j}))_{j\in J_k}\rgl,
\end{equation}
and
let 
\begin{equation}
\pi_{<>}(B(k)) = \lgl\rgl,
\end{equation}
 where $\lgl\rgl$ denotes the empty sequence.

We slightly abuse notation by associating $\lgl n, (\emptyset)_{j\in J_k}\rgl$ with $\lgl n\rgl$.
We define the depth projection map as
\begin{equation}
\pi_{\depth}(B(k)) = \lgl n\rgl,
\end{equation}
the depth of $B(k)$ in $\mathbb A$.
Then
when $I_{k,j}=\emptyset$ for all $j\in J_k$,
we associate  $\pi_{(I_{k,j})_{j\in J_k}}(B(k))$  with  $\pi_{\depth}(B(k))$.
Let 
\begin{equation}
\Pi(k) = \{\pi_{<>}\}\cup\{\pi_{(I_{k,j})_{j\in J_k}} : \forall j\in J_k,\ I_{k,j}\subseteq K_{k,j}\}.
\end{equation}
\end{defn}

The canonical equivalence relations on blocks are induced by the canonical projection maps as follows.

\begin{defn}[Canonical equivalence relations on blocks]\label{defn.canoneqblock}
Let $k<\om$, 
and $B(k), C(k)\in\mathcal{R}(k)$.
For  $I_{k,j}\subseteq K_{k,j}$, $j\in J_k$,
define
\begin{equation}
B(k)\, E_{(I_{k,j})_{j\in J_k}}\, C(k)
\longleftrightarrow
\pi_{(I_{k,j})_{j\in J_k}}(B(k)) = \pi_{(I_{k,j})_{j\in J_k}}(C(k)).
\end{equation}
Define
\begin{equation}
B(k)\, E_{<>}\, C(k) \longleftrightarrow \pi_{<>}(B(k)) = \pi_{<>}(C(k)).
\end{equation}
Thus, 
$E_{<>} = \mathcal R(k)\times\mathcal R(k)$.
We also define
\begin{equation} 
B(k)\, E_{\depth}\, C(k)\longleftrightarrow \pi_{\depth}(B(k)) = \pi_{\depth}(C(k)).
\end{equation}
When $I_{k,j}=\emptyset$ for all $j\in J_k$,
then $E_{\depth}$ is a simplified notation for  $E_{(I_{k,j})_{j\in J_k}}$, as  in this case, they are the same equivalence relation.

The collection of {\em canonical equivalence relations} on $\mathcal{R}(k)$ is
 \begin{equation}
\mathcal E(k) =\{E_{<>}\}\cup\{E_{(I_{k,j})_{j\in J_k}} : \forall j\in J_k,\ I_{k,j}\subseteq K_{k,j}\}.
\end{equation}
\end{defn}

For the following definitions, let $X\in\mathcal{R}$, $\mathcal{F}$ be a front on $[\emptyset, X]$, and
 $\vp$ be a function on $\mathcal{F}$.

\begin{defn}\label{defn.innerphi}
We shall say that $\vp$  is {\em inner} 
if for each $b\in\mathcal{F}$, 
$\vp(b)=\bigcup_{i<|b|}\pi_{r_i(b)}(b(i))$,
where each $\pi_{r_i(b)}$ is some member of $\Pi(i)$.
\end{defn}

Thus, for $b=\lgl \lgl n_0,(\bsB_{0,j})_{j\in J_0}\rgl,\dots,\lgl n_{k-1},(\bsB_{{k-1},j})_{j\in J_{k-1}}\rgl\rgl$,
$\vp(b)=\{\lgl\rgl\}\cup\{\lgl n_l, (\bsC_{l,j})_{j\in J_l}\rgl:l\in L\}$,
for  some subset  $L\sse k$, and some
 (possibly empty) substructures
$\bsC_{l,j}$  of $\bsB_{l,j}$.
That is, $\vp$ is inner if it picks out a subsequence of substructures from a given $b$.

For $l<|b|$, let $\vp(b)\re r_l(b)$ denote 
$\bigcup_{i<l}\pi_{r_i(b)}(b(i))$,
the {\em initial segment} of $\vp(b)$ which is obtained from $r_l(b)$.
For $b,c\in\mathcal{F}$, we shall say that $\vp(c)$ is a {\em proper initial segment} of $\vp(b)$, and  write $\vp(c)\sqsubset \vp(b)$, if there is an $l<|b|$ such that 
$\vp(c)=\vp(b)\re r_l(b)\ne \vp(b)$.

\begin{defn}
An inner map $\vp$ is {\em Nash-Williams} if
whenever $b,c\in\mathcal{F}$ and $\vp(b)\ne\vp(c)$,
then $\vp(c)\not\sqsubset\vp(b)$.

An equivalence relation $R$ on $\mathcal{F}$ is {\em canonical}  if there is an
 inner, Nash-Williams map $\vp$ on $\mathcal{F}$ such that 
for all $b,c\in\mathcal{F}$, $b\, E\, c \Llra \vp(b)=\vp(c)$.
\end{defn}

\begin{rem}
As for the topological Ramsey spaces considered in \cite{Dobrinen/Todorcevic11} and \cite{Dobrinen/Todorcevic12}, 
here too there can be different inner, Nash-Williams maps which represent the same canonical equivalence relation.
However, there will be one maximal such map, maximality being with respect to the embedding relation on the \Fraisse\ classes.
(See Remark 4.23 and Example 4.24 
in \cite{Dobrinen/Todorcevic11}.)
As the maximal such map is the one useful for classifying the initial Tukey structures below an ultrafilter associated with a Ramsey space, we consider the maximal inner Nash-Williams map to be the canonizing map.
\end{rem}

We fix the following notation, useful in this and subsequent sections.

\begin{notation}
For  $X\in\mathcal{R}$ and $b,s, t\in\mathcal{AR}$,
fix the following notation.
If $s\sqsubseteq b$, write $b\setminus s\le_{\fin} X$ if 
the blocks in $b$ not in $s$ all come from blocks of $X$;
 precisely,
if
 $b\setminus s =\lgl \lgl n_l,(\bsB_{l,j})_{j\in J_l}\rgl:k\le l<m\rgl$,
then for each $k\le l<m$, there is a block $\lgl n_l, (\bsX_{p,j})_{j\in J_p}\rgl =X(p)$, for some $p$,  such that $(\bsB_{l,j})_{j\in J_l}\le (\bsX_{p,j})_{j\in J_l}$.
For a set $\mathcal{F}\sse\mathcal{AR}$, define the following.
 Let $\mathcal F_s = \{b\in\mathcal F : s\sqsubseteq b\}$,
$\mathcal{F}_s|X=\{b\in\mathcal{F}_s:b\setminus s\le_{\fin} X\}$.

For $b(k)=\lgl n_k,(\bsB_{k,j})_{j\in J_k}\rgl\in\mathcal{R}(k)$,
we let
 $\depth_{\mathbb A}(b(k))=n_k$,
the depth of the block $b(k)$ in $\mathbb A$.
For $\mathcal{F}\sse\mathcal{AR}$,
define $\mathcal{F}_s|X/t=\{b\in \mathcal{F}_s|X: 
\depth_{\mathbb A}(b(|s|))>\depth_{\mathbb A}(t)\}$.
Similarly, let $r_{|s|+1}[s,X]/t=\{b\in r_{|s|+1}[s,X]:\depth_{\mathbb A}(b(|s|))>\depth_{\mathbb A}(t)\}$.
\end{notation}

It is important to note that $b$ being in $\mathcal{F}_s|X$
 or  $\mathcal{F}_s|X/t$ does not imply that $b\le_{\fin} X$; it only means that the blocks of $b$ above $s$ comes from within $X$.

 The next theorem is the main result of this section.

\begin{thm}\label{canonical R}
Let $1\le J\le \om$, and $\mathcal{K}_j$, $j\in J$, be \Fraisse\ classes of finite ordered relational structures with the Ramsey property and the Order-Prescribed Free Amalgamation Property.
Let
$\lgl \bsA_k:k<\om\rgl$ be a fixed generating sequence,
and let $\mathcal{R}$ denote the topological Ramsey space $\mathcal{R}(\lgl \bsA_k:k<\om\rgl)$.

Suppose $A\in\mathcal{R}$, $\mathcal F$ is a front on $[\emptyset,A]$, and $R$ is an equivalence relation on $\mathcal F$. Then there exists $C\leq A$ such that $R$ is canonical on $\mathcal F|C$.
\end{thm}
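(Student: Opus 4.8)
The plan is to argue by induction on the rank of the front $\mathcal{F}$, interleaving a fusion over first blocks (driven by Axiom \textbf{A.4} and the Abstract Nash-Williams Theorem \ref{thm.ANW}) with the blockwise canonization supplied by Theorem \ref{thm.ERProducts}. For the base case, a front of rank $0$ is $\{\emptyset\}$, on which $R$ is trivially canonized by the empty inner map. So I would assume $\mathcal{F}$ has positive rank and that the theorem holds for all fronts of smaller rank on every space of the form $\mathcal{R}(\lgl\bsA_k:k<\om\rgl)$; the tail of such a space above any block is (after reindexing) again a space of the same form, so the induction hypothesis applies to the tail fronts appearing below.

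First I would canonize the tails. For each first block $a\in\mathcal{R}(0)|A$, the set of tails $\mathcal{F}^a=\{b/a:b\in\mathcal{F},\ b(0)=a\}$ is a front of strictly smaller rank, so the induction hypothesis yields a canonizing inner Nash-Williams map on a tail of $A$ above $a$. The point is to make these choices cohere across all first blocks: using a fusion through Axiom \textbf{A.4}, together with the pigeonhole clause (4) of the generating sequence and Theorem \ref{thm.ERProducts} to stabilize which projection $\pi_{r_i(b)}\in\Pi(i)$ is selected at each depth, I would produce a single $B\le A$ for which the canonical projection on each tail block depends only on the data $r_i(b)$ and not on which first block $a$ lies below $B$ was chosen.

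Next I would canonize the dependence on the first block itself. Fixing a representative tail pattern, the relation ``$b\,R\,c$ for $b,c\in\mathcal{F}|B$ sharing the same canonical tail'' induces an equivalence relation on first blocks, i.e.\ on $\bigcup_{k\ge 0}{(\bsA_{k,j})_{j\in J_0}\choose(\bsA_{0,j})_{j\in J_0}}$. Applying Theorem \ref{thm.ERProducts} with $n=0$ and $L=J_0$ gives a $C_0\le B$ and index sets $I_{0,j}\sse\|\bsA_{0,j}\|$ so that this relation is exactly $E_{(I_{0,j})_{j\in J_0}}$, selecting the projection $\pi_{(I_{0,j})_{j\in J_0}}\in\Pi(0)$. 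Combining this with the tail projections defines the candidate inner map $\vp(b)=\bigcup_{i<|b|}\pi_{r_i(b)}(b(i))$. After one more thinning to some $C\le C_0$ via the Abstract Nash-Williams Theorem, removing the finitely many configurations in which $\vp(c)\sqsubset\vp(b)$ while $\vp(b)\ne\vp(c)$, the map $\vp$ becomes Nash-Williams. The equivalence $b\,R\,c\Llra\vp(b)=\vp(c)$ then follows by combining the first-block canonization with the tail canonization and repeatedly swapping one coordinate at a time using transitivity of $R$, exactly as in Claims \ref{clm.3} and \ref{claim.10new} of Theorem \ref{thm.ERProducts}.

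The main obstacle is the mixing between blocks: I must rule out that $R$ depends on the first block and on the tail in an entangled, non-product way, and I must secure the uniformity that lets a single $C$ serve at every depth simultaneously. Both are handled as in Theorem \ref{thm.ERProducts}, where the OPFAP is used to freely amalgamate copies of blocks so that any prescribed relative configuration of two substructures is realized inside some $\bsA_{k,j}$; this is precisely what allows transitivity of $R$ to be leveraged to show that the equivalence factors through the coordinatewise projections. Coordinating these amalgamations through the fusion, so that the finitely many projection decisions at each depth remain compatible below a single $C$, together with the concluding Nash-Williams thinning, is the delicate part of the argument, and is where the close parallel to the proof of Theorem 4.14 of \cite{Dobrinen/Todorcevic11} is exploited.
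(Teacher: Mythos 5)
Your proposed route (induction on the rank of the front, canonizing tails first and then the first block) is genuinely different from the paper's argument, which does no rank induction at all: the paper fixes a function $f$ inducing $R$, introduces the mixing/separating relation on pairs of finite approximations, proves transitivity of mixing and a diagonalization lemma, and then uses Theorem \ref{FiniteERProducts} inside Claim \ref{mixed iff related} to attach to each $s\in\hat{\mathcal F}\setminus\mathcal F$ a single canonical $E_s\in\mathcal E(|s|)$ governing which one-block extensions of $s$ are mixed. The heart of that proof is Proposition \ref{mix}, which shows that whenever two \emph{different} stems $s$ and $t$ are mixed, their projections $\pi_s,\pi_t$ must pick out isomorphic substructures and extensions $a\sqsupset s$, $b\sqsupset t$ are mixed exactly when $\pi_s(a(|s|))=\pi_t(b(|t|))$.

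That cross-branch coherence is precisely where your proposal has a gap. Theorem \ref{thm.ERProducts} canonizes an equivalence relation on a \emph{single} block, i.e.\ on $\bigcup_{k\ge n}{(\bsA_{k,j})_{j\in L}\choose(\bsA_{n,j})_{j\in L}}$; it says nothing about how $R$ relates a pair (first block, tail) to another pair (first block$'$, tail$'$). Your induced ``equivalence relation on first blocks'' is defined by fixing a representative tail pattern, but it is not clear that the resulting relation is independent of the chosen tail, nor that it is transitive, nor --- even granting both --- that knowing the first-block relation and the tail relations separately determines $R$: one still has to show that for $b,c$ with $E_{(I_{0,j})}$-equal first blocks, the set of tail pairs making them $R$-equivalent is exactly the one given by the tail canonization, uniformly across all first blocks and across stems of different lengths (members of $\mathcal F$ need not all have the same length, and the projection at depth $i$ may legitimately depend on $r_i(b)$). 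The coordinate-swapping argument of Claims \ref{clm.3} and \ref{claim.10new} that you invoke lives entirely inside one finite product block, where OPFAP realizes any prescribed configuration of two copies; transporting it to the infinite, tree-structured setting of a front is exactly the content of the mixing machinery and of Proposition \ref{mix}, and is not automatic. Relatedly, your final step --- making $\vp$ Nash-Williams by ``removing the finitely many configurations in which $\vp(c)\sqsubset\vp(b)$'' --- does not work as stated: the Nash-Williams property is a global property of $\vp$ on the front, not a finite list of bad patterns to be thinned away; in the paper it is derived (Claim \ref{finish}) from the mixing analysis, not from an application of the Abstract Nash-Williams Theorem. Since your fallback at each of these points is ``as in the proof of Theorem 4.14 of \cite{Dobrinen/Todorcevic11},'' which is itself a mixing/separating proof, the proposal in effect presupposes the machinery it sets out to replace.
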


\begin{proof}
Let $f:\mathcal F\rightarrow \mathbb N$ be any map which induces $R$. 
We begin by reviewing the concepts of mixing and separating, first introduced in \cite{Proml/Voigt85} and used in a more general form in \cite{Dobrinen/Todorcevic11} and \cite{Dobrinen/Todorcevic12}.
Let  $\hat{\mathcal F}$ denote
$\{r_n(b) : b\in\mathcal F, n\leq |b|\}$,
the collection of all initial segments of members of $\mathcal{F}$. 
For  $s,t\in\hat{\mathcal{F}}$,
we shall say that
 $X$ {\em separates $s$ and $t$} if and only
for all  
$b\in\mathcal{F}_s|X/t$ and $c\in\mathcal{F}_t|X/s$,
$f(b)\ne f(c)$.
 $X$ {\em mixes $s$ and $t$} if and only if there is no $Y\le X$ which separates $s$ and $t$.
$X$ {\em decides for $s$ and $t$} if and only if either $X$ separates $s$ and $t$, or else $X$ mixes $s$ and $t$.

The proofs of the following Lemmas \ref{lem.transmix} - \ref{lema decides 2} are omitted, as they are the same as the proofs of the analogous statements in \cite{Dobrinen/Todorcevic11}.

\begin{lem}[Transitivity of mixing]\label{lem.transmix}
 For every $X\in\mathcal R$ and every $s,t,u\in\hat{\mathcal F}$, if $X$ mixes $s$ and $t$ and $X$ mixes $t$ and $u$, then $X$ mixes $s$ and $u$.
\end{lem}

Since mixing is trivially reflexive and symmetric, it is an equivalence relation.
We shall say that a property $P(s,X)$  ($s\in\mathcal{AR},X\in\mathcal{R}$) is {\em hereditary} if
whenever $P(s,X)$ holds, then $P(s,Y)$ holds for all $Y\le X$.
Likewise,  $P(s,t,X)$ is  {\em hereditary} if 
whenever $P(s,t,X)$ holds, then $P(s,t,Y)$ holds for all $Y\le X$.

\begin{lem}[Diagonalization for Hereditary Properties]\label{diagonalization argument}
\begin{enumerate}
\item 
Suppose $P(\cdot,\cdot)$ is a 
hereditary property, and  that for  every $X\in\mathcal R$ and every $s\in\mathcal{AR}|X$, there exists $Y\leq X$ such that $P(s,Y)$. 
Then for every $X\in\mathcal R$ there exists $Y\leq X$ such that $P(s,Z)$ holds, for every $s\in\mathcal{AR}|Y$ and every $Z\leq Y$.
\item 
Suppose $P(\cdot,\cdot, \cdot)$ is a hereditary property, and that for every $X\in\mathcal R$ and  all $s,t\in\mathcal{AR}|X$, there exists $Y\leq X$ such that $P(s,t,Y)$ holds.
 Then, for every $X\in\mathcal R$ there exists $Y\leq X$ such that $P(s,t,Z)$ holds, for all $s,t\in\mathcal{AR}|Y$ and every $Z\leq Y$.
\end{enumerate}
\end{lem}

\begin{lem}\label{lema decides 2}
For each $A\in\mathcal R$ there is a $B\leq A$ such that $B$ decides for all $s,t\in\hat{\mathcal F}|B$.
\end{lem}

Possibly shrinking $A$, we may assume that $A\in\mathcal R$ satisfies Lemma \ref{lema decides 2}.
We now introduce some notation useful for arguments applying the Nash-Williams Theorem.

\begin{notation}\label{notn.leftmostcopy}
For $i\le k<\om$, we define the projection map  $\pi_{\bsA_i}:\mathcal{R}(k)\ra\mathcal{R}(i)$ as follows.
For $X(k)=\lgl n_k,(\bsX_{k,j})_{j\in J_k}\rgl \in\mathcal R(k)$,
let 
 $\pi_{\bsA_i}(X(k))$ denote 
$\lgl n_k,(\bsY_{i,j})_{j\in J_i}\rgl$,
where 
for each $j\in J_i$,
$\bsY_{i,j}$ is  the projection of $\bsX_{k,j}$ to 
the lexicographic leftmost copy of $\bsA_{i,j}$ within  $\bsX_{k,j}$.
\end{notation}

\begin{clm}\label{mixed iff related}
 For each $s\in (\hat{\mathcal F}\setminus\mathcal F)|A$ and each $X\leq A$, there is a $Z\leq X$ and an equivalence relation $E_s\in\mathcal E(|s|)$ such that 
the following holds:
Whenever
$x,y\in \mathcal{R}(|s|)|Z/s$,
 letting $a=s^{\frown}x$ and $b=s^{\frown}y$, 
 we have that
$Z$ mixes $a$ and $b$ if and only if $x\,E_s\, y$.
\end{clm}

\begin{proof}
Let $n=|s|$ and $X\le A$ be given. 
Let $R_s$ be the following relation on $\mathcal R(n)|A/s$. For all $x,y\in \mathcal{R}(n)|A/s$,
\begin{equation}
x\, R_s\, y\  \Llra\ A\ \mbox{mixes } s^{\frown}x \mbox{ and }\ s^{\frown} y.
\end{equation}
Define 
$\mathcal X = \{Y\le X: A$ mixes $s^{\frown}Y(n)$ 
 and  $s^{\frown}\pi_{\bsA_n}(Y(n+1))\}$. 
By the Abstract Nash-Williams Theorem, there is a $Y\le X$ such that $[\emptyset,Y]\subseteq\mathcal X$ or $[\emptyset,Y]\cap\mathcal X = \emptyset$.

Suppose $[\emptyset,Y]\subseteq\mathcal X$. 
Then for all 
$x,y\in\mathcal{R}(n)|Y/s$,
we have $x\, R_s\, y$.
Fix $x,y\in\mathcal{R}(n)|Y/s$,
let $a=s^{\frown}x$, $b=s^{\frown}y$, 
 and take $Z_1,Z_2\le Y$ such that
$Z_1(n)=x$,
$Z_2(n)=y$,
and $Z_1(n+1)=Z_2(n+1)$.
Then $x\,R_s\,y$ follows from the fact that $Z_1,Z_2\in\mathcal X$ and by transitivity of mixing.
 In this case the proof of the claim finishes by taking $Z=Y$ and $E_s=E_{<>}$.

Suppose now that $[\emptyset,Y]\cap\mathcal X = \emptyset$. 
Then for all 
$x,y\in\mathcal{R}(n)|Y/s$,
 we have
$x\, R_s\, y\rightarrow\ \depth_{\mathbb A}(x)=\depth_{\mathbb A}(y)$.
Let $m$ be large enough that all possible configurations of isomorphic copies of $(\bsA_{n,j})_{j\in J_n}$ can be embedded into $(\bsA_{m,j})_{j\in J_n}$.  
Let $\mathcal{I}_n$ denote the collection of all sequences $(I_{n,j})_{j\in J_n}$, where each $I_{n,j}\sse K_{n,j}$.
(Recall that $K_{n,j}$ is the cardinality of the structure $\bsA_{n,j}$ from the fixed generating sequence.)
For each $\mathbb I\in\mathcal{I}_n$,
 define
\begin{equation}
\mathcal Y_\mathbb I  = \{Z\le Y : \forall x,y\in \mathcal{R}(n)|Z(m)/s\ 
(A \mbox{ mixes } s^{\frown} x\mbox{ and } s^{\frown}y\lra \pi_{\mathbb I } (x)=\pi_{\mathbb I } (y))\}
\end{equation}
Let $\mathcal Y = [\emptyset,Y]\setminus\bigcup_{\mathbb I}\mathcal Y_{\mathbb I}$. 
Notice that $\mathcal Y$ along with the $\mathcal Y_\mathbb I$, $\mathbb I\in\mathcal{I}_n$, form a finite clopen cover of $[\emptyset,Y]$.
 By the Abstract Nash-Williams Theorem, there is $Z\le Y$ such that either $[\emptyset,Z]\subseteq \mathcal Y_{\mathbb I}$ for some $\mathbb I\in\mathcal{I}_n$, or else $[\emptyset,Z]\subseteq \mathcal Y$. 
By Theorem \ref{FiniteERProducts}, the latter case is impossible.
Thus, fix $Z\le Y$ and $\mathbb I\in\mathcal{I}_n$ such that $[\emptyset,Z]\subseteq \mathcal Y_{\mathbb I}$. 
If at least one of the $I_{n,j}$'s is nonempty then  $E_s=E_{\mathbb I}$.
Otherwise, $E_s=E_{\depth}$.
\end{proof}

The following is obtained from  Claim \ref{mixed iff related} and  Lemma \ref{diagonalization argument}.

\begin{clm}\label{claim decide}
 There is a $B\leq A$ such that for each $s\in (\hat{\mathcal F}\setminus\mathcal F)|B$, there is an equivalence relation $E_s\in\mathcal E(|s|)$ satisfying the following:
 For all $a,b\in r_{|s|+1}[s,B]$, $B$ mixes $a$ and $b$ if and only if $a(|s|)\,E_s\,b(|s|)$.
\end{clm}

Fix $B$ as in Claim \ref{claim decide}. 
For $s\in (\hat{\mathcal F}\setminus\mathcal F)|B$ and $n=|s|$, let $E_s$ denote the member of $\mathcal E(n)$  as guaranteed by Claim \ref{claim decide}. 
We say that $s$ is $E_s$-\textit{mixed} by $B$; that is,
 for all $a,b\in r_{n+1}[s,B]$, $B$ mixes $a$ and $b$ if and only if $a(n)\,E_s\,b(n)$. 
Let $\pi_s$ denote the projection which defines $E_s$. 
 Given $a\in\mathcal F|B$, define 
\begin{equation}
\varphi(a) = \bigcup_{i<|a|}\pi_{r_i(a)}(a(i)).
\end{equation}

The proof of the next claim follows in a straightforward manner from the definitions.
We omit the proof, as it is essentially the same as the proof of Claim 4.17 of \cite{Dobrinen/Todorcevic11}.

\begin{clm}\label{claim 3}
The following are true for all $X\leq B$ and all $s,t\in\hat{\mathcal F}| B$.
\begin{enumerate}
\item 
Suppose $s\notin\mathcal F$. Given $a,b\in r_{|s|+1}[s,X]$, if $X$ mixes $a$ and $t$, and $X$ also mixes $b$ and $t$, then $a(|s|)\,E_s\, b(|s|)$. 
\item
 If $X$ separates $s$ and $t$, then for every $a\in\hat{\mathcal F}\cap r_{|s|+1}[s,X]/t$ and every $b\in \mathcal F\cap r_{|t|+1}[t,X]/s$, $X$ separates $a$ and $b$.
\item
 Suppose $s\notin\mathcal F$. 
Then $\pi_s=\pi_{<>}$ if and only if $X$ mixes $s$ and $a$, for all $a\in \hat{\mathcal F}\cap r_{|s|+1}[s,X]$.
\item 
Suppose $s\notin\mathcal F$. 
Then $\pi_s=\pi_{\depth}$ if and only if for all $a,b\in \hat{\mathcal F}\cap r_{|s|+1}[s,X]$, if $X$ mixes  $a$ and  $b$ then $\depth_X(a)=\depth_X(b).$
\item
 If $s\sqsubseteq t$ and $\varphi(s)=\varphi(t)$, then $X$ mixes $s$ and $t$. 
\end{enumerate}
\end{clm}

The next proposition is the crucial step in the proof of the theorem.
It follows the same outline as  Claim 4.18 of \cite{Dobrinen/Todorcevic11}, but more needs to be checked for the general setting of topological Ramsey spaces constructed from a generating sequence.
The key  to this proof is that blocks are composed of sequences of members of \Fraisse\ classes, 
and the definition of generating sequence allows us to find blocks where all possible order configurations of some fixed finite collection of structures occur.

\begin{prop}\label{mix}
Assume that  $s,t\in (\hat{\mathcal F}\setminus\mathcal F)|B$ are mixed by $B$.
Let $k=|s|$ and $l=|t|$.
Then the following hold.
\begin{itemize}
\item[{(a)}] 
$\pi_s=\pi_{<>}$ if and only if  $\pi_t=\pi_{<>}$.
\item[{(b)}] 
$\pi_s=\pi_{\depth}$ if and only if  $\pi_t=\pi_{\depth}$.
\item[{(c)}] 
$\pi_s=\pi_{(I_{s,j})_{j\in J_k}}$
if and only if
$\pi_t=\pi_{(I_{t,j})_{j\in J_l}}$.
\end{itemize}
In the case of (c),
the set
$\{j\in J_k:I_{s,j}\ne\emptyset\}$ must equal $\{j\in J_l: I_{t,j}\ne\emptyset\}$, and 
the projected  substructures are isomorphic.
That is, if $\lgl i,(\bsS_{k,j})_{j\in J_k}\rgl=\pi_s(Z(k))$ and $\lgl i',(\bsT_{|t|,j})_{j\in J_l}\rgl=\pi_t(Z'(l))$,
then
for each $j\in J_k\cap J_l$,
the structures $\bsS_{k,j}$ and $\bsT_{l,j}$ are isomorphic;
in addition,
 for each $j\in J_k\setminus J_l$, $\bsS_{k,j}=\emptyset$,
and for each $j\in J_l\setminus J_k$, $\bsT_{l,j}=\emptyset$.

Furthermore, there is a $C\leq B$ such that for all $s,t\in\hat{\mathcal F}|C$, 
if $C$ mixes $s$ and $t$, then
for 
every $a\in\hat{\mathcal F}\cap r_{k+1}[s,C]/t$ and every $b\in \hat{\mathcal F}\cap r_{l+1}[t,C]/s$, $C$ mixes $a$ and $b$ if and only if $\pi_s(a(k))=\pi_t(b(l))$.
\end{prop}

\begin{proof}
Suppose $s,t\in  (\hat{\mathcal F}\setminus\mathcal F)|B$ are mixed by $B$, and let  $k=|s|$ and $l=|t|$.

(a) 
Suppose $\pi_s=\pi_{<>}$ and $\pi_t\neq \pi_{<>}$.
 By (1) of Claim \ref{claim 3}, 
$B$ mixes $s$ with at most one $E_t$ equivalence class of extensions of $t$.
Since $\pi_t\neq \pi_{<>}$,
there is a $Y\in[\max(k,l),B]$ such that for every $b\in r_{l+1}[t,Y]/s$, $Y$ separates $s$ and $b$. 
But then $Y$ separates $s$ and $t$, contradiction.
Thus, $\pi_t$ must also be $\pi_{<>}$.
By a similar argument, we conclude  that $\pi_s=\pi_{<>}$
if and only if
$\pi_t=\pi_{<>}$.

(b)
will follow from the argument for (c), in the case when all $I_{s,j}$ and $I_{t,j}$ are empty.

 (c) 
Suppose now that both $\pi_s$ and $\pi_t$ are not $\pi_{<>}$. 
 Let $m = \max(k,l)$, and let $n>m$ be large enough that all amalgamations  of two  copies of $(\bsA_{m,j})_{j\in J_m}$ can be embedded into $(\bsA_{n,j})_{j\in J_m}$. 
Let 
$$
\mathcal Z_0 = \{X\in[m,B]: B\ \mbox{separates } s^{\frown} X(k)\mbox{ and } t^{\frown}\pi_{\bsA_l}(X(n))\}
$$
and
$$
\mathcal Z_1 = \{X\in[m,B] : B\ \mbox{separates } s^{\frown}\pi_{\bsA_k}(X(n))\mbox{ and } t^{\frown} X(l)\}.
$$
Applying the Abstract Nash-Williams Theorem twice, 
we obtain an $X\in [m,B]$ such that 
$[m,X]$ is homogeneous for both $\mathcal{Z}_0$ and $\mathcal{Z}_1$.
Since we are assuming that both 
$\pi_s$ and $\pi_t$ are different from $\pi_{<>}$,
it must be the case that $[m,X]\sse \mathcal{Z}_0\cap\mathcal{Z}_1$.
Thus, 
for all $a\in r_{k+1}[s,X]/t$ and  $b\in  r_{l+1}[t,X]/s$, if $a$ and $b$ are mixed by $B$,  then $\depth_{B}(a) =  \depth_{B}(b)$.

Let $\mathcal{I}_k$ denote the collection of all sequences of the form $(I_{j})_{j\in J_k}$, where each $I_{j}\sse K_{n,j}$ and $\pi_{(I_{j})_{j\in J_k}}(B(n))\in\mathcal{R}(k)$.
Likewise,
let $\mathcal{I}_l$ denote the collection of all sequences of the form $(I_{j})_{j\in J_l}$, where each $I_{j}\sse K_{n,j}$ and $\pi_{(I_{j})_{j\in J_l}}(B(n))\in\mathcal{R}(l)$.

For each pair $\mathbb S \in\mathcal{I}_k$ and $\mathbb T\in\mathcal{I}_l$, let
\begin{equation}
\mathcal X_{\mathbb S, \mathbb T} = \{Y\leq X : B\ \mbox{ mixes }\ s^{\frown}\pi_{\mathbb S}(Y(n))\mbox{ and } t^{\frown}\pi_{\mathbb T}(Y(n))\}.
\end{equation}
Applying the Abstract Nash-Williams Theorem finitely-many times, we find a $Y\leq X$ which is homogeneous for
$\mathcal X_{\mathbb S, \mathbb T}$, for
all
 pairs $(\mathbb S,\mathbb T)\in\mathcal{I}_k\times\mathcal{I}_l$.

\begin{subclaimn}\label{subclaim.important}
For each pair $(\mathbb S,\mathbb T)\in\mathcal{I}_k\times\mathcal{I}_l$, 
if $\pi_s\circ\pi_{\mathbb S}(Y(n))\neq\pi_t\circ\pi_{\mathbb T}(Y(n))$, 
then $[\emptyset,Y]\cap\mathcal X_{\mathbb S,\mathbb T}=\emptyset$.
\end{subclaimn}

\begin{proof}
Suppose $\pi_s\circ\pi_{\mathbb S}(Y(n))\neq\pi_t\circ\pi_{\mathbb T}(Y(n))$.
Let 
$S(k)$ denote
$\pi_{\mathbb S}(Y(n))$ which is in $\mathcal{R}(k)$,
and let  $T(l)$ denote 
$\pi_{\mathbb T}(Y(n))$ which is in $\mathcal{R}(l)$.
Then there is a  $d$ and there are some substructures $\bsS_{k,j}\in{\bsY_{n,j}\choose\bsA_{k,j}}$, $j\in J_k$,
and $\bsT_{l,j}\in{\bsY_{n,j}\choose\bsA_{l,j}}$, $j\in J_l$ such that
$S(k)=\lgl d, (\bsS_{k,j})_{j\in J_k}\rgl$ 
and
$T(l)=\lgl d, (\bsT_{l,j})_{j\in J_l}\rgl$.
$\pi_s\circ\pi_{\mathbb S}(Y(n))=\pi_s(S(k))=\lgl d, (\bsS'_{k,j})_{j\in J_k}\rgl$ for some substructures $\bsS'_{k,j}\le \bsS_{k,j}$;
likewise,
$\pi_t\circ\pi_{\mathbb T}(Y(n))=\pi_t(T(l))=\lgl d, (\bsT'_{l,j})_{j\in J_l}\rgl$ for some substructures $\bsT'_{l,j}\le \bsT_{l,j}$.
Since  $\pi_s\circ\pi_{\mathbb S}(Y(n))\neq\pi_t\circ\pi_{\mathbb T}(Y(n))$,
 one of the following must happen:
 (i) there is some $j\in J_k\cap J_l$ such that $\bsS'_{k,j}\ne \bsT'_{k,j}$;
or (ii) there is a $j\in J_k\setminus J_l$ such that $\bsS'_{k,j}\ne\emptyset$;
or (iii)  there is a $j\in J_l\setminus J_k$ such that $\bsT'_{k,j}\ne\emptyset$.

In case (i), without loss of generality, assume that 
$|\bsS'_{k,j}|\setminus|\bsT'_{l,j}|\ne\emptyset$ for some $j\in J_k\cap J_l$;
that is,
the universe of $\bsS'_{k,j}$  is not contained within the universe of $\bsT'_{l,j}$.
Since $\bsS'_{k,j}$ and $\bsT'_{k,j}$ are substructures of $\bsY_{n,j}$,
their universes are subsets of the universe of $\bsY_{n,j}$.
Recall that $K_{n,j}$ is the cardinality of the universe of $\bsY_{n,j}$, and that we enumerate  the members of the universe $|\bsY_{n,j}|$  in increasing order as $\{y^i_{n,j}:i\in K_{n,j}\}$.
Let $p\in K_{n,j}$  be such that $y_{n,j}^p\in|\bsS'_{k,j}|\setminus|\bsT'_{k,j}|$.
Take $q$ large enough that there are $W(n),V(n)\in\mathcal{R}(n)|Y(q)$ such that
for all $i\in J_n\setminus \{j\}$,
$\bsW_{n,i}=\bsV_{n,i}$,
and
the universes of $\bsW_{n,j}$ and $\bsV_{n,j}$ differ only  on the members $w^p_{n,j}$ and $v^p_{n,j}$.
This is possible by the definition of a generating sequence; in particular, because $\mathcal{K}_j$ is a \Fraisse\ class.

Let $U(k)=\pi_{\mathbb S}(W(n))$, $U'(k)=\pi_{\mathbb S}(V(n))$,
$Z(l)=\pi_{\mathbb T}(W(n))$, and $Z'(l)=\pi_{\mathbb T}(V(n))$.
Then $\pi_t(Z(l))=\pi_t(Z'(l))$, which implies that $B$ mixes $t^{\frown} Z(l)$ and $t^{\frown} Z'(l)$.
If $[\emptyset, Y]\sse\mathcal{X}_{\mathbb S,\mathbb T}$,
then
it follows that $B$ mixes $s^{\frown} U(k)$ and $t^{\frown} Z(l)$, 
and $B$ mixes $s^{\frown} U'(k)$ and $t^{\frown} Z'(l)$.
By transitivity of mixing,
$B$ mixes $s^{\frown} U(k)$ and $s^{\frown} U'(k)$.
But $\pi_s(U(k))\ne \pi_s(U'(k))$, since 
$w^p_{n,j}\in
\pi_s(U(k))\setminus \pi_s(U'(k))$ 
(and also $v^p_{n,j}\in \pi_s(U'(k))\setminus\pi_s(U(k))$).
Thus, $U(k)$ is not ${E}_s$ related to $U'(k)$,
so $B$ does not mix   $s^{\frown} U(k)$ and $s^{\frown} U'(k)$, by 
 Claim \ref{claim decide}, a contradiction.
Therefore, 
it must be the case that $[\emptyset,Y]\cap\mathcal{X}_{\mathbb S,\mathbb T}=\emptyset$.

In case (ii), 
if  there is a $j\in J_k\setminus J_l$ with $\bsS'_{k,j}\ne\emptyset$,
then this implies that $J_k>J_l$.
Take $W(n),V(n)\in\mathcal{R}(n)|Y(q)$, for some $q$ large enough, such
that $\bsW_{n,j}$ and $\bsV_{n,j}$ have disjoint universes,
and for all $i\in J_n\setminus\{j\}$,
$\bsW_{n,i}=\bsV_{n,i}$.
Let $U(k)=\pi_{\mathbb S}(W(n))$, $U'(k)=\pi_{\mathbb S}(V(n))$,
$Z(l)=\pi_{\mathbb T}(W(n))$, and $Z'(l)=\pi_{\mathbb T}(V(n))$.
Then $Z(l)=Z'(l)$;
so in particular, $B$ mixes $t^{\frown}\pi_t\circ \pi_{\mathbb T}(W(n))$
and $t^{\frown}\pi_t\circ\pi_{\mathbb T}(V(n))$.
Again, if $[\emptyset, Y]\sse\mathcal{X}_{\mathbb S,\mathbb T}$,
then
 $B$ mixes $s^{\frown} U(k)$ and $t^{\frown} Z(l)$, 
and $B$ mixes $s^{\frown} U'(k)$ and $t^{\frown} Z'(l)$.
By transitivity of mixing,
$B$ mixes $s^{\frown} U(k)$ and $s^{\frown} U'(k)$.
But $\pi_s(U(k))\ne \pi_s(U'(k))$, since 
the universes of $\bsW_{n,j}$ and $\bsV_{n,j}$ are disjoint, and the $j$-th structures in $\pi_s(U(k))$ and $\pi_s(U'(k))$
are not empty and not equal.
Thus, $B$ does not mix   $s^{\frown} U(k)$ and $s^{\frown} U'(k)$, by 
 Claim \ref{claim decide}, a contradiction.
Therefore,  $[\emptyset,Y]\cap\mathcal{X}_{\mathbb S,\mathbb T}=\emptyset$.

By a similar argument as in Case (ii),
we conclude that $[\emptyset,Y]\cap\mathcal{X}_{\mathbb S,\mathbb T}=\emptyset$ in Case (iii) as well.
Thus, in all cases, 
 $[\emptyset,Y]\cap\mathcal{X}_{\mathbb S,\mathbb T}=\emptyset$ .
\end{proof}

It follows from the Subclaim that whenever 
$a\in r_{k+1}[s,Y]/t$, $b\in r_{l+1}[t,Y]/s$ and
 $B$ mixes $a$ and $b$, then $\pi_s(a(k))=\pi_t(b(l))$.
Thus, (b) and (c) follow.


Now we prove there is a $C\leq Y$ such that for all $s,t\in\hat{\mathcal F}|C$, 
if $C$ mixes $s$ and $t$, then
for 
every $a\in\hat{\mathcal F}\cap r_{k+1}[s,C]/t$ and every $b\in \hat{\mathcal F}\cap r_{l+1}[t,C]/s$, $C$ mixes $a$ and $b$ if and only if $\pi_s(a(k))=\pi_t(b(l))$.
Since the forward direction holds below $Y$, it only remains to 
find a $C\le Y$ such that, below $C$, whenever $\pi_s(a(k))=\pi_t(b(l))$, then $C$ mixes $a$ and $b$.

Let  $(\mathbb S,\mathbb T)\in\mathcal{I}_k\times\mathcal{I}_l$ be a pair such that  
$\pi_s\circ\pi_{\mathbb S}(Y(n))=\pi_t\circ\pi_{\mathbb T}(Y(n))$.
It suffices to show that $[\emptyset, Y]\sse \mathcal{X}_{\mathbb S,\mathbb T}$.
Assume also, towards a contradiction, that  $[\emptyset,Y]\cap\mathcal X_{\mathbb S,\mathbb T}=\emptyset$. 
Let $\mathbb S',\mathbb T'$ be a pair in
 $\mathcal{I}_k\times\mathcal{I}_l$ 
satisfying $\pi_s\circ\pi_{\mathbb S'} (Y(n))=\pi_t\circ\pi_{\mathbb T'}(Y(n))$. 
We will prove that $[\emptyset,Y]\cap\mathcal X_{\mathbb S',\mathbb T'}=\emptyset$. 
Take $V(n),W(n)\in\mathcal{R}(n)|Y$ such that 
$\pi_s\circ \pi_{\mathbb S}(V(n)) =\pi_s\circ\pi_{\mathbb S'}(W(n))$ and 
$\pi_t\circ\pi_{\mathbb T}(V(n))=\pi_t\circ\pi_{\mathbb T'}(W(n))$.
This is possible since all $\mathcal{K}_j$ are \Fraisse\ classes with the OPFAP and  we are assuming that 
 $\pi_s\circ\pi_{\mathbb S} (Y(n))=\pi_t\circ\pi_{\mathbb T}(Y(n))$
 and
 $\pi_s\circ\pi_{\mathbb S'} (Y(n))=\pi_t\circ\pi_{\mathbb T'}(Y(n))$. 
Then $Y$ mixes $s^{\frown}\pi_{\mathbb S}(V(n))$ and $s^{\frown}\pi_{\mathbb S'}(W(n))$;
and $Y$ mixes $t^{\frown}\pi_{\mathbb T}(V(n))$ and $t^{\frown}\pi_{\mathbb T'}(W(n))$.
Since 
$Y$ separates  $s^{\frown}\pi_{\mathbb S}(V(n))$ and $t^{\frown}\pi_{\mathbb T}(V(n))$,
and since mixing is transitive,
it
 follows that $Y$ must separate $s^{\frown}\pi_{\mathbb S'}(W(n))$ and $t^{\frown}\pi_{\mathbb T'}(W(n))$.
Thus, $[\emptyset, Y]\cap\mathcal{X}_{\mathbb S',\mathbb T'}=\emptyset$.

This, along with the Subclaim, implies that for all pairs $(\mathbb S,\mathbb B)$ in $\mathcal{I}_k\times\mathcal{I}_l$,
$[\emptyset, Y]\cap\mathcal{X}_{\mathbb S,\mathbb T}=\emptyset$.
But this implies that $Y$ separates $s$ and $t$, a contradiction.
Therefore, 
for all pairs
$(\mathbb S,\mathbb T)\in\mathcal{I}_k\times\mathcal{I}_l$ such that  
$\pi_s\circ\pi_{\mathbb S}(Y(n))=\pi_t\circ\pi_{\mathbb T}(Y(n))$,
we have $[\emptyset,Y]\sse \mathcal{X}_{\mathbb S,\mathbb T}$.
Thus, whenever $U(k)\in \mathcal{R}(k)|Y/(s,t)$ and $V(l)\in\mathcal{R}(l)|Y/(s,t)$ satisfy $\pi_s(U(k))=\pi_t(V(l))$,
then $Y$ mixes $s^{\frown}U(k)$ and $t^{\frown}V(l)$.
By Lemma \ref{lema decides 2}, we get $C\leq Y$ for which the Proposition  holds.
\end{proof}

By very slight, straightforward  modifications to the proofs of Claims 4.19 - 4.21 in \cite{Dobrinen/Todorcevic11}, we obtain the following claim.

\begin{clm}\label{finish}
For all $a,b\in\mathcal{F}|C$,
$a\, R\, b$ if and only if $\vp(a)=\vp(b)$.
Moreover, $\vp(a)\not\sqsubset\vp(b)$.
\end{clm}

By its definition, $\vp$ is inner, and by Claim \ref{finish}, $\vp$ is Nash-Williams and canonizes the equivalence relation $R$.
\end{proof}

\begin{rem}
We point out that the entire proof of Theorem \ref{canonical R} used only instances of the Abstract Nash-Williams Theorem, and not the full power of the Abstract Ellentuck Theorem.
\end{rem}

The following corollary of Theorem \ref{canonical R}
is proved in exactly the same way as Theorem 4.3 in \cite{Dobrinen/Todorcevic11}.

\begin{cor}\label{canonical on finite fronts}
Let $1\le J\le \om$, and $\mathcal{K}_j$, $j\in J$, be \Fraisse\ classes of finite ordered relational structures with the Ramsey property and the Order-Prescribed  Free Amalgamation Property.
Let
$\lgl \bsA_k:k<\om\rgl$ be a fixed generating sequence,
and let $\mathcal{R}$ denote the topological Ramsey space $\mathcal{R}(\lgl \bsA_k:k<\om\rgl)$.

For any $n$, $A\in \mathcal{R}$, and equivalence relation $R$ on $\mathcal{AR}_n|A$,
there is a $B\le A$ such that 
$R$ is canonical on $\mathcal{AR}_n|B$.
This means 
there are equivalence relations  $E_i\in\mathcal E(i)$, $i< n$, such that for all $a,b\in\mathcal{AR}_n|B$ 
$$a\, R\, b\ \mbox {if and only if }\, \forall i<n, a(i)\, E_i\, b(i).$$
\end{cor}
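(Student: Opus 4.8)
The plan is to apply the general front canonization, Theorem~\ref{canonical R}, to the particular front $\mathcal{AR}_n|A$ and then exploit that all of its members have the same length $n$. First I would observe that $\mathcal{AR}_n|A$ is a front (indeed a barrier) on $[\emptyset,A]$: for every $Y\in[\emptyset,A]$ the approximation $r_n(Y)$ is the unique member of $\mathcal{AR}_n|A$ that is an initial segment of $Y$, and no two distinct length-$n$ approximations $a\ne b$ can satisfy $a\sqsubseteq b$, since $a\sqsubseteq b$ with $|a|=|b|=n$ forces $a=b$; so the family is Nash-Williams, in fact Sperner. Applying Theorem~\ref{canonical R} then yields a $C_0\le A$ and an inner, Nash-Williams map $\vp$ canonizing $R$ on $\mathcal{AR}_n|C_0$. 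Because every $a\in\mathcal{AR}_n|C_0$ has $|a|=n$, this map takes the form $\vp(a)=\bigcup_{i<n}\pi_{r_i(a)}(a(i))$, where for each $i<n$ and each node $s=r_i(a)$ the projection $\pi_s$ is the one determined by some $E_s\in\mathcal{E}(i)$ as produced in Claim~\ref{claim decide}.

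The main step, and the one requiring care, is to make the projection used at each level depend only on the level and not on the particular initial segment. Fix $i<n$. The assignment $s\mapsto E_s$ is a coloring of $\mathcal{AR}_i|C_0$ by the fixed finite palette $\mathcal{E}(i)$, and $\mathcal{AR}_i$ is a Nash-Williams family by the same length argument as above. Hence, applying the Abstract Nash-Williams Theorem~\ref{thm.ANW} finitely many times (once for each of the finitely many values in $\mathcal{E}(i)$), I obtain a refinement on which $s\mapsto E_s$ is constant; iterating over the finitely many levels $i<n$ produces a single $C\le C_0$ such that for each $i<n$ there is a fixed $E_i\in\mathcal{E}(i)$ with $E_s=E_i$ for all $s\in\mathcal{AR}_i|C$. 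Restricting $\vp$ to the subfront $\mathcal{AR}_n|C$ of $\mathcal{AR}_n|C_0$ leaves it an inner, Nash-Williams map canonizing $R$, and now its level-$i$ projection is the single map $\pi_{E_i}$ for every $a$. (For nodes that $C$ mixes, this uniformity is already the content of Proposition~\ref{mix}(c); the Nash-Williams recoloring has the advantage of also handling nodes that $C$ separates.) The one point to verify is that the values $E_s$, fixed once $C_0$ was chosen, remain the canonizing projections upon passing to $C\le C_0$; this is immediate, since $E_s$ and $\vp$ were defined through hereditary mixing and separating properties, so $\vp$ restricts to a canonizing map on any subfront.

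Finally I would unwind $\vp(a)=\vp(b)$ into a level-by-level statement. With the projections uniformized, $\vp(a)=\bigcup_{i<n}\pi_{E_i}(a(i))$, where each nonempty contribution $\pi_{E_i}(a(i))$ is a block carrying its depth $\depth_{\mathbb{A}}(a(i))$, and within $a$ these depths strictly increase in $i$. Consequently two such unions coincide exactly when their level-$i$ contributions coincide for every $i$: the depth tags together with the strict increase rule out any cross-level matching, while at levels where $E_i=E_{<>}$ the contribution is the empty sequence for both $a$ and $b$, and there $a(i)\,E_{<>}\,b(i)$ holds vacuously since $E_{<>}=\mathcal{R}(i)\times\mathcal{R}(i)$. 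As $\pi_{E_i}(a(i))=\pi_{E_i}(b(i))$ is by definition the relation $a(i)\,E_i\,b(i)$, this yields $\vp(a)=\vp(b)\Llra\forall i<n\,(a(i)\,E_i\,b(i))$. Combined with $a\,R\,b\Llra\vp(a)=\vp(b)$ from Claim~\ref{finish}, the corollary follows. I expect the uniformization of $E_s$ across each level to be the crux; everything else is bookkeeping about depths and the Nash-Williams shape of $\vp$.
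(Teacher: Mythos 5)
Your proposal is correct and follows essentially the route the paper intends: the paper proves this corollary by deferring to Theorem 4.3 of \cite{Dobrinen/Todorcevic11}, whose argument is exactly your two steps of applying the general front canonization to the Nash--Williams family $\mathcal{AR}_n|A$ and then homogenizing the finitely many possible values of $s\mapsto E_s$ on each level $i<n$ via the Abstract Nash--Williams Theorem. You correctly identify the level-wise uniformization as the only nontrivial content beyond Theorem \ref{canonical R}, and your depth-tag argument for unwinding $\vp(a)=\vp(b)$ into the level-by-level condition is sound.
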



\section{Basic Tukey reductions for selective and Ramsey filters\\
 on general topological Ramsey spaces}\label{sec.basic}

We first remind the reader of the basic definitions of the Tukey theory of ultrafilters. Suppose that $\mathcal{U}$ and $\mathcal{V}$ are ultrafilters. A function $f$ from $\mathcal{U}$ to $\mathcal{V}$ is \emph{cofinal} if every cofinal subset of $(\mathcal{U},\supseteq)$ is mapped by $f$ to a cofinal subset of $(\mathcal{V}, \supseteq)$. We say that $\mathcal{V}$ is \emph{Tukey reducible to $\mathcal{U}$} and write $\mathcal{V} \le_{T} \mathcal{U}$ if there exists a cofinal map $f:\mathcal{U} \rightarrow \mathcal{V}$. If $\mathcal{U} \le_{T}\mathcal{V}$ and $\mathcal{V} \le_{T}\mathcal{U}$ then we write $\mathcal{V}\equiv_{T} \mathcal{U}$ and say that $\mathcal{U}$ and $\mathcal{V}$ are \emph{Tukey equivalent.} The relation $\equiv_{T}$ is an equivalence relation and $\le_{T}$ is a partial order on its equivalence classes.  The equivalence classes are called \emph{Tukey types}.
(See the recent survey paper \cite{Dobrinen13} for more background on Tukey types of ultrafilters.)

When restricted to ultrafilters, the Tukey reducibility relation is a coarsening of the Rudin-Keisler reducibility relation. 
If $h(\mathcal{U})=\mathcal{V}$, then the map sending $X\in\mathcal{U}$ to $h''X \in\mathcal{V}$ witnesses Tukey reducibility. 
Thus,  if $\mathcal{V} \le _{RK} \mathcal{U}$, then $\mathcal{V}\le_{T} \mathcal{U}$.

The work in this section will set up some of the machinery for answering this and Questions 1, 2, and 3  from the Introduction;
we do that in the next section.
The main results in this section, Proposition \ref{BasicReductionTukeyProp}
and Theorem \ref{p-point Tukey theorem}, are proved for general topological Ramsey spaces, in the hope that they may be more generally applied in the future.

An ultrafilter $\mathcal{U}$ on a countable base $X$ has \emph{continuous Tukey reductions} if whenever a non-principal ultrafilter $\mathcal{V}$ is Tukey reducible to $\mathcal{U}$, then every monotone cofinal map $f:\mathcal{U} \rightarrow \mathcal{V}$ is continuous with respect to the subspace topologies on $\mathcal{U}$ and $\mathcal{V}$ inherited from $2^{X}$ when restricted to some cofinal subset of $\mathcal{U}$. The next theorem has become an important tool in the study of the Tukey structure of ultrafilters Tukey reducible to some p-point ultrafilter.

\begin{thm}[Dobrinen and Todorcevic, \cite{Dobrinen/Todorcevic10}] \label{Ppoint Reduction} If $\mathcal{U}$ is a p-point ultrafilter on $\omega$, then $\mathcal{U}$ has continuous Tukey reductions. 
\end{thm}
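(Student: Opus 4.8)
The plan is to take an arbitrary monotone cofinal map $f:\mathcal{U}\to\mathcal{V}$ (the definition of continuous Tukey reductions already restricts attention to such maps) and to produce a single set $A\in\mathcal{U}$ on which $f$ is uniformly continuous, namely on the cofinal subset $\mathcal{U}\re A:=\{U\in\mathcal{U}:U\sse A\}$. Note first that $\mathcal{U}\re A$ is cofinal in $(\mathcal{U},\supseteq)$: for any $V\in\mathcal{U}$ the set $V\cap A$ lies in $\mathcal{U}\re A$ and is a subset of $V$. I aim to find the set $A$ together with an increasing sequence $k_0<k_1<\cdots$ such that, for every $U\in\mathcal{U}$ with $U\sse A$ and every $n$, the set $f(U)\cap[0,n)$ is completely determined by the finite initial segment $U\cap[0,k_n)$. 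This uniform form immediately yields that preimages under $f$ of the basic clopen sets of $\mathcal{V}\sse 2^{\om}$ are relatively open in $\mathcal{U}\re A$, which is exactly continuity in the subspace topology.

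The first ingredient is a decision lemma that uses only monotonicity. Fix a finite set $s\sse\om$, an integer $j$, and a tail $T\in\mathcal{U}$ with $\min T>\max s$. The map $W\mapsto\mathbf{1}[\,j\in f(s\cup W)\,]$, defined for $W\in\mathcal{U}$ with $W\sse T$, is monotone increasing: if $W_0\sse W_1$ then $s\cup W_0\sse s\cup W_1$, whence $f(s\cup W_0)\sse f(s\cup W_1)$. Consequently either $j\in f(s\cup W)$ for every such $W$, or else there is a witness $W_0\in\mathcal{U}$ with $j\notin f(s\cup W_0)$, in which case $j\notin f(s\cup W)$ for all $W\sse W_0$. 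In both cases there is a single $W_{s,j}\in\mathcal{U}$ with $W_{s,j}\sse T$ below which the value is constant, say $\epsilon_{s,j}\in\{0,1\}$.

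The second ingredient is a fusion that packages these decisions, and this is where the p-point hypothesis enters. I would build a $\sse$-decreasing sequence $A_0\supseteq A_1\supseteq\cdots$ in $\mathcal{U}$ together with thresholds $k_0<k_1<\cdots$ recursively: given $A_n$ and $k_n$, apply the decision lemma with $T=A_n\cap[k_n,\om)$ to each of the finitely many pairs $(s,j)$ with $s\sse[0,k_n)$ and $j<n$, obtaining constants $\epsilon^n_{s,j}$, and let $A_{n+1}$ be the intersection of the finitely many witnesses $W_{s,j}$, so $A_{n+1}\sse A_n\cap[k_n,\om)$. By construction, for every $U\in\mathcal{U}$ whose tail $U\cap[k_n,\om)$ is contained in $A_{n+1}$, writing $s=U\cap[0,k_n)$ and $W=U\cap[k_n,\om)$ one has $j\in f(U)=f(s\cup W)\iff\epsilon^n_{s,j}$ for all $j<n$; thus $f(U)\cap[0,n)$ is read off from $U\cap[0,k_n)$ alone. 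Finally, using that $\mathcal{U}$ is a p-point I would diagonalize the decreasing tower $\langle A_n\rangle$ into a single $A\in\mathcal{U}$ satisfying the \emph{genuine} containment $A\cap[k_n,\om)\sse A_{n+1}$ for every $n$: a pseudo-intersection $A\subseteq^* A_n$ exists by the p-point property, and it is thinned, in coordination with the $k_n$, to this diagonal form. For such $A$ every $U\sse A$ automatically meets the tail condition at each stage, so the finitary rules $\epsilon^n$ cohere into the desired uniform continuity of $f$ on $\mathcal{U}\re A$.

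The conceptual heart is the decision lemma, but its use is elementary; the genuine difficulty lies in the final step. The map $f$ is sensitive to finite modifications of its argument—$f(U\setminus m)$ may be a proper subset of $f(U)$—so it is essential that the \emph{correct} finite initial segment $U\cap[0,k_n)$ be the determining data, and this forces the tail of $U$ to lie genuinely, not merely mod finite, inside $A_{n+1}$. Producing one $A\in\mathcal{U}$ realizing $A\cap[k_n,\om)\sse A_{n+1}$ for all $n$ is exactly a diagonal intersection, and it is the p-point property (pseudo-intersections of decreasing towers) that makes such a diagonal set available inside $\mathcal{U}$; an ultrafilter supplying only $\subseteq^*$-pseudo-intersections without this diagonal coherence would leave the finite initial segments unable to pin $f$ down exactly. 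Carrying out the coordination between the fusion thresholds $k_n$ and the speed of the pseudo-intersection is therefore the main technical obstacle, and it is the only place where anything beyond monotonicity of $f$ is used.
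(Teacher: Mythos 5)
Your argument is correct and is essentially the approach the paper itself takes: the statement is quoted from \cite{Dobrinen/Todorcevic10}, and the paper's generalization, Theorem \ref{p-point Tukey theorem}, is proved by exactly your scheme --- a decision lemma exploiting only monotonicity (the properties $P_k(s,X)$ there), a fusion sequence deciding all pairs $(s,k)$ below each threshold, and a p-point/selectivity diagonalization yielding a single set on which membership of $k$ in $f(U)$ is read off a finite initial segment of $U$. The one imprecision is in your final step: for a pseudo-intersection $A$ one cannot in general arrange the genuine containment $A\cap[k_n,\om)\sse A_{n+1}$ for \emph{every} $n$ with the thresholds $k_n$ fixed during the fusion (the exceptional intervals $[k_n,m_n)$ may cover a cofinite set), but one can arrange it along an infinite subsequence of stages by the even/odd interval trick used in the proof of Theorem \ref{p-point Tukey theorem} (discard $\bigcup_i[y_{2i+1},y_{2i+2})$ or its complement, whichever is not in $\mathcal{U}$), and a subsequence of stages is all that continuity requires.
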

In fact, by  
results of Dobrinen
(see Theorem 2.7  in  \cite{DobrinenCanonicalMaps15}, which first appeared in   the unpublished  \cite{Dobrinen10}), every ultrafilter  Tukey reducible to some p-point has continuous Tukey reductions.

 In the previous sections of this paper we restricted consideration to  spaces constructed from generating sequences. 
In this section we consider all  topological Ramsey spaces $\mathcal{R}$ such that $\mathcal{R}$ is  closed in $\mathcal{AR}^{\bN}$, $(\mathcal{R}, \le)$ is a partial order, and $(\mathcal{R},\le, r)$ satisfies axioms $\bf{A.1}-\bf{A.4}$.
In Theorem \ref{p-point Tukey theorem},
we  generalize  Theorem $\ref{Ppoint Reduction}$ to filters selective for a topological Ramsey space.

\begin{notation}
 In order to avoid repeating certain phrases, we let $(\mathcal{R}, \le, r)$ denote a fixed triple satisfying axioms ${\bf A.1}-{\bf A.4}$ which is closed in the subspace topology it inherits from $\mathcal{AR}^{\mathbb{N}}$. Furthermore, we assume that $(\mathcal{R},\le)$ is a partial order and has a top element which we denote by $\mathbb{A}$. By the Abstract Ellentuck Theorem, $\mathcal{R}$ forms a topological Ramsey space. If $\mathcal{C}$ is a subset of $\mathcal{R}$ we let $\mathcal{C} \ge_{T} \mathcal{V}$ denote the statement $(\mathcal{C},\ge)\ge_{T} (\mathcal{V}, \supseteq)$.
\end{notation}
We omit the proof of the next fact since if follows by a straightforward generalization of the proof of Fact 6 from \cite{Dobrinen/Todorcevic10}.

\begin{fact}\label{Tim-monotone reduction} Assume that $\mathcal{C}\subseteq \mathcal{R}$ and $\mathcal{V}$ is an ultrafilter on $\omega$. If $\mathcal{C}\ge_{T} \mathcal{V}$, then there is monotone cofinal map $f: \mathcal{C} \rightarrow \mathcal{V}$.
\end{fact}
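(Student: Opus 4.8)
The plan is to show that the given Tukey reduction can be realized by an order-preserving map, following the classical argument behind Fact 6 of \cite{Dobrinen/Todorcevic10} with the ultrafilter domain there replaced by the filter $\mathcal{C}$ on $\mathcal{R}$. First I would record the two structural facts that make that argument run without change. Since $\mathcal{C}$ is a filter on $(\mathcal{R},\le)$, it is closed upward and downward-directed, so $(\mathcal{C},\ge)$ is a directed partial order with top element $\mathbb{A}$; and since $\mathcal{V}$ is an ultrafilter on $\omega$, it is closed under supersets, and every subset of $(\mathcal{V},\supseteq)$ that is bounded above has a supremum lying in $\mathcal{V}$ (realized as a suitable intersection). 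These are precisely the features of domain and codomain exploited in \cite{Dobrinen/Todorcevic10}, so the generalization from an ultrafilter domain to $\mathcal{C}$ introduces no new phenomenon.

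Next, unwinding $\mathcal{C}\ge_{T}\mathcal{V}$ and invoking the standard equivalence between cofinal maps and convergent (Tukey) maps (see the survey \cite{Dobrinen13}), I would fix a convergent map $\phi:\mathcal{V}\to\mathcal{C}$; that is, $\phi$ carries $\supseteq$-unbounded subsets of $\mathcal{V}$ to $\ge$-unbounded subsets of $\mathcal{C}$, equivalently the $\phi$-preimage of every $\ge$-bounded subset of $\mathcal{C}$ is $\supseteq$-bounded in $\mathcal{V}$. I would then define, for $X\in\mathcal{C}$,
\[
f(X)=\bigcap\{V\in\mathcal{V}: X\le\phi(V)\},
\]
with the convention that $f(X)=\omega$ when this index set is empty. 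The point of this Galois-style definition is that it builds monotonicity and cofinality into $f$ simultaneously, which is exactly what a raw union or intersection of the values of a cofinal map fails to achieve.

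It then remains to verify three properties. That $f$ maps into $\mathcal{V}$ is the crux: the set $S_X=\{V\in\mathcal{V}: X\le\phi(V)\}$ is the $\phi$-preimage of the principal cone $\{Y\in\mathcal{C}: Y\ge X\}$, which is $\ge$-bounded by $X$, so convergence of $\phi$ forces $S_X$ to be $\supseteq$-bounded; hence some $W\in\mathcal{V}$ satisfies $W\subseteq V$ for all $V\in S_X$, whence $f(X)\supseteq W$ and $f(X)\in\mathcal{V}$ since $\mathcal{V}$ is closed under supersets. Monotonicity is immediate, since $X\le Y$ gives $S_Y\subseteq S_X$ and therefore $f(X)\subseteq f(Y)$, which is order-preservation from $(\mathcal{C},\ge)$ to $(\mathcal{V},\supseteq)$. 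Cofinality of the image follows by taking $X=\phi(V)$, for then $V\in S_{\phi(V)}$ and so $f(\phi(V))\subseteq V$; and because $f$ is monotone, image-cofinality upgrades to the cofinal-to-cofinal condition required by the definition of a cofinal map.

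The hard part, and the step I would be most careful with, is the range verification, i.e.\ ensuring the defining intersection does not escape $\mathcal{V}$. This is where both the convergence of $\phi$ and the (ultra)filter closure of $\mathcal{V}$ are genuinely used, and it is the only place where the specific combinatorial nature of the codomain enters. Everything else is formal and uses only that $(\mathcal{C},\ge)$ is a directed partial order, which is why the proof transfers verbatim from the ultrafilter setting of \cite{Dobrinen/Todorcevic10} to an arbitrary filter on a topological Ramsey space.
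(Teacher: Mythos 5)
Your proof is correct and is exactly the argument the paper has in mind: the paper omits the proof, citing Fact~6 of \cite{Dobrinen/Todorcevic10}, and your construction $f(X)=\bigcap\{V\in\mathcal{V}:X\le\phi(V)\}$ (the supremum in $(\mathcal{V},\supseteq)$ of the $\phi$-preimage of the bounded principal cone below $X$) is precisely that folklore monotonization, with the range verification via the bounded-preimage characterization of Tukey maps carried out correctly.
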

The notion of a selective filter for a topological Ramsey space was introduced along with the relation of almost-reduction by Mijares in \cite{mijares2007notion}.
The notion of almost reduction on a topological Ramsey space was introduced by  Mijares in \cite{mijares2007notion}. The relation of almost reduction generalizes the relation of almost inclusion $\subseteq^{*}$ on $\mathcal{P}(\omega)$ to arbitrary topological Ramsey spaces.  The relation of \emph{almost reduction on $\mathcal{R}$} is defined as follows: $X \le^{*} Y$ if and only if there exists $a\in \mathcal{AR}$ such that $\emptyset \not = [a, X] \subseteq [a, Y]$.
Fix the following notation:
For any fixed $\mathbb A\in \mathcal{R}$,
 for  $n<\omega$ and $X,Y\le\mathbb A$,
define $X/ n \le Y$ if and only if there exists an $a\in \mathcal{AR}|Y$ with $\depth_{\mathbb{A}}(a) \le  n$, $\emptyset \not= [a, X] \subseteq [a,Y]$. 
In particular, if $X/n\le Y$, then $X\le^* Y$.
If a topological Ramsey space has a maximum member, we let $\mathbb A$ denote that member.
Otherwise, we may without loss of generality fix some $\mathbb A\in \mathcal{R}$ and work below $\mathbb A$.

\begin{fact}
 For each $X$ and $Y$ in $\mathcal{R}$, $X\le^{*}Y$ if and only if there exists $i<\omega$ such that $X/r_i(X) \le Y$.
\end{fact}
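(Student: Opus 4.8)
The plan is to prove both implications directly from the definitions, once the notation is matched up. I would read the expression $X/r_i(X)\le Y$ as the relation $X/n\le Y$ with cutoff $n=\depth_{\mathbb A}(r_i(X))$; that is, as the assertion that there exists $a\in\mathcal{AR}|Y$ with $\depth_{\mathbb A}(a)\le\depth_{\mathbb A}(r_i(X))$ and $\emptyset\ne[a,X]\subseteq[a,Y]$. With this reading the two relations $X\le^{*}Y$ and ``$\exists i\ X/r_i(X)\le Y$'' differ only by a depth constraint on the witnessing approximation, and the whole content is that this constraint is automatically satisfiable.

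For the direction ($\Leftarrow$) I would simply observe that if $X/r_i(X)\le Y$ holds for some $i$, then by definition there is an $a\in\mathcal{AR}$ with $\emptyset\ne[a,X]\subseteq[a,Y]$, which is exactly the defining condition for $X\le^{*}Y$. This is precisely the remark already recorded after the definition of $X/n\le Y$, so nothing further is needed here.

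For the direction ($\Rightarrow$), suppose $X\le^{*}Y$ and fix a witnessing $a\in\mathcal{AR}$ with $\emptyset\ne[a,X]\subseteq[a,Y]$. First I would check that $a$ already meets every requirement except possibly the depth bound. Since $[a,Y]\supseteq[a,X]\ne\emptyset$, there is some $A\le Y$ with $a=r_n(A)$ for some $n$, so $a\le_{\fin}Y$ and hence $a\in\mathcal{AR}|Y$; and since $A\le Y\le\mathbb A$ gives $a\le_{\fin}\mathbb A$, the depth $\depth_{\mathbb A}(a)$ is finite. It then remains only to produce an index $i$ with $\depth_{\mathbb A}(r_i(X))\ge\depth_{\mathbb A}(a)$, after which this same $a$ witnesses $X/r_i(X)\le Y$.

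The only genuine point — and the place I would be most careful — is the cofinality claim that the depths $\depth_{\mathbb A}(r_i(X))$, $i<\omega$, are unbounded in $\omega$. I would deduce this from Axioms \textbf{A.1} and \textbf{A.2} alone: the approximations $r_i(X)$ are pairwise distinct by \textbf{A.1}, each satisfies $r_i(X)\le_{\fin}X\le\mathbb A$ and so has finite $\mathbb A$-depth; were all these depths bounded by some $D$, every $r_i(X)$ would lie in the set $\{b\in\mathcal{AR}:b\le_{\fin}r_D(\mathbb A)\}$, which is finite by \textbf{A.2}(a), contradicting that infinitely many distinct approximations occur. Thus a suitable $i$ exists and the forward direction closes. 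I expect no obstacle of substance: Axioms \textbf{A.3} and \textbf{A.4} and the amalgamation machinery play no role, and the argument reduces to the bookkeeping that the chosen witness has finite depth and that approximation depths are cofinal.
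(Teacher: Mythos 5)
Your proof is correct. The paper states this as a Fact without proof, and your argument supplies exactly the routine verification it relies on: your reading of $X/r_i(X)\le Y$ as $X/\depth_{\mathbb{A}}(r_i(X))\le Y$ is the one the paper itself records later in the section, and your cofinality step is precisely the paper's (also unproved) assertion that $d(X)=\{\depth_{\mathbb{A}}(r_i(X)):i<\om\}$ is infinite. One pedantic touch-up: a bound $D$ on the depths places the pairwise-distinct approximations $r_i(X)$ in the finite union $\bigcup_{n\le D}\{b\in\mathcal{AR}:b\le_{\fin}r_n(\mathbb{A})\}$ rather than in the single set $\{b\in\mathcal{AR}:b\le_{\fin}r_D(\mathbb{A})\}$, since the axioms do not directly give $r_n(\mathbb{A})\le_{\fin}r_D(\mathbb{A})$ for $n\le D$; the contradiction with \textbf{A.2}(a) goes through unchanged.
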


\begin{defn}
A subset $\mathcal{C}\subseteq \mathcal{R}$ is a $\emph{selective filter}$ on $(\mathcal{R},\le)$ if $\mathcal{C}$ is a maximal  filter on $(\mathcal{R},\le)$ and for each decreasing sequence $X_{0}\ge X_{1} \ge X_{2} \ge \dots $ of elements of $\mathcal{C}$ there exists $X\in \mathcal{C}$ such that for all $i<\omega$, $X/r_{i}(X) \le X_{i}$.
\end{defn}

Axiom ${\bf A.3}$ implies that for each decreasing sequence $X_{0}\ge X_{1} \ge X_{2} \ge \dots $ of elements of $\mathcal{R}$ there exists $X\in \mathcal{R}$ such that for all $i<\omega$, $X/r_{i}(X) \le X_{i}$. Thus, assuming MA or CH it is possible to construct a selective filter on $(\mathcal{R},\le)$. Forcing with $\mathcal{R}$ using almost reduction adjoins a filter on $(\mathcal{R},\le)$ satisfying a localized version of the Abstract Nash-Williams theorem for $\mathcal{R}$. 
By  work of Mijares in \cite{mijares2007notion} every ultrafilter generic for this forcing is a selective filter on $(\mathcal{R},\le)$.

Recall that $\mathcal{R}$ is assumed to be closed in the subspace topology it inherits from $\mathcal{AR}^{\mathbb{N}}$. A sequence $(X_{n})_{n<\omega}$ of elements of $\mathcal{R}$ converges to an element $X\in \mathcal{R}$ if and only if for each $k<\omega$ there is an $m<\omega$ such that for each $n\ge m$, $r_{k}(X_{n}) = r_{k}(X).$ A function $f: \mathcal{R}\rightarrow \mathcal{P}(\omega)$ is \emph{continuous} if and only if for each convergent sequence $(X_{n})_{n<\omega}$  in $\mathcal{R}$ with $X_{n} \rightarrow X$, we also have $f(X_{n})\rightarrow f(X)$ in the topology obtained by identifying $\mathcal{P}(\omega)$ with $2^{\bN}$. A function $f:\mathcal{C}\rightarrow \mathcal{V}$ is said to be \emph{continuous} if it is continuous with respect to the topologies on $\mathcal{C}$ and $\mathcal{V}$ taken as subspaces of $\mathcal{AR}^{\mathbb{N}}$ and $2^{\bN}$, respectively. The next definition is a generalization of notion of basic Tukey reductions for an ultrafilter on $\omega$, (see Definition 2.2  and Lemma 2.5 in \cite{DobrinenCanonicalMaps15}), to filters on $\mathcal{R}$.

\begin{defn}\label{defn.basic}
 Assume that $\mathcal{C}\subseteq \mathcal{R}$ is a filter on $(\mathcal{R},\le)$. \emph{$\mathcal{C}$ has basic Tukey reductions} if whenever $\mathcal{V}$ is a non-principal ultrafilter on $\omega$ and  $f:\mathcal{C} \rightarrow \mathcal{V}$ is a monotone cofinal map, there is an $X\in \mathcal{C}$ and  a  monotone map $\tilde{f}:\mathcal{R} \rightarrow \mathcal{P}(\om)$    such that
\begin{enumerate}
\item  $\tilde{f}$ is continuous with respect to the metric topology on $\mathcal{AR}^{\bN}$;
\item $\tilde{f}\upharpoonright ( \mathcal{C} \upharpoonright X)=f \upharpoonright ( \mathcal{C} \upharpoonright X)$;
\item
$\tilde{f}$ is generated by a finitary map
 $\hat{f}: \mathcal{AR} \rightarrow [\omega]^{<\omega}$ satisfying
\begin{enumerate}
\item For each $k<\omega$ and each $s\in \mathcal{AR}$, if $\depth_{\mathbb{A}}(s) \le k$ then $\hat{f}(s) \subseteq k;$
\item 
$s\sqsubseteq t \in \mathcal{AR}$ implies that $\hat{f}(s) \sqsubseteq \hat{f}(t);$
\item 
$\hat{f}$ is monotone, that is, if $s, t \in \mathcal{AR}$ with $s\le_{\fin} t$, then $\hat{f}(s) \subseteq \hat{f}(t)$; and 
\item For each $Y\in \mathcal{R}$, $\tilde{f}( Y) = \bigcup_{k<\omega} \hat{f}(r_{k}(Y))$.
\end{enumerate}
\end{enumerate}
\end{defn}

The next proposition provides an important application of the notion of basic Tukey reductions for $\mathcal{C}$ and helps reduce the characterization of the ultrafilters on $\omega$ Tukey reducible to $(\mathcal{C},\ge)$ to the study of canonical equivalence relations for fronts on $\mathcal{C}$. It is the generalization of Proposition 5.5 from \cite{Dobrinen/Todorcevic11} to our current setting. 

\begin{defn}
If $\mathcal{C}\subseteq \mathcal{R}$ and $\mathcal{F}\subseteq \mathcal{AR}$ then we will say that \emph{$\mathcal{F}$ is a front on $\mathcal{C}$} if and only if for each $C\in \mathcal{C}$, there exists ${s} \in\mathcal{F}$ such that ${s} \sqsubseteq X$; and for all pairs $s\ne t$ in $\mathcal{F}$, $s\not\sqsubset t$.
\end{defn}

\begin{prop} \label{Proposition 5.5 Analogue} 
Assume that $\mathcal{C}\subseteq \mathcal{R}$ is a  filter on $(\mathcal{R},\le)$  which  has basic Tukey reductions, and suppose $\mathcal{V}$ is a non-principal  ultrafilter on $\omega$ with $ \mathcal{V}\le_T \mathcal{C}$. 
Then there is a front $\mathcal{F}$ on $\mathcal{C}$ and a function $f:\mathcal{F}\rightarrow \omega$ such that for each $Y\in \mathcal{V}$,
 there exists $X\in \mathcal{C}$ such that $f( \mathcal{F}|X) \subseteq Y$.
 Furthermore, if $\mathcal{C}\upharpoonright \mathcal{F}$ is a base for an ultrafilter on $\mathcal{F}$,
 then $\mathcal{V} = f(\left< \mathcal{C}\upharpoonright\mathcal{F} \right>).$
\end{prop}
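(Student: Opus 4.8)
The plan is to extract everything from the basic Tukey reduction that $\mathcal{C}$ is assumed to admit, so that the real work is bookkeeping with its finitary generator. First I would invoke Fact \ref{Tim-monotone reduction} to turn the hypothesis $\mathcal{V}\le_T\mathcal{C}$ into an honest monotone cofinal map $g:\mathcal{C}\to\mathcal{V}$, and then feed $g$ into Definition \ref{defn.basic}: since $\mathcal{C}$ has basic Tukey reductions, there are $X_0\in\mathcal{C}$, a monotone map $\tilde f:\mathcal{R}\to\mathcal{P}(\omega)$ continuous in the metric topology, and a finitary $\hat f:\mathcal{AR}\to[\omega]^{<\omega}$ with $\tilde f(Y)=\bigcup_k\hat f(r_k(Y))$, $\tilde f\re(\mathcal{C}\re X_0)=g\re(\mathcal{C}\re X_0)$, and $\hat f$ monotone under $\le_{\fin}$. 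Because $X_0\in\mathcal{C}$ and $\mathcal{C}$ is a filter, $\mathcal{C}\re X_0$ is cofinal in $(\mathcal{C},\ge)$ and a base for $\langle\mathcal{C}\rangle$; I would pass to $\mathcal{R}\re X_0$ and assume without loss of generality $X_0=\mathbb{A}$, so that $\tilde f\re\mathcal{C}=g$. The two facts to record are: $\tilde f(X)=g(X)\in\mathcal{V}$ for each $X\in\mathcal{C}$, and, by cofinality of $g$, the family $\{\tilde f(X):X\in\mathcal{C}\}$ is a base for $\mathcal{V}$.

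Next I would define the front and the map. Let $\mathcal{F}$ be the set of $s\in\mathcal{AR}$ that are $\sqsubseteq$-minimal with $\hat f(s)\ne\emptyset$, and set $f(s)=\min\hat f(s)$ (any choice $f(s)\in\hat f(s)$ works). That $\mathcal{F}$ covers $\mathcal{C}$ follows because for each $C\in\mathcal{C}$ we have $\tilde f(C)=g(C)\in\mathcal{V}$, which is nonempty as $\mathcal{V}$ is nonprincipal, so some $\hat f(r_k(C))\ne\emptyset$; taking the least such $k$, the only proper initial segments of $r_k(C)$ are the $r_j(C)$ with $j<k$, all with empty $\hat f$-value, so $r_k(C)\in\mathcal{F}$ and $r_k(C)\sqsubseteq C$. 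That $\mathcal{F}$ is Nash-Williams is where I would use monotonicity of $\hat f$: if $s\sqsubset t$ with $s,t\in\mathcal{F}$, then $s\sqsubseteq r_{|t|-1}(t)$, whence $\emptyset\ne\hat f(s)\subseteq\hat f(r_{|t|-1}(t))=\emptyset$, a contradiction. Thus $\mathcal{F}$ is a front on $\mathcal{C}$ and $f:\mathcal{F}\to\omega$ is a well-defined single-valued map.

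For the main clause I would prove the inclusion $f(\mathcal{F}|X)\subseteq\tilde f(X)$ for every $X\in\mathcal{C}$. If $t\in\mathcal{F}$ and $t\le_{\fin}X$, then $t=r_n(A')$ for some $A'\le X$, so $f(t)\in\hat f(t)=\hat f(r_n(A'))\subseteq\tilde f(A')\subseteq\tilde f(X)$, the last inclusion by monotonicity of $\tilde f$ applied to $A'\le X$. Given $Y\in\mathcal{V}$, cofinality of $g$ furnishes $X\in\mathcal{C}$ with $\tilde f(X)=g(X)\subseteq Y$, and then $f(\mathcal{F}|X)\subseteq\tilde f(X)\subseteq Y$, which is exactly the asserted conclusion. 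Note that the potentially worrying degenerate scenarios (e.g. $f$ constant on $\mathcal{F}|X$) cannot arise, precisely because they would force $\mathcal{V}$ to be principal.

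Finally, the ``furthermore'' I would argue purely formally. Writing $\mathcal{W}=\langle\mathcal{C}\re\mathcal{F}\rangle$ for the ultrafilter on $\mathcal{F}$ generated by $\{\mathcal{F}|X:X\in\mathcal{C}\}$, the pushforward $f(\mathcal{W})=\{A\subseteq\omega:f^{-1}(A)\in\mathcal{W}\}$ is again an ultrafilter on $\omega$ (and proper, since $f^{-1}(\emptyset)=\emptyset\notin\mathcal{W}$). For $Y\in\mathcal{V}$, the previous paragraph produces $X$ with $\mathcal{F}|X\subseteq f^{-1}(Y)$, and as $\mathcal{F}|X\in\mathcal{W}$ this gives $Y\in f(\mathcal{W})$; hence $\mathcal{V}\subseteq f(\mathcal{W})$. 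Two ultrafilters one of which contains the other must coincide, so $\mathcal{V}=f(\langle\mathcal{C}\re\mathcal{F}\rangle)$. I expect the only genuinely delicate points to be the normalization to $X_0=\mathbb{A}$, which is what makes $\mathcal{F}$ a front on the whole of the relevant filter rather than merely on a cofinal part, together with the Nash-Williams verification; the substantive content — the existence of a continuous, finitely generated cofinal map — is exactly the hypothesis that $\mathcal{C}$ has basic Tukey reductions (Definition \ref{defn.basic}, guaranteed for selective filters by Theorem \ref{p-point Tukey theorem}), which is what reduces this proposition to a packaging argument.
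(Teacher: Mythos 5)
Your proposal is correct and follows essentially the same route as the paper's proof: obtain a monotone cofinal map via Fact \ref{Tim-monotone reduction}, apply the basic Tukey reduction to get the finitary generator $\hat f$, take $\mathcal{F}$ to be the minimal approximations with nonempty $\hat f$-value, set $f=\min\circ\hat f$, derive $f(\mathcal{F}|X)\subseteq \tilde f(X)$, and conclude the ultrafilter equality from the refinement criterion (Fact \ref{EqualityFact}). You simply spell out the covering and Nash-Williams verifications that the paper leaves implicit.
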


\begin{proof} 
Suppose that $\mathcal{C}$ and $\mathcal{V}$ are given and satisfy the assumptions of the proposition. 
By Fact $\ref{Tim-monotone reduction}$, there is a monotone map $g: \mathcal{C} \rightarrow \mathcal{V}$. Since $\mathcal{C}$ has basic Tukey reductions, there is a continuous monotone cofinal map $g':\mathcal{C} \rightarrow \mathcal{V}$  and a function $\hat{g}: \mathcal{AR} \rightarrow [\omega]^{<\omega}$ satisfying (1)-(3) in the definition of basic Tukey reductions. Let $\mathcal{F}$ consist of all $r_{n}(Y)$ such that $Y \in \mathcal{C}$ and $n$ is minimal such that $\hat{g}(r_{n}(Y)) \not = \emptyset$. 
By the properties of $\hat{g}$, $\min(\hat{g}(r_{n}(Y))) = \min( g( Y))$. By its definition $\mathcal{F}$ is a front on $\mathcal{C}$. Define a new function $f:\mathcal{F} \rightarrow \omega$ by $f(b) = \min(\hat{g}(b))$, for each $b\in \mathcal{F}$.

Since $g'$ is a monotone cofinal map, the $g'$-image of $\mathcal{C}$ in $\mathcal{V}$ is a base for $\mathcal{V}$. From the construction of $f$, we see that for each $X\in \mathcal{C}$, $f(\mathcal{F}|X) = \{ f(a) : a\in \mathcal{F}|X\} \subseteq g'(X)$.
 Therefore, for each $Y\in\mathcal{V}$ there exists $X\in \mathcal{C}$ such that $f(\mathcal{F}|X) \subseteq Y$.
 We remind the reader of
the following useful fact
(see Fact 5.4 from \cite{Dobrinen/Todorcevic11}).

\begin{fact}\label{EqualityFact} Suppose $\mathcal{V}$ and $\mathcal{U}$ are proper ultrafilters on the same countable base set, and for each $V\in \mathcal{V}$ there is a $U \in \mathcal{U}$ such that $U \subseteq V$. Then $\mathcal{U} = \mathcal{V}$.
\end{fact}

Suppose that $\mathcal{C} \upharpoonright \mathcal{F}$ generates an ultrafilter on $\mathcal{F}$,
 and let $\left<\mathcal{C}\upharpoonright \mathcal{F} \right>$ denote the ultrafilter it generates. Then the Rudin-Keisler image $f(\left< \mathcal{F}\upharpoonright \mathcal{C}\right>)$ is an ultrafilter on $\omega$ generated by the base $\{ f(\mathcal{F}|X) : X\in \mathcal{X}\}$. Hence, Fact $\ref{EqualityFact}$ implies that $f(\left< \mathcal{F}\upharpoonright \mathcal{C}\right>)=\mathcal{V}$. 
\end{proof}

If a selective filter $\mathcal{C}$ on $(\mathcal{R},\le)$ has the property that, for each front $\mathcal{F}$ on $\mathcal{C}$, $\mathcal{C}\upharpoonright \mathcal{F}$ generates an ultrafilter on $\mathcal{F}$, 
then Proposition
\ref{Proposition 5.5 Analogue} 
 shows that every nonprincipal ultrafilter Tukey-reducible to $\mathcal{C}$ is a Rudin-Keisler image of $\mathcal{C}\upharpoonright \mathcal{F}$, for some front on $\mathcal{C}$. This provides motivation for studying the notion of a Nash-Williams filter on $(\mathcal{R},\le)$. The next definition is an adaptation of Definition $5.1$ ($1$) from \cite{Dobrinen/Todorcevic11} to our current setting.

\begin{defn}\label{defn.NWfilter} 
A maximal filter $\mathcal{C} \subseteq \mathcal{R}$
is a \emph{Nash-Williams filter on $(\mathcal{R},\le)$}
if  for each 
front $\mathcal{F}$ on $\mathcal{C}$ and each 
$\mathcal{H} \subseteq \mathcal{F}$, there is a $C\in \mathcal{C}$  such that
either  $\mathcal{F}|C\sse\mathcal{H}$
or else $\mathcal{F}|C\cap\mathcal{H}=\emptyset$.
\end{defn}

It is clear that any Nash-Williams filter is also a Ramsey filter on $(\mathcal{R},\le)$
(recall Definition \ref{def.Ramseymaxfilter}), and hence is a maximal filter. 
The Abstract Nash-Williams Theorem for $\mathcal{R}$ can be used in conjunction with MA or CH to construct a Nash-Williams filter on $(\mathcal{R} ,\le)$. Furthermore, forcing with $\mathcal{R}$ using almost reduction adjoins a Nash-Williams filter on $(\mathcal{R}, \le)$. 
By work of Mijares in \cite{mijares2007notion}, any Ramsey filter on $(\mathcal{R}, \le)$ is a selective filter on $(\mathcal{R}, \le)$. 
Thus, any Nash-Williams filter is a selective filter on $(\mathcal{R},\le)$. 
Trujillo in \cite{Trujillo2013selective} has shown that (assuming CH or MA, or by forcing) there are topological Ramsey spaces for which there are maximal filters which are selective but not Ramsey for those spaces.
We omit the proof of the next theorem as it follows from a straightforward generalization of the proof of Trujillo 
 in \cite{TrujilloThesis}
  for the special case of the space $\mathcal{R}_{1}$ (recall Example \ref{ex.BT4.84.9} in Section \ref{sec.uf}).

\begin{thm}\label{thm.Ramsey=NW}
 Let $1\le J\le\omega$ and $\mathcal{K}_{j}$, $j\in J$, be a collection of Fra\"{i}ss\'{e} classes of finite ordered relational structures such that each $\mathcal{K}_{j}$ satisfies the Ramsey property. 
Let $\left< \mathbf{A}_{k} : k<\omega \right>$ be a generating sequence,
and let $\mathcal{R}$ denote $\mathcal{R}(\left< \mathbf{A}_{k} : k<\omega \right>)$.
 Suppose that $\mathcal{C}$ is a filter on $(\mathcal{R},\le)$. 
Then 
$\mathcal{C}$ is  Nash-Williams for $\mathcal{R}$
 if and only if $\mathcal{C}$ is Ramsey
for $\mathcal{R}$.
\end{thm}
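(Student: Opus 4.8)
The plan is to prove the two implications separately; the forward implication (Nash--Williams $\Rightarrow$ Ramsey) is immediate, and the reverse implication carries all of the content.

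For the forward direction I would note that for each $n<\om$ the uniform family $\mathcal{AR}_n$ is a front on $\mathcal{C}$: it is Nash--Williams because any two distinct members share the length $n$, so neither is a proper initial segment of the other, and for every $C\in\mathcal{C}$ the approximation $r_n(C)$ lies in $\mathcal{AR}_n$ and is an initial segment of $C$. Hence, given $\mathcal{H}\sse\mathcal{AR}_n$, the Nash--Williams property of $\mathcal{C}$ applied to the front $\mathcal{AR}_n$ and the subset $\mathcal{H}$ yields a $C\in\mathcal{C}$ with $\mathcal{AR}_n|C\sse\mathcal{H}$ or $\mathcal{AR}_n|C\cap\mathcal{H}=\emptyset$; together with the maximality of $\mathcal{C}$ this is precisely Definition~\ref{def.Ramseymaxfilter}.

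For the reverse direction I would first use the quoted result of Mijares that a Ramsey filter on $(\mathcal{R},\le)$ is selective, so that both the uniform partition property of Definition~\ref{def.Ramseymaxfilter} and the diagonalization property $X/r_i(X)\le X_i$ of selective filters are available. The argument is a combinatorial-forcing induction on the rank of the front. Given a front $\mathcal{F}$ on $\mathcal{C}$ and $\mathcal{H}\sse\mathcal{F}$, for an initial segment $s$ of a member of $\mathcal{F}$ and $C\in\mathcal{C}$ say that $C$ \emph{accepts} $s$ if $\mathcal{F}_s|C\sse\mathcal{H}$ and \emph{rejects} $s$ if $\mathcal{F}_s|C\cap\mathcal{H}=\emptyset$; it suffices to produce a single $C\in\mathcal{C}$ deciding $\emptyset$, since then $\mathcal{F}|C$ is homogeneous. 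The inductive step reduces deciding $s$ to deciding the one-block extensions $s^\frown b$, each of which heads a front of strictly smaller rank in the relativized space $[s,\cdot]$ (again a space of the present type, coming from the tail of the generating sequence, in which $\mathcal{C}$ induces a Ramsey, hence selective, filter). The induction hypothesis therefore assigns to each one-block extension $s^\frown b$ realizable below the relevant member a well-defined accept/reject colour, well-definedness following from the fact that two members of $\mathcal{C}$ cannot disagree, as they have a common lower bound in $\mathcal{C}$ on which both decisions would have to hold.

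These one-block decisions are then combined by the two hypotheses in turn. The assignment $b\mapsto$ (colour of $s^\frown b$) is a $2$-colouring of the uniform family of one-block extensions of $s$, i.e.\ of a piece of $\mathcal{AR}_{|s|+1}$, so the uniform partition property of the Ramsey filter $\mathcal{C}$ produces a member below which this colour is constant. Because the one-block extensions occurring below a member of $\mathcal{C}$ are formed from blocks of arbitrarily large depth, the deciding witnesses must finally be fused into one element of $\mathcal{C}$: I would build a decreasing sequence $X_0\ge X_1\ge\cdots$ in $\mathcal{C}$ whose $l$-th term decides all of the finitely many first-block configurations available at stage $l$ (finite by the \Fraisse\ structure of the blocks), and then invoke selectivity to obtain $C\in\mathcal{C}$ with $C/r_l(C)\le X_l$ for every $l$. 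I expect this fusion to be the main obstacle, the delicate point being that selectivity controls tails indexed by length while the initial block of a front element below $C$ may be formed from an arbitrarily deep block of $C$; the resolution is to organize the fusion by depth so that the tail control $C/r_l(C)\le X_l$ lines up with the block of $C$ from which that initial block is built. Once $C$ decides $\emptyset$, the homogeneity of $\mathcal{F}|C$ gives the Nash--Williams property, and, as throughout the paper, only the Abstract Nash--Williams Theorem rather than the full Abstract Ellentuck Theorem is used.
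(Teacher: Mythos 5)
The paper itself contains no proof of Theorem~\ref{thm.Ramsey=NW}: it is stated with the remark that the argument is a ``straightforward generalization'' of Trujillo's proof for $\mathcal{R}_1$ in \cite{TrujilloThesis}, so there is no in-paper argument to compare against. Your outline --- the forward direction via the observation that each level set $\mathcal{AR}_n$ is a front on $\mathcal{C}$, and the reverse direction by combinatorial forcing on a front, homogenizing one-block extensions with the Ramsey-filter property and fusing with selectivity (which Ramsey filters possess by Mijares's result) --- is exactly the expected generalization, and your forward direction is complete as written.

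Two steps in the reverse direction, however, do not go through as stated. First, your well-definedness argument for the accept/reject colour (``two members of $\mathcal{C}$ cannot disagree, as they have a common lower bound in $\mathcal{C}$ on which both decisions would have to hold'') only produces a contradiction if $\mathcal{F}_{s^{\frown}b}|C_3\ne\emptyset$ for the common refinement $C_3$; if that set is empty, $C_3$ accepts and rejects $s^{\frown}b$ vacuously and no contradiction arises. This is a real issue because the definition of a front on $\mathcal{C}$ in Section~\ref{sec.basic} only requires $\mathcal{F}$ to catch members of $\mathcal{C}$, not arbitrary $Y\in[s^{\frown}b,C_3]$, so nonemptiness of the relativized family is not automatic; the same weakness means the tree $\hat{\mathcal{F}}$ can a priori have infinite branches through approximations that are $\le_{\fin}$ some $C\in\mathcal{C}$ without being initial segments of members of $\mathcal{C}$, so the ``rank of the front'' on which you induct is not obviously well defined. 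One needs a preliminary reduction (e.g., passing to a $C\in\mathcal{C}$ below which the relevant relativized families are nonempty, or treating vacuously decided extensions as a third colour that is shown to be avoidable) before the induction can start. Second, the Ramsey-filter property of Definition~\ref{def.Ramseymaxfilter} homogenizes subsets of the \emph{full} level set $\mathcal{AR}_{|s|+1}$, not of $r_{|s|+1}[s,\cdot]$; to transfer homogeneity to the one-block extensions of a fixed $s$ you must factor the colouring through the last block, use \textbf{A.3} to see that every sufficiently deep $|s|$-block below $C$ occurs as the last block of some element of $\mathcal{AR}_{|s|+1}|C$, and pass to a $C$ whose blocks all lie above $\depth_{\mathbb{A}}(s)$ so that the padded values never occur. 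Neither point derails the approach, but both are precisely where the omitted ``straightforward generalization'' requires an argument rather than an assertion.
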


The next  fact is the analogue of Fact 5.3  from \cite{Dobrinen/Todorcevic11}. 
We omit the proof as it follows by similar arguments.

\begin{fact}\label{analogueFact5.3}  Suppose $\mathcal{C}\subseteq\mathcal{R}$ is a Nash-Williams filter on $(\mathcal{R},\le)$. If $\mathcal{C}'$ is any cofinal subset of $\mathcal{C}$, and $\mathcal{F} \subseteq \mathcal{AR}$ is any front on $\mathcal{C}'$, then $\mathcal{C}' \upharpoonright \mathcal{F}$ generates an ultrafilter on $\mathcal{F}$.
\end{fact}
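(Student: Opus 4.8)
The plan is to show that the collection $\langle\mathcal{C}'\upharpoonright\mathcal{F}\rangle=\{\mathcal{G}\sse\mathcal{F}:\exists X\in\mathcal{C}',\ \mathcal{F}|X\sse\mathcal{G}\}$ is an ultrafilter on the countable base set $\mathcal{F}$. This splits into two parts: first, that $\mathcal{C}'\upharpoonright\mathcal{F}=\{\mathcal{F}|X:X\in\mathcal{C}'\}$ is a proper filter base on $\mathcal{F}$; and second, that the generated filter decides every subset of $\mathcal{F}$. Throughout I would use the monotonicity fact that $Z\le X$ implies $\mathcal{F}|Z\sse\mathcal{F}|X$, which is immediate from transitivity of $\le_{\fin}$.

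For the filter-base part, each $\mathcal{F}|X$ with $X\in\mathcal{C}'$ is nonempty: since $\mathcal{F}$ is a front on $\mathcal{C}'$ there is an $s\in\mathcal{F}$ with $s\sqsubseteq X$, and $s\sqsubseteq X$ yields $s\le_{\fin}X$, so $s\in\mathcal{F}|X$. Downward directedness follows from the filter property of $\mathcal{C}$ together with cofinality of $\mathcal{C}'$: given $X,Y\in\mathcal{C}'$, pick $W\in\mathcal{C}$ with $W\le X,Y$, then $Z\in\mathcal{C}'$ with $Z\le W$ by cofinality, and monotonicity gives $\mathcal{F}|Z\sse\mathcal{F}|X\cap\mathcal{F}|Y$. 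Since no $\mathcal{F}|X$ is empty, the generated filter is proper. All of this is routine and parallels the filter-base verification in Fact~5.3 of \cite{Dobrinen/Todorcevic11}.

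The heart of the argument is the deciding property: for every $\mathcal{H}\sse\mathcal{F}$ I must produce an $X\in\mathcal{C}'$ with $\mathcal{F}|X\sse\mathcal{H}$ or $\mathcal{F}|X\cap\mathcal{H}=\emptyset$. The tool is that $\mathcal{C}$ is Nash-Williams for $\mathcal{R}$, which delivers exactly such a homogeneous $C\in\mathcal{C}$; one then descends to $\mathcal{C}'$ by choosing, via cofinality, an $X\in\mathcal{C}'$ with $X\le C$, so that $\mathcal{F}|X\sse\mathcal{F}|C$ inherits the homogeneity of $C$. This shows every subset or its complement lies in $\langle\mathcal{C}'\upharpoonright\mathcal{F}\rangle$, so it is an ultrafilter; taking $\mathcal{C}'=\mathcal{C}$ recovers the statement for $\mathcal{C}\upharpoonright\mathcal{F}$ as well.

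The delicate point, which I expect to be the main obstacle, is that a front on the cofinal subset $\mathcal{C}'$ need not be a front on $\mathcal{C}$ itself: the covering clause controls only the members of $\mathcal{C}'$, whereas $\mathcal{C}$ may contain elements (such as $\mathbb{A}$) with no initial segment in $\mathcal{F}$. Hence the Nash-Williams property of $\mathcal{C}$, stated for fronts on $\mathcal{C}$, cannot be invoked for $\mathcal{F}$ verbatim. I would resolve this by first \emph{localizing}: using that $\mathcal{C}$ is selective (which it inherits from being Nash-Williams, by the work of Mijares) together with the Abstract Nash-Williams Theorem, I would find an $X_0\in\mathcal{C}'$ below which $\mathcal{F}|X_0$ is a genuine front on $[\emptyset,X_0]$ in the sense of Definition~\ref{def.frontR1}. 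Working below $X_0$, the partition $(\mathcal{H},\mathcal{F}\setminus\mathcal{H})$ of the Nash-Williams family $\mathcal{F}|X_0$ is then decided inside the filter by the Ramsey/Nash-Williams property of $\mathcal{C}$, yielding the required homogeneous $C\le X_0$ in $\mathcal{C}$, after which the descent to $\mathcal{C}'$ proceeds as above. Establishing the localization step—that $\mathcal{F}$ becomes a true front below a suitable element of $\mathcal{C}'$, where the selectivity of $\mathcal{C}$ does the real work—is the one place requiring more than a routine transcription of the proof of Fact~5.3 in \cite{Dobrinen/Todorcevic11}.
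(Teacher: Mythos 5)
Your core argument is the same one the paper has in mind when it defers to Fact~5.3 of \cite{Dobrinen/Todorcevic11}: check that $\{\mathcal{F}|X:X\in\mathcal{C}'\}$ is a proper filter base via the front property, filter directedness and cofinality, then decide an arbitrary $\mathcal{H}\sse\mathcal{F}$ by producing a homogeneous $C\in\mathcal{C}$ and descending to some $X\in\mathcal{C}'$ with $X\le C$. The mismatch you flag is genuine and is exactly what the paper's ``similar arguments'' elides: Definition~\ref{defn.NWfilter} quantifies only over fronts on $\mathcal{C}$, which must in particular cover $\mathbb{A}$, whereas $\mathcal{F}$ need only cover $\mathcal{C}'$. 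Your localization via selectivity is the right repair, but note that it does not by itself finish the job: even after you find $X_0\in\mathcal{C}'$ such that $\mathcal{F}|X_0$ is a front on $[\emptyset,X_0]$ in the sense of Definition~\ref{def.frontR1}, this is still not a front on $\mathcal{C}$, since members of $\mathcal{C}$ not below $X_0$ need not meet $\mathcal{F}$, so Definition~\ref{defn.NWfilter} still cannot be quoted verbatim. What the localization actually buys is well-foundedness: if every $r_n(Y)$ were a proper initial segment of a member of $\mathcal{F}|X_0$, then \textbf{A.2} would give $r_n(Y)\le_{\fin}X_0$ for all $n$ and hence $Y\le X_0$, contradicting that $\mathcal{F}|X_0$ covers $[\emptyset,X_0]$. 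This lets you pad $\mathcal{F}|X_0$ out to a front $\mathcal{F}^*$ on all of $\mathcal{R}$ (adjoin each $t$ that is not an initial segment of a member of $\mathcal{F}|X_0$ but whose immediate predecessor $r_{|t|-1}(t)$ is a proper such initial segment), apply Definition~\ref{defn.NWfilter} to $\mathcal{F}^*$ and $\mathcal{H}\cap(\mathcal{F}|X_0)$, and then descend to $X\in\mathcal{C}'$ with $X\le C,X_0$, observing that $\mathcal{F}|X\sse\mathcal{F}^*|C$. With that supplement, your proof is complete.
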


The next proposition is one of the keys in the general mechanism for classifying initial Tukey structures and the Rudin-Keisler structures within them.
We only sketch the proof here, as it is the same proof as that of Proposition 5.5  in \cite{Dobrinen/Todorcevic11}.

\begin{prop}\label{BasicReductionTukeyProp}
Assume that that $\mathcal{C}\subseteq \mathcal{R}$ is a Nash-Williams filter on $(\mathcal{R},\le)$. 
Suppose $\mathcal{C}$ has basic Tukey reductions and $\mathcal{V}$ is a non-principal an ultrafilter on $\omega$ with $\mathcal{C} \ge_{T} \mathcal{V}$. 
Then there is a front $\mathcal{F}$ on $\mathcal{C}$ and a function $f:\mathcal{F}\rightarrow \omega$ such that $\mathcal{V} = f(\left< \mathcal{C}\upharpoonright\mathcal{F} \right>).$
\end{prop}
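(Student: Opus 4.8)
The plan is to combine Proposition~\ref{Proposition 5.5 Analogue} with Fact~\ref{analogueFact5.3}, since the hypotheses of the present proposition are precisely what is needed to feed into both. First I would unpack the hypothesis $\mathcal{C}\ge_T\mathcal{V}$: by the notational convention fixed above, this reads $(\mathcal{C},\ge)\ge_T(\mathcal{V},\supseteq)$, which is exactly $\mathcal{V}\le_T\mathcal{C}$. Together with the standing assumption that $\mathcal{C}$ has basic Tukey reductions (and the observation that a Nash-Williams filter is in particular a filter on $(\mathcal{R},\le)$), this places us squarely in the setting of Proposition~\ref{Proposition 5.5 Analogue}. Applying that proposition produces a front $\mathcal{F}$ on $\mathcal{C}$ and a function $f:\mathcal{F}\rightarrow\omega$ such that for each $Y\in\mathcal{V}$ there is an $X\in\mathcal{C}$ with $f(\mathcal{F}|X)\subseteq Y$.

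The only remaining ingredient required to invoke the ``Furthermore'' clause of Proposition~\ref{Proposition 5.5 Analogue} is that $\mathcal{C}\upharpoonright\mathcal{F}$ generate an ultrafilter on $\mathcal{F}$. This is exactly what Fact~\ref{analogueFact5.3} supplies: since $\mathcal{C}$ is a Nash-Williams filter on $(\mathcal{R},\le)$ and is trivially a cofinal subset of itself, taking $\mathcal{C}'=\mathcal{C}$ and letting $\mathcal{F}$ be the front just produced shows that $\mathcal{C}\upharpoonright\mathcal{F}$ generates an ultrafilter on $\mathcal{F}$. The mild point worth checking here is that the front delivered by Proposition~\ref{Proposition 5.5 Analogue} is a front on all of $\mathcal{C}$, not merely on some cofinal subset, so that Fact~\ref{analogueFact5.3} applies with $\mathcal{C}'=\mathcal{C}$; but this is immediate, as the conclusion of that proposition already asserts $\mathcal{F}$ to be a front on $\mathcal{C}$.

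With $\mathcal{C}\upharpoonright\mathcal{F}$ now known to be a base for an ultrafilter on $\mathcal{F}$, the final sentence of Proposition~\ref{Proposition 5.5 Analogue} applies verbatim and yields $\mathcal{V}=f(\langle\mathcal{C}\upharpoonright\mathcal{F}\rangle)$, which is the desired conclusion. I do not expect any genuine obstacle here: the argument is essentially bookkeeping, chaining the two cited results and verifying that their hypotheses align, which is precisely why the paper states that this follows by the same proof as Proposition~5.5 of \cite{Dobrinen/Todorcevic11} and only sketches it.
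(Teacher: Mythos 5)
Your argument is correct and is essentially identical to the paper's own proof, which likewise chains Proposition~\ref{Proposition 5.5 Analogue} with Fact~\ref{analogueFact5.3} (applied with $\mathcal{C}'=\mathcal{C}$) to get that $\mathcal{C}\upharpoonright\mathcal{F}$ generates an ultrafilter and then invokes the ``Furthermore'' clause. No gaps.
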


\begin{proof} Suppose that $\mathcal{V}$ is Tukey reducible to some Nash-Williams filter $\mathcal{C}$ on $(\mathcal{R},\le)$. Assume that $\mathcal{C}$ has Basic Tukey reductions. Theorem $\ref{Proposition 5.5 Analogue}$ and Fact $\ref{analogueFact5.3}$ imply that there is a front $\mathcal{F}$ on $\mathcal{C}$ and a function $f:\mathcal{F}\rightarrow \omega$ such that $\mathcal{V} = f(\left< \mathcal{C} \upharpoonright \mathcal{F}\right>)$.
\end{proof}

We now introduce some notation needed for its definition and for the proof of the main theorem of this section.

\begin{notation}\label{notn.thm53}
If there is a maximum member of $\mathcal{R}$, let  $\mathbb A$ denote it.
Otherwise, fix some $\mathbb A\in\mathcal{R}$ and relative everything that follows to $[0,\mathbb A]$.
 For each $X,Y \le \mathbb A$,  define
\begin{equation}
d(X) = \{ \depth_{\mathbb{A}}(r_{i}(X)): i <\omega\}.
\end{equation}
 Define $\rho:[0,\mathbb A]\times \omega \rightarrow \mathcal{AR}$ to be the map such that for each $X\le \mathbb A$ and each $n<\omega$, $\rho(X,n) = r_{i}(X)$, where $i$ is the unique natural number such that $\depth_{\mathbb{A}}(r_{i}(X)) \le  n < \depth_{\mathbb{A}}(r_{i+1}(X))$. 
\end{notation}

By ${\bf A.1}$ (c) and ${\bf A.2}$ (b), for each $X\le\mathbb A$, $d(X)$ is infinite.
Also note that
for each $s\in \mathcal{AR}|X$, 
 $X/s \le Y$ if and only if $\depth_{X}(s) = i$ and $X/r_{i}(X) \le Y$.
In particular,
 $X/r_{i}(X) \le Y$ if and only if $X/\depth_{\mathbb{A}}(r_{i}(X)) \le Y$.

The next theorem is the main result of this section.
It extends to all topological Ramsey spaces previous  results in \cite{Dobrinen/Todorcevic10} for the Milliken space $\FIN^{[\infty]}$ and in \cite{Dobrinen/Todorcevic11} and \cite{Dobrinen/Todorcevic12} for the $\mathcal{R}_{\al}$  spaces ($1\le\al<\om_1$).
It will be used in conjunction with Proposition \ref{BasicReductionTukeyProp} in the next section to identify initial structures in the Tukey types of ultrafilters.

\begin{thm} \label{p-point Tukey theorem}
If $\mathcal{C}$ is a selective filter on $(\mathcal{R},\le)$ and $\{d(X): X\in \mathcal{C}\}$ generates a nonprincipal ultrafilter on $\omega$ then, $\mathcal{C}$ has basic Tukey reductions.
\end{thm}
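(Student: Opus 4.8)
The plan is to start from a monotone cofinal map $f:\mathcal{C}\to\mathcal{V}$ (which exists by Fact \ref{Tim-monotone reduction} once $\mathcal{V}\le_T\mathcal{C}$, and is exactly the input in Definition \ref{defn.basic}) and to manufacture a single $X\in\mathcal{C}$ together with a finitary map $\hat f:\mathcal{AR}\to[\omega]^{<\omega}$ that computes $f$ on the cone $\mathcal{C}\upharpoonright X$. Once $\hat f$ is in hand and satisfies (3)(a)--(d), both the continuity of $\tilde f$ and the agreement $\tilde f\upharpoonright(\mathcal{C}\upharpoonright X)=f\upharpoonright(\mathcal{C}\upharpoonright X)$ fall out formally, so the entire content is the construction of $X$ and $\hat f$. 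Two features of the hypotheses drive the construction: the selectivity of $\mathcal{C}$, which supplies fusion (from any decreasing sequence $X_0\ge X_1\ge\cdots$ in $\mathcal{C}$ we get $X\in\mathcal{C}$ with $X/r_i(X)\le X_i$), and the assumption that $\mathcal{U}_d:=\langle\{d(X):X\in\mathcal{C}\}\rangle$ is a nonprincipal ultrafilter on $\omega$. Since $\mathcal{C}$ is selective, $\mathcal{U}_d$ is in fact a selective, hence p-point, ultrafilter on the depths, and this is the analogue of the p-point hypothesis in Theorem \ref{Ppoint Reduction}.

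The core step is to decide, level by level, which integers are forced into $f(Y)$ by initial segments of $Y$. Fix $m<\omega$ and an approximation $s\in\mathcal{AR}$, and consider the event ``$m\in f(Y)$'' as $Y$ ranges over the members of $\mathcal{C}$ with $s\sqsubset Y$. Monotonicity of $f$ makes this event stable under passing to smaller $Y$, so along a $\le$-decreasing cofinal sequence it is eventually constant; recording, for each such $Y$, the least depth at which $m$ has entered $f$ gives a coloring of $d(Y)\subseteq\omega$, which we homogenize using that $\mathcal{U}_d$ is an ultrafilter on $\omega$. This yields $X_{s,m}\in\mathcal{C}$ which \emph{decides $m$ over $s$}: for every $Y\le X_{s,m}$ in $\mathcal{C}$ with $s\sqsubset Y$, either $m\in f(Y)$ or $m\notin f(Y)$, uniformly, and moreover the decision is already witnessed by the initial segment of $Y$ of depth exceeding $m$. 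Nonprincipality of $\mathcal{U}_d$ is what lets us keep the witnessing depth above $m$, which is precisely the requirement needed later for the depth bound (3)(a).

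With these promises in hand I would fuse. Enumerate all pairs $(s,m)\in\mathcal{AR}\times\omega$ as $\lgl (s_i,m_i):i<\omega\rgl$ and apply selectivity of $\mathcal{C}$ to the sequence $\lgl X_{s_i,m_i}:i<\omega\rgl$ to obtain a single $X\in\mathcal{C}$ with $X/r_i(X)\le X_{s_i,m_i}$ for all $i$; then $X$ decides every pair simultaneously below itself. Consequently, for $Y\le X$ in $\mathcal{C}$ and any $k>m$, membership $m\in f(Y)$ is determined by the approximation $\rho(Y,k)$ alone. I would then define $\hat f(s)$ to be the set of $m<\depth_{\mathbb{A}}(s)$ that are decided into $f$ by $s$ below $X$, extend it monotonically along $\le_{\fin}$, and set $\tilde f(Y)=\bigcup_{k<\omega}\hat f(r_k(Y))$. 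Properties (3)(a)--(d) of Definition \ref{defn.basic} are then read off from the construction: (a) from the ``witnessing depth exceeds $m$'' clause, (b) and (c) from defining $\hat f$ to respect $\sqsubseteq$ and $\le_{\fin}$, and (d) by definition; monotonicity and continuity of $\tilde f$ follow, and $\tilde f=f$ on $\mathcal{C}\upharpoonright X$ because $X$ decides all levels correctly.

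The main obstacle, and the place where both hypotheses are genuinely needed, is the simultaneous control of \emph{which level is decided at which depth}. It is not enough that each $m$ is eventually decided; for $\tilde f$ to equal $f$ on the cone and to obey $\hat f(s)\subseteq\depth_{\mathbb{A}}(s)$, each level $m$ must be decided by an initial segment whose depth is only slightly larger than $m$, uniformly over the fused member $X$. Arranging this requires interleaving the depth-homogenization (using that $\mathcal{U}_d$ is a nonprincipal ultrafilter, so that the ``deciding depths'' can be pushed up past $m$ while staying in $\mathcal{C}$) with the selective fusion (so that all pairs are decided at once without the deciding depths drifting off to infinity faster than the levels). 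Verifying that these two diagonalizations can be run together, exactly as in the proofs for $\FIN^{[\infty]}$ and the $\mathcal{R}_\alpha$ spaces, is the technical heart of the argument.
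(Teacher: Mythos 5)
Your overall architecture agrees with the paper's: use monotonicity of $f$ to decide, for each approximation $s$ and each level $k$, whether $k$ can be kept out of $f(Z)$ for all $Z\in\mathcal{C}$ extending $s$ inside a fixed set (the paper's hereditary predicate $P_k(s,X)$), fuse via selectivity, and read off $\hat f(s)=\{k\le\depth_{\mathbb{A}}(s):\neg P_k(s,\bar X)\}$. One of your mechanisms is not what is needed, though: the individual decisions do not come from homogenizing ``a coloring of $d(Y)$'' by the depth ultrafilter (it is unclear what coloring you intend), but from a plain dichotomy using the filter property of $\mathcal{C}$ together with monotonicity --- either some $Y'\in\mathcal{C}$ with $\rho(Y',n)=s$ has $k\notin f(Y')$, in which case meeting $Y'$ secures $P_k(s,\cdot)$ below it, or else every such $Y'$ has $k\in f(Y')$. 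At each stage $n$ the paper handles all the finitely many (by {\bf A.2}(a)) pairs $(s,k)$ with $\depth_{\mathbb{A}}(s)\le n$ and $k\le n$ at once; the nonprincipality of the depth ultrafilter enters here only to arrange $\rho(X_n,n)=\emptyset$.

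The genuine gap is in your fusion step. From $X/r_i(X)\le X_{s_i,m_i}$ you conclude that ``$X$ decides every pair simultaneously below itself,'' but the decision recorded in $X_{s,m}$ applies to $Y$ only when $Y/s\le X_{s,m}$, i.e.\ when \emph{every} block of $Y$ beyond $s$ lies inside $X_{s,m}$; the fusion only controls the blocks of $Y$ beyond $r_i(X)$, and $Y$ may have blocks with depths strictly between $\depth_{\mathbb{A}}(s)$ and $\depth_{\mathbb{A}}(r_i(X))$ that escape $X_{s,m}$. You correctly identify this tension in your final paragraph, but then defer its resolution to ``interleaving the two diagonalizations,'' which is precisely where the proof lives. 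The paper's resolution, which you do not supply, is a parity argument: after fusing to get $Y$ with $Y/r_i(Y)\le X_i$, enumerate $d(Y)=\{y_0<y_1<\cdots\}$, set $A=\bigcup_i[y_{2i+1},y_{2i+2})$, use that $\{d(X):X\in\mathcal{C}\}$ generates an \emph{ultrafilter} to pass to $\bar X\le Y$ in $\mathcal{C}$ with $d(\bar X)\subseteq\omega\setminus A$; then for every $W\le\bar X$ the approximations $\rho(W,y_{2i+1})$ and $\rho(W,y_{2i+2})$ coincide, so the blocks of $W$ beyond $s=\rho(W,y_{2i_k+1})$ all have depth at least $y_{2i_k+2}$ and land inside the set that made the promise about $(s,k)$, while $s$ itself is determined by a depth only slightly above $k$. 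This is also the only place the full ultrafilter hypothesis on the depths (as opposed to mere nonprincipality) is used. Without this device, or an equivalent one, neither the continuity of $f$ on a cone nor the depth bound (3)(a) of Definition \ref{defn.basic} is established.
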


\begin{proof}
Suppose that $\mathcal{V}$ is an ultrafilter on $\omega$ Tukey reducible to $\mathcal{C}$, and $f:\mathcal{C}\rightarrow\mathcal{V}$ is a monotone cofinal map witnessing $\mathcal{C}\ge_{T} \mathcal{V}$. For each $k<\omega$, let $P_{k}(\cdot, \cdot)$ be the following proposition:
For $s\in\mathcal{AR}$ and $X\in \mathcal{R}$, $P_{k}(s,X)$ holds if and only if for each $Z\in \mathcal{C}$ such that $s \sqsubseteq Z$ and $Z/s \le X$,  $k\not \in f(Z)$. Let $\mathcal{C}$ be a selective filter for $(\mathcal{R},\le)$. Assume that $\{d(X): X\in \mathcal{C}\}$ generates an nonprincipal ultrafilter on $\omega$.

\begin{clm} 
There is an $\bar{X}\in \mathcal{C}$ such that $f\upharpoonright (\mathcal{C} \upharpoonright \bar{X}) : \mathcal{C} \upharpoonright \bar{X} \rightarrow \mathcal{V}$ is continuous.
\end{clm}

\begin{proof}
We begin by constructing a decreasing sequence in $(\mathcal{C},\le)$. 
Let $X_{0}= \mathbb{A}$. 
Given $n>0$ and $X_{i}\in \mathcal{C}$ 
 for all $i< n$, 
we will choose $X_{n}\in \mathcal{C}$ such that
\begin{enumerate}
\item 
$X_n\le X_{n-1}$,
\item 
$\rho(X_{n}, n) = \emptyset$,
\item 
For each $s$ in $\mathcal{AR}$ with $\depth_{\mathbb{A}}(s) \le n$ and each $k\le n$, if there exists $Y'\in\mathcal{C}$ such that $\rho(Y',n)=s$ and $k\not \in f(Y')$, then $P_{k}(s, X_{n})$ holds.
\end{enumerate}

By axiom ${\bf A.2}$ (a),  the set $\{ s \in \mathcal{AR} : \depth_{\mathbb{A}}(s) \le  n\}$ is finite.
 Let $s_{1}, s_{2}, \dots s_{i_{n}}$ be an enumeration of $\{ s \in \mathcal{AR} : \depth_{\mathbb{A}}(s) \le n\}$. Since $\mathcal{C}$ is a maximal filter and $\{d(X): X\in \mathcal{C}\}$ forms a nonprincipal ultrafilter on $\omega$, there exists a $W_{0} \in \mathcal{C}$ such that $W_{0} \le X_{n-1}$ and $\rho(W_{0}, n) = \emptyset$. Now suppose that there exists $Y\in \mathcal{C}$ such that $\rho(Y, n) = s_{1}$ and $k \not \in f(Y)$. Take $Y_{1}$ to be in $\mathcal{C}$ such that $\rho(Y_{1}, n) = s_{1}$ and $k \not \in f(Y_{1})$. 
Since $\mathcal{C}$ is a filter on $(\mathcal{C},\le)$ there exists $W_1\in \mathcal{C}$ such that $W_1\le Y_{1}, W_{0}$. 
If there is no $Y\in \mathcal{C}$ such that $\rho(X,n)=s_{1}$ and $k\not\in f(Y)$,
then let $W_1=W_0$.
For the induction step, suppose that for $1\le l < i_{n}$ and $W_{0} \ge W_{1} \ge \dots \ge W_{l}$ are given and in $\mathcal{C}$. If there is a $Y\in \mathcal{C}$ such that $s_{l+1} = \rho(Y,n)$ and $k\not \in f(Y)$, then take some $Y_{l}\in \mathcal{C}$ and let $W_{l+1}\in\mathcal{C}$ such that $W_{l+1}\le W_{l}, Y_{l+1}$.
Otherwise, let $W_{l+1} = W_{l}$. After $i_{n}$ many steps let $X_{n} = W_{i_{n}}$.

We check that $X_{n}$ satisfies properties (1) - (3). 
(1) By construction $X_{n} \le X_{n-1}$. 
(2) Since $\rho(W_{0}, n) = \emptyset$ and $X_{n}\le W_{0}$, we have $\rho(X_{n},n)=\emptyset$.
(3) Let $s$ be an element of $\mathcal{AR}$ such that $\depth_{\mathbb{A}}(s) \le n$. It follows that there exists $1\le l \le i_{n}$ such that $s = s_{l}$. If there is a $Y'\in \mathcal{C}$ such that $s=s_{l}=\rho(Y',n)$ and $k\not \in f(Y')$ then $W_{l}$ was taken so that $W_{l} \le W_{l-1}, Y_{l}'$. Hence, if $Z \in \mathcal{C}$, $s \sqsubseteq Z$ and $Z/s \le X_{n}$ then $Z \le Y'_{l}$.
Since $f$ is monotone and $k\not\in f(Y'_{l})$, it must be the case that $P_{k}(s, X_{n})$ holds.

Since $\mathcal{C}$ is selective for $(\mathcal{R},\le)$, there exists $Y \in \mathcal{C}$ such that for each $i<\omega$, $Y/r_{i}(Y)\le X_{i}$. Let $\{y_{0}, y_{1}, \dots\}$ denote the increasing enumeration of $d(Y)$. Let $A = \bigcup [y_{2i+1}, y_{2i+2})$. Without loss of generality, assume that $A$ is not in the ultrafilter generated by $\{d(X): X\in \mathcal{C}\}$. 
 Let $\bar{X}$ be an element of $\mathcal{C}$ such that $\bar{X} \le Y$ and $d(\bar{X}) \subseteq \omega \setminus A$. 
We show that $f\upharpoonright(\mathcal{C} \upharpoonright \bar{X})$ is continuous
by showing that there is a strictly increasing sequence $(m_{k})_{k<\omega}$ such that for each $Z \in \mathcal{C}\upharpoonright \bar{X}$, the initial segment $f(Z) \cap (k+1)$ of $f(Z)$ is determined by the initial segment $\rho(Z, m_{k})$ of $Z$.

For each $k<\omega$, let $i_{k}$ denote the least $i$ for which $y_{2i+1} \ge k$. 
Let $W \in \mathcal{C} \upharpoonright \bar{X}$ be given and let $s = \rho(W, y_{2{i}_{k}+1})$.  
Since $d(\bar{X}) \cap [ y_{2i_{k}+1}, y_{2i_{k}+2}) = \emptyset,$ it follows that $\rho(\bar{X}, y_{2i_{k}+1})=\rho(\bar{X}, y_{2{i}_{k}+2})$. 
Notice that $W\le \bar{X}$, $\bar{X}/y_{2i_{k}+2}  \le Y$, $Y/y_{2i_{k}+2}\le X_{2i_{k}+1}$ and $\rho(\bar{X},y_{2i_{k}+1})=\rho(\bar{X}, y_{2{i}_{k}+2})$
 From this it follows that $k \not \in f(W)$ if and only if $P_{k}(s,  X_{2i_{k}+1})$, which holds if and only if $P_{k}(s, \bar{X})$ holds. 
Let $m_{k}= y_{2i_{k}+2}$.
Then  $f\upharpoonright( \mathcal{C} \upharpoonright \bar{X})$ is continuous, since the question of whether or not $k\in f(W)$ is determined by the finite initial segment $\rho(W, m_{k})$ along with $\bar{X}$.
\end{proof}

 Extend $f\re(\mathcal{C}\upharpoonright \bar{X})$ to 
a function $f':\mathcal{C}\ra\mathcal{V}$ by defining $f'(X) = \bigcup\{ f(Y) : Y \in \mathcal{C}\upharpoonright \bar{X}$ and $Y \le X\}$, for $X \in \mathcal{C}$. 
Notice that  
$f':\mathcal{C}\rightarrow \mathcal{V}$ is monotone and
$f'\re(\mathcal{C}\re\bar{X})= f\re(\mathcal{C}\re\bar{X})$.
Further, for each $X\in \mathcal{C}$ and $k<\omega$, $k\not \in f'(X)$ if and only if for all $Y \in \mathcal{C}\upharpoonright \bar{X}$ with $Y \le X$, $k\not\in f'(X)$,
 and this holds if and only if $P_{k}( \rho( X, m_{k}), \bar{X})$ holds. 
Thus,
$f':\mathcal{C} \rightarrow \mathcal{V}$ is continuous, as whether $k\in f(X)$ is determined by the finite initial segment $\rho( X, m_{k})$ along with $\bar{X}$. 
Now define  $\hat{f}:\mathcal{AR} \rightarrow [\omega]^{<\omega}$ by $\check{f}(s) = \{ k \le \depth_{\mathbb{A}}(s) : \neg P_{k}(s, \bar{X})\}$, for $s\in \mathcal{AR}$; and 
define $\tilde{f}:\mathcal{R}\ra\mathcal{P}(\om)$ by
$\tilde{f}(Y)= \bigcup_{n<\omega} \hat{f}(r_{n}(Y))$, for $Y \in \mathcal{R}$.
Then $\hat{f}$ satisfies (3) in Definition \ref{defn.basic} and $\tilde{f}$ is continuous.
Notice  that   $f'(Y)= \bigcup_{n<\omega} \hat{f}(r_{n}(Y))$, for $Y \in \mathcal{C}$,
hence implying that $\tilde{f}\re\mathcal{C}= f'\re\mathcal{C}$.
Thus,  $\tilde{f}\re(\mathcal{C}\re\bar{X})= f'\re(\mathcal{C}\re\bar{X})$.
\end{proof}

\begin{cor} \label{cor.p-point Tukey theorem}
Let $\lgl \bsA_k:k<\om\rgl$ be a generating sequence as in Definition  \ref{defn.A_k}.
If $\mathcal{C}$ is a selective filter on $(\mathcal{R}(\lgl \bsA_k:k<\om\rgl),\le)$ such that $\{d(X): X\in \mathcal{C}\}$ generates a nonprincipal ultrafilter on $\omega$, then 
for each ultrafilter $\mathcal{V}$ Tukey reducible to $\mathcal{C}$,
$\mathcal{V}$ has basic Tukey reductions.
\end{cor}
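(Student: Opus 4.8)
The plan is to deduce the corollary from Theorem~\ref{p-point Tukey theorem} by a transfer argument showing that the property of having basic Tukey reductions descends along the Tukey order. Fix a nonprincipal ultrafilter $\mathcal{V}$ on $\om$ with $\mathcal{V}\le_{T}\mathcal{C}$, and let $\mathcal{W}$ be an arbitrary nonprincipal ultrafilter on $\om$ together with a monotone cofinal map $g:\mathcal{V}\to\mathcal{W}$; I must produce a cofinal subset of $\mathcal{V}$ on which $g$ agrees with a basic (finitary, depth-controlled) map in the sense of the ultrafilter version of basic reductions (Definition 2.2 and Lemma 2.5 of \cite{DobrinenCanonicalMaps15}). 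First I would invoke Fact~\ref{Tim-monotone reduction} to obtain a monotone cofinal map $h:\mathcal{C}\to\mathcal{V}$ witnessing $\mathcal{C}\ge_{T}\mathcal{V}$. Since $\mathcal{C}$ is selective and $\{d(X):X\in\mathcal{C}\}$ generates a nonprincipal ultrafilter, Theorem~\ref{p-point Tukey theorem} applies and $\mathcal{C}$ has basic Tukey reductions. Applying this to $h$ gives an $\bar{X}_{0}\in\mathcal{C}$ and a finitary $\hat{h}:\mathcal{AR}\to[\om]^{<\om}$ generating a continuous monotone $\tilde{h}$ with $\tilde{h}\upharpoonright(\mathcal{C}\upharpoonright\bar{X}_{0})=h\upharpoonright(\mathcal{C}\upharpoonright\bar{X}_{0})$, where $\hat{h}$ satisfies the clauses of Definition~\ref{defn.basic}, in particular $\depth_{\mathbb{A}}(s)\le k\Rightarrow\hat{h}(s)\sse k$.

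Next I would exploit that Tukey reductions compose. The composite $g\circ h:\mathcal{C}\to\mathcal{W}$ is again monotone and cofinal, since a cofinal map carries cofinal sets to cofinal sets and this is preserved under composition. A second application of the basic Tukey reductions of $\mathcal{C}$ then yields an $\bar{X}\le\bar{X}_{0}$ in $\mathcal{C}$ and a finitary $\widehat{gh}:\mathcal{AR}\to[\om]^{<\om}$, satisfying the clauses of Definition~\ref{defn.basic}, with $(g\circ h)(Y)=\bigcup_{n}\widehat{gh}(r_{n}(Y))$ for every $Y\in\mathcal{C}\upharpoonright\bar{X}$. The set $\mathcal{B}:=h''(\mathcal{C}\upharpoonright\bar{X})$ is a cofinal, hence generating, subset of $\mathcal{V}$. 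The goal thereby reduces to showing that $g\upharpoonright\mathcal{B}$ is basic as a map on $\mathcal{V}$.

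The heart of the argument is the descent step: for $V\in\mathcal{B}$, expressing $g(V)$ in terms of finite initial segments of $V$ alone. For $V=h(Y)$ with $Y\in\mathcal{C}\upharpoonright\bar{X}$ one has simultaneously $V=\bigcup_{n}\hat{h}(r_{n}(Y))$ and $g(V)=(g\circ h)(Y)=\bigcup_{n}\widehat{gh}(r_{n}(Y))$. Because $g$ is a genuine function of $V$, the value $g(V)$ is independent of the chosen preimage $Y$; combining this with the depth clause $\hat{h}(s)\sse\depth_{\mathbb{A}}(s)$ and the monotonicity of $\hat{h}$, the finite set $V\cap k$ pins down $\hat{h}(r_{n}(Y))$ for all approximations of depth at most $k$ and thus determines a finite initial portion of the sequence $\lgl r_{n}(Y):n<\om\rgl$ sufficient to compute, via $\widehat{gh}$, a corresponding finite portion of $g(V)$. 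Formalizing this produces a strictly increasing sequence $(\ell_{k})_{k<\om}$ and a finitary monotone map on the finite initial segments of members of $\mathcal{B}$ for which $g(V)\cap k$ is determined by $V\cap\ell_{k}$; the depth bookkeeping inherited from the clauses on $\hat{h}$ and $\widehat{gh}$ then normalizes this to the finitary generator required by the ultrafilter notion of basic reductions, establishing that $g\upharpoonright\mathcal{B}$ is basic.

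I expect this descent step to be the main obstacle, precisely because one must transport the finitary and depth bookkeeping living on the index space $\mathcal{AR}$ (encoded in $\hat{h}$ and $\widehat{gh}$) onto the base $\om$ of $\mathcal{V}$ in a manner that is uniform over preimages $Y$ and respects the depth clause (3a) of Definition~\ref{defn.basic}. This parallels the passage from continuity to a basic generator carried out inside the proof of Theorem~\ref{p-point Tukey theorem} and in Theorem 2.7 of \cite{DobrinenCanonicalMaps15}; accordingly, I would not reprove it from scratch but reuse that argument, now applied to the monotone cofinal composite $g\circ h$ out of $\mathcal{C}$. Since $\mathcal{W}$ and $g$ were arbitrary, this shows $\mathcal{V}$ has basic Tukey reductions, which is the assertion of the corollary.
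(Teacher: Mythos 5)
Your proposal is correct and takes essentially the same route as the paper, whose entire proof is the one-line citation you anticipate: Theorem \ref{p-point Tukey theorem} gives that $\mathcal{C}$ itself has basic Tukey reductions, and the composition-plus-descent argument of Theorem 2.6 (cf.\ Theorem 2.7) of \cite{DobrinenCanonicalMaps15} transfers basicness to every $\mathcal{V}\le_T\mathcal{C}$. Your sketch of the descent step is slightly loose where it says $V\cap k$ determines an initial portion of $\lgl r_n(Y):n<\om\rgl$ --- it only pins $\hat{h}(r_n(Y))$ down to being some initial segment of $V\cap k$, and well-definedness of the resulting finitary generator rests on $g(V)$ being independent of the preimage $Y$ together with monotonicity, as you yourself note --- but since you, like the paper, defer that bookkeeping to the cited theorem, this is the same proof.
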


\begin{proof}
This follows from Theorem \ref{p-point Tukey theorem}
and   the proof of Theorem  2.6 in \cite{DobrinenCanonicalMaps15}.
\end{proof}

The next result will be used in Section  \ref{sec.initstruc}  to identify initial structures in the Tukey types of p-point ultrafilters.

\begin{thm}\label{thm.Tim Step1} 
Suppose $\mathcal{C}\sse\mathcal{R}$ is a Nash-Williams filter on $(\mathcal{R},\le)$  and $\{d(X):X\in\mathcal{C}\}$ generates an ultrafilter on $\om$.
Then an ultrafilter $\mathcal{V}$ on $\om$ is Tukey reducible to $\mathcal{C}$ if and only if $\mathcal{V} =f( \left< \mathcal{C} \upharpoonright \mathcal{F} \right>)$ for some front $\mathcal{F}$ on $\mathcal{C}$ and some function $f: \mathcal{F} \rightarrow \omega$.
\end{thm}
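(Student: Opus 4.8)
The plan is to prove the two implications separately: the forward implication by assembling the machinery already developed in this section, and the reverse implication by exhibiting an explicit monotone cofinal map.

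First I would check that the hypotheses feed into the earlier results. Since $\mathcal{C}$ is a Nash-Williams filter on $(\mathcal{R},\le)$, the discussion following Definition \ref{defn.NWfilter} (via the work of Mijares in \cite{mijares2007notion}) shows that $\mathcal{C}$ is a selective filter on $(\mathcal{R},\le)$. I would also note that $\{d(X):X\in\mathcal{C}\}$ generates a \emph{nonprincipal} ultrafilter: whenever $Y\le X$ in $\mathcal{C}$ we have $d(Y)\sse d(X)$, and since $\mathcal{C}$ is a filter, every finite subfamily of $\{d(X):X\in\mathcal{C}\}$ has a common refinement $d(Z)$ with $Z\in\mathcal{C}$; as each $d(Z)$ is infinite (by {\bf A.1}(c) and {\bf A.2}(b)), no finite set belongs to the generated ultrafilter, so it is nonprincipal. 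With $\mathcal{C}$ selective and $\{d(X):X\in\mathcal{C}\}$ nonprincipal, Theorem \ref{p-point Tukey theorem} gives that $\mathcal{C}$ has basic Tukey reductions.

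For the forward direction, suppose $\mathcal{V}$ is Tukey reducible to $\mathcal{C}$, i.e.\ $\mathcal{C}\ge_T\mathcal{V}$. If $\mathcal{V}$ is principal, say at $n$, the conclusion is immediate by taking any front $\mathcal{F}$ on $\mathcal{C}$ (for instance $\{r_1(C):C\in\mathcal{C}\}$) with the constant map $f\equiv n$, since then $f(\left<\mathcal{C}\upharpoonright\mathcal{F}\right>)$ is the principal ultrafilter at $n$. So assume $\mathcal{V}$ is nonprincipal. Then Proposition \ref{BasicReductionTukeyProp} applies directly: $\mathcal{C}$ is a Nash-Williams filter with basic Tukey reductions and $\mathcal{V}$ is a nonprincipal ultrafilter with $\mathcal{C}\ge_T\mathcal{V}$, so there are a front $\mathcal{F}$ on $\mathcal{C}$ and a function $f:\mathcal{F}\ra\om$ with $\mathcal{V}=f(\left<\mathcal{C}\upharpoonright\mathcal{F}\right>)$, exactly the desired representation.

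For the reverse direction, suppose $\mathcal{V}=f(\left<\mathcal{C}\upharpoonright\mathcal{F}\right>)$ for some front $\mathcal{F}$ on $\mathcal{C}$ and some $f:\mathcal{F}\ra\om$. By Fact \ref{analogueFact5.3}, $\mathcal{C}\upharpoonright\mathcal{F}$ does generate an ultrafilter on $\mathcal{F}$, so $\left<\mathcal{C}\upharpoonright\mathcal{F}\right>$ is well defined and its Rudin-Keisler image under $f$ is generated by the base $\{f(\mathcal{F}|X):X\in\mathcal{C}\}$. I would then define $g:\mathcal{C}\ra\mathcal{V}$ by $g(X)=f(\mathcal{F}|X)$, each value lying in $\mathcal{V}$ as a base element. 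The map $g$ is monotone from $(\mathcal{C},\ge)$ to $(\mathcal{V},\contains)$: if $Y\le X$ then $\mathcal{F}|Y\sse\mathcal{F}|X$, hence $g(Y)\sse g(X)$. Its range is cofinal in $(\mathcal{V},\contains)$, since for each $Z\in\mathcal{V}$ the base property provides $X\in\mathcal{C}$ with $f(\mathcal{F}|X)\sse Z$, and in $(\mathcal{V},\contains)$ the inclusion $g(X)\sse Z$ means $g(X)$ dominates $Z$. A routine argument shows a monotone map with cofinal range sends cofinal sets to cofinal sets, so $g$ is a cofinal map witnessing $\mathcal{C}\ge_T\mathcal{V}$; that is, $\mathcal{V}$ is Tukey reducible to $\mathcal{C}$.

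The real content is already packaged in Proposition \ref{BasicReductionTukeyProp} and Theorem \ref{p-point Tukey theorem}, so this theorem is essentially their synthesis. I do not anticipate a serious obstacle; the only points needing care are the bookkeeping of the order conventions in the reverse direction — the reversal between $(\mathcal{C},\ge)$ and the reduction, and between $\sse$ on the $f$-images and $\contains$ on $\mathcal{V}$ — and the short verification that $\{d(X):X\in\mathcal{C}\}$ is nonprincipal, which is exactly what licenses the use of Theorem \ref{p-point Tukey theorem}.
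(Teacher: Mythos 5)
Your proposal is correct and follows essentially the same route as the paper: the forward direction is exactly the combination of Theorem \ref{p-point Tukey theorem} and Proposition \ref{BasicReductionTukeyProp}, and the reverse direction uses the same monotone cofinal map $X\mapsto f''(\mathcal{F}|X)$. The extra bookkeeping you supply (nonprincipality of the ultrafilter generated by $\{d(X):X\in\mathcal{C}\}$, and the trivial principal case) is a reasonable filling-in of details the paper leaves implicit.
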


\begin{proof}$(\Rightarrow)$ Suppose that $\mathcal{C}$ is Ramsey for $\mathcal{R}$. Proposition \ref{BasicReductionTukeyProp} and Theorem \ref{p-point Tukey theorem} show that if $\mathcal{V}$ is a non-principal ultrafilter on $\omega$ Tukey reducible to $\mathcal{C}$ then there is a front $\mathcal{F}$ on $\mathcal{C}$ and a function $f:\mathcal{F}\rightarrow \omega$ such that $\mathcal{V} =f( \left< \mathcal{C} \upharpoonright \mathcal{F} \right>)$.

$(\Leftarrow)$ Suppose that $\mathcal{F}$ is a front on $\mathcal{C}$, $f:\mathcal{F}\rightarrow \omega$ and $\mathcal{V}=f( \left< \mathcal{C} \upharpoonright \mathcal{F} \right>)$. The map sending $X \in \mathcal{C}$ to $f'' \mathcal{F}|X$ is a monotone cofinal map from $(\mathcal{C},\ge)$ to $(\mathcal{V},\supseteq)$. Thus, $\mathcal{V} \le_{T} \mathcal{C}$.
\end{proof}

When  $\mathcal{R}$ is a topological Ramsey space constructed from a generating sequence,
Theorem \ref{thm.Ramsey=NW} implies that 
the hypotheses of 
Theorem \ref{thm.Tim Step1} can be weakened to assuming that $\mathcal{C}$ is Ramsey.

The next fact shows that many topological Ramsey spaces give rise to selective filters with basic Tukey reductions.

\begin{fact} \label{selectiveExistence} Suppose that $\mathcal{R}$ has the property that for each $X \in \mathcal{R}$ and each $A \subseteq \omega$ there exists $Y\le X$  in $\mathcal{R}$ such that either $d(Y) \subseteq A$ or $d(Y) \subseteq \omega \setminus A$. Then assuming CH, MA or forcing with $\mathcal{R}$ using almost reduction, there exists a selective filter on $(\mathcal{R}, \le)$ with the property that $\{ d(X) : X\in \mathcal{C}\}$ generates a nonprincipal ultrafilter on $\omega$. 
\end{fact}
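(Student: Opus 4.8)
The plan is to obtain $\mathcal{C}$ by augmenting the already-available construction of a selective filter on $(\mathcal{R},\le)$ with a single extra family of requirements that pins down $\{d(X):X\in\mathcal{C}\}$. By the work of Mijares in \cite{mijares2007notion}, a selective filter on $(\mathcal{R},\le)$ exists under CH or MA, and any filter generic for $(\mathcal{R},\le^{*})$ is selective; what must be added is, for each $A\sse\om$, the requirement that some member of $\mathcal{C}$ decide $A$ through its depth-set. This is encoded by the sets
\[
D_A=\{Y\in\mathcal{R}:\ d(Y)\sse A\ \text{ or }\ d(Y)\sse\om\setminus A\},\qquad A\sse\om .
\]
The first thing I would record is that each $D_A$ is dense in $(\mathcal{R},\le)$, and hence in $(\mathcal{R},\le^{*})$: given any $X$, the hypothesis of the Fact produces $Y\le X$ with $d(Y)\sse A$ or $d(Y)\sse\om\setminus A$, i.e. $Y\in D_A$. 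I would also note the monotonicity $Z\le X\Rightarrow d(Z)\sse d(X)$, which follows from the definition of $d$ together with {\bf A.2}, since reducing $X$ to $Z$ can only discard approximation depths; this is what makes the $d$-images cohere into a filter base.

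Next I would carry out the recursion. Under CH, fix enumerations $\lgl A_\al:\al<\om_1\rgl$ of $\mathcal{P}(\om)$ and $\lgl Z_\al:\al<\om_1\rgl$ of $\mathcal{R}$ and build a $\le^{*}$-decreasing sequence $\lgl X_\al:\al<\om_1\rgl$ meeting three families of requirements: (i) at successor stages pass to $X_{\al+1}\le X_\al$ with $X_{\al+1}\in D_{A_\al}$, using density of $D_{A_\al}$; (ii) at the same stage decide $Z_\al$, taking $X_{\al+1}\le Z_\al$ whenever $Z_\al$ is compatible with $X_\al$ and otherwise leaving $Z_\al$ incompatible with the tower; and (iii) at limit stages $\lambda$, select a cofinal $\le^{*}$-decreasing $\om$-subsequence and apply the fusion principle flowing from {\bf A.3} (that a $\le$-decreasing sequence $\lgl X_i\rgl$ admits a lower bound $X$ with $X/r_i(X)\le X_i$ for all $i$) to obtain $X_\lambda$ below all earlier terms. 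Setting $\mathcal{C}=\{Y\in\mathcal{R}:\exists\al\ X_\al\le Y\}$, the selectivity and maximality of $\mathcal{C}$ are exactly what the classical construction delivers; requirement (i) is the only addition. For MA one runs the same recursion of length $\mathfrak{c}$, using $\mathfrak{p}=\mathfrak{c}$ to secure lower bounds for the $({<}\,\mathfrak{c})$-towers produced along the way while meeting every $D_A$; for the forcing version one lets $\mathcal{C}$ be the filter induced by a generic for $(\mathcal{R},\le^{*})$, which is selective by \cite{mijares2007notion} and meets each $D_A$ by genericity.

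It then remains to verify that $\{d(X):X\in\mathcal{C}\}$ generates a nonprincipal ultrafilter on $\om$. It is a filter base because $\mathcal{C}$ is $\le$-directed and $d$ is monotone: a common lower bound $Z\le X,Y$ in $\mathcal{C}$ gives $d(Z)\sse d(X)\cap d(Y)$. It generates an ultrafilter because each $A\sse\om$ is decided --- the member $X_{\al+1}\in D_{A_\al}\cap\mathcal{C}$ has $d(X_{\al+1})\sse A_\al$ or $d(X_{\al+1})\sse\om\setminus A_\al$, so $A_\al$ or its complement lies in the generated filter. It is nonprincipal because every $d(X)$ is infinite (by {\bf A.1}(c) and {\bf A.2}(b), as observed after Notation \ref{notn.thm53}): meeting $D_{\{n\}}$ yields $X\in\mathcal{C}$ with $d(X)\sse\om\setminus\{n\}$, so no singleton belongs to the generated filter.

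The step I expect to demand the most care is not requirement (i), which is discharged at a single successor stage, but confirming that interleaving the $D_{A_\al}$'s leaves intact the two load-bearing features of the classical construction --- selectivity and the passage from the $\le^{*}$-tower to a genuine $\le$-filter with the directedness guaranteeing maximality. I would argue this is robust precisely because the new requirements are density conditions met once and never constraining later stages, so the limit-stage fusion from {\bf A.3} continues to operate on a $\le^{*}$-decreasing sequence exactly as before; but it is at this junction that one must be explicit about the interaction of the three requirement families and about why the upward $\le$-closure of the resulting tower is a genuine maximal, selective filter rather than merely a $\le^{*}$-chain.
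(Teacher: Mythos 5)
The paper states this Fact without proof, so there is no argument of the authors' to compare against; your proof is the standard construction and is essentially correct. The one ingredient you should make fully explicit is the monotonicity $Z\le X\Rightarrow d(Z)\sse d(X)$ (or at least $d(Z)\sse^{*}d(X)$): this does not follow from {\bf A.2} alone for an abstract space --- transitivity of $\le_{\fin}$ only yields $\depth_{\mathbb{A}}(r_n(Z))\le\depth_{\mathbb{A}}(r_{m}(X))$ for the witnessing $m$ --- though it does hold for every space in the paper (for a generating-sequence space $\depth_{\mathbb{A}}(r_n(Z))$ is one more than the integer tag of the last block of $r_n(Z)$, and those tags are inherited from $X$), and without some such coherence the family $\{d(X):X\in\mathcal{C}\}$ need not even have the finite intersection property. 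The only other point to spell out is that the fusion principle you quote from {\bf A.3} is stated for $\le$-decreasing sequences, so at limit stages one must first refine the cofinal $\le^{*}$-decreasing $\om$-subsequence to a genuinely $\le$-decreasing one before fusing; with these two details supplied the argument goes through.
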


In \cite{Dobrinen/Todorcevic11} and \cite{Dobrinen/Todorcevic12}, Dobrinen and Todorcevic   introduced topological Ramsey spaces $\mathcal{R}_{\al}$,     
 $\al<\om_1$, which distill key properties of forcings of Laflamme in \cite{Laflamme89} and with associated ultrafilters with initial Tukey structure exactly that of a decreasing chain of order type $\al+1$. 
For $1\le n<\om$,
the space $\mathcal{R}_n$ is constructed from a certain tree of height $n+1$ which forms the top element of the space. When $n>1$, these spaces are not constructed from generating sequences.

Trujillo has shown in \cite{Trujillo2013selective} that there is a topological Ramsey space $\mathcal{R}_{n}^{\star}$ constructed from $\mathcal{R}_{n}$, such that forcing with $\mathcal{R}_{n}^{\star}$ using almost reduction adjoins a selective filter $\mathcal{C}$ on $(\mathcal{R}_{n}, \le)$ which is not a Ramsey filter on $(\mathcal{R}_{n},\le)$. Furthermore, it can be shown that $\{d(X): X\in \mathcal{C}\}$ generates an ultrafilter on $\omega$. 
Forcing with the space $\mathcal{R}_{n}^{\star}$ using almost reduction, or assuming CH or MA,
one can
 construct a selective but not Ramsey maximal filter on $(\mathcal{R}_{n},\le)$.
Such a filter has the property  that $\{ d(X) : X\in \mathcal{C}\}$ generates an ultrafilter on $\omega$. Theorem $\ref{p-point Tukey theorem}$ implies that these non-Ramsey filters on $(\mathcal{R}_{n},\le)$ have basic Tukey reductions.
 Using a similar argument, the work of Trujillo in \cite{Trujillo2013selective} shows that for each positive $n$,
using forcing or  assuming CH or MA, there is a selective but not Ramsey filter on $(\mathcal{H}^{n},\le)$ with basic Tukey reductions. (Recall  $\mathcal{H}^{n}$ from Example \ref{ex.H}.)

If $\mathcal{R}$ is constructed from some generating sequence then 
Theorems \ref{p-point Tukey theorem}  and \ref{thm.Tim Step1}
reduce the identification of ultrafilters on $\omega$
which are Tukey reducible to a Ramsey filter $\mathcal{C}$ associated with  $(\mathcal{R},\le)$ to the study of Rudin-Keisler reduction on ultrafilters on base sets which are  fronts on $\mathcal{C}$.
 In the next section we show that the 
Ramsey-classification Theorem \ref{canonical R}
can be localized  to
 equivalence relations on fronts on a Ramsey filter on $(\mathcal{C},\le)$.
We then use it identify initial structures in the Tukey types of ultrafilters Tukey reducible to any Ramsey filter associated with a Ramsey space constructed from a generating sequence.


\section{Initial structures in the Tukey and Rudin-Keisler types of p-points}\label{sec.initstruc}

The  structure of the Tukey types of ultrafilters  (partially ordered by $\contains$)  was  studied in \cite{Dobrinen/Todorcevic10}.
In that paper, it is shown that large chains, large antichains, and diamond configurations embed into the Tukey types of p-points.  However, this left open the question of what  the exact structure of all Tukey  types below a given p-point is.
Recall that we use the terminology 
{\em initial Tukey structure} below an ultrafilter $\mathcal{U}$ to refer to the structure of the Tukey types of {\em all} nonprincipal ultrafilters Tukey reducible to $\mathcal{U}$ 
(including $\mathcal{U}$).

In \cite{Raghavan/Todorcevic12},
Todorcevic showed that the initial Tukey structure below a Ramsey ultrafilter on $\om$ consists  exactly of  one Tukey type, namely that of the Ramsey ultrafilter.
In \cite{Dobrinen/Todorcevic11} and \cite{Dobrinen/Todorcevic12}, Dobrinen and Todorcevic  showed that for each $1\le \al<\om_1$, there are Ramsey spaces with associated ultrafilters which have initial 
Tukey and initial Rudin-Keisler structures which are decreasing chains of order type $\al+1$.  This left open the following questions from the Introduction,  which we restate here.

\begin{question1}
What are the possible initial Tukey structures for ultrafilters on a countable base set?
\end{question1}

\begin{question2}
What are the possible initial Rudin-Keisler structures for ultrafilters on a countable base set?
\end{question2}

\begin{question3}
 For a given ultrafilter $\mathcal{U}$, what is the structure of the Rudin-Keisler ordering of the isomorphism classes of ultrafilters Tukey reducible to $\mathcal{U}$?
\end{question3}

In this section, we answer Questions 1  - 3 for all 
Ramsey filters associated with a Ramsey space constructed from a generating sequence with \Fraisse\ classes which have the Order-Prescribed Free Amalgamation Property.
The results in 
Theorems \ref{Tim-InitialStructures}  and \ref{TukeyReducibleTheorem2}
show the surprising  fact that the structure of the \Fraisse\ classes used for the generating sequence have bearing on the initial Rudin-Keisler structures, but not on the intial Tukey structures.

In this section we use topological Ramsey spaces constructed from generating sequences to identify some initial  structures in the Tukey types of p-points. 
The next theorem is one of the main results, and will be proved at the end of this section.

\begin{thm} \label{Tim-InitialStructures} 
Let  $\mathcal{C}$ be a Ramsey filter on 
a Ramsey space constructed from a generating sequence for \Fraisse\ classes of ordered relational structures with the Ramsey property and the OPFAP.
\begin{enumerate}
\item
If $J<\om$,
then the initial Tukey structure of all ultrafilters Tukey reducible to $\mathcal{C}$ is exactly $\mathcal{P}(J)$.
\item
If  $J\le\om$, then 
  the Tukey ordering of the p-points Tukey reducible to $\mathcal{C}$ is isomorphic to the partial order $([J]^{<\omega}, \subseteq)$. 
\end{enumerate}
\end{thm}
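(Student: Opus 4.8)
The plan is to push every Tukey type below $\mathcal{C}$ through the reduction machinery of Section \ref{sec.basic} and the canonization Theorem \ref{canonical R}, so that each such type is pinned to a canonical projection of $\mathcal{C}$, and then to compute the Tukey order on these projections by hand. Since $\mathcal{C}$ is Ramsey, Theorem \ref{thm.Ramsey=NW} shows it is Nash-Williams, so Theorem \ref{p-point Tukey theorem} gives that $\mathcal{C}$ has basic Tukey reductions; hence Proposition \ref{BasicReductionTukeyProp} together with Theorem \ref{thm.Tim Step1} shows that every nonprincipal $\mathcal{V}\le_T\mathcal{C}$ has the form $\mathcal{V}=f(\langle\mathcal{C}\upharpoonright\mathcal{F}\rangle)$ for some front $\mathcal{F}$ on $\mathcal{C}$ and some $f:\mathcal{F}\to\omega$. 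Applying Theorem \ref{canonical R}, localized to fronts on $\mathcal{C}$, to the equivalence relation $a\sim b\Leftrightarrow f(a)=f(b)$, I may shrink to a member of $\mathcal{C}$ on which $f$ is represented by a canonical inner Nash-Williams map $\varphi$, assembled block-by-block from projections $\pi_{r_i(b)}\in\Pi(i)$.

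To each such $\varphi$ I attach its \emph{support}
\[
S_\varphi=\{\,j\in J:\ \varphi\text{ retains a substructure of size }\ge 2\text{ on the }j\text{-th coordinate of cofinally many blocks}\,\}.
\]
Proposition \ref{mix} forces the projection type, and in particular the active-coordinate pattern, to be consistent across mixed blocks, so $S_\varphi$ is well defined. For $S\subseteq J$ let $\mathcal{U}_S$ be the ultrafilter obtained from the canonical projection that records the depth together with the full structures $\bsA_{k,j}$, $j\in S$, and discards the coordinates outside $S$. The first claim is that $\mathcal{V}=\varphi(\langle\mathcal{C}\upharpoonright\mathcal{F}\rangle)\equiv_T\mathcal{U}_{S_\varphi}$. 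Here the crucial point is a Tukey collapse of the richer Rudin-Keisler data of Theorem \ref{TukeyReducibleTheorem2}: on a coordinate $j$ any two retained substructures of size at least two give the same Tukey type, being mutually cofinal through the embeddability of finite members of $\mathcal{K}_j$, whereas a coordinate retained only up to size at most one, like the depth itself, is Rudin-Keisler-below every genuine coordinate and so never raises the Tukey type. In particular the pure-depth map ($S=\emptyset$) projects $\mathcal{C}$ onto the selective ultrafilter generated by $\{d(X):X\in\mathcal{C}\}$, which by \cite{Raghavan/Todorcevic12} is the minimum nonprincipal Tukey type and matches the bottom $\emptyset$ of $\mathcal{P}(J)$.

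The second claim is that $\mathcal{U}_S\le_T\mathcal{U}_{S'}$ if and only if $S\subseteq S'$. The forward direction for $S\subseteq S'$ is the coordinate-forgetting projection, a Rudin-Keisler and hence Tukey reduction. The converse is the main obstacle: I must show $\mathcal{U}_S\not\le_T\mathcal{U}_{S'}$ whenever $S\not\subseteq S'$. Fixing $j_0\in S\setminus S'$, any monotone cofinal witness is, after canonizing over $\mathcal{C}$, represented by a canonical map on a front for $\mathcal{U}_{S'}$ whose support omits $j_0$; the task is to see that such a map cannot reproduce the cofinal complexity carried by the $\mathcal{K}_{j_0}$-coordinate. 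This is exactly where the Order-Prescribed Free Amalgamation Property is indispensable: it lets me freely amalgamate blocks so as to manufacture, for any purported reduction, an unbounded family in $\mathcal{U}_S$ whose image is bounded in $\mathcal{U}_{S'}$, contradicting cofinality. I expect this lower-bound argument, rather than any of the bookkeeping above, to be the hard technical heart of the proof.

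Granting both claims, $S\mapsto[\mathcal{U}_S]_T$ is an order isomorphism of $(\mathcal{P}(J),\subseteq)$ onto the collection of Tukey types below $\mathcal{C}$ realized by canonical projections. When $J<\omega$ every subset $S$ is finite and each $\mathcal{U}_S$ is a finite product of single-coordinate p-points, hence a p-point, so every nonprincipal $\mathcal{V}\le_T\mathcal{C}$ is Tukey-equivalent to a p-point and the initial Tukey structure is exactly $\mathcal{P}(J)$; this gives (1). For (2) I would finish by characterizing which $\mathcal{U}_S$ are p-points: a finite support yields a finite product of p-points and so a p-point, while for $J=\omega$ an infinite support yields a genuine Fubini product, which is never a p-point (cf.\ Example \ref{ex.Homega}). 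Thus the p-points Tukey reducible to $\mathcal{C}$ are precisely the $\mathcal{U}_S$ with $S\in[J]^{<\omega}$, and by the second claim their Tukey order is $([J]^{<\omega},\subseteq)$.
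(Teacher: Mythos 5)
Your overall architecture --- basic Tukey reductions to pass to a front, canonization of the induced equivalence relation, and classification of the resulting projections by their set of active coordinates --- is the same as the paper's, but two of your three main steps contain genuine errors or omissions. First, your support $S_\varphi$ uses the threshold ``retains a substructure of size $\ge 2$,'' and you assert that a coordinate retained only as a singleton, ``like the depth itself,'' never raises the Tukey type. That is false: by Proposition \ref{prop.Tukeyequiv} (whose proof shows $\mathcal{U}_{\bsB}\equiv_{T}\mathcal{U}_{(\bsA_{0,j})_{j\in J_{\bsB}}}$, where each $\bsA_{0,j}$ is a \emph{one-point} structure), retaining a single point of the $j$-th coordinate in each block already yields an ultrafilter Tukey equivalent to the full $j$-th coordinate projection. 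In $\mathcal{H}^1$ your definition would declare the one-point-per-block projection --- which is $\mathcal{Y}_1$ itself --- Tukey equivalent to the Ramsey ultrafilter $\mathcal{Y}_0$, contradicting \cite{Dobrinen/Todorcevic11}. The correct support is $\{j: I_j\neq\emptyset\}$. Second, the claim $\varphi(\langle\mathcal{C}\upharpoonright\mathcal{F}\rangle)\equiv_{T}\mathcal{U}_{S_\varphi}$ ignores that fronts have arbitrary countable rank, so the canonized image is an \emph{iterated Fubini product} of block projections $\mathcal{U}_{\bsB}$ (this is the content of Theorem \ref{TukeyReducibleTheorem}, which requires the $\vec{\mathcal{W}}$-tree analysis). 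Your ``mutual cofinality within a coordinate'' remark collapses the per-block Rudin--Keisler data but says nothing about the cross-block iteration; the paper closes this by proving each $\mathcal{U}_{\bsB}$ is a rapid p-point (Proposition \ref{Tim RKstructure}(4)) and invoking Corollary 37 of \cite{Dobrinen/Todorcevic10} (a rapid p-point is Tukey equivalent to its Fubini powers), together with the finiteness of the set of types when $J<\om$.

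The largest gap is the one you flag yourself: the non-reducibility $\mathcal{U}_S\not\le_{T}\mathcal{U}_{S'}$ for $S\not\subseteq S'$ is announced but not proved, and the route you sketch --- using the OPFAP to manufacture an unbounded family with bounded image against an arbitrary monotone cofinal map --- is not viable as stated, since such a map carries no definability one could exploit before canonizing it. The paper's argument is a bootstrap: $\mathcal{U}_{S'}$ is Tukey equivalent to a Ramsey filter on the smaller space $\mathcal{R}(\langle(\bsA_{k,j})_{j\in S'}:k<\om\rangle)$, so Theorem \ref{TukeyReducibleTheorem} applied to \emph{that} space shows every p-point Tukey reducible to $\mathcal{U}_{S'}$ is isomorphic to some $\mathcal{U}_{\bsD}$ with $J_{\bsD}\subseteq S'$ or to $\mathcal{Y}_0$, and then the Rudin--Keisler classification of Proposition \ref{Tim RKstructure}(5) --- which is where the OPFAP and the canonization Theorem \ref{canonical R} actually do their work --- rules out $\mathcal{U}_S\le_{RK}\mathcal{U}_{\bsD}$ for every such $\bsD$. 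Without this (or an equivalent) lower-bound argument, the order isomorphism with $\mathcal{P}(J)$ is not established.
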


From Theorem \ref{Tim-InitialStructures},  the following corollary is immediate.

\begin{cor} \label{cor.Tim-InitialStructures} 
It is consistent with ZFC that the following statements hold.
\begin{enumerate}
\item 
Every finite Boolean algebra appears as  the initial Tukey structure below some p-point.
\item 
The structure of the Tukey types of p-points  contains the partial order $([\omega]^{<\omega}, \subseteq)$ as an initial structure.
\end{enumerate}
\end{cor}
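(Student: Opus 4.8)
The plan is to obtain both statements as direct specializations of Theorem \ref{Tim-InitialStructures}, choosing the generating sequence to be that of the hypercube spaces of Examples \ref{ex.H} and \ref{ex.Homega}, and locating the single non-ZFC ingredient in the existence of a Ramsey filter. First I would observe that every finite Boolean algebra is, as a partial order, isomorphic to $(\mathcal{P}(n),\sse)$, where $n<\om$ is its number of atoms. So for statement (1) I would fix such an $n\ge 1$ and take the hypercube space $\mathcal{H}^n$, whose $n$ many \Fraisse\ classes are all the class of finite linear orders. The essential point to check here is that this class meets the hypotheses of Theorem \ref{Tim-InitialStructures}: it has the Ramsey property (it is the prototypical Ramsey class), and it satisfies the OPFAP, since there are no relation symbols beyond the order and hence the order-prescribed free amalgamation is realized by simply interleaving two copies according to the prescribed relation $\vec{\rho}$. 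Thus $\mathcal{H}^n$ is a topological Ramsey space constructed from a generating sequence with $J=n<\om$.

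Next I would supply the consistency and the p-point. By the remarks following Definition \ref{defn.NWfilter}, assuming CH or MA, or by forcing with $\mathcal{H}^n$ under almost reduction, there is a Nash-Williams filter on $(\mathcal{H}^n,\le)$; by Theorem \ref{thm.Ramsey=NW} this filter is a Ramsey filter $\mathcal{C}$ for $\mathcal{H}^n$. Since $1\le n<\om$, Fact \ref{fact.Ramseyuf} shows $\mathcal{C}$ generates an ultrafilter $\mathcal{U}_{\mathcal{H}^n}$ on the base $\mathcal{AR}_1$, which is a p-point. Applying Theorem \ref{Tim-InitialStructures}(1), the initial Tukey structure of all ultrafilters Tukey reducible to $\mathcal{U}_{\mathcal{H}^n}$ is exactly $(\mathcal{P}(n),\sse)$, which is isomorphic to the given finite Boolean algebra. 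Because the only hypothesis beyond ZFC was the consistent principle CH/MA (or passage to a forcing extension), statement (1) is consistent with ZFC.

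For statement (2) I would take $J=\om$ and the infinite hypercube space $\mathcal{H}^{\om}$ of Example \ref{ex.Homega}, again built from the Ramsey, OPFAP class of finite linear orders. Under the same consistency assumption there is a Ramsey filter $\mathcal{C}$ for $\mathcal{H}^{\om}$, and Theorem \ref{Tim-InitialStructures}(2) with $J=\om$ gives that the p-points Tukey reducible to $\mathcal{C}$, ordered by $\le_T$, are isomorphic to $([\om]^{<\om},\sse)$. Since this collection of p-points is closed downward under $\le_T$ (if $\mathcal{W}$ is a p-point and $\mathcal{W}\le_T\mathcal{V}\le_T\mathcal{C}$ then $\mathcal{W}\le_T\mathcal{C}$ by transitivity), it is an initial structure in the Tukey types of p-points, establishing (2).

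The argument has no serious obstacle — the corollary is a straight specialization of Theorem \ref{Tim-InitialStructures} — so the only genuine points of care are bookkeeping items. The first is verifying that the class of finite linear orders used in the hypercube spaces really does satisfy both the Ramsey property and the OPFAP, so that the theorem applies. The second is recognizing that the word \emph{consistent} enters solely through the existence of a Ramsey filter, which holds under CH, MA, or in a forcing extension but not in ZFC alone. Finally, I would flag for statement (2) that $\mathcal{C}$ itself need not be a p-point: $([\om]^{<\om},\sse)$ has no top element, which is exactly why the statement quantifies only over the p-points \emph{below} $\mathcal{C}$ rather than asserting that $\mathcal{C}$ is a p-point.
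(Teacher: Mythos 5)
Your proposal is correct and takes essentially the same route as the paper, which derives the corollary immediately from Theorem \ref{Tim-InitialStructures} applied to the hypercube spaces $\mathcal{H}^n$ and $\mathcal{H}^{\om}$ of Examples \ref{ex.H} and \ref{ex.Homega}, with the consistency hypothesis entering only through the existence of a Ramsey (Nash-Williams) filter under CH, MA, or forcing. The details you supply (finite linear orders satisfy the Ramsey property and the OPFAP, every finite Boolean algebra is $\mathcal{P}(n)$ for some $n$, and downward closure under $\le_T$ for part (2)) are exactly the bookkeeping the paper leaves implicit.
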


The archetype for the proofs and results in this section comes from work in \cite{Dobrinen/Todorcevic11} showing that the initial Tukey structure below the ultrafilter associated with the space $\mathcal{R}_1$ is exactly a chain of length 2.
  (See Theorem 5.18 in \cite{Dobrinen/Todorcevic11}  and results leading up to it.)
The outline of that proof is now presented, as it will be followed in this section in more generality.
\vskip.1in

\noindent \bf Outline of Proof of Theorem Theorem 5.18 in \cite{Dobrinen/Todorcevic11}.
 \rm
Recall that the space $\mathcal{R}_1$ in \cite{Dobrinen/Todorcevic11} is exactly the
topological Ramsey space $\mathcal{R}(\left<\mathbf{A}_{k} : k<\omega\right>)$ where $J=1$ and for each $k<\omega$, $\mathbf{A}_{k,0}$ is a linear order of cardinality $k$.  
Let  $\mathcal{C}$ be a maximal filter Ramsey for $\mathcal{R}_1$ and $\mathcal{U}_1$ be the ultrafilter it generates on the leaves of the base tree.

Theorem 5.18   in \cite{Dobrinen/Todorcevic11} was obtained in six main steps. (1) Theorem 20 from \cite{Dobrinen/Todorcevic10}, every p-point has basic monotone reductions, was used to show that all ultrafilters Tukey reducible to $\mathcal{U}_{1}$ are of the form $f(\left<\mathcal{C}\upharpoonright \mathcal{F}\right>)$ for some front on $\mathcal{C}$. 
(2) A localized version of the Ramsey-classifcation theorem for equivalence relations on fronts on $\mathcal{C}$ was shown to hold. 
(3) For each $n<\omega$, it was shown that the filter $\mathcal{Y}_{n+1}$ on the base set $\mathcal{R}_{1}(n)$ generated by $\mathcal{C}\upharpoonright\mathcal{R}_{1}(n)$ is a p-point ultrafilter. 
Furthermore, it was shown that $\mathcal{Y}_{1} <_{RK} \mathcal{Y}_{2} <_{RK} \dots$.  
(4) The localized Ramsey-classification theorem and the  canonical equivalence  relations were used to show that all ultrafilters Tukey reducible to $\mathcal{U}_{1}$ are isomorphic to an ultrafilter of $\vec{\mathcal{W}}$-trees, where $\hat{\mathcal{S}}\setminus \mathcal{S}$ is a well-founded tree, $\vec{\mathcal{W}}= ( \mathcal{W}_{s} : s \in\hat{\mathcal{S}}\setminus \mathcal{S})$, and each $\mathcal{W}_{s}$ is isomorphic to $\mathcal{Y}_{n+1}$ for some $n<\omega$ or isomoprhic to $\mathcal{U}_{0}$. (5) The theory of uniform fronts was used to show that each ultrafilter generated by a $\vec{\mathcal{W}}$-tree is isomorphic to a countable Fubini product from among the ultrafilters $\mathcal{Y}_{n}$, $n<\omega$.
 (6) The result on Fubini products was used to show that the Tukey structure of the non-principal ultrafilters on $\omega$ Tukey reducible to $\mathcal{U}_{1}$ is isomorphic to the two element Boolean algebra and that the p-points Tukey reducible to $\mathcal{U}_{1}$ are exactly $\{\mathcal{Y}_{n}: n<\omega\}$.
\vskip.1in

In order to avoid repeating phrases we fix some notation for the remainder of the section.
Fix $1\le J\le\omega$ and $\mathcal{K}_{j}$, $j\in J$, a collection of Fra\"{i}ss\'{e} classes of finite ordered relational structures such that each $\mathcal{K}_{j}$ satisfies the Ramsey property and the OPFAP.
Let $\mathcal{K}$ denote $(\mathcal{K}_{j})_{j\in J}$.
Let $\left< \mathbf{A}_{k} : k<\omega \right>$ be a generating sequence, and  let $\mathcal{R}$ denote the topological Ramsey space $\mathcal{R}(\left< \mathbf{A}_{k} : k<\omega \right>$).

Theorem \ref{thm.Tim Step1}  
verifies that step (1) can be carried out for any Ramsey filter on $(\mathcal{R},\le)$.
 In the remainder of this section, we show that analogues of steps (2) - (6) can be carried out for any Ramsey filter on $(\mathcal{R},\le)$. 
The first part  of step (2), proving the Ramsey-classifiication  theorem for $\mathcal{R}$,  was  obtained in  Theorem $\ref{canonical R}$. 
We complete step (2) by showing that a localized version of Theorem \ref{canonical R} holds for Ramsey filters on $(\mathcal{R},\le)$.   
The analogue of step (3) is not as straightforward.
First we introduce $\mathcal{K}_{\fin}$
 and then associate to each $\mathbf{B}\in \mathcal{K}_{\fin}$ a p-point ultrafilter $\mathcal{U}_{\mathbf{B}}$ (see Notation \ref{notn.importantt}). 
Then we show that the Rudin-Keisler structure of these p-points is  isomorphic to  $\tilde{\mathcal{K}}_{\fin}$,
the collection of equivalence classes of members of 
$\mathcal{K}_{\fin}$, partially ordered by embeddability.
Steps (4) and (5) are then generalized,
the only difference being that the nodes of the $\vec{\mathcal{W}}$-trees are taken to be the p-points $\mathcal{U}_{\mathbf{B}}$, $\mathbf{B}\in \mathcal{K}_{\fin}$, from step (3). 
Step (6) will be completed at the end of the section by proving Theorem $\ref{Tim-InitialStructures}$ and identifying initial structures in the Tukey types of ultrafilters.

 The next theorem completes step (2) for topological Ramsey spaces constructed from generating sequences.

\begin{thm}\label{Tim-LocalCanonical}
Let  $\mathcal{C}$ be a Ramsey filter on 
a Ramsey space constructed from a generating sequence for \Fraisse\ classes of ordered relational structures with the Ramsey property and the OPFAP.
 If $\mathcal{C}$ is a Ramsey filter on $(\mathcal{R}, \le)$, then for any front $\mathcal{F}$ on $\mathcal{R}$ and any equivalence relation $R$ on $\mathcal{F}$, there exists a $C \in \mathcal{C}$ such that $R$ is canonical on $\mathcal{F}|C$.
\end{thm}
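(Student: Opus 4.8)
The plan is to re-run the proof of Theorem \ref{canonical R} essentially verbatim, but to carry out every step inside the filter $\mathcal{C}$, replacing the two external tools used there by their filter counterparts. First I would record that, since $\mathcal{C}$ is Ramsey for $\mathcal{R}$, it is Nash-Williams by Theorem \ref{thm.Ramsey=NW}, and hence selective for $(\mathcal{R},\le)$ by the cited work of Mijares; these are exactly the two localizing devices needed. The proof of Theorem \ref{canonical R} invokes the Abstract Nash-Williams Theorem only to homogenize a finite clopen cover of some $[\emptyset,X]$ (in Claims \ref{mixed iff related} and \ref{claim decide} and in the Subclaim of Proposition \ref{mix}), and invokes the Diagonalization Lemma \ref{diagonalization argument} only to fuse such homogenizations over all short initial segments (in Lemma \ref{lema decides 2} and Claim \ref{claim decide}).

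For the first device: each family $\mathcal{X}\subseteq\mathcal{R}$ that gets homogenized in the global proof is clopen, its membership being decided by finitely many blocks of $Y$ above a fixed depth; hence the minimal approximations deciding $\mathcal{X}$ form a front on $\mathcal{C}$ and membership in $\mathcal{X}$ is a $2$-coloring of that front. Applying the Nash-Williams property of $\mathcal{C}$ (Definition \ref{defn.NWfilter}) to this coloring yields a $C\in\mathcal{C}$ with $[\emptyset,C]\subseteq\mathcal{X}$ or $[\emptyset,C]\cap\mathcal{X}=\emptyset$, which is precisely the conclusion the Abstract Nash-Williams Theorem provided, now with the witness in $\mathcal{C}$. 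Thus the localized forms of Claims \ref{mixed iff related} and \ref{claim decide} and of the Subclaim inside Proposition \ref{mix} go through word for word, producing the equivalence relations $E_s\in\mathcal E(|s|)$ and the projections $\pi_s$ with all their mixing properties, but now witnessed by members of $\mathcal{C}$. Note that the mixing and separating relations are defined with respect to all of $\mathcal{R}$, so Lemma \ref{lem.transmix} and Claim \ref{claim 3}, which say nothing about the filter, are unaffected.

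For the second device I would prove a localized diagonalization lemma: if $P(s,t,\cdot)$ is hereditary and for every $X\in\mathcal{C}$ and all $s,t\in\mathcal{AR}|X$ there is $Y\in\mathcal{C}$ with $Y\le X$ and $P(s,t,Y)$, then for every $X\in\mathcal{C}$ there is $Y\in\mathcal{C}$ with $Y\le X$ such that $P(s,t,Z)$ holds for all $s,t\in\mathcal{AR}|Y$ and all $Z\le Y$. This is proved by the fusion argument underlying the proof of Theorem \ref{p-point Tukey theorem}: enumerate the pairs and build a decreasing sequence $X_0\ge X_1\ge\cdots$ in $\mathcal{C}$ with $X_n$ arranged to handle all pairs of depth at most $n$, then use selectivity of $\mathcal{C}$ to obtain $Y\in\mathcal{C}$ with $Y/r_i(Y)\le X_i$ for all $i$, and invoke heredity of $P$ to transfer the property to $Y$ and to every $Z\le Y$. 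With this lemma and the localized Claim \ref{mixed iff related} in hand, the filter analogues of Lemma \ref{lema decides 2} and Claim \ref{claim decide} follow by exactly the arguments given (now inside $\mathcal{C}$), and the localized Proposition \ref{mix} then produces a single $C\in\mathcal{C}$ for which the inner, Nash-Williams map $\varphi(a)=\bigcup_{i<|a|}\pi_{r_i(a)}(a(i))$ canonizes $R$ on $\mathcal{F}|C$, just as in Claim \ref{finish}.

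I expect the main obstacle to be the localized diagonalization lemma, and specifically the bookkeeping that shows the selective pseudo-intersection $Y\in\mathcal{C}$ genuinely inherits the deciding/mixing property for every pair $s,t\in\mathcal{AR}|Y$ and every $Z\le Y$. One must verify that the single-pair input is available inside $\mathcal{C}$ (obtained, as in the global deciding argument, by applying the filter's Nash-Williams property to a coloring derived from the map $f$ inducing $R$, rather than the Abstract Nash-Williams Theorem), and that the depth control $Y/r_i(Y)\le X_i$ supplied by selectivity, combined with heredity, suffices to push the finitely-determined property from the approximating $X_n$ down to all of $[\emptyset,Y]$. This is the only place where the argument genuinely departs from the proof of Theorem \ref{canonical R}; everything else is a mechanical transcription in which ``$B\le A$'' is upgraded to ``$B\in\mathcal{C}$.''
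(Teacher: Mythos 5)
Your proposal is correct and follows exactly the route the paper takes: its proof of Theorem \ref{Tim-LocalCanonical} simply observes that $\mathcal{C}$ satisfies the Abstract Nash-Williams Theorem localized to $\mathcal{C}$ and is selective for $\mathcal{R}$ (by Mijares), so the proof of Theorem \ref{canonical R} relativizes to $\mathcal{C}$. You have merely spelled out the two substitutions (filter Nash-Williams property for homogenization, selectivity for the diagonalization/fusion) that the paper leaves implicit.
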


\begin {proof}
Since $\mathcal{C}$ satisfies the Abstract Nash-Williams Theorem for $\mathcal{R}$, and $\mathcal{C}$  is also selective for $\mathcal{R}$, by Lemma 3.8 in \cite{mijares2007notion}. 
Thus,  the   proof of Theorem \ref{canonical R}
can be relativized to $\mathcal{C}$.
\end{proof}

Next we complete step (3) for the general case by first identifying the p-points to be used as the nodes of the $\vec{\mathcal{W}}$-trees we encounter in step (4), and then determining the Rudin-Keisler structure among these p-points.

\begin{fact} 
If $\mathcal{C}\subseteq \mathcal{R}$ is a Ramsey filter on $(\mathcal{R},\le)$, then for each $n<\omega$, $\mathcal{C}\upharpoonright \mathcal{R}(n) = \{ \mathcal{R}(n) | C : C \in \mathcal{C}\}$ generates an ultrafilter on base set $\mathcal{R}(n)$.
\end{fact}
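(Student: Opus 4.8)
The plan is to verify the two defining properties of an ultrafilter for the collection $\{\mathcal{R}(n)\mid C : C\in\mathcal{C}\}$ on the base set $\mathcal{R}(n)$: that it generates a proper filter, and that it decides every subset of $\mathcal{R}(n)$. First I would check that it is a filter base. Given $C, D\in\mathcal{C}$, since $\mathcal{C}$ is a filter on $(\mathcal{R},\le)$ there is an $E\in\mathcal{C}$ with $E\le C$ and $E\le D$; by the definition of the partial order, every $n$-th block of a member below $C$ (resp.\ $D$) is built from a block of $C$ (resp.\ $D$), so $\mathcal{R}(n)\mid E\subseteq (\mathcal{R}(n)\mid C)\cap(\mathcal{R}(n)\mid D)$. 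Thus the collection is downward directed under inclusion, and upward closure yields a genuine filter. Properness follows because each $\mathcal{R}(n)\mid C$ is nonempty (it always contains $C(n)$), so the filter does not contain $\emptyset$.

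The heart of the argument is the ultrafilter (decisiveness) property, and this is exactly where the hypothesis that $\mathcal{C}$ is \emph{Ramsey} for $(\mathcal{R},\le)$ is used. Let $\mathcal{H}\subseteq\mathcal{R}(n)$ be arbitrary; I want a single $C\in\mathcal{C}$ such that $\mathcal{R}(n)\mid C\subseteq\mathcal{H}$ or $\mathcal{R}(n)\mid C\cap\mathcal{H}=\emptyset$. The natural move is to transport $\mathcal{H}$ to a subset of $\mathcal{AR}_{n+1}$ so that Definition \ref{def.Ramseymaxfilter} applies. Concretely, I would set
\begin{equation}
\widetilde{\mathcal{H}}=\{a\in\mathcal{AR}_{n+1} : a(n)\in\mathcal{H}\},
\end{equation}
the collection of length-$(n+1)$ approximations whose $n$-th (i.e.\ final, by zero-indexing) block lies in $\mathcal{H}$. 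Since $\mathcal{C}$ is Ramsey for $\mathcal{R}$, applying the defining property at level $n+1$ to $\widetilde{\mathcal{H}}$ produces a $C\in\mathcal{C}$ with either $\mathcal{AR}_{n+1}\mid C\subseteq\widetilde{\mathcal{H}}$ or $\mathcal{AR}_{n+1}\mid C\cap\widetilde{\mathcal{H}}=\emptyset$. In the first case every $B(n)$ with $B\le C$ arises as the final block of some $a\in\mathcal{AR}_{n+1}\mid C$, forcing $\mathcal{R}(n)\mid C\subseteq\mathcal{H}$; in the second case the same correspondence gives $\mathcal{R}(n)\mid C\cap\mathcal{H}=\emptyset$.

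The main obstacle I anticipate is the bookkeeping in that transport step: one must confirm that the map $a\mapsto a(n)$ carries $\mathcal{AR}_{n+1}\mid C$ \emph{onto} $\mathcal{R}(n)\mid C$, so that homogeneity of $\widetilde{\mathcal{H}}$ downstairs genuinely transfers to homogeneity of $\mathcal{H}$ upstairs with no leftover blocks escaping the decision. This is where the axioms \textbf{A.3} and the structure of $\le_{\mathrm{fin}}$ from Definition \ref{def.XinR} enter: every choice of substructures $(\bsC_{n,j})_{j\in J_n}$ refining the blocks of $C$ can be completed to a full member $B\le C$ with $B(n)$ equal to that choice (this is essentially the argument establishing \textbf{A.3}(1) in the proof of Theorem \ref{thm.tRs}, building $B$ as $a^\frown\langle D(i):i\rangle$). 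Granting surjectivity, the equivalence of the two dichotomies is immediate, and the two cases above give exactly the ultrafilter property. Hence $\mathcal{C}\upharpoonright\mathcal{R}(n)$ generates an ultrafilter on $\mathcal{R}(n)$, completing the proof.
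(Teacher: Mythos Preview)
Your proof is correct. The paper actually states this fact without proof; your argument is exactly the natural one, and it mirrors the proof the paper gives for the analogous Fact~\ref{fact.Ramseyuf} (the case $n=0$, on base set $\mathcal{AR}_1$), lifted to arbitrary $n$ via the transport $\widetilde{\mathcal{H}}=\{a\in\mathcal{AR}_{n+1}:a(n)\in\mathcal{H}\}$ and the surjection $a\mapsto a(n)$ from $\mathcal{AR}_{n+1}\mid C$ onto $\mathcal{R}(n)\mid C$.
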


\begin{notationn}\label{notn.importantt}
Suppose that $\mathcal{C}\subseteq \mathcal{R}$ is a Ramsey filter on $(\mathcal{R},\le)$. 
For each $n<\omega$, define $\mathcal{Y}_{n+1}$ to be the ultrafilter on $\mathcal{R}(n)$ generated by $\mathcal{C}\upharpoonright \mathcal{R}(n).$  
Define $\mathcal{Y}_{0} = \pi_{\depth} (\mathcal{Y}_{1})$ and $\mathcal{Y}_{\left< \right>} = \pi_{\left< \right>}( \mathcal{Y}_{1})$. 
Let 
\begin{equation}
\mathcal{K}_{\fin} = \{ (\mathbf{B}_{j})_{j\in K} :  K \in [J]^{<\omega}\mathrm{\ and \ } (\mathbf{B}_{j})_{j\in K} \in (\mathcal{K}_{j})_{j\in K}\}.
\end{equation}
For $\mathbf{B}=(\mathbf{B}_{j})_{j\in K}$ and $\mathbf{C}=(\mathbf{C}_{j})_{j\in L}$ in $\mathcal{K}_{\fin}$, define  $\mathbf{B} \le\mathbf{C}$ if and only if $K \subseteq L$ and for all $j\in K$, $\mathbf{B}_{j} \le \mathbf{C}_{j}.$
Let $\tilde{\mathcal{K}}_{\fin}$ denote the collection of equivalence classes of members of $\mathcal{K}_{\fin}$.
Then  $\tilde{\mathcal{K}}_{\fin}$ is partially ordered by $\le$.

For  $\mathbf{B}=(\mathbf{B}_{j})_{j\in K}\in\mathcal{K}_{\fin}$ with $K\not=\emptyset$, define the following.
\begin{enumerate}
\item 
Define $J_{\mathbf{B}}$ to be $K$ and define $$\mathcal{B}(\mathbf{B}) = \bigcup_{n<\omega} \left \{ \left<n, (\mathbf{C}_{j})_{j\in J_{\mathbf{B}}}\right>: \forall j\in J_{\mathbf{B}}, \mathbf{C}_{j} \in{\mathbf{A}_{n,j}\choose \mathbf{B}_{j}} \right \}.$$
\item 
Applying the joint embedding property once for each $j\in J_{\mathbf{B}}$ and using the definition of generating sequence, there is an $n$ such that for each $j\in K$, $\mathbf{B}_{j} \le  \mathbf{A}_{n,j}$. Define $n(\mathbf{B})$ to be the smallest natural number $n$ such that for each $j\in J_{\mathbf{B}}$, $\mathbf{B}_{j} \le \mathbf{A}_{n,j}.$ 
\item 
Let $\mathbb{I}(\mathbf{B})$ denote the sequence $(I_{j})_{j\in J_{n(\mathbf{B})}}$ such that $\pi_{\mathbb{I}(\mathbf{B})}(\mathbb{A}(n(\mathbf{B}))) =\left< n(\mathbf{B}), \mathbf{C}\right>$,
where $\mathbf{C}$ is the lexicographical-least element of ${(\mathbf{A}_{n(\mathbf{B}), j})_{j\in J_{\mathbf{B}}} \choose \mathbf{B}}$.
\item 
We use the slightly more compact notation $\pi_{\mathbf{B}}$ to denote
the map $\pi_{\mathbb{I}(\mathbf{B})}$.
\item  
Let $\mathcal{U}_{\mathbf{B}}$ denote the ultrafilter $\pi_{\mathbf{B}}(\mathcal{Y}_{n(\mathbf{B})+1})$ on the base set $\mathcal{B}(\mathbf{B})$.
\end{enumerate}
 We let $\emptyset$ denote the sequence in $\mathcal{K}_{\fin}$ with $K=\emptyset$. 
\end{notationn}

The next proposition describes the configuration
 of the ultrafilters $\mathcal{U}_{\mathbf{B}}$ with $\mathbf{B} \in \mathcal{K}_{\fin}$ and the projection ultrafilters $\pi_{\mathbb{I}}(\mathcal{Y}_{i})$ with $ i<\omega$ and $\pi_{\mathbb{I}}$ a projection map on $\mathcal{R}(i)$, with respect to the Rudin-Keisler ordering. For the remainder of the section, if $\pi_{\mathbb{I}}$ is a projection map on $\mathcal{R}(i)$ with $\mathbb{I}=(I_{j})_{j\in J_{i}}$,
 then we let $J_{\mathbb{I}}$ denote the set $\{j\in J_{i} : I_{j}\not = \emptyset\}$.
Recall that we write $\mathcal{U}\cong\mathcal{V}$ to denote that the two ultrafilters are Rudin-Keisler equivalent.

\begin{prop} \label{Tim RKstructure}
Suppose that $\mathcal{C}\subseteq \mathcal{R}$ is a
 Ramsey filter on $(\mathcal{R},\le)$.
\begin{enumerate}
\item 
$\mathcal{Y}_{0}$ is a Ramsey ultrafilter and $\mathcal{Y}_{1}$ is not a Ramsey ultrafilter.
\item 
For each $n<\omega$, $\mathcal{Y}_{n+1} = \mathcal{U}_{\mathbf{A}_{n}}$.
\item 
For each $m<\omega$ and each projection map $\pi_{\mathbb{I}}$ with domain $\mathcal{R}(m)$,
 there exists $\mathbf{B}\in \mathcal{K}_{\fin}$ such that $\pi_{\mathbb{I}}(\mathcal{Y}_{m+1}) \cong \mathcal{U}_{\mathbf{B}}$.
\item
 For each $\mathbf{B}\in \mathcal{K}_{\fin}$, $\mathcal{U}_{\mathbf{B}}$ is a rapid p-point.
\item 
For $\mathbf{B}$ and $\mathbf{C}$ in $\mathcal{K}_{\fin}$, $ \mathcal{U}_{\mathbf{B}} \le_{RK} \mathcal{U}_{\mathbf{C}}$ if and only if $\mathbf{B} \le \mathbf{C}\mbox{ in } \mathcal{K}_{\fin}.$
\end{enumerate}
\end{prop}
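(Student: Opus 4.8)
The plan is to treat the five items in order, using the selectivity of $\mathcal{C}$ for the ``soft'' facts (1)--(4) and reserving the Ramsey-classification machinery of Section~\ref{sec.ProductER} for the structural item (5). For (1), I would first observe that the depth map $X\mapsto d(X)$ carries the selective filter $\mathcal{C}$ onto the ultrafilter it generates on $\om$, which is exactly $\mathcal{Y}_0$ read through $\pi_{\depth}$; selectivity of $\mathcal{C}$ for $\mathcal{R}$ (via the definition of selective filter and axiom \textbf{A.3}) then yields selectivity of $\mathcal{Y}_0$, i.e.\ that it is Ramsey. For the non-Ramseyness of $\mathcal{Y}_1$ (the ultrafilter on $\mathcal{R}(0)$), the cleanest route is to show $\mathcal{Y}_0<_{RK}\mathcal{Y}_1$ strictly: the map $\pi_{\depth}$ witnesses $\mathcal{Y}_0\le_{RK}\mathcal{Y}_1$, and it is an isomorphism on no member, since every base set $\mathcal{R}(0)|C$ with $C\in\mathcal{C}$ contains, at each sufficiently deep level, more than one copy of $(\mathbf{A}_{0,j})_{j\in J_0}$, so $\pi_{\depth}$ is injective on no element of $\mathcal{Y}_1$. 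As $\mathcal{Y}_0$ is nonprincipal, $\mathcal{Y}_1$ fails to be RK-minimal and hence is not Ramsey. Item (2) is a direct unwinding of Notation~\ref{notn.importantt}: for $\mathbf{B}=\mathbf{A}_n$ one has $J_{\mathbf{A}_n}=J_n$, $n(\mathbf{A}_n)=n$ by the nesting clause of a generating sequence, and $\mathbb{I}(\mathbf{A}_n)$ is the full index set, so $\pi_{\mathbf{A}_n}$ is the identity and $\mathcal{U}_{\mathbf{A}_n}=\mathcal{Y}_{n+1}$.

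For (3), given $\pi_{\mathbb{I}}$ on $\mathcal{R}(m)$ with $\mathbb{I}=(I_j)_{j\in J_m}$, I would set $K=\{j:I_j\ne\emptyset\}$ and $\mathbf{B}=(\mathbf{A}_{m,j}\upharpoonright I_j)_{j\in K}$, so that $\pi_{\mathbb{I}}(\mathcal{Y}_{m+1})$ lives on copies of $\mathbf{B}$. The content is then a coherence statement: for $m\ge n(\mathbf{B})$ the natural identification of the copies of $\mathbf{B}$ occurring in blocks, which does not depend on the intermediate level $m$, is an RK-isomorphism between $\pi_{\mathbb{I}}(\mathcal{Y}_{m+1})$ and $\mathcal{U}_{\mathbf{B}}=\pi_{\mathbf{B}}(\mathcal{Y}_{n(\mathbf{B})+1})$, since $\mathcal{C}\upharpoonright\mathcal{R}(n)$ projects consistently through all levels. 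Item (4) splits in two. That $\mathcal{U}_{\mathbf{B}}$ is a p-point follows by pulling a countable family in $\mathcal{U}_{\mathbf{B}}$ back to members of $\mathcal{C}$, applying selectivity of $\mathcal{C}$ to diagonalize, and pushing the result down under $\pi_{\mathbf{B}}$. Rapidity I would obtain by combining the rapidity of $\mathcal{Y}_0$ (a Ramsey, hence rapid, ultrafilter) with the finiteness of each depth level of $\mathcal{B}(\mathbf{B})$: given $f:\om\to\om$, a sufficiently sparse set of depths in $\mathcal{Y}_0$ absorbs the growth of the number of copies of $\mathbf{B}$ per level, producing a member of $\mathcal{U}_{\mathbf{B}}$ meeting $f(n)$ in at most $n$ points.

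The heart of the proposition is (5). The easy direction $(\Leftarrow)$ is a direct construction: if $\mathbf{B}\le\mathbf{C}$, fix coordinatewise embeddings $\mathbf{B}_j\hookrightarrow\mathbf{C}_j$ for $j\in J_{\mathbf{B}}\subseteq J_{\mathbf{C}}$, corresponding to index sets $I_j\subseteq\|\mathbf{C}_j\|$; the projection ``forget the coordinates outside $J_{\mathbf{B}}$ and cut each $\mathbf{C}_j$ down to its sub-copy on $I_j$'' sends a base for $\mathcal{U}_{\mathbf{C}}$ to a base for $\mathcal{U}_{\mathbf{B}}$, witnessing $\mathcal{U}_{\mathbf{B}}\le_{RK}\mathcal{U}_{\mathbf{C}}$, with coherence from (3) guaranteeing the image is the correct ultrafilter. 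The main obstacle is the converse. Given an RK-reduction $f$ with $f(\mathcal{U}_{\mathbf{C}})=\mathcal{U}_{\mathbf{B}}$, I would first use that $\mathcal{U}_{\mathbf{C}}$ is a p-point (item (4)) together with the basic/continuous reduction theorems of Section~\ref{sec.basic} to replace $f$ by a map determined on a member $C\in\mathcal{C}$, and use selectivity together with RK-minimality of $\mathcal{Y}_0$ to arrange that $f$ respects depth. The fibers of $f$ then form an equivalence relation on the copies of $\mathbf{C}$ across the depths of $C$, to which Theorem~\ref{thm.ERProducts} (equivalently Theorem~\ref{FiniteERProducts}) applies: after shrinking $C$ inside $\mathcal{C}$, the relation is canonical, given by index sets $(I_j)_{j\in L}$ with $L\subseteq J_{\mathbf{C}}$, whence the quotient is $\mathcal{U}_{\mathbf{C}'}$ for $\mathbf{C}'=(\mathbf{C}_j\upharpoonright I_j)_{j\in L}\le\mathbf{C}$.

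To close, I would establish the injectivity statement $\mathcal{U}_{\mathbf{D}}\cong\mathcal{U}_{\mathbf{D}'}\Rightarrow\mathbf{D}\cong\mathbf{D}'$, obtained by running the canonical analysis in both directions to get mutual coordinatewise embeddings of equal total size, hence an isomorphism; then $\mathcal{U}_{\mathbf{C}'}\cong\mathcal{U}_{\mathbf{B}}$ forces $\mathbf{B}\cong\mathbf{C}'\le\mathbf{C}$, i.e.\ $\mathbf{B}\le\mathbf{C}$, completing (5). I expect the two delicate points to be making $f$ depth-respecting, so that the \emph{a priori} cross-depth fibers collapse to the within-depth canonical relations handled by Theorem~\ref{thm.ERProducts}, and verifying that the canonical quotient is genuinely $\mathcal{U}_{\mathbf{C}'}$ rather than merely an ultrafilter RK-below it.
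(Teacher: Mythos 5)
Your overall plan follows the paper's proof quite closely: (1), (2) and (4) are handled essentially as in the printed argument (depth projection, unwinding of Notation \ref{notn.importantt}, selectivity of $\mathcal{C}$ for the p-point property and sparse depths against the per-level counts for rapidity), and for (5) you identify both the easy direction via ``restrict further'' projections and the hard direction via canonization of the fiber relation, ending with the same injectivity statement that the paper isolates as the ``furthermore'' clause of Lemma \ref{lem.important}.

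The genuine gap is in item (3), and it matters because you invoke (3) in both directions of (5). You choose the right $\mathbf{B}$, but then dispose of the isomorphism $\pi_{\mathbb{I}}(\mathcal{Y}_{m+1})\cong\mathcal{U}_{\mathbf{B}}$ with the phrase that the identification ``does not depend on the intermediate level $m$'' because $\mathcal{C}\upharpoonright\mathcal{R}(n)$ ``projects consistently through all levels.'' That consistency is precisely the statement to be proved, not a proof: the two ultrafilters are generated by \emph{different} families of concrete copies of $\mathbf{B}$ --- restrictions of $\mathbf{A}_m$-copies to the positions $\mathbb{I}$ versus restrictions of $\mathbf{A}_{n(\mathbf{B})}$-copies to the positions $\mathbb{I}(\mathbf{B})$ --- and there is no a priori bijection between these families, let alone one carrying one filter base onto the other. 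The paper's argument amalgamates the two embeddings of $\mathbf{B}$ over a common level $k$ to obtain projection maps $\pi_{\mathbb{M}},\pi_{\mathbb{N}},\pi_{\mathbb{F}}$ on $\mathcal{R}(k)$ that literally agree, and then uses the Abstract Nash-Williams Theorem together with Fact \ref{EqualityFact} to show each of the two projected ultrafilters coincides with the one coming from level $k$. Some homogenization step of this kind is unavoidable, and your proposal does not contain it.

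A secondary, fixable mismatch occurs in the converse of (5): you propose to apply Theorem \ref{thm.ERProducts} to the fiber relation ``on the copies of $\mathbf{C}$,'' but that theorem canonizes equivalence relations on copies of the blocks $(\mathbf{A}_{n,j})_{j\in L}$ of the generating sequence, not on copies of an arbitrary $\mathbf{C}\in\mathcal{K}_{\fin}$. The paper instead precomposes with $\pi_{\mathbf{C}}$ and canonizes the induced relation on $\mathcal{R}(n(\mathbf{C}))$ via the localized Theorem \ref{Tim-LocalCanonical}, then compares the resulting $\pi_{\mathbb{I}}$ with $\pi_{\mathbf{C}}$ coordinatewise to read off $\mathbf{D}\le\mathbf{C}$. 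Relatedly, your detour through RK-minimality of $\mathcal{Y}_0$ to make the map ``depth-respecting'' is unnecessary: the canonical forms already include $E_{\langle\rangle}$ and $E_{\depth}$, so once the relation is canonical on a member of $\mathcal{C}$, nonprincipality of the quotient rules out depth-mixing automatically.
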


\begin{proof}
($1$) $\mathcal{Y}_{0}=\pi_{\depth}(\mathcal{Y}_{1})$ is a Ramsey ultrafilter since $\{ \pi_{\depth} '' \mathcal{R}(0)\upharpoonright C : C\in \mathcal{R}\}$ is identical to the Ellentuck space. $\mathcal{Y}_{1}$ is not Ramsey since the map $\pi_{\depth}$ is not one-to-one on any element of $\mathcal{Y}_{1}$.  ($2$) For each $n<\omega$, $n(\mathbf{A}_{n})=n$ and $\pi_{\mathbf{A}_{n}}$ is the identity map on $\mathcal{B}(\mathbf{A}_{n})$. Thus, $\mathcal{U}_{\mathbf{A}_{n}} = \pi_{\mathbf{A}_{n}}( \mathcal{Y}_{n+1}) = \mathcal{Y}_{n+1}$.

($3$) Suppose that $\pi_{\mathbb{I}}$ is a projection map with domain $\mathcal{R}(m)$. 
Let $\mathbf{B}=(\mathbf{B}_{i})_{i\in J_\mathbb{I}}$ be the substructure of $(\mathbf{A}_{m,j})_{j\in J_{\mathbb{I}}}$ such that $\pi_{\mathbb{I}}(\mathbb{A}(m)) = \left< m, \mathbf{B}\right>$. 
Let $n= n(\mathbf{B})$, and let $\mathbf{C}=(\mathbf{C}_{i})_{i\in J_{\mathbb{I}}}$ be the  substructure of $(\mathbf{A}_{n,j})_{j\in J_{\mathbb{I}}}$ such that $\pi_{\mathbf{B}}(\mathbb{A}(n)) = \left< n, \mathbf{C}\right>$.  
We will show that $\mathcal{U}_{\mathbf{B}} = \pi_{\mathbf{B}}(\mathcal{Y}_{n+1})\cong \pi_{\mathbb{I}}(\mathcal{Y}_{m+1})$.

Let $f:\mathbf{B} \rightarrow (\mathbf{A}_{m,j})_ {j\in J_{\mathbb{I}}}$ be the embedding with range $\mathbf{B}$ and $g: \mathbf{B}\rightarrow(\mathbf{A}_{n,j})_{ j\in J_{\mathbb{I}}}$ be the embedding with range $\mathbf{C}$. 
By the amalgamation property for $\mathcal{K}_{j}$, $j\in J_{\mathbb{I}}$, and the definition of generating sequence, there exist $k<\omega$ and embeddings $r:(\mathbf{A}_{n,j})_{ j\in J_{\mathbb{I}}} \rightarrow(\mathbf{A}_{k,j})_{ j\in J_{\mathbb{I}}}$ and $s:(\mathbf{A}_{m,j})_{ j\in J_{\mathbb{I}}} \rightarrow (\mathbf{A}_{k,j})_{ j\in J_{\mathbb{I}}}$
 such that $r \circ f = s \circ g$. 
Let $\mathbf{F}$, $\mathbf{G}$ and $\mathbf{H}$ denote the substructures of $(\mathbf{A}_{k,j})_{ j\in J_{\mathbb{I}}}$ generated by the ranges of $r\circ f$, $s$, and $r$, respectively.  
Let $\pi_{\mathbb{M}}$, $\pi_{\mathbb{N}}$ and $\pi_{\mathbb{F}}$ denote projection maps on $\mathcal{R}(k)$ such that $J_{\mathbb{M}}=J_{\mathbb{N}}=J_{\mathbb{F}}=J_{\mathbb{I}}$, $\pi_{\mathbb{M}}(\mathbb{A}(k))= \left< k , \mathbf{G}\right>$, $\pi_{\mathbb{N}}(\mathbb{A}(k))= \left< k , \mathbf{H}\right>$ and $\pi_{\mathbb{F}}(\mathbb{A}(k))= \left< k , \mathbf{F}\right> $.  
Since $r \circ f = s\circ g$, it follows that for all $y \in \mathcal{AR}_{k+1}$, $\pi_{\mathbf{B}}\circ \pi_{\mathbb{N}}(y(k))=\pi_{\mathbb{I}}\circ\pi_{\mathbb{M}} (y(k))= \pi_{\mathbb{F}}(y(k))$.

Let $X\in \mathcal{C}$ and consider the set $\mathcal{G}=\{x \in \mathcal{AR}_{n+1}: \exists  y \in \mathcal{R}(k)|X, \  \pi_{\mathbb{F}}(y)= \pi_{\mathbf{B}}(x(n))\} $. 
Since $\mathcal{C}$ satisfies the Abstract Nash-Williams Theorem it follows that there exists a $Y\le X$ in $\mathcal{C}$ such that 
either $\mathcal{G} \cap \mathcal{AR}_{n+1}|Y = \emptyset$ or  $\mathcal{AR}_{n+1}|Y\subseteq \mathcal{G}$. 
Since there exists $z \in \mathcal{AR}_{n+1}|Y$ such that  $\pi_{\mathbb{F}}(Y(k))= \pi_{\mathbf{B}}(z(n))$ it must be the case that $\mathcal{AR}_{n+1}|Y\subseteq \mathcal{G}$. 
By Fact $\ref{EqualityFact}$  it follows that $\pi_{\mathbb{F}}(\mathcal{Y}_{k}) \cong \pi_{\mathbf{B}}(\mathcal{Y}_{n})$. By a similar argument, we also have $\pi_{\mathbb{F}}(\mathcal{Y}_{k}) \cong \pi_{\mathbb{I}}(\mathcal{Y}_{m})$. 
Thus, $ \mathcal{U}_{\mathbf{B}}=\pi_{\mathbf{B}}(\mathcal{Y}_{n}) \cong \pi_{\mathbb{I}}(\mathcal{Y}_{m})$.

($4$) Let $K$ be a finite subset of $J$ and $\mathbf{B}=(\mathbf{B}_{j})_{j\in K}\in\mathcal{K}_{\fin}$. Suppose that $X_{0}\supseteq X_{1} \supseteq X_{2} \supseteq \cdots$ is a sequence of sets in $\mathcal{U}_{\mathbf{B}}$. Then there exists a sequence $C_{0} \ge C_{1} \ge C_{2} \ge \cdots$ of elements of $\mathcal{C}$ such that for each $i<\omega$, $\pi_{\mathbf{B}}'' (\mathcal{R}(n(\mathbf{B}))\upharpoonright C_{i})\subseteq X_{i}$. Since every Ramsey filter on $(\mathcal{R},\le)$ is also a selective filter for $(\mathcal{R},\le)$, there exists $C\in \mathcal{C}$ such that for each $i<\omega$, $C/r_{i}(C) \le C_{i}$. 
Since each $\mathcal{K}_{j}$, $j\in K$, consists of finite structures and $K$ is finite, it follows that for each $i<\omega$, $\pi_{\mathbf{B}}'' (\mathcal{R}(n)\upharpoonright C)\subseteq^{*} \pi_{\mathbf{B}}'' (\mathcal{R}(n)\upharpoonright C_{i})$. Therefore $\mathcal{U}_{\mathbf{B}}$ is a p-point.

Let $h: \omega \rightarrow \omega$ be a strictly increasing function. Linearly order $\mathcal{B}(\mathbf{B})$ so that $\left< i, \mathbf{C}\right>$ comes before $\left< j, \mathbf{D}\right>$ whenever $i<j$. For each $B\in \mathcal{R}$, there is a $C\le B$ such that $\pi_{\depth}(C(n-1)) > h(1),$ $\pi_{\depth}(C(n(\mathbf{B}))) > h ( 1+ |\mathcal{B}(\mathbf{B})\upharpoonright \left<n(\mathbf{B}), \mathbf{A}_{n(\mathbf{B})}\right>| )$, and in general, for $k>n(\mathbf{B})$,
\begin{equation}
\pi_{\depth}( C(k)) > h ( \sum_{i=n}^{k} | \mathcal{B}(\mathbf{B})\upharpoonright \left<i, \mathbf{A}_{i}\right>|).
\end{equation}
Since $\mathcal{C}$ is selective for $\mathcal{R}$, there is a $C \in \mathcal{C}$ with this property, which yields that $\mathcal{U}_{\mathbf{B}}$ is rapid. 

($5$)  ($\Leftarrow$) Suppose that $\mathbf{B}=(\mathbf{B}_{j})_{j\in K}$ and $\mathbf{C}=(\mathbf{C}_{j})_{j\in L}$ are elements of $\mathcal{K}_{\fin}$ and $\mathbf{B} \le \mathbf{C}$. Let $\mathbb{I}(\mathbf{C})= (I_{j})_{j\in J_{n(\mathbf{C})}}$. Then $K\subseteq L$ and there is a sequence $\mathbb{I} = (I'_{j})_{j\in J_{n(\mathbf{C})}}$ such that for each $j\in J_{n(\mathbf{C})}$, $I'_{j} \subseteq I_{j}$, and the structure $\mathbf{D}$ in $\mathcal{K}_{\fin}$ such that $\pi_{\mathbb{I}}(\mathbb{A}(n(\mathbf{C})))= \left< n(\mathbf{C}), \mathbf{D}\right>$ is isomorphic to $\mathbf{B}$. By the work in part (3) of this proposition, $\pi_{\mathbb{I}}(\mathcal{Y}_{n(\mathbf{C})+1})\cong \pi_{\mathbf{B}}(\mathcal{Y}_{n(\mathbf{B})+1})=\mathcal{U}_{\mathbf{B}}.$ Since for each $j\in J_{n(\mathbf{C})}$, $I'_{j}\subseteq I_{j}$, we have $\pi_{\mathbb{I}}(\mathcal{Y}_{n(\mathbf{C})+1} )\le_{RK} \pi_{\mathbf{C}}(\mathcal{Y}_{n(\mathbf{C})+1}) = \mathcal{U}_{\mathbf{C}}$. Hence, $\mathcal{U}_{\mathbf{B}} \le_{RK} \mathcal{U}_{\mathbf{C}}.$

 ($5$) ($\Rightarrow$)   Next suppose that $(\mathbf{B}_{j})_{j\in K}$ and $(\mathbf{C}_{j})_{j\in L}$ are elements of $\mathcal{K}_{\fin}$ and $\mathcal{U}_{\mathbf{B}} \le_{RK} \mathcal{U}_{\mathbf{C}}$.

\begin{lem}\label{lem.important}
For each nonprincipal ultrafilter $\mathcal{V}$ on $\omega$ with $\mathcal{V}\le_{RK} \mathcal{U}_{\mathbf{C}}$, there exists $\mathbf{D}\in \mathcal{K}_{\fin}$ such that $\mathbf{D} \le \mathbf{C}$ and $\mathcal{V} \cong \mathcal{U}_{\mathbf{D}}$. 
Furthermore, if $\mathcal{V} \cong \mathcal{U}_{\mathbf{C}}$ then $J_{\mathbf{D}}=J_{\mathbf{C}}$ and for all $j\in J_{\mathbf{C}}$, $\mathbf{C}_{j} \cong \mathbf{D}_{j}$.
\end{lem}

\begin{proof} 
Suppose that $\mathcal{V}$ is a nonprincipal ultrafilter on $\omega$ such that $\mathcal{V} \le_{RK} \mathcal{U}_{\mathbf{C}}$. 
Then there is a function $\theta:\mathcal{B}(\mathbf{C})\rightarrow \omega$ such that $\theta(\mathcal{U}_{\mathbf{C}})= \mathcal{V}$. 
Since $\theta \circ \pi_{\mathbf{C}} : \mathcal{R}(n(\mathbf{C})) \rightarrow \omega$, Theorem $\ref{Tim-LocalCanonical}$ implies that there exist an $X\in \mathcal{C}$ and a projection map $\pi_{\mathbb{I}}$ on $\mathcal{R}(n(\mathbf{C}))$ such that for all $y,z \in \mathcal{R}(n(\mathbf{C}))\upharpoonright X$, 
\begin{equation}\label{Tim-equation1}
\theta \circ \pi_{\mathbf{C}}(y) = \theta \circ \pi_{\mathbf{C}}(z) \mbox{ if and only if } \pi_{\mathbb{I}}(y) =  \pi_{\mathbb{I}}(z).
\end{equation}
Suppose $\mathbb{I} = ( I_{j})_{j\in J_{n(\mathbf{C})}}$ and $\mathbb{I}(\mathbf{C}) = ( I_{j}')_{j\in J_{n(\mathbf{C})}}$.
 Let $\mathbf{D}\in \mathcal{K}_{\fin}$ such that $\pi_{\mathbb{I}}(\mathbb{A}(n(\mathbf{C}))) = \left< n(\mathbf{C}), \mathbf{D}\right >$. 
If there exists $j\in J_{n(\mathbf{C})}$ such that $I_{j}' \not \subseteq I_{j}$ or there exists $j\in J_{\mathbb{I}}$ such that $\mathbf{D}_{j} \not \le \mathbf{C}_{j}$, then there exist $s,t\in \mathcal{R}(n(\mathbf{C}))\upharpoonright X$ such that $\pi_{\mathbb{I}}(s) \not = \pi_{\mathbb{I}}(t)$ and $\pi_{\mathbf{C}}(s) =\pi_{\mathbf{C}}(t)$. 
However, this is a contradiction to equation $(\ref{Tim-equation1})$. Therefore, $J_{\mathbf{D}}\subseteq J_{\mathbf{C}}$ and for all $j \in  J_{n(\mathbf{D})}$, $\mathbf{D}_{j} \le \mathbf{C}_{j}$, \emph{i.e.} $\mathbf{D} \le \mathbf{C}$. Additionally, equation ($\ref{Tim-equation1}$) shows that $\mathcal{U}_{\mathbf{D}}\cong \theta(\mathcal{U}_{\mathbf{C}})= \mathcal{V}.$

Next suppose that $\mathcal{V} \cong \mathcal{U}_{\mathbf{C}}$. Then there exists $Y\in \mathcal{C}$ such that $Y \le X$ and $\theta$ is injective on $\pi_{\mathbf{C}}''(\mathcal{R}(n(\mathbf{C}))\upharpoonright Y).$ 
If there is a $j\in J_{n(\mathbf{C})}$ such that $I_{j} \not \subseteq I_{j}'$ or there is a $j\in J_{\mathbb{I}}$ such that $\mathbf{C}_{j} \not \le \mathbf{D}_{j}$, 
then there are $s,t\in \mathcal{R}(n(\mathbf{C}))\upharpoonright Y$ such that $\pi_{\mathbb{I}}(s) = \pi_{\mathbb{I}}(t)$ and $\pi_{\mathbf{C}}(s) \not =\pi_{\mathbf{C}}(t)$. 
However, this contradicts the fact that $\theta$ is injective on $\pi_{\mathbf{C}}''(\mathcal{R}(n(\mathbf{C}))\upharpoonright Y)$. Therefore, $J_{\mathbf{C}}\subseteq J_{\mathbf{D}}$ and for all $j \in  J_{n(\mathbf{C})}$, $\mathbf{C}_{j} \le \mathbf{D}_{j}$,
that is, $\mathbf{C} \le \mathbf{D}$. 
Thus, $J_{\mathbf{D}}= J_{\mathbf{C}}$ and for all $j\in J_{\mathbf{C}}$, $\mathbf{C}_{j} \cong \mathbf{D}_{j}.$ \end{proof}

 Since $\mathcal{U}_{\mathbf{B}} \le_{RK} \mathcal{U}_{\mathbf{C}}$, Lemma \ref{lem.important} shows that $\mathbf{B} \le \mathbf{C}$.
\end{proof}

In what follows, 
we omit any proofs of results which follow the exact same argument as their counterparts in the proof of Theorem 5.10 in \cite{Dobrinen/Todorcevic11}.
The following makes use of the correspondence between iterated Fubini products of ultrafilters and so-called ultrafilters of $\vec{\mathcal{W}}$-trees on a flat-top front, 
(see Definition 3.2 and Facts 3.4 and 3.4 in \cite{DobrinenCanonicalMaps15}).
A uniform front is, in particular, a flat-top front, and the projection of the uniform front $\mathcal{C}|C$ in the next theorem  to $\hat{\mathcal{S}}$ will also be a flat-top front.

\begin{thm}\label{TukeyReducibleTheorem}
Suppose that $\mathcal{C}$ is a Ramsey filter on $(\mathcal{R}, \le )$.  
If $\mathcal{V}$ is a non-principal ultrafilter and $\mathcal{C}\ge_{T} \mathcal{V}$, then $\mathcal{V}$ is isomorphic to a Fubini 
iterate of p-points from among $\mathcal{U}_{\mathbf{B}}$, 
$\mathbf{B} \in \mathcal{K}_{\fin}$.
Precisely,
$\mathcal{V}$ is isomorphic to an 
ultrafilter of $\vec{\mathcal{W}}$-trees, where $\hat{\mathcal{S}}\setminus \mathcal{S}$ is a well-founded tree, $\vec{\mathcal{W}}= ( \mathcal{W}_{s} : s \in\hat{\mathcal{S}}\setminus \mathcal{S})$, and each $\mathcal{W}_{s}$ is isomorphic to $\mathcal{U}_{\mathbf{B}}$, for some $\mathbf{B} \in \mathcal{K}_{\fin}$.
\end{thm}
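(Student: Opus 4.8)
The plan is to carry out the analogues of steps (1) and (4)--(5) of the outline; steps (2) and (3) are already in hand as Theorems \ref{thm.Tim Step1}, \ref{Tim-LocalCanonical} and Proposition \ref{Tim RKstructure}. First I would reduce $\mathcal{V}$ to a front. Because $\mathcal{R}$ is built from a generating sequence, Theorem \ref{thm.Ramsey=NW} lets me treat the Ramsey filter $\mathcal{C}$ as Nash-Williams, and the remark following Theorem \ref{thm.Tim Step1} records that its hypotheses then hold for $\mathcal{C}$. Since $\mathcal{C}\ge_T\mathcal{V}$, Theorem \ref{thm.Tim Step1} yields a front $\mathcal{F}$ on $\mathcal{C}$ and a map $f:\mathcal{F}\ra\om$ with $\mathcal{V}=f(\lgl\mathcal{C}\re\mathcal{F}\rgl)$. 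Using the Abstract Nash-Williams Theorem relativized to $\mathcal{C}$, I would first shrink $\mathcal{F}$ inside some $C_0\in\mathcal{C}$ so that $\mathcal{F}|C_0$ is a \emph{uniform} front, hence (as noted before the theorem) a flat-top front.

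Next I would canonize. Let $R$ be the equivalence relation on $\mathcal{F}$ given by $a\, R\, b\Llra f(a)=f(b)$. By the localized classification Theorem \ref{Tim-LocalCanonical}, there is a $C\le C_0$ in $\mathcal{C}$ and a maximal inner, Nash-Williams map $\vp$ canonizing $R$ on $\mathcal{F}|C$. Since $\vp$ and $f$ induce the same equivalence relation there, and $\mathcal{C}\re(\mathcal{F}|C)$ generates an ultrafilter on $\mathcal{F}|C$ by Fact \ref{analogueFact5.3}, replacing $f$ by $\vp$ produces an isomorphic Rudin-Keisler image, so $\mathcal{V}\cong\vp(\lgl\mathcal{C}\re(\mathcal{F}|C)\rgl)$.

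I would then read off the tree. Put $\mathcal{S}=\vp''(\mathcal{F}|C)$ and let $\hat{\mathcal{S}}$ be the tree of its initial segments; because $\mathcal{F}$ is a front, $\hat{\mathcal{S}}\setminus\mathcal{S}$ is well-founded, and the projection of the flat-top front $\mathcal{F}|C$ keeps $\hat{\mathcal{S}}$ flat-top. For an internal node $s\in\hat{\mathcal{S}}\setminus\mathcal{S}$ at level $m=|s|$, the inner map $\vp$ extends $s$ by a fixed canonical projection $\pi_s=\pi_{(I_{m,j})_{j\in J_m}}\in\Pi(m)$ applied to the $m$-th block. Setting $K=\{j\in J_m:I_{m,j}\ne\emptyset\}$ and letting $\mathbf{B}^s_j\cong\bsA_{m,j}\re I_{m,j}$ for $j\in K$, the sequence $\mathbf{B}_s=(\mathbf{B}^s_j)_{j\in K}$ is a member of $\mathcal{K}_{\fin}$. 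The ultrafilter $\mathcal{W}_s$ on the immediate successors of $s$ is the push-forward of $\mathcal{Y}_{m+1}$ under $\pi_s$, and Proposition \ref{Tim RKstructure}(3) gives $\mathcal{W}_s=\pi_s(\mathcal{Y}_{m+1})\cong\mathcal{U}_{\mathbf{B}_s}$, which is a rapid p-point by Proposition \ref{Tim RKstructure}(4). Invoking the correspondence between ultrafilters of $\vec{\mathcal{W}}$-trees on a flat-top front and iterated Fubini products (Definition 3.2 and Facts 3.4 in \cite{DobrinenCanonicalMaps15}), with $\vec{\mathcal{W}}=(\mathcal{W}_s:s\in\hat{\mathcal{S}}\setminus\mathcal{S})$, the ultrafilter $\vp(\lgl\mathcal{C}\re(\mathcal{F}|C)\rgl)$ is exactly the ultrafilter of $\vec{\mathcal{W}}$-trees on $\hat{\mathcal{S}}$, isomorphic to the corresponding Fubini iterate of the $\mathcal{U}_{\mathbf{B}_s}$; this is the desired conclusion.

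The main obstacle will be the identification $\mathcal{W}_s\cong\mathcal{U}_{\mathbf{B}_s}$ holding \emph{uniformly} over all nodes so that the labels cohere into a genuine $\vec{\mathcal{W}}$-tree. One must check, after a further shrinking of $C$, that the projection determining $\mathcal{W}_s$ depends only on $s$ rather than on the branch through it, and that its Rudin-Keisler type is stable; this is precisely where Proposition \ref{Tim RKstructure}(3) and the uniformity (flat-top) of $\mathcal{F}|C$ carry the weight. This requires more care than the $J=1$ case of \cite{Dobrinen/Todorcevic11}, since the nodes are now labeled by \emph{products} of \Fraisse\ structures rather than single linear orders; otherwise the argument is the same as that of Theorem 5.10 there, so those routine verifications may be cited rather than repeated.
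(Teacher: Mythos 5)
Your proposal is correct and follows essentially the same route as the paper's proof: reduce to a front via basic Tukey reductions, pass to a uniform (flat-top) front, canonize with the localized classification theorem, build the tree $\hat{\mathcal{S}}$ of initial segments of $\vp$-images, identify each node ultrafilter $\mathcal{W}_s$ with some $\mathcal{U}_{\bsB}$ via Proposition \ref{Tim RKstructure}(3), and invoke the $\vec{\mathcal{W}}$-tree/Fubini correspondence. The one imprecision is your indexing of the projection at a node $s$ by $m=|s|$: since $\vp$ may act trivially on some blocks, the relevant block index is the maximal $j$ with $\vp(r_j(t))=s$ (which may exceed $|s|$), and this is exactly the device the paper uses to make $\mathcal{W}_s$ well-defined independently of the branch $t$ — the issue you correctly flag as needing care at the end.
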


\begin{proof} 
Suppose that $\mathcal{C}$ and $\mathcal{V}$ are given and satisfy the assumptions of the theorem. 
By Proposition $\ref{Proposition 5.5 Analogue}$ and Lemma $\ref{Tim-LocalCanonical}$ there are a front $\mathcal{F}$ on $\mathcal{C}$, a function $f:\mathcal{F} \rightarrow \omega$, and a $C\in \mathcal{C}$ such that the following hold:
\begin{enumerate}
\item The equivalence relation induced by $f$ on $\mathcal{F}|C$ is canonical.
\item $\mathcal{V} = f( \left< \mathcal{C}\upharpoonright \mathcal{F}\right>).$
\end{enumerate}
A straightforward induction argument on the rank of fronts, along with the fact that $\mathcal{F}$ is Ramsey, shows that there is a $C'\le C\in\mathcal{C}$ such that $\mathcal{F}| C'$ is a uniform front on $\mathcal{C}|C'$.
From now on, we  will abuse notation and let $\mathcal{F}$ denote $\mathcal{F}|C'$ and $\mathcal{C}$ denote $\mathcal{C}|C'$.

Let $\mathcal{S} = \{ \varphi(t): t\in \mathcal{F}\}$, where 
$\vp$ is the inner Nash-Williams map from Theorem \ref{canonical R} 
which represents the canonical equivalence relation.
 The filter $\mathcal{W}$ on the base set $\mathcal{S}$ generated by $\varphi(\mathcal{C}\upharpoonright \mathcal{F})$ is an ultrafilter, and $\mathcal{W}\cong \mathcal{V}$.
 We omit the proof of this fact, since it follows from exactly the same argument as its counterpart in the proof of Theorem 5.10 in \cite{Dobrinen/Todorcevic11}.

Recall from the proof of Theorem \ref{canonical R}  that for each $t\in\mathcal{F}$ and $i<|t|$, there is a projection map $\pi_{r_i(t)}$ defined on $\mathcal{R}(i)$ such that $\vp(t)=\bigcup_{i<|t|}\pi_{r_i(t)}(t(i))$.
We  now extend $\vp$ to a map on all of $\hat{\mathcal{F}}$ by defining
$\vp(r_j(t))=\bigcup_{i<j}\pi_{r_i(t)}(t(i))$,
for $t\in\mathcal{F}$ and $j\le |t|$.

 Let  $\hat{\mathcal{S}}$ denote the collection of all initial segments of elements of $\mathcal{S}$.
Thus, $\hat{\mathcal{S}}=\{\vp(w):w\in\hat{\mathcal{F}}\}$. $\hat{\mathcal{S}}$ forms a well-founded tree under the ordering $\sqsubseteq$. 
For $s\in\hat{\mathcal{S}}\setminus\mathcal{S}$,
define $\mathcal{W}_{s}$ to be the filter 
on the base set $\{\pi_{r_j(t)}(u):u\in\mathcal{R}(j)\}$
generated by the sets $\{\pi_{r_{j}(t)}(u): u \in \mathcal{R}(j)|X/r_j(t)\}$, $X\in \mathcal{C}$, for any (all) $t \in \mathcal{F}$ such that $s \sqsubseteq \varphi(t)$ and $j<|t|$ maximal such that 
$\vp(r_j(t))=s$. 
The proof of the next claim follows exactly as in  \cite{Dobrinen/Todorcevic11}.

\begin{clm}
For each $s\in \hat{\mathcal{S}}\setminus\mathcal{S}$, $\mathcal{W}_{s}$ is an ultrafilter which is generated by the collection of $\{ \pi_{r_{j}(t)}(u) : u \in \mathcal{R}(j)|X\}$, $X\in \mathcal{C}$, for any $t\in \mathcal{F}$ and $j<|t|$ maximal such that $\varphi(r_{j}(t))=s$.
\end{clm}

The proof of the next claim is included, as it differs from its counterpart in the proof of Theorem 5.1 in \cite{Dobrinen/Todorcevic11}.

\begin{clm}\label{claim.new}
Let $s\in \hat{\mathcal{S}}\setminus\mathcal{S}$. Then $\mathcal{W}_{s}$ is isomorphic to $\mathcal{U}_{\mathbf{B}}$ for some $\mathbf{B} \in\mathcal{K}_{\fin}$.
\end{clm}

\begin{proof} 
Fix $t\in \mathcal{F}$ and $j<|t|$ with $j$ maximal such that $\varphi(r_{j}(t))=s$. 
Suppose that $\pi_{r_{j}(t)} = \pi_{\depth}$. 
Then for each $X \in \mathcal{C}$, $\{ \pi_{r_{j}(t)}(u) : u \in \mathcal{R}(j)|X\} = \pi_{\depth}( \mathcal{R}(j)|X) \in \mathcal{Y}_{0}$. 
Since $\mathcal{W}_{s}$ is non-principal, $\mathcal{W}_{s} = \mathcal{Y}_{0}=\mathcal{U}_{\emptyset}$, by Fact $\ref{EqualityFact}$. 
If $\pi_{r_{j}(t)} = \pi_{\mathbb{I}}$,
 then for each $X\in \mathcal{C}$, $\{\pi_{r_{j}(t)}(u) : u \in \mathcal{R}(j)|X/t\}\subseteq \{\pi_{\mathbb{I}}(u) : u \in \mathcal{R}(j)|X\} \in \pi_{\mathbb{I}}( \mathcal{Y}_{j+1})$. 
Thus, by Fact $\ref{EqualityFact}$, $\mathcal{W}_{s}= \pi_{\mathbb{I}}( \mathcal{Y}_{j+1})$. 
By Proposition $\ref{Tim RKstructure}$ (3), there exists $\mathbf{B} \in \mathcal{K}_{\fin}$ such that $\mathcal{W}_{s}\cong\mathcal{U}_{\mathbf{B}}$.
\end{proof}

The proof of the next claim follows as in \cite{Dobrinen/Todorcevic11}.

\begin{clm}
$\mathcal{W}$ is the ultrafilter generated by $\vec{\mathcal{W}}$-trees, where $\vec{\mathcal{W}}= ( \mathcal{W}_{s} : s\in \hat{\mathcal{S}}\setminus\mathcal{S}).$
\end{clm}

The previous claims show that $\mathcal{V}$ is isomorphic to the ultrafilter $\mathcal{W}$ on the base $\mathcal{S}$ generated by the $\vec{\mathcal{W}}$-trees, where for each $s\in \hat{\mathcal{S}}\setminus\mathcal{S}$, $\mathcal{W}_{s}$ is isomorphic to $\mathcal{U}_{\mathbf{B}}$ for some $\mathbf{B} \in\mathcal{K}_{\fin}$.
By the correspondence of ultrafilters of $\vec{\mathcal{W}}$-trees on $\mathcal{S}$ and iterated Fubini products,
we conclude that $\mathcal{V}$ is isomorphic to a Fubini iterate of p-points from among 
$\mathcal{U}_{\mathbf{B}}$, $\mathbf{B} \in\mathcal{K}_{\fin}$.
\end{proof}

\begin{thm} \label{TukeyReducibleTheorem2} 
Suppose that $\mathcal{C}$ is a Ramsey filter on $(\mathcal{R}, \le )$.
 The Rudin-Keisler ordering of the p-points Tukey reducible to $\mathcal{C}$ is isomorphic to the partial order $(\tilde{\mathcal{K}}_{\fin}, \le)$.
In particular, if $|J|<\om$, then the initial  Rudin-Keisler structure below 
$\mathcal{C}$ is isomorphic to the partial order $(\tilde{\mathcal{K}}_{\fin}, \le)$.
\end{thm}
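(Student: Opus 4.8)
The plan is to produce an explicit isomorphism of partial orders between $(\tilde{\mathcal{K}}_{\fin},\le)$ and the Rudin--Keisler ordering of the p-points Tukey reducible to $\mathcal{C}$, namely the assignment $[\mathbf{B}]\mapsto[\mathcal{U}_{\mathbf{B}}]$ sending the $\le$-class of $\mathbf{B}\in\mathcal{K}_{\fin}$ to the $\cong$-class of the p-point $\mathcal{U}_{\mathbf{B}}$ from Notation \ref{notn.importantt}. First I would verify that this assignment is a well-defined order-embedding. By Proposition \ref{Tim RKstructure}(5), $\mathcal{U}_{\mathbf{B}}\le_{RK}\mathcal{U}_{\mathbf{C}}$ if and only if $\mathbf{B}\le\mathbf{C}$; hence $\mathcal{U}_{\mathbf{B}}\cong\mathcal{U}_{\mathbf{C}}$ if and only if $\mathbf{B}\le\mathbf{C}$ and $\mathbf{C}\le\mathbf{B}$, i.e.\ if and only if $[\mathbf{B}]=[\mathbf{C}]$ in $\tilde{\mathcal{K}}_{\fin}$. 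This single equivalence delivers well-definedness, injectivity, and (through the two directions of Proposition \ref{Tim RKstructure}(5)) the fact that both the map and its inverse preserve order. I would then record that each $\mathcal{U}_{\mathbf{B}}$ genuinely belongs to the target family: it is a p-point by Proposition \ref{Tim RKstructure}(4), and since $\mathcal{U}_{\mathbf{B}}=\pi_{\mathbf{B}}(\mathcal{Y}_{n(\mathbf{B})+1})$ is a Rudin--Keisler image of $\mathcal{Y}_{n(\mathbf{B})+1}=\mathcal{C}\upharpoonright\mathcal{R}(n(\mathbf{B}))$, which is Tukey reducible to $\mathcal{C}$ by the ($\Leftarrow$) direction of Theorem \ref{thm.Tim Step1}, and since Rudin--Keisler reducibility implies Tukey reducibility, we get $\mathcal{U}_{\mathbf{B}}\le_T\mathcal{C}$.

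The crux is surjectivity: every p-point $\mathcal{V}\le_T\mathcal{C}$ is $\cong\mathcal{U}_{\mathbf{B}}$ for some $\mathbf{B}\in\mathcal{K}_{\fin}$. Here I would invoke Theorem \ref{TukeyReducibleTheorem}, which already expresses such a $\mathcal{V}$ as isomorphic to an ultrafilter of $\vec{\mathcal{W}}$-trees on a well-founded tree $\hat{\mathcal{S}}\setminus\mathcal{S}$ whose nodes $\mathcal{W}_s$ are each isomorphic to some $\mathcal{U}_{\mathbf{B}}$, and these are all nonprincipal. Via the correspondence between ultrafilters of $\vec{\mathcal{W}}$-trees and iterated Fubini products recorded in \cite{DobrinenCanonicalMaps15}, $\mathcal{V}$ is a Fubini iterate of the nonprincipal ultrafilters $\mathcal{W}_s$. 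The step I expect to be the main obstacle is showing that a p-point cannot be a \emph{nontrivial} such iterate, so that the tree must collapse to a single (root) node and $\mathcal{V}\cong\mathcal{W}_{\langle\rangle}\cong\mathcal{U}_{\mathbf{B}}$. The mechanism I would use is the standard non-decomposability of p-points under Fubini sums: if the root carries $\mathcal{W}_{\langle\rangle}$ on its immediate successors and $\mathcal{W}_{\langle\rangle}$-many of these successors are interior nodes, then the corresponding top-level fibers of $\mathcal{S}$ are infinite with nonprincipal fiber ultrafilters, and this partition of the base into fibers directly witnesses the failure of the p-point property (any $\mathcal{V}$-large set meets each of $\mathcal{W}_{\langle\rangle}$-many fibers in an infinite set). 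If instead $\mathcal{W}_{\langle\rangle}$-almost every successor is a leaf, then $\mathcal{V}\cong\mathcal{W}_{\langle\rangle}$, again a single $\mathcal{U}_{\mathbf{B}}$. The delicate point is pinning down this dichotomy uniformly over the tree; I would lean on the reduction to a uniform (flat-top) front carried out inside the proof of Theorem \ref{TukeyReducibleTheorem}, which makes the top level of $\hat{\mathcal{S}}$ homogeneous and so forces exactly one of the two cases. With surjectivity in hand, the assignment above is an isomorphism of partial orders, proving the first statement.

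Finally, for the ``in particular'' clause I would argue that when $|J|<\omega$ the p-points Tukey reducible to $\mathcal{C}$ coincide, as Rudin--Keisler classes, with the p-points Rudin--Keisler reducible to $\mathcal{C}$, so that the structure just computed is literally the initial Rudin--Keisler structure below $\mathcal{C}$. One inclusion is free, since $\le_{RK}$ implies $\le_T$. For the other, each p-point $\mathcal{V}\le_T\mathcal{C}$ equals some $\mathcal{U}_{\mathbf{B}}$ by the first statement, and $\mathcal{U}_{\mathbf{B}}\le_{RK}\mathcal{Y}_{n(\mathbf{B})+1}\le_{RK}\mathcal{C}$ through the block-projection of $\mathcal{C}$ along the front $\mathcal{AR}_{n(\mathbf{B})+1}$. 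Thus the two families agree, and the initial Rudin--Keisler structure below $\mathcal{C}$ is $(\tilde{\mathcal{K}}_{\fin},\le)$. I would remark that the hypothesis $|J|<\omega$ is exactly what lets $\mathcal{C}$ sit at the top of this Rudin--Keisler analysis (through the ultrafilter it generates on $\mathcal{AR}_1$), whereas for $J=\omega$ only the unqualified first statement, about p-points Tukey reducible to $\mathcal{C}$, is available.
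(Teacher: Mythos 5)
Your proof of the main statement is correct and follows the paper's own route: Theorem \ref{TukeyReducibleTheorem} expresses every nonprincipal $\mathcal{V}\le_T\mathcal{C}$ as an iterated Fubini product of the $\mathcal{U}_{\bsB}$'s, the paper then invokes the fact that a nontrivial Fubini product of nonprincipal ultrafilters is never a p-point (precisely the fiber-partition argument you spell out), and Proposition \ref{Tim RKstructure}(4) and (5) supply the rest. Your extra care with the root dichotomy on the flat-top front is a reasonable expansion of what the paper simply cites as known.

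The gap is in your treatment of the ``in particular'' clause. You assert $\mathcal{U}_{\bsB}\le_{RK}\mathcal{Y}_{n(\bsB)+1}\le_{RK}\mathcal{C}$ and conclude that the p-points Tukey reducible to $\mathcal{C}$ coincide, as Rudin--Keisler classes, with the p-points Rudin--Keisler reducible to $\mathcal{C}$. When $|J|<\om$, the ultrafilter that $\mathcal{C}$ determines on a countable base is the one generated by $\{\mathcal{AR}_1|X:X\in\mathcal{C}\}$, which is $\mathcal{Y}_1=\mathcal{U}_{\bsA_0}$; and by Proposition \ref{Tim RKstructure}(5), $\mathcal{U}_{\bsA_n}\le_{RK}\mathcal{U}_{\bsA_0}$ would require $\bsA_n\le\bsA_0$ in $\mathcal{K}_{\fin}$, which fails as soon as the $\bsA_{n,j}$ have more than one point. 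So the block ultrafilters sit Rudin--Keisler \emph{above} $\mathcal{Y}_1$ (the paper records $\mathcal{Y}_1<_{RK}\mathcal{Y}_2<_{RK}\cdots$ in its outline of the $\mathcal{R}_1$ case), and your middle link $\mathcal{Y}_{n(\bsB)+1}\le_{RK}\mathcal{C}$ is false under this reading of $\le_{RK}\mathcal{C}$: already for $\mathcal{H}^1$ one has $\mathcal{Y}_2\equiv_T\mathcal{Y}_1$ but $\mathcal{Y}_2\not\le_{RK}\mathcal{Y}_1$. If instead you read $\le_{RK}\mathcal{C}$ through fronts (as your phrase ``block-projection along the front $\mathcal{AR}_{n(\bsB)+1}$'' suggests), then by Theorem \ref{thm.Tim Step1} every ultrafilter Tukey reducible to $\mathcal{C}$ is Rudin--Keisler reducible to $\mathcal{C}$ in that sense, including the non-p-point Fubini products, and your restriction of the ``initial Rudin--Keisler structure'' to p-points is then unjustified. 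The paper's own one-line argument runs in the opposite direction from yours: it uses only that every Rudin--Keisler predecessor of the p-point on $\mathcal{AR}_1$ is again a p-point, hence already falls inside the family classified by the first statement; it does not claim that every $\mathcal{U}_{\bsB}$ is Rudin--Keisler below that p-point. Your closing remark that $|J|<\om$ ``lets $\mathcal{C}$ sit at the top of this Rudin--Keisler analysis'' reflects the same misconception: $(\tilde{\mathcal{K}}_{\fin},\le)$ has no top element, and $\mathcal{Y}_1$ sits near its bottom.
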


\begin{proof}
The first statement follows from Theorem \ref{TukeyReducibleTheorem}, the correspondence between 
and iterated Fubini products of ultrafilters
(see  Facts 3.4 and 3.4 in \cite{DobrinenCanonicalMaps15}),
and the fact that a Fubini product of ultrafilters is never a p-point.
If  $|J|<\om$, then $\{\mathcal{AR}_1|X:X\in\mathcal{C}\}$ generates a p-point on the base set $\mathcal{AR}_1$, and in this case, every ultrafilter Rudin-Keisler reducible to $\mathcal{C}$ is a p-point. 
\end{proof}

\begin{prop}\label{prop.Tukeyequiv}
Suppose that $\mathbf{B}$ and $\mathbf{C}$ are in $\mathcal{K}_{\fin}$. 
Then
$ \mathcal{U}_{\mathbf{B}} \le_{T} \mathcal{U}_{\mathbf{C}}$ if and only if $J_{\mathbf{B}}\subseteq J_{\mathbf{C}}$.
Hence, $ \mathcal{U}_{\mathbf{B}} \equiv_{T} \mathcal{U}_{\mathbf{C}}$ if and only if $J_{\mathbf{B}}= J_{\mathbf{C}}$.
\end{prop}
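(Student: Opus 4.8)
The plan is to derive the $\equiv_{T}$ clause at the end immediately from the biconditional (using that $J_{\mathbf{B}}\subseteq J_{\mathbf{C}}$ and $J_{\mathbf{C}}\subseteq J_{\mathbf{B}}$ together amount to $J_{\mathbf{B}}=J_{\mathbf{C}}$), so it suffices to prove that $\mathcal{U}_{\mathbf{B}}\le_{T}\mathcal{U}_{\mathbf{C}}$ if and only if $J_{\mathbf{B}}\subseteq J_{\mathbf{C}}$. Throughout I use three already-established facts: $\le_{RK}$ implies $\le_{T}$; each $\mathcal{U}_{\mathbf{B}}$ is a rapid p-point (Proposition \ref{Tim RKstructure}(4)); and $\mathcal{U}_{\mathbf{B}}\le_{RK}\mathcal{U}_{\mathbf{C}}$ iff $\mathbf{B}\le\mathbf{C}$ in $\mathcal{K}_{\fin}$ (Proposition \ref{Tim RKstructure}(5)).

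For the direction $(\Leftarrow)$ I first reduce to the claim that the Tukey type of $\mathcal{U}_{\mathbf{B}}$ depends only on the index set $J_{\mathbf{B}}$, i.e. that $J_{\mathbf{B}}=J_{\mathbf{B}'}$ implies $\mathcal{U}_{\mathbf{B}}\equiv_{T}\mathcal{U}_{\mathbf{B}'}$. Granting this, if $J_{\mathbf{B}}\subseteq J_{\mathbf{C}}$ I set $\mathbf{D}=(\mathbf{C}_{j})_{j\in J_{\mathbf{B}}}$, the restriction of $\mathbf{C}$ to $J_{\mathbf{B}}$; then $\mathbf{D}\le\mathbf{C}$, so $\mathcal{U}_{\mathbf{D}}\le_{RK}\mathcal{U}_{\mathbf{C}}$ and hence $\mathcal{U}_{\mathbf{D}}\le_{T}\mathcal{U}_{\mathbf{C}}$, while $J_{\mathbf{D}}=J_{\mathbf{B}}$ gives $\mathcal{U}_{\mathbf{B}}\equiv_{T}\mathcal{U}_{\mathbf{D}}$, and the two combine. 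To prove the reduction claim, given $J_{\mathbf{B}}=J_{\mathbf{B}'}=K$, clause (3) of Definition \ref{defn.A_k} furnishes a single structural upper bound $\mathbf{E}=(\mathbf{A}_{k,j})_{j\in K}$ with $\mathbf{B}_{j},\mathbf{B}'_{j}\le\mathbf{E}_{j}$ for all $j\in K$; then $\mathbf{B},\mathbf{B}'\le\mathbf{E}$ yields $\mathcal{U}_{\mathbf{B}},\mathcal{U}_{\mathbf{B}'}\le_{T}\mathcal{U}_{\mathbf{E}}$ via $\le_{RK}$. It remains to establish the reverse reduction $\mathcal{U}_{\mathbf{E}}\le_{T}\mathcal{U}_{\mathbf{B}}$ whenever $\mathbf{B}\le\mathbf{E}$ with $J_{\mathbf{B}}=J_{\mathbf{E}}=K$. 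For this I build a monotone map $\mathcal{U}_{\mathbf{B}}\to\mathcal{U}_{\mathbf{E}}$ sending a large set $U$ to the collection of copies of $(\mathbf{E}_{j})_{j\in K}$ all of whose finitely many sub-copies of $(\mathbf{B}_{j})_{j\in K}$ lie in $U$; since each copy of $\mathbf{E}_{j}$ contains only $|\binom{\mathbf{E}_{j}}{\mathbf{B}_{j}}|$-many copies of $\mathbf{B}_{j}$, this is the standard finite-to-one reduction, and cofinality follows because selectivity of $\mathcal{C}$ produces, for each $V\in\mathcal{U}_{\mathbf{E}}$ arising from some $X\in\mathcal{C}$, a refinement $X'\le X$ in $\mathcal{C}$ whose $\pi_{\mathbf{B}}$-image is coherent with $V$.

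The direction $(\Rightarrow)$ is the main obstacle. Suppose $\mathcal{U}_{\mathbf{B}}\le_{T}\mathcal{U}_{\mathbf{C}}$, witnessed by a monotone cofinal $g:\mathcal{U}_{\mathbf{C}}\to\mathcal{U}_{\mathbf{B}}$ (Fact \ref{Tim-monotone reduction}). Precomposing $g$ with the canonical monotone cofinal map $X\mapsto\pi_{\mathbf{C}}''(\mathcal{R}(n(\mathbf{C}))\upharpoonright X)$ of $\mathcal{C}$ onto $\mathcal{U}_{\mathbf{C}}$ produces a monotone cofinal $G:\mathcal{C}\to\mathcal{U}_{\mathbf{B}}$, so $\mathcal{U}_{\mathbf{B}}\le_{T}\mathcal{C}$. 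By Theorem \ref{thm.Tim Step1} together with the localized classification Theorem \ref{Tim-LocalCanonical}, there are a front $\mathcal{F}$ on $\mathcal{C}$ and an inner Nash-Williams map $\varphi$ (as in Theorem \ref{canonical R}) with $\mathcal{U}_{\mathbf{B}}\cong f(\langle\mathcal{C}\upharpoonright\mathcal{F}\rangle)$, and since $\mathcal{U}_{\mathbf{B}}$ is a p-point, Theorem \ref{TukeyReducibleTheorem} (Fubini products are never p-points) forces $\mathcal{U}_{\mathbf{B}}\cong\mathcal{U}_{\mathbf{D}}$ for a single $\mathbf{D}\in\mathcal{K}_{\fin}$. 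The heart of the matter is then to track coordinates: because $G$ factors through $\pi_{\mathbf{C}}$, its value on $X$ depends only on the copies of $(\mathbf{C}_{j})_{j\in J_{\mathbf{C}}}$ appearing in the blocks of $X$, so members of $\mathcal{C}$ agreeing on all $\pi_{\mathbf{C}}$-images are identified by $G$. I would argue that this invariance forces the finitary map $\hat g$, and hence each projection $\pi_{r_{i}(b)}$ composing $\varphi$, to read within each block only a substructure of the distinguished copy of $(\mathbf{C}_{j})_{j\in J_{\mathbf{C}}}$; in particular $\varphi$ never exposes a coordinate outside $J_{\mathbf{C}}$. Consequently $J_{\mathbf{D}}\subseteq J_{\mathbf{C}}$, and Lemma \ref{lem.important} (with Proposition \ref{Tim RKstructure}(5)) gives $J_{\mathbf{B}}=J_{\mathbf{D}}\subseteq J_{\mathbf{C}}$. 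The step requiring the most care — and the one I expect to be the genuine difficulty — is making this coordinate-tracking rigorous: namely, proving that the canonical representing map of an ultrafilter Tukey-below $\mathcal{U}_{\mathbf{C}}$ can be taken to factor through $\pi_{\mathbf{C}}$, so that it involves only the coordinates and substructures already visible in the $\pi_{\mathbf{C}}$-images.
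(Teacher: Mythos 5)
Your backward direction is workable but takes a different route from the paper: you pass upward to a common bound $\mathbf{E}$ and then argue $\mathcal{U}_{\mathbf{E}}\le_T\mathcal{U}_{\mathbf{B}}$ by a finite-to-one style map, whereas the paper passes \emph{downward}, showing $\mathcal{U}_{\mathbf{B}}\equiv_T\mathcal{U}_{(\bsA_{0,j})_{j\in J_{\mathbf{B}}}}$ via the map $\mathcal{B}((\bsA_{0,j})_{j\in J_{\mathbf{B}}})\upharpoonright X\mapsto\mathcal{B}(\mathbf{B})\upharpoonright X$, which is well defined because the singleton projections let one \emph{reconstruct} the blocks $\lgl k_n,(\bsX_{n,j})_{j\in J_{\mathbf{B}}}\rgl$ of $X$. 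Your version needs (and does not supply) the verification that ``all copies of $\mathbf{B}$ inside blocks of $X$'' and ``$\pi_{\mathbf{B}}$-images of copies of $\bsA_{n(\mathbf{B})}$ in $X$'' cohere on a cofinal set, which is exactly the Nash-Williams/Fact \ref{EqualityFact} argument used in Proposition \ref{Tim RKstructure}(3); your appeal to ``selectivity produces a coherent refinement'' is not a proof.

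The genuine gap is in the forward direction, and it is precisely the step you flag as the difficulty: you never prove that the canonical map $\vp$ representing $\mathcal{U}_{\mathbf{B}}\cong\mathcal{U}_{\mathbf{D}}$ ``never exposes a coordinate outside $J_{\mathbf{C}}$.'' The factorization of $G$ through $\pi_{\mathbf{C}}$ does not transfer in any routine way through the construction of $\hat{g}$ in Theorem \ref{p-point Tukey theorem} and then through the canonization on a front of arbitrary rank; Lemma \ref{lem.important}, which you cite at the end, only handles Rudin--Keisler images of $\mathcal{U}_{\mathbf{C}}$ itself (a rank-one front), not Tukey reductions. The paper avoids this coordinate-tracking entirely: since $J_{\mathbf{C}}$ is finite, $\mathcal{U}_{(\bsA_{0,j})_{j\in J_{\mathbf{C}}}}\equiv_T\mathcal{U}_{\mathbf{C}}$ is Tukey equivalent to a Ramsey filter on the \emph{restricted} space $\mathcal{R}(\lgl(\bsA_{k,j})_{j\in J_{\mathbf{C}}}:k<\om\rgl)$, and Theorem \ref{TukeyReducibleTheorem} applied to that smaller space already says that every p-point Tukey below it is isomorphic to some $\mathcal{U}_{\mathbf{D}}$ with $\mathbf{D}\in\bigcup_{L\sse J_{\mathbf{C}}}(\mathcal{K}_j)_{j\in L}$ or to $\mathcal{Y}_0$; Proposition \ref{Tim RKstructure}(5) then rules out $\mathcal{U}_{(\bsA_{0,j})_{j\in J_{\mathbf{B}}}}$ being isomorphic to any of these when $J_{\mathbf{B}}\not\sse J_{\mathbf{C}}$. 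To repair your argument, replace the coordinate-tracking step with this change of ambient space (which, notably, relies on the paper's singleton-reduction form of the backward direction), or else supply a full proof that the canonical inner map for a front on $\mathcal{C}$ representing an ultrafilter Tukey below $\mathcal{U}_{\mathbf{C}}$ projects each block into the distinguished copy of $(\mathbf{C}_j)_{j\in J_{\mathbf{C}}}$ --- a claim that is plausible but is nowhere established in the paper and would require a separate induction on the rank of the front.
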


\begin{proof}
 Assume $J_{\mathbf{B}} \subseteq J_{\mathbf{C}}$. By Proposition $\ref{Tim RKstructure}$, $\mathcal{U}_{(\mathbf{A}_{0,j})_{j\in J_{\mathbf{B}}}} \le_{T} \mathcal{U}_{\mathbf{B}}$, since $(\mathbf{A}_{0,j})_{j\in J_{\mathbf{B}}}\le \mathbf{B}$. Define $g :\mathcal{C}\upharpoonright\mathcal{B}((\mathbf{A}_{0,j})_{j\in J_{\mathbf{B}}}) \rightarrow \mathcal{C}\upharpoonright\mathcal{B}(\mathbf{B})$ by $g(\mathcal{B}((\mathbf{A}_{0,j})_{j\in J_{\mathbf{B}}})\upharpoonright X) = \mathcal{B}(\mathbf{B})\upharpoonright X$. 
$g$ is well-defined on a cofinal subset of $\mathcal{U}_{(\mathbf{A}_{0,j})_{j\in J_{\mathbf{B}}}}$, since from the set $\mathcal{B}((\mathbf{A}_{0,j})_{j\in J_{\mathbf{B}}})\upharpoonright X$ one can reconstruct $\{\left< k_{n}, (\mathbf{X}_{n,j})_{j\in J_{\mathbf{B}}} \right> : n<\omega\}$. 
Since $g$ is a monotone cofinal map from a cofinal subset of $\mathcal{U}_{(\mathbf{A}_{0,j})_{j\in J_{\mathbf{B}}}}$ into a cofinal subset of $\mathcal{U}_{\mathbf{B}}$, we have $\mathcal{U}_{\mathbf{B}}\le_{T} \mathcal{U}_{(\mathbf{A}_{0,j})_{j\in J_{\mathbf{B}}}}$. Hence $\mathcal{U}_{\mathbf{B} }\equiv_{T} \mathcal{U}_{(\mathbf{A}_{0,j})_{j\in J_{\mathbf{B}}}}$. By a similar argument, $\mathcal{U}_{\mathbf{C}} \equiv_{T} \mathcal{U}_{(\mathbf{A}_{0,j})_{j\in J_{\mathbf{C}}}}$. 
Since $J_{\mathbf{B}}\subseteq J_{\mathbf{C}}$,
 Proposition $\ref{Tim RKstructure}$ implies that $\mathcal{U}_{(\mathbf{A}_{0,j})_{j\in J_{\mathbf{B}}}}\le_{T}  \mathcal{U}_{(\mathbf{A}_{0,j})_{j\in J_{\mathbf{C}}}}$. Therefore $\mathcal{U}_{\mathbf{B}} \le_{T} \mathcal{U}_{\mathbf{C}}$.

 Now suppose that $J_{\mathbf{B}} \not \subseteq J_{\mathbf{C}}$. 
Since $J_{\mathbf{C}}$ is finite, the p-point ultrafilter $\mathcal{U}_{(\mathbf{A}_{0,j})_{j\in J_{\mathbf{C}}}}$ is Tukey equivalent to a Ramsey filter on $(\mathcal{R}(\left< (\mathbf{A}_{k,j})_{j\in J_{\mathbf{C}}}: k<\omega \right>), \le)$. 
By Theorem  $\ref{TukeyReducibleTheorem}$, if $\mathcal{V}$ is a p-point and $\mathcal{V} \le_{T} \mathcal{U}_{(\mathbf{A}_{0,j})_{j\in J_{\mathbf{C}}}}$ then $\mathcal{V}$ is isomorphic to some $\mathcal{U}_{\mathbf{D}}$ for some $\mathbf{D} \in \bigcup_{L\subseteq J_{\mathbf{C}}} (\mathcal{K}_{j})_{j\in L}$ or isomorphic to $\mathcal{Y}_{0}$. 
By Proposition $\ref{Tim RKstructure}$ (5), $\mathcal{U}_{(\mathbf{A}_{0,j})_{j\in J_{\mathbf{B}}}} \not \le_{RK} \mathcal{U}_{\mathbf{D}}$ for each $\mathbf{D} \in \bigcup_{J'\subseteq J_{\mathbf{C}}} (\mathcal{K}_{j})_{j\in J'}$, and also $\mathcal{U}_{(\mathbf{A}_{0,j})_{j\in J_{\mathbf{B}}}} \not \le_{RK} \mathcal{Y}_{0}$. So  $\mathcal{U}_{(\mathbf{A}_{0,j})_{j\in J_{\mathbf{B}}}}\not \le_{T} \mathcal{U}_{(\mathbf{A}_{0,j})_{j\in J_{\mathbf{C}}}}$. Therefore $\mathcal{U}_{\mathbf{B}} \not \le_{T} \mathcal{U}_{\mathbf{C}}$.
\end{proof}

\noindent \it Final Argument for Proof of Theorem \ref{Tim-InitialStructures}. \rm
For now, let $J$ be either finite or $\om$.
We prove (2) first.
Let $\mathcal{V}$ be any p-point Tukey reducible to $\mathcal{C}$.
Since a Fubini product of ultrafilters is never a p-point, it follows from Theorem \ref{TukeyReducibleTheorem2}  that $\mathcal{V}$ is isomorphic to $\mathcal{U}_{\mathbf{B}}$ for some $\mathbf{B}\in \mathcal{K}_{\fin}$. 
In particular, 
$\mathcal{V}$ is 
 Tukey equivalent to $\mathcal{U}_{\mathbf{B}}$.
 Proposition \ref{prop.Tukeyequiv} shows that the Tukey type of $\mathcal{U}_{\mathbf{B}}$ is completely determined by 
the index set $J_{\bsB}$.
Therefore, (2) holds.

Now suppose $J$ is finite.
By Theorem \ref{TukeyReducibleTheorem2}, each ultrafilter $\mathcal{V}$ Tukey reducible to $\mathcal{C}$ is 
isomorphic to a Fubini iterate of ultrafilters from among 
$\mathcal{U}_{\mathbf{B}}$, $\mathbf{B}\in \mathcal{K}_{\fin}$.
Since $J$ is finite, it follows from 
Proposition \ref{prop.Tukeyequiv} that there are only finitely many Tukey types of p-points Tukey reducible to $\mathcal{C}$.
By  Corollary 37 in \cite{Dobrinen/Todorcevic10},
for each
$\mathcal{U}_{\mathbf{B}}$, its Tukey type is the same as the Tukey type of any Fubini power of itself, since  $\mathcal{U}_{\mathbf{B}}$ is a rapid p-point.
Thus, each Fubini iterate from among the 
$\mathcal{U}_{\mathbf{B}}$, $\mathbf{B}\in \mathcal{K}_{\fin}$, has Tukey type equal to some such 
$\mathcal{U}_{\mathbf{B}}$.
Therefore, 
the initial Tukey structure below $\mathcal{C}$ is exactly $\mathcal{P}(J)$.
\qed

We finish by pointing out the implications the theorems in this section have for the  specific examples in Section \ref{sec.uf}.

\begin{example}[$n$-arrow, not $(n+1)$-arrow ultrafilters]
Let $J=1$ and fix $n\ge 2$. 
Recall that the space 
$\mathcal{A}_{n}$ is defined to be the space $\mathcal{R}(\left<\mathbf{A}_{k} : k<\omega \right>)$, 
where $\left<\mathbf{A}_{k} : k<\omega \right>$ is some generating sequence for 
$\mathcal{K}_{0}$, the class of all finite $(n+1)$-clique-free ordered graphs.

 Suppose  $\mathcal{C}$ is a Ramsey filter on $(\mathcal{A}_{n}, \le)$.
Then 
$\mathcal{U}_{\mathcal{A}_n}$, defined to be the filter on base set $\mathcal{R}(0)$ generated by the sets $\mathcal{R}(0)|C$, $C\in\mathcal{C}$, is a p-point ultrafilter  which is Tukey equivalent to $\mathcal{C}$.
By Theorem $\ref{TukeyReducibleTheorem2}$,
 the Rudin-Keisler structure of the p-points Tukey reducible to $\mathcal{U}_{\mathcal{A}_n}$ is isomorphic to 
the collection of all equivalence classes of members of $\mathcal{K}_0$, partially ordered by embedability.
By Theorem \ref{Tim-InitialStructures}, the initial Tukey structure below $\mathcal{U}_{\mathcal{A}_n}$ is exactly a chain of length $2$.
\end{example}

\begin{example}[Hypercube spaces]
Let $J\le \omega$ and for each $j\in J$, let $\mathcal{K}_{j}$ be the class of finite linear orders. Let $\left<\mathbf{A}_{k} : k<\omega \right>$ be a generating sequence such that for each $k<\omega$ and each $j\in J_k$, $\mathbf{A}_{k,j}$ is a $k$-element linear order. Recall that the space $\mathcal{H}^{J}$ is defined to be the space $\mathcal{R}(\left<\mathbf{A}_{k} : k<\omega \right>)$.

If $\mathcal{C}$ is a Ramsey filter on $(\mathcal{H}^{J}, \le)$ then by Theorem $\ref{TukeyReducibleTheorem2}$,
 the Rudin-Keisler structure of the p-points Tukey reducible to $\mathcal{C}$ is isomorphic to the partial order
 $(\tilde{\mathcal{K}}_{\fin}, \le)$. 

If $J<\omega$ then $(\tilde{\mathcal{K}}_{\fin},\le)$ is isomorphic to the partial order $(\omega^{J},\le)$ via the map sending  $\mathbf{B} \mapsto (\|\mathbf{B}_{0}\|,\|\mathbf{B}_{1}\|, \dots, $ $\|\mathbf{B}_{J-1}\|)$. (If  $j\not \in J_{\mathbf{B}}$, then we assume $\|\mathbf{B}_{j}\|= 0$.) Moreover, the initial Tukey structure below $\mathcal{C}$ is exactly $\mathcal{P}(J)$.

Let $C_{0}(\omega)$ denote the collection of sequences of natural numbers which are eventually zero. (For $(x_{i})_{i<\omega}$ and $(y_{i})_{i<\omega}$ in $C_{0}(\omega)$, $(x_{i})_{i<\omega} \le (y_{i})_{i<\omega}$ iff for all $i<\omega$, $x_{i}\le y_{i}$.) 
If $J=\omega$, then $(\tilde{\mathcal{K}}_{\fin},\le)$ is isomorphic to the partial order $(C_{0}(\omega),\le)$ via the map $\mathbf{B} \mapsto (\|\mathbf{B}_{0}\|,\|\mathbf{B}_{1}\|, \dots, \|\mathbf{B}_{J_{\mathbf{B}}}\|, 0, 0,0, \dots)$.
Further, the structure of the Tukey types of the p-points Tukey reducible to $\mathcal{C}$  is exactly $[\om]^{<\om}$.
\end{example}

These examples are just prototypes of what can be achieved by topological Ramsey spaces constructed from generating sequences.  Based on the work in this paper, many examples of topological Ramsey spaces can be constructed, with associated ultrafilters having a wide range of partition properties and initial Rudin-Keisler and Tukey structures.


\bibliographystyle{amsplain}
\bibliography{referencesDMT0108}

\end{document}